\definecolor{green}{rgb}{0,0.8,0} 
\newtheorem{theorem}{Theorem}[section]
\newtheorem{corollary}[theorem]{Corollary}
\newtheorem{lemma}[theorem]{Lemma}
\newtheorem{proposition}[theorem]{Proposition}
\theoremstyle{definition}
\newtheorem{definition}[theorem]{Definition}
\theoremstyle{remark}
\newtheorem{remark}[theorem]{Remark}
\numberwithin{equation}{section}
\newcommand{\nrm}[1]{\Vert#1\Vert}
\newcommand{\abs}[1]{\vert#1\vert}
\newcommand{\brk}[1]{\langle#1\rangle}
\newcommand{\set}[1]{\{#1\}}
\newcommand{\tr}{\mathrm{tr}\,}
\newcommand{\aeq}{\sim}
\newcommand{\aleq}{\lesssim}
\newcommand{\ageq}{\gtrsim}
\newcommand{\lap}{\Delta}
\newcommand{\ud}{\mathrm{d}}
\newcommand{\rd}{\partial}
\newcommand{\nb}{\nabla}
\newcommand{\bb}{\Big}
\newcommand{\alp}{\alpha}
\newcommand{\bt}{\beta}
\newcommand{\gmm}{\gamma}
\newcommand{\Gmm}{\Gamma}
\newcommand{\dlt}{\delta}
\newcommand{\eps}{\epsilon}
\newcommand{\veps}{\varepsilon}
\newcommand{\kpp}{\kappa}
\newcommand{\lmb}{\lambda}
\newcommand{\Lmb}{\Lambda}
\newcommand{\sgm}{\sigma}
\newcommand{\tht}{\theta}
\newcommand{\Tht}{\Theta}
\newcommand{\omg}{\omega}
\newcommand{\Omg}{\Omega}
\newcommand{\bfd}{{\bf d}}
\newcommand{\bfe}{{\bf e}}
\newcommand{\bfg}{{\bf g}}
\newcommand{\bfm}{{\bf m}}
\newcommand{\bfA}{{\bf A}}
\newcommand{\bfD}{{\bf D}}
\newcommand{\bfF}{{\bf F}}
\newcommand{\bfG}{{\bf G}}
\newcommand{\bfH}{{\bf H}}
\newcommand{\bfP}{{\bf P}}
\newcommand{\bfQ}{{\bf Q}}
\newcommand{\bbD}{\mathbb D}
\newcommand{\bbH}{\mathbb H}
\newcommand{\bbN}{\mathbb N}
\newcommand{\bbP}{\mathbb P}
\newcommand{\bbR}{\mathbb R}
\newcommand{\bbS}{\mathbb S}
\newcommand{\bbZ}{\mathbb Z}
\newcommand{\calA}{\mathcal A}
\newcommand{\calC}{\mathcal C}
\newcommand{\calD}{\mathcal D}
\newcommand{\calE}{\mathcal E}
\newcommand{\calF}{\mathcal F}
\newcommand{\calG}{\mathcal G}
\newcommand{\calH}{\mathcal H}
\newcommand{\calL}{\mathcal L}
\newcommand{\calM}{\mathcal M}
\newcommand{\calO}{\mathcal O}
\newcommand{\calQ}{\mathcal Q}
\newcommand{\calX}{\mathcal X}
\newcommand{\frka}{\mathfrak a}
\newcommand{\frkb}{\mathfrak b}
\newcommand{\weakto}{\rightharpoonup}
\newcommand{\covD}{\bfD}
\newcommand{\scovD}{\not \hskip-.3em \covD}
\newcommand{\snb}{{\hskip-.1em \not \hskip-.3em \covnb}}
\newcommand{\slap}{{\not \hskip-.25em \lap}}
\newcommand{\smet}{{\not \hskip-.25em \bfg}}
\newcommand{\uL}{\underline{L}}
\newcommand{\ualp}{\underline{\alp}}
\newcommand{\tA}{{\tilde{A}}}
\newcommand{\tF}{{\tilde{F}}}
\newcommand{\ta}{\tilde{a}}
\newcommand{\te}{\tilde{e}}
\newcommand{\g}{\mathfrak{g}}
\renewcommand{\H}{{\mathcal H}}
\newcommand{\R}{\mathbb R}
\newcommand{\mvC}[1]{{}^{(#1)} P}
\newcommand{\mfa}{\mathfrak{a}}
\newcommand{\mfb}{\mathfrak{b}}
\newcommand{\G}{\mathbf{G}}
\newcommand{\la}{\langle}
\newcommand{\ra}{\rangle}
\renewcommand{\aa}{\alpha}
\renewcommand{\bb}{\beta}
\newcommand{\pfstep}[1]{\vskip.5em \noindent {\bf #1.}}
\newcommand{\ED}{ED}					
\newcommand{\EM}{T}				
\newcommand{\defT}[1]{{}^{(#1)} \pi}
\newcommand{\vC}[1]{{}^{(#1)} P}
\newcommand{\sC}[1]{{}^{(#1)} K}
\newcommand{\EFlux}{\calF}
\newcommand{\met}{\bfm}
\newcommand{\Str}{\text{Str}}
\newcommand\DA{{\mathbf{DA}}}
\newcommand{\Egs}{{E_{\text{GS}}}}
\newcommand{\hM}{\calQ} 
\newcommand{\bhM}{{\bf \calQ}} 
\newcommand{\nE}{{\calE}} 
\newcommand{\spE}{{\calE_{e}}} 
\newcommand{\ch}{\boldsymbol{\chi}}
\newcommand{\En}{{E_{3}}}		
\newcommand{\dltnc}{\dlt_{0}}		
\newcommand{\covnb}{\boldsymbol{\nb}}
\newcommand{\epsU}{\eps_{0}} 	
\newcommand{\epse}{\eps_{\bfe}} 	
\newcommand{\epsA}{\eps_{A}} 	
\begin{document}

\title[]{The threshold conjecture for the energy critical hyperbolic Yang--Mills equation}
\author{Sung-Jin Oh}%
\address{Department of Mathematics, UC Berkeley, Berkeley, CA 94720 and KIAS, Seoul, Korea 02455}%
\email{sjoh@math.berkeley.edu}%

\author{Daniel Tataru}%
\address{Department of Mathematics, UC Berkeley, Berkeley, CA 94720}%
\email{tataru@math.berkeley.edu}%


\begin{abstract}
  This article represents the fourth and final part of a four-paper
  sequence whose aim is to prove the Threshold Conjecture as well as
  the more general Dichotomy Theorem for the energy critical $4+1$
  dimensional hyperbolic Yang--Mills equation.  The Threshold Theorem
  asserts that topologically trivial solutions
   with energy below twice the ground state energy are global
  and scatter. The Dichotomy Theorem applies to solutions in arbitrary topological class 
  with large energy, and provides two exclusive alternatives:
  Either the solution is global and scatters, or it bubbles off a
  soliton in either finite time or infinite time.

  Using the caloric gauge developed in the first paper \cite{OTYM1},
  the continuation/scattering criteria established in the second paper
  \cite{OTYM2}, and the large data analysis in an arbitrary
  topological class at optimal regularity in the third paper
  \cite{OTYM2.5}, here we perform a blow-up analysis which shows that
  the failure of global well-posedness and scattering implies either
  the existence of a soliton with at most the same energy bubbling
  off, or the existence existence of a nontrivial self-similar
  solution.  The proof is completed by showing that the latter
  solutions do not exist.

\end{abstract}
\maketitle

\tableofcontents

\section{Introduction} \label{sec:intro}

This article represents the fourth and last of a four-paper sequence
devoted to the study of finite energy solutions to the energy
critical $4+1$ dimensional hyperbolic Yang--Mills equation.  The four
installments of the series are concerned with
\begin{enumerate}[label=(\alph*)]
\item  the \emph{caloric gauge} for 
the hyperbolic Yang--Mills equation, \cite{OTYM1};
\item large data \emph{energy dispersed} caloric 
gauge solutions,   \cite{OTYM2};
\item \emph{topological classes} of connections and large data local
well-posedness,  \cite{OTYM2.5};
\item the Threshold Conjecture and soliton bubbling/scattering dichotomy for large data solutions, present article.
\end{enumerate}
A short overview of the four papers is provided in the survey paper \cite{OTYM0}.
\medskip

Our first goal in this paper is to prove the Threshold Conjecture for the hyperbolic
Yang--Mills equation; this asserts that the solution is global and
scatters for all topologically trivial data with energy below $2\Egs$,
where $\Egs$ represents energy of the ground state (i.e., lowest energy steady state) for this problem.

Secondly, we consider solutions with energy above this threshold, and 
prove the following Dichotomy Theorem: either (i) the solution is 
topologically trivial, global and scatters, or (ii) it ``bubbles off'' a soliton
in either finite time (which corresponds to blow-up) or in infinite time.
Here ``soliton bubbling off'' means that  a sequence of symmetry- 
and gauge-equivalent solutions must converge
to a soliton, namely a Lorentz transform of a steady state.   

As a main common component of both theorems, we separately state 
and prove a Bubbling Theorem, which provides a necessary and 
sufficient condition for soliton bubbling off purely in terms of the 
the energy distribution of the solution.

The paper is organized as follows. In the first section we provide
some background material on the hyperbolic Yang--Mills equation, and
then we give the statements of the main results in
Theorem~\ref{t:bubble-off} (the Bubbling Theorem), 
Theorem~\ref{t:threshold} (the Threshold Theorem), and finally
Theorem~\ref{t:no-bubble} (the Dichotomy Theorem).  In the second
section we provide a brief overview of the results in the first three
papers of the sequence \cite{OTYM1},\cite{OTYM2},\cite{OTYM2.5}. The remainder of the paper
is devoted to the proof of the main results.

\subsection{The energy critical hyperbolic Yang--Mills equation} 

\subsubsection{Lie groups and algebras}
Let $\G$ be a compact noncommutative Lie group and $\g$ its associated Lie
algebra. We denote by $Ad(O) X = O X O^{-1}$ the action of $\G$ on
$\g$ by conjugation (i.e., the adjoint action), and by $ad(X) Y = [X,
Y]$ the associated action of $\g$, which is given by the Lie
bracket. We introduce the notation $\brk{X, Y}$ for a bi-invariant inner product on $\g$,
\begin{equation*}
\la [X,Y],Z \ra = \la X, [Y,Z] \ra, \qquad X,Y,Z \in \g, 
\end{equation*}
or equivalently 
\begin{equation*}
\la X,Y \ra = \la Ad(O) X,  Ad(O) Y  \ra, \qquad X,Y \in \g, \quad O \in \G. 
\end{equation*}
If $\G$ is semisimple then one can take 
$\brk{X, Y} = -\tr(ad(X) ad(Y))$ i.e. negative of the Killing form on $\g$, which is then positive definite.
However, a bi-invariant inner product on $\g$ exists for any compact Lie group $\G$.

An important concrete case is $\bfG = SU(2)$, the group of $2 \times 2$ unitary matrices with unit determinant. In that case, $\g = su(2)$, which is the space of $2 \times 2$ anti-hermitian matrices with zero trace, $[X, Y] = XY - YX$, $Ad(O) X = O X O^{-1}$, and $\brk{X, Y} = - \tr XY$ for $X, Y \in su(2)$ and $O \in SU(2)$, with the usual matrix multiplication and trace operations.

\subsubsection{The hyperbolic Yang--Mills equation}
Let $\R^{1+4}$ be the $(4+1)$ dimensional Minkowski space  with
the standard Lorentzian metric $\met= \text{diag}(-1,1,1,1,1)$. 
Denote by $A_{\alpha}: \R^{1+4}\rightarrow \g$, $\alpha = 0, 1, \ldots,4$,
a connection $1$-form\footnote{The geometric setting for the hyperbolic Yang--Mills equation is the space of connections on a vector bundle on a Lorentzian manifold; here, for simplicity, we give a concrete formulation on $\bbR^{1+4}$. For a more geometric description, we refer the reader to \cite{OTYM2.5}.} taking values in the Lie algebra $\g$, and by $\covD_\alpha$
the associated covariant differentiation,
\[
\covD_{\alpha} B:= \partial_{\alpha} B + [A_{\alpha},B],
\]
acting on $\g$-valued functions $B$. The commutator of two covariant derivatives takes the form $(\covD_{\alp} \covD_{\bt} - \covD_{\bt} \covD_{\alp}) B = [F_{\alp \bt}, B]$, where $F$ is the curvature tensor 
\[
F_{\alpha\beta}: = \partial_{\alpha}A_{\beta}
- \partial_{\beta}A_{\alpha} +[A_\alpha,A_\beta].
\]
The curvature tensor obeys the \emph{Bianchi identity}, namely
\begin{equation*}
	\covD_{\alp} F_{\bt \gmm} + \covD_{\bt} F_{\gmm \alp} + \covD_{\gmm} F_{\alp \bt} = 0.
\end{equation*}

The (hyperbolic) {\it{Yang--Mills}} equation for $A$ is the Euler--Lagrange
equation associated with the formal Lagrangian action functional
\[
\mathcal{L}(A) = \frac{1}{2}\int_{\R^{1+4}} \la F_{\alpha\beta}, F^{\alpha\beta}\ra \,dxdt.
\]
Here we are using the standard convention of raising or lowering indices using the metric $\bfm$, as well as summing up repeated upper and lower indices. Thus,
the Yang--Mills equation takes the form
\begin{equation}\label{ym}
\covD^\alpha F_{\alpha \beta} = 0.
\end{equation}
There is a natural energy-momentum tensor associated to the Yang--Mills
equation, namely 
\[
\EM_{\alpha \beta} (A)=   2 \la \tensor{F}{_{\alp}^{\gmm}}, F_{\beta \gamma}\ra
- \frac12 \met_{\alpha \beta} \la F_{\gamma \delta}, F^{\gamma \delta}\ra.
\]
If $A$ solves the Yang--Mills equation \eqref{ym} then $\EM_{\alpha \beta}$ is divergence free,
\begin{equation}\label{divT}
\partial^\alpha \EM_{\alpha \beta} = 0.
\end{equation}
Integrating this for $\beta = 0$ yields the \emph{conserved energy}
\begin{equation}\label{energy}
\nE(A) = \nE_{\set{t} \times \bbR^{4}}(A) = \int_{\set{t} \times \bbR^{4}} \EM_{00} \, \ud x = \int_{\set{t} \times \bbR^{4}} \frac{1}{2} \brk{F_{jk}, F^{jk}} + \brk{F_{0j}, \tensor{F}{_{0}^{j}}} \, \ud x,
\end{equation}
which is constant in time.
Here we are using the convention of using roman indices $j, k, \ldots$ for the spatial indices $\set{1, \ldots, 4}$.
For a general subset $U \subset \bbR^{4}$, we define the local energy in $U$ to be 
\begin{equation*}
\nE_{\set{t} \times U}(A) = \int_{\set{t} \times U} \EM_{00} \, \ud x.
\end{equation*}

\subsubsection{Symmetries}

The  group of symmetries for the the Yang--Mills equation
play a key role in our analysis. Its components are as follows:

\begin{enumerate}

\item Translations, both in space and in time;

\item the Lorentz group of linear coordinate changes;

\item the scaling group, 
\[
A(t,x) \to \lambda A(\lambda t,\lambda x).
\]
\end{enumerate}
The conserved energy functional $\nE$ is invariant with respect to scaling precisely in dimension
$4+1$. For this reason we call the $4+1$ problem \emph{energy critical};  this is one of the
motivations for our interest in this problem.

\subsubsection{Gauge invariance and Yang--Mills solutions}
In order to study the Yang--Mills equation as a well-defined evolution
in time, we first need to address its gauge invariance. 
Given a $\G$-valued function $O$ on $\bbR^{1+4}$, we introduce the
notation
\begin{equation*}
  O_{; \alp} = \rd_{\alp} O O^{-1}.
\end{equation*}
Such a function $O$ induces the gauge transformation
   \[
A_\alpha \longrightarrow \calG(O) A := Ad(O) A_\alpha - O_{;\alp},
\]
under which equation \eqref{ym} is invariant.
In order to uniquely 
determine the solutions to the Yang--Mills equation, one needs to add an additional 
set of constraint equations which uniquely determine a gauge. This 
procedure is known as {\em gauge fixing}.  

The choice of a gauge plays a central role in the study of the
Yang--Mills equation.  There are multiple interesting classical gauge
choices, e.g.~the Lorenz gauge, the temporal gauge and the Coulomb
gauge. Neither of these is well-suited for the global (in spacetime) large data problem, and
a main goal of our first paper \cite{OTYM1} is to introduce a better
alternate gauge choice, namely the 
 \emph{caloric gauge}. We briefly return to the issue of gauge choice in Section~\ref{subsec:results}, and then give a more detailed discussion in Section~\ref{sec:review}.

\subsubsection{Initial data sets.}
In order to consider the Yang--Mills problem as an evolution equation
we need to consider its initial data sets. An \emph{initial data set} for
\eqref{ym} is a pair of $\g$-valued $1$-forms $(a_{j}, e_{j})$  on $\bbR^{4}$. 
We say that $(a_{j},e_{j})$ is the initial data set for a Yang--Mills solution $A$ if
\begin{equation*}
	(A_{j}, F_{0 j}) \vert_{\set{t=0}} = (a_{j}, e_{j}).
\end{equation*}
Note that \eqref{ym} imposes the condition that the following
equation be true for any initial data for \eqref{ym}:
\begin{equation} \label{eq:YMconstraint}
	\covD^{j} e_{j} = 0.
\end{equation}
Here, $\covD^j$ denotes the covariant derivative with respect to the $a_j$
connection.  This equation is the \emph{Gauss} (or the
\emph{constraint}) \emph{equation} for \eqref{ym}.
In what follows, we denote by $f = f_{ij}$ the curvature of $a$. We refer to Section~\ref{subsec:notation} for the notation $H^{1}(\calO)$, $H^{1}_{loc}(\calO)$ etc.~concerning function spaces.

\begin{definition}
a) A \emph{regular initial data set}  for the Yang--Mills equation  
is a pair $(a_j,e_j) \in H^N_{loc} \times H^{N-1}$, $N \geq 2$, also with $f \in H^{N-1}$,
which has finite energy and satisfies the constraint equation \eqref{eq:YMconstraint}.

b) A \emph{finite energy initial data set}  for the Yang--Mills equation  
is a pair $(a_j,e_j) \in H^1_{loc} \times L^2$, with $f \in L^2$, and
which satisfies the constraint equation \eqref{eq:YMconstraint}.
\end{definition}

We remark that the family of regular initial data sets is dense 
in the class of finite energy data. This is not entirely trivial due to
the nonlinear constraint equation.

\subsubsection{Yang--Mills solutions}  

Due to the gauge invariance properties, we need to be more careful than usual about what we call
a solution to the hyperbolic Yang--Mills equation:

\begin{definition}
a)   Let $N \geq 2$. A \emph{regular solution} to the Yang--Mills equation in an open set $\calO \subset \R^{1+4}$  
is a connection $A$ in $\calO$ obeying $(A, \rd_{t} A) \in C_{t} H^{N}_{loc} \times C_{t} H^{N-1}_{loc}(\calO)$, whose curvature satisfies $F \in C_{t} H^{N-1}_{loc}(\calO)$
and which solves the equation \eqref{ym}.

b) A \emph{finite energy solution} to the Yang--Mills equation in the open set $\calO$
is a connection $A$ obeying $(A, \rd_{t} A) \in C_{t} H^1_{loc} \times C_{t} L^{2}_{loc}(\calO)$, whose curvature satisfies $F \in C_{t} L^2(\calO)$
and which is the limit of regular solutions in this topology.
\end{definition}

We carefully remark that this definition does not require a gauge choice. Hence, at this point
solutions are still given by equivalence classes.  Corresponding to the 
above classes of solutions, we have the classes of gauge transformations
which preserve them:

\begin{definition} \label{def:gt}
a) Let $N \geq 2$. A \emph{regular gauge transformation} in an open set $\calO \subset \bbR^{1+4}$ is 
 is a map 
\[
O: \calO \to \G
\]
with the following regularity properties:
\[
(O_{;t, x}, \rd_{t} O_{;t, x}) \in C_t H^{N+1}_{loc} \times C_t H^{N}_{loc} (\calO).
\]

b) An \emph{admissible gauge transformation} in an open set $\calO \subset
\bbR^{1+4}$ is a similar map with the following regularity properties:
\[
(O_{;t, x}, \rd_{t} O_{;t, x}) \in C_t H^1_{loc} \times C_{t} L^{2}_{loc}( \calO).
\]
\end{definition}

Using this notion we can now talk about gauge-equivalent connections:

\begin{definition}
 Two  finite energy connections $A^{(1)}$ and $A^{(2)}$  in an open set $\calO \subset \bbR^{1+4}$ are gauge equivalent
if there exists an admissible gauge transformation $O$ so that
\[
A^{(2)}_{\alp} = \calG(O) A^{(1)} (= O A^{(1)}_{\alp} O^{-1} - \rd_{\alp} O O^{-1}).
\]
\end{definition}

We list  some simple properties of finite energy connections and admissible gauge transformations in an open set $\calO$
(see \cite{OTYM2.5}):

\begin{itemize}
\item If $A^{(1)}$ and $A^{(2)}$ are finite energy gauge-equivalent
  connections then the bounds for the corresponding gauge transformation $O$ 
depend only on the corresponding bounds for $A^{(1)}$ and $A^{(2)}$.
\item If $A^{(1)}$ and $A^{(2)}$ are regular gauge-equivalent
  connections then the corresponding gauge transformation $O$ is also
  regular, with uniform bounds in terms of $A^{(1)}$, $A^{(2)}$.
\item The family of regular admissible gauge transformations is dense in the family of admissible gauge
transformations.
\item If $A^{(1)}$ and $A^{(2)}$ are gauge-equivalent finite energy
  connections, then $A^{(1)}$ is a finite energy solution to the
  Yang--Mills equation \eqref{ym} if and only if $A^{(2)}$ is.
\item If $A$ is a finite energy connection then its equivalence class 
$[A]$ is closed in the corresponding topology.
\end{itemize}

In terms of local well-posedness, it is easier to work in a gauge. At this point 
we know that (see the more detailed discussion in Section~\ref{sec:review}): 

\begin{enumerate}[label=(\roman*)] 
\item Small data global well-posedness holds in the Coulomb gauge \cite{KT},
caloric gauge \cite{OTYM2} and temporal gauge  \cite{OTYM2}.
\item Large data local well-posedness holds for large caloric data 
in the caloric gauge  \cite{OTYM2} and for arbitrary large data in the temporal gauge
 \cite{OTYM2.5}.
\item Uniqueness of finite energy solutions (up to gauge transformations)  
\cite{OTYM2.5}.
\end{enumerate}

\subsubsection{Topological classes}

The family of finite energy Yang--Mills data sets $(a,e)$  is not a connected topological space
in the above topologies. Instead, they are classified according to 
 their \emph{topological class}, see
\cite{OTYM2.5} and also the discussion in Section~\ref{sec:review}.
The topological class is easily seen to be preserved dynamically for
both regular and finite energy solutions to the hyperbolic Yang--Mills equation.

A special role in the present paper is played by the class $[0]$ of
$0$, whose elements we call \emph{ topologically trivial connections}. These
have the equivalent characterization that they can be described using
$\dot H^1$ connections \cite{OTYM2.5}, see Theorem~\ref{zero-class}
below.  The topologically trivial connections are the subject of both the first two papers
\cite{OTYM1} and  \cite{OTYM2} in our four-paper series, as well as of the  Threshold Theorem below.

For our purposes here, we will use a specific topological invariant, namely the \emph{characteristic number} $\ch$
defined by 
\[
\ch(a) = \int_{\bbR^{4}} - \brk{f \wedge f} = \frac{1}{4} \int_{\bbR^{4}} - \brk{f_{ij}, f_{k \ell}} \, \ud x^{i} \wedge \ud x^{j} \wedge \ud x^{k} \wedge \ud x^{\ell}, 
\]
which depends only on the topological class $[a]$ of $a$.  Two key properties of $\ch$ are that $\ch([0])= 0$
and the pointwise bound
\begin{equation}\label{ch-vs-E}
\abs{\brk{f \wedge f}} \leq \frac{1}{2} \brk{f_{jk}, f^{jk}} \leq \EM_{00}(a),
\end{equation}
which is referred to as the \emph{Bogomoln'yi bound}; see \cite{OTYM2.5} for their proofs.

In the case $\G= SU(2)$ the topological class of $a$ is fully described by the characteristic number $\ch$, which is in fact a multiple of the second Chern number $c_2$ computed from $a$. The Chern number $c_{2}$ turns out to be an integer,
and each such integer defines a connected component in the space of
finite energy connections in $\R^4$.  For a general Lie group $\G$ the characteristic number
$\ch$  of a connection $a$ provides only a partial description of the topological
class of $a$.

\subsubsection{Harmonic Yang--Mills connections and the 
ground state}

A \emph{harmonic Yang--Mills connection} in $\R^4$ is a $\dot
H^1_{loc}$ connection $a$ which is a critical point for the (static) energy
functional
\begin{equation*}
	\spE(a) = \int_{\bbR^{4}} \frac{1}{2} \brk{f, f}.
\end{equation*}

On the one hand they are the steady states for the  hyperbolic Yang--Mills
flow, and on the other hand they are celebrated objects in geometric analysis with spectacular applications to four-dimensional topology; see \cite{DK}. 

The Euler--Lagrange equation satisfied by $a$ takes the form
\begin{equation*}
	\covD^{\ell} f_{\ell j} = 0,
\end{equation*}
which becomes an elliptic system for $a$ in a suitable gauge (e.g.~Coulomb).

The key elliptic regularity result is as follows:
\begin{theorem} [Uhlenbeck \cite{MR648356, U2}]\label{t:harmonic-reg}
 Harmonic Yang--Mills connections $a \in H^1_{loc}(\R^4)$ are smooth in a suitable gauge. 
More generally, $H^{\frac{d}2}_{loc}$ harmonic Yang--Mills connections in any $d$-dimensional 
Riemannian manifold $(M,g)$ are smooth.
\end{theorem}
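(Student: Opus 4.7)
The plan is to use gauge invariance to recast the equation $\covD^{\ell} f_{\ell j} = 0$, which is degenerate elliptic in its raw form because of the gauge direction, as a semilinear elliptic system in the \emph{Coulomb gauge} $\rd^{j} a_{j} = 0$. Under the Coulomb condition, the Yang--Mills equation becomes
\begin{equation*}
	\lap a_{j} = - \rd^{\ell} [a_{\ell}, a_{j}] - [a^{\ell}, f_{\ell j}],
\end{equation*}
a genuinely elliptic system with only quadratic and cubic nonlinearities, for which smoothness follows from standard elliptic bootstrap once one has a starting point with critical integrability. The entire difficulty is thus concentrated in the passage from a rough connection to the Coulomb gauge.

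The central and hardest step is the local Coulomb gauge fixing: on any sufficiently small ball $B \subset M$ on which $\|f\|_{L^{d/2}(B)} < \eps_{0}$ (a universal constant), there is a gauge transformation $O \in H^{\frac{d}{2}+1}(B;\G)$ such that $\tilde{a} = \calG(O) a$ satisfies $\rd^{j} \tilde{a}_{j} = 0$ in $B$ together with the quantitative bound $\|\tilde{a}\|_{L^{d}(B)} \aleq \|f\|_{L^{d/2}(B)}$. The construction is by a continuity argument along a path of connections $t \mapsto t a$: openness is sustained by inverting the linearized Coulomb gauge operator on divergence-free $1$-forms with $L^{d}$-coercive estimates, and closedness is sustained by the a priori $L^{d}$ bound, which does not degenerate because $\eps_{0}$ is universal. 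The smallness hypothesis is arranged by shrinking $B$; by Sobolev embedding, $a \in H^{d/2}_{loc}$ forces $f \in L^{d/2}_{loc}$, and absolute continuity of $\int \abs{f}^{d/2}$ with respect to Lebesgue measure supplies such balls.

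Once in a local Coulomb gauge, the elliptic bootstrap is routine. One begins with $\tilde{a} \in L^{d}(B)$ and $f \in L^{d/2}(B)$, inserts these into the elliptic equation to place $\lap \tilde{a}$ in a critical negative Sobolev space, and gains two derivatives via $\lap^{-1}$. Sobolev embedding combined with Hölder's inequality strictly improves the functional framework at each iteration until one enters a subcritical regime; from there, Schauder estimates iterate to give $C^{\infty}$ regularity in the interior of each $B$.

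The final step is to globalize: on overlaps of two Coulomb balls the transition gauge transformation $O^{(1)}(O^{(2)})^{-1}$ satisfies an elliptic equation derived from the two Coulomb conditions and the known smoothness of each $\tilde{a}^{(i)}$, and is therefore itself smooth, so one obtains a smooth structure on the full domain in a gauge which is Coulomb in each chart. The main obstacle is the first step, namely the Uhlenbeck gauge fixing theorem proper, which is a deep standalone harmonic-analytic result; given it, the remaining ingredients reduce to standard elliptic regularity theory.
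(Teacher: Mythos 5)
Your outline is essentially the classical Uhlenbeck argument---small-curvature local Coulomb gauge fixing by a continuity method, elliptic bootstrap in Coulomb gauge, and smooth patching of the local gauges---which is precisely the route of the cited references; the paper itself quotes Theorem~\ref{t:harmonic-reg} from Uhlenbeck without proof and reuses the same scheme in its own Proposition~\ref{p:stat}. The only point stated too casually is the bootstrap at critical regularity: a naive insertion of $\tilde{a} \in L^{d}$, $f \in L^{d/2}$ into the equation returns you to the same critical space, so the strict improvement per iteration must come from exploiting the universal smallness $\eps_{0}$ (absorbing the quadratic terms perturbatively, or working in a slightly supercritical/Morrey framework), exactly as in Uhlenbeck's proof.
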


As far as the energy of  harmonic Yang--Mills connections and the energy in different topological classes 
is concerned, the  key properties are as follows, see \cite{OTYM2.5}:

 \begin{theorem} \label{t:gs} Let $\G$ be a noncommutative compact Lie
    group. Let
    \begin{equation*}
      \Egs = \inf \set{\spE(a) : \hbox{$a$ is a nontrivial harmonic Yang--Mills connection on a $\G$-bundle on $\bbR^{4}$}}.
    \end{equation*}
    Then the following statements hold.
    \begin{enumerate}[labelindent=.3in, leftmargin=!]
    \item There exists a nontrivial harmonic Yang--Mills connection
      $a$ so that $\spE(a) = \Egs < \infty$.
    \item Let $a$ be any nontrivial harmonic Yang--Mills
      connection. Then either $\spE(a) \geq 2 \Egs$, or
      \begin{equation*}
	|\ch| = \spE(a) \geq \Egs.
      \end{equation*}
    \end{enumerate}
  \end{theorem}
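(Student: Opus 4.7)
My plan is to pass to the round sphere $S^{4}$ via Uhlenbeck's removable singularity theorem \cite{MR648356, U2} and the conformal invariance of the Yang--Mills action in dimension four, then exploit the self-dual/anti-self-dual Hodge decomposition of $2$-forms. Writing $f = f^{+} + f^{-}$, the Bianchi identity $\covD f = 0$ combined with the Yang--Mills equation $\covD^{\ast} f = 0$ (using $\ast f = f^{+} - f^{-}$) yields $\covD f^{+} = \covD f^{-} = 0$. The standard Bogomol'nyi calculation gives, up to a fixed positive normalization,
\[
  \spE[a] = \tfrac{1}{2}\bigl(\|f^{+}\|_{L^{2}}^{2} + \|f^{-}\|_{L^{2}}^{2}\bigr), \qquad \ch(a) = \tfrac{1}{2}\bigl(\|f^{-}\|_{L^{2}}^{2} - \|f^{+}\|_{L^{2}}^{2}\bigr),
\]
so that $\spE[a] \geq |\ch(a)|$ with equality iff $a$ is (anti-)self-dual.

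For assertion (1), the upper bound $\Egs < \infty$ comes from pulling back the BPST instanton through any nontrivial Lie algebra homomorphism $\mathfrak{su}(2) \hookrightarrow \g$, which exists since $\g$ is compact noncommutative. Uhlenbeck's $\veps$-regularity for harmonic Yang--Mills supplies a positive lower bound $\Egs \geq \veps_{0} > 0$. To produce a minimizer, I would take a minimizing sequence $\set{a^{(n)}}$, use translation and scaling to normalize so that unit energy concentrates at the origin at unit scale, pass to an Uhlenbeck (Coulomb) gauge on each ball, and extract a weak $H^{1}_{\mathrm{loc}}$ (and strong $L^{4}_{\mathrm{loc}}$) limit $a^{(\infty)}$. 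An $\veps$-regularity argument at $0$ ensures $a^{(\infty)} \not\equiv 0$, and lower semicontinuity of $\spE$ combined with the definition of $\Egs$ forces $\spE[a^{(\infty)}] = \Egs$. Should the normalization fail and the full energy escape to smaller scales, the argument is run at the concentration scale and the same rescaled limit provides the minimizer.

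For assertion (2), the (anti-)self-dual case is immediate: if $f^{\mp} = 0$ then $\spE[a] = |\ch(a)|$, and the right-hand side is $\geq \Egs$ by the infimum characterization and nontriviality of $a$. The non-(anti-)self-dual case reduces to the sub-claim: if $f^{+} \neq 0$ then $\tfrac{1}{2} \|f^{+}\|_{L^{2}}^{2} \geq \Egs$, and symmetrically for $f^{-}$; granted this, the Bogomol'nyi identity immediately yields $\spE[a] \geq 2 \Egs$. To establish the sub-claim I would run a bubble extraction on the self-dual part alone: select a scale $\lmb_{+}$ where $f^{+}$ concentrates a fixed fraction of its $L^{2}$-mass in a unit ball after rescaling by $\lmb_{+}$, apply Uhlenbeck compactness in a suitable gauge to extract a weak limit $a_{+}$ that is a harmonic Yang--Mills connection on $\bbR^{4}$ with nontrivial self-dual part (using that $\covD f^{+} = 0$ passes to the limit and $\veps$-regularity rules out concentration on a null set), and then apply the definition of $\Egs$ to $a_{+}$.

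The hardest step is this sub-claim. The core difficulty is that $f^{+}$ alone is not the curvature of any connection, so the definition of $\Egs$ cannot be applied to $f^{+}$ directly; one has to promote the self-dual concentration profile to an honest nontrivial harmonic Yang--Mills connection through a bubble extraction that simultaneously controls the possibly very different concentration scales of $f^{+}$ and $f^{-}$, separates their $L^{2}$-contributions without loss of mass, and certifies nontriviality of each extracted bubble. The delicate compactness-and-concentration argument needed here is where the bulk of the technical work would reside.
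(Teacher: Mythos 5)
Your reduction of part (2) to the sub-claim ``if $f^{+} \neq 0$ then $\tfrac{1}{2}\|f^{+}\|_{L^{2}}^{2} \geq \Egs$'' identifies the right statement, but your proposed proof of that sub-claim does not work, and this is precisely where the real content lies. The bubble-extraction argument you sketch cannot close: you start from a \emph{single} fixed connection $a$, so rescaling by one concentration scale $\lmb_{+}$ produces only a gauge/scaling copy of $a$ (and along any sequence of scales tending to $0$ or $\infty$ the local limits are flat), so there is no new connection $a_{+}$ to extract; and even granting a limit that is a nontrivial harmonic Yang--Mills connection ``with nontrivial self-dual part,'' the definition of $\Egs$ only yields $\spE[a_{+}] = \tfrac{1}{2}(\|f^{+}[a_{+}]\|^{2} + \|f^{-}[a_{+}]\|^{2}) \geq \Egs$, which is a lower bound on the \emph{total} energy of the limit, not on its self-dual half, and nothing in the construction forces $a_{+}$ to be self-dual or forces $\|f^{+}[a_{+}]\|^{2}$ to be controlled by $\|f^{+}[a]\|^{2}$ from below in the way you need. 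Soft $\eps$-regularity/compactness arguments of this type can at best give a small universal gap $\|f^{+}\|_{L^{2}}^{2} \geq \eps_{0}$ (in the spirit of the older Min-Oo/Parker gap theorems), which is far from the sharp constant $2\Egs$ required by the theorem.

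The paper does not prove this statement by such an argument either: it quotes it, deriving part (1) and the (anti-)self-dual alternative from classical instanton theory (ADHM, Donaldson--Kronheimer, minimality of instantons within a topological class together with the Bogomoln'yi bound \eqref{ch-vs-E}, which is the same decomposition $f = f^{+} + f^{-}$ you use), and deriving the non-(anti-)self-dual case $\nE \geq 2\Egs$ from the recent sharp energy-gap theorem of Gursky--Kelleher--Streets \cite{GKS}, with the details assembled in \cite[Section~6]{OTYM2.5}. The GKS input is proved by conformally invariant Weitzenb\"ock/Bochner estimates for $f^{+}$ on $S^{4}$ combined with sharp eigenvalue/Sobolev inequalities, i.e.\ by a mechanism entirely different from concentration-compactness; your outline is missing exactly this ingredient. (Your sketch of part (1) via a minimizing sequence, Uhlenbeck gauges and $\eps$-regularity is a reasonable alternative to the citation, and your treatment of the (anti-)self-dual alternative is fine.)
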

  This result is a combination of classical results \cite{ADHM, BPST, DK} concerning energy minimizing solutions within a topological class (called \emph{instantons}), as well as a recent energy lower bound for the non-minimizing solutions proved by Gursky--Kelleher--Streets \cite{GKS}. For a derivation, see \cite[Section~6]{OTYM2.5}. When $\bfG = SU(2)$, the first instanton $a$ is given explicitly by the classical construction of BPST/ADHM \cite{ADHM, BPST}; we refer to \cite[Chapter~3]{DK} for an exposition.
  
As a corollary, Theorem~\ref{t:gs} shows that in the class of topologically trivial connections,
harmonic Yang--Mills connections must have energy at least $2\Egs$. 
Based on this, we will call \emph{subthreshold data/solution} any 
topologically trivial hyperbolic Yang--Mills  data/solution with 
energy below $2 \Egs$.

\subsection{The main results} \label{subsec:results}

We consider the Cauchy problem for the hyperbolic Yang--Mills equation \eqref{ym} with
finite energy data $(a,e)$. As discussed earlier, this problem is known to be locally well-posed \cite{OTYM2, OTYM2.5} for large data and 
globally well-posed for small data \cite{KT}. Here we are interested in the global large data problem,
and we seek to address the following two questions:
\begin{itemize}
\item {\it Global well-posedness};
\item {\it Scattering of the solution}.
\end{itemize}

Preliminary remarks on each point in relation to the notion of the topological class of solutions are in order.

\medskip
\emph{Global well-posedness.}  Because of the finite speed of
propagation and the small data result, a classical argument shows that
at the blow-up time $T$ we must have energy concentration in a backward light cone
centered at a point $(T,X)$,
\[
\underline{C}^{(T, X)} = \{ (t, x) \in \bbR^{1+4} : |x-X| <T-t\}.
\]
In particular, the question of global well-posedness is of local nature, i.e., has nothing to do with the topological class of the initial data.

\medskip

\emph{Scattering.} In a classical sense, a solution $A$ for the Yang--Mills equation would be
 scattering if as $t$ approaches infinity, $A(t)$ approaches a free wave. Such a definition is
unrealistic in our situation. In the first place, it is gauge-dependent. Secondly, the small data result
in \cite{KT} shows that, even in a favorable gauge, classical scattering cannot occur, and instead
one needs to consider some form of \emph{modified scattering}. Even so, there is no chance 
of scattering unless the solution $A$ is topologically trivial; this is due to the fact that 
any solution which decays in a scale invariant $L^p$ norm for $p > 2$ must be topologically 
trivial. We refer to Remark~\ref{rem:scat} for a description of our notion of scattering.

\medskip

We now present our main results, which are divided into two classes.
The first consists of a gauge-independent bubbling off result.
In a nutshell, it asserts that time-like energy concentration implies soliton bubbling off. To state 
it, we need some notation. Given a backward (resp. forward) light cone 
\begin{align*}
	{}^{(T, X)}\underline{C} &= \{(t, x) \in \bbR^{1+4} : |x-X| < T-t\} \\
\big( \hbox{resp. } {}^{(T, X)}C &= \{(t, x) \in \bbR^{1+4} : | x- X | < t - T \} \big),
\end{align*}	
we introduce the time slices
\begin{align*}
	{}^{(T, X)}S_{t} &= {}^{(T, X)} \underline{C} \cap (\set{t} \times \bbR^{4}) \\
	\big( \hbox{resp. } {}^{(T, X)} S_{t} &= {}^{(T, X)} \underline{C} \cap (\set{t} \times \bbR^{4}) \big), 
\end{align*}
and for $0 < \gmm < 1$, the time-like cone
\begin{align*}
{}^{(T, X)} \underline{C}_{\gamma} &= \{  (t, x) \in \bbR^{1+4} : |x-X| < \gamma(T-t)\} \\
	\big( \hbox{resp. } {}^{(T, X)} C_{\gamma} &= \{ (t, x) \in \bbR^{1+4} : |x - X| < \gamma (t - T) \big).
\end{align*}	
When the tip $(T, X)$ coincides with the spacetime origin, we omit the superscript $(T, X)$ and write $C = {}^{(0, 0)} C$, $S_{t} = {}^{(0, 0)} S$, $C_{\gmm} = {}^{(0, 0)} C_{\gmm}$ etc.

For any future time-like vector, which in general takes the form $(1, v)$ with $\abs{v} < 1$, we denote by $L_{v}$ the Lorentz transformation\footnote{More concretely, when $v \neq 0$, $L_{v}$ is the linear transformation on $\bbR^{1+4}$ that preserves $\met$, maps $(1, v)$ to $(\sqrt{1 - \abs{v}^{2}}, 0)$ and equals the identity in $\mathrm{span}\set{(1, 0), (1, v)}^{\perp}$. When $v = 0$, $L_{0}$ is simply the identity.} with velocity $v$. 

Then we have:

\begin{theorem}[Bubbling Theorem] \label{t:bubble-off}
a) Let $A$ be a finite energy Yang--Mills connection which blows up in finite time at $(T,X)$.
Assume in addition that for some $0 < \gamma < 1$ we have
\begin{equation} 
\limsup_{t \nearrow T} \nE_{{}^{(T, X)} \underline{C}_{\gamma} \cap {}^{(T, X)} S_{t}}(A) > 0.
\end{equation}
Then there exists a sequence of points ${}^{(T, X)} \underline{C} \ni (t_n,x_n) \to (T,X)$ and scales $r_n > 0$ with the 
following properties:
\begin{enumerate}
\item Time-like concentration,
\[
\limsup_{n\to \infty} \frac{x_n-X}{|t_n-T|}  = v, \qquad \hbox{ for some } |v| < 1 .
\]

\item Below self-similar scale, 
\[
\limsup_{n\to \infty} \frac{r_n}{|t_n-T|} = 0.
\]

\item Convergence to soliton:
\[
\lim_{n \to \infty} r_{n} \calG(O_n) A(t_{n} + r_{n} t, x_{n} + r_{n} x) = L_v Q(t,x) 
\quad \hbox{in} \ H^1_{loc}([-1/2, 1/2] \times \bbR^{4}) 
\]
for some sequence of admissible gauge transformations $O_n$ and finite energy harmonic Yang--Mills connection $Q$. Here, $L_{v}$ is the Lorentz transformation with velocity $v$.
\end{enumerate}

b) Let $A$ be a finite energy Yang--Mills connection which is global forward in time.
Assume in addition that for some $0 < \gamma < 1$ we have
\begin{equation} 
\limsup_{t \nearrow \infty} \nE_{C_{\gamma} \cap S_{t}}(A) > 0,
\end{equation}
where we recall that $C = {}^{(0, 0)} C$, $S_{t} = {}^{(0, 0)} S_{t}$, $C_{\gmm} = {}^{(0, 0)} C_{\gmm}$ etc\footnote{Since this part concerns the limit $t \to \infty$, the precise choice of the tip $(0, 0)$ is irrelevant; any choice leads to an equivalent statement.}.
Then there exists a sequence of points $C \ni (t_n,x_n) \to \infty$ and scales $r_n > 0$ with the 
following properties:
\begin{enumerate}
\item Time-like concentration,
\[
\limsup_{n\to \infty} \frac{x_n}{t_n}  = v, \qquad \hbox{ for some } |v| < 1 .
\]

\item Below self-similar scale, 
\[
\limsup_{n\to \infty} \frac{r_n}{t_n} = 0 .
\]

\item Convergence to soliton:
\[
\lim_{n \to \infty} r_{n} \calG(O_n) A(t_{n} + r_{n} t, x_{n} + r_{n} x) = L_v Q(t,x) 
\quad \hbox{in} \ H^1_{loc}([-1/2, 1/2] \times \bbR^{4}) 
\]
for some sequence of admissible gauge transformations $O_n$ and finite energy harmonic Yang--Mills connection $Q$. Here, $L_{v}$ is the Lorentz transformation with velocity $v$.
\end{enumerate}
\end{theorem}

Next, we turn to the second class of main results, which concern global well-posedness and scattering properties of \eqref{ym}. 
For this, we need to briefly introduce our gauge choices:

\medskip 
\emph{Caloric gauge.} This is our main choice of gauge, in which we have the strongest gauge-dependent control of solutions. 
We say that a connection $a$ on $\bbR^{4}$ is in \emph{caloric gauge} if its Yang--Mills heat flow
\begin{equation*}
	\rd_{s} A_{j}(x, s) = \covD^{\ell} F_{\ell j}(x, s), \qquad A_{j}(x, s=0) = a_{j}(x)
\end{equation*}
exists globally in heat-time $s$ and $\lim_{s \to \infty} A(s) = 0$. Denoting by $\calC$ the manifold of finite energy caloric connections, and by $T^{L^{2}} \calC$ the completion of its tangent space in $L^{2}$, a solution to the Yang--Mills equation in the caloric gauge can be interpreted as a continuous curve $(A_{x}, \rd_{t} A_{x})(t)$ in $T^{L^{2}} \calC$ (see \cite{OTYM1} and Section~\ref{subsec:OTYM1}).

The Yang--Mills equation written in this gauge has a favorable structure, akin to the classical Coulomb gauge. But in contrast to the Coulomb gauge, the caloric gauge may be imposed for all subthreshold data (to be discussed below), making it a natural setting for the Threshold Theorem.

\medskip 
\emph{Temporal gauge.} This is a classical gauge defined by the condition
\begin{equation*}
	A_{0} = 0,
\end{equation*}
which plays an auxiliary role in our work. The structure of \eqref{ym} in this gauge is less favorable, but nevertheless it has the advantage of respecting causality (i.e., finite speed of propagation) of \eqref{ym}.

Direct analysis of \eqref{ym} in the temporal gauge at energy regularity is fraught with difficulties; however, we observe a suitable structure in the caloric gauge, which allows us to transfer some (but not all) bounds to the temporal gauge. These bounds are enough to establish small energy global well-posedness, which can then be turned into large data local well-posedness in temporal gauge by causality (see \cite{OTYM2.5} and Section~\ref{subsec:OTYM2.5}). This result provides a suitable setting for considering evolution of arbitrary finite energy data, albeit with more indirect control. 

\medskip
We refer to the beginning of Section~\ref{sec:review} for a further discussion of various gauges that arise in our work.

\medskip

We now present the Threshold Theorem, which asserts global well-posedness and scattering for initial data with energy below a sharp threshold. In view of existence of solitons, which are counterexamples for scattering, the threshold may first appear to be the ground state energy $\Egs$. However, as we aim for scattering, we would need to limit ourselves to the class of topologically trivial connections, in which the ground state energy is $2\Egs$ by Theorem~\ref{t:gs}.
Thus our result  is as follows:

\begin{theorem}[Threshold Theorem]\label{t:threshold}
The Yang--Mills equation \eqref{ym} 
is globally well-posed in the caloric gauge for all topologically trivial initial data
below the energy threshold $2\Egs$ and the corresponding solutions scatter  in the following sense:

a) (Regular data) For regular data $(a_{j}, b_{0j}) \in T^{L^{2}} \calC \cap \dot{\H}^{N}$, then there exists a unique
global regular caloric solution $(A_j,\partial_0 A_j) \in C(\R, T^{L^{2}} \calC \cap \dot{\H}^{N})$, also
with $(A_0,\partial_0 A_0) \in C(\R,  \dot{\H}^{1} \cap \dot{\H}^{N})$, which has a Lipschitz
dependence on the initial data locally in time in the $\dot{\H} \cap \dot{\H}^N$ topology.

b) (Rough data) The flow map admits an extension
\[
T^{L^2} \calC  \ni (a_{j}, b_{j}) \to (A_\alpha, \partial_t A_\alpha) \in C(\R, T^{L^2} \calC )
\]
and which is continuous in the $\H \cap \dot \H^\sgm$ topology for $\sgm <1$ and close to $1$.

c) (Weak Lipschitz dependence) The flow map is globally Lipschitz in the 
$\dot \H^\sgm$ topology for $\sgm < 1$, close to $1$.

d) (Scattering) The $S^1$ norm of $A$ is finite. More precisely,
\begin{equation}
\| A_x\|_{S^1} + \nrm{\nb A_{0}}_{\ell^{1} L^{2} \dot{H}^{\frac{1}{2}}}< \infty.
\end{equation}
\end{theorem}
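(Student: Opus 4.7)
The plan is to derive the Threshold Theorem from the Bubbling Theorem (Theorem~\ref{t:bubble-off}) together with the local well-posedness and energy-dispersed continuation/scattering criteria of \cite{OTYM2}. Argue by contradiction: suppose there exist topologically trivial subthreshold caloric data whose maximal caloric solution $A$ either blows up at some $(T,X)$ with $T<\infty$, or exists globally but has infinite $S^{1}$ norm. The energy-dispersed criterion of \cite{OTYM2} implies that in either case a backward light cone $C$ carries a non-vanishing concentration of energy as $t\nearrow T$ or $t\to\infty$; using finite speed of propagation and the small-data theory, one may assume $C$ is centered at a true concentration point.

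I would then dichotomize according to the geometry of the concentration. \emph{Timelike case:} some $\gmm<1$ satisfies $\limsup_{t}\nE_{C_{\gmm}\cap S_{t}}(A)>0$. Theorem~\ref{t:bubble-off} extracts a soliton $L_{v}Q$ with $Q$ a nontrivial harmonic Yang--Mills connection and $|v|<1$. A careful asymptotic energy decoupling at the scale $r_{n}$, using that the bubble is concentrated strictly inside the cone while the radiation is supported in a disjoint spacetime region, gives in the limit
\begin{equation*}
\nE(A)\;\geq\;\nE(L_{v}Q)\;+\;\nE(\mathrm{radiation}).
\end{equation*}
By Theorem~\ref{t:gs} either $\spE(Q)\geq 2\Egs$, so that $\nE(L_{v}Q)=(1-|v|^{2})^{-1/2}\spE(Q)\geq 2\Egs$ already contradicts the subthreshold assumption; or $|\ch(Q)|=\spE(Q)\geq\Egs$. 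In the latter subcase, the topological triviality of $A$ forces the radiation to carry characteristic number $-\ch(Q)$, so the Bogomol'nyi bound \eqref{ch-vs-E} applied to it yields $\nE(\mathrm{radiation})\geq\Egs$, and the total is again $\geq 2\Egs$, a contradiction.

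\emph{Null/self-similar case:} if no timelike concentration is present, all concentrating energy hugs the lateral boundary of $C$; a suitable rescaling and gauge transformation produce, in the limit, a nontrivial finite energy self-similar solution of \eqref{ym} in the truncated outgoing cone. Ruling out such solutions is what I expect to be the main obstacle of the argument. I would fix the limit in a self-similar analogue of the caloric gauge, use the conservation law associated to the conformal Killing vector field $t\p_{t}+x^{j}\p_{j}$ together with the divergence-free identity \eqref{divT} to conclude that the hyperbolic energy flux through the lateral null boundary of the cone vanishes, and then exploit the resulting elliptic structure---combined with a Uhlenbeck-type regularity argument as in Theorem~\ref{t:harmonic-reg} and a unique continuation step across the null cone---to deduce that the self-similar profile must be trivial, contradicting the construction.

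Once both concentration scenarios are excluded, global well-posedness in the caloric gauge at the regular level and finiteness of $\nrm{A_{x}}_{S^{1}}+\nrm{\nb A_{0}}_{\ell^{1}L^{2}\dot{H}^{1/2}}$ follow, giving parts (a) and (d). The rough-data extension (b) and weak Lipschitz dependence (c) then follow by density of regular caloric data inside $T^{L^{2}}\calC$ and the stability estimates of \cite{OTYM2}, applied to the uniform $S^{1}$ bounds just obtained.
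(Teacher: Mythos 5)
Your treatment of the timelike scenario is essentially the paper's: if a bubble $L_vQ$ forms, then $\nE(Q)\leq\nE(A)<2\Egs$ forces, via Theorem~\ref{t:gs}, the instanton case $|\ch(Q)|=\spE(Q)\geq\Egs$, and topological triviality of $A$ makes the exterior region carry the opposite characteristic number, so the Bogomoln'yi bound \eqref{ch-vs-E} yields $\nE(A)\geq\nE(L_vQ)+|\ch(Q)|\geq 2\Egs$, a contradiction. That part is sound.

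The genuine gap is in your ``null/self-similar case.'' Once bubbling is excluded, the remaining scenario is that the timelike energy vanishes, $\lim_t \nE_{S^\gmm_t}(A)=0$ for every $\gmm<1$, i.e.\ \emph{all} concentrating energy hugs the lateral null boundary. In that regime a rescaling/compactness limit taken inside the cone is identically zero (there is no energy left in any compact subset of the open cone), so no nontrivial finite energy self-similar solution can be extracted, and there is nothing for your conformal-vector-field/unique-continuation argument to act on. Nontrivial self-similar limits do occur, and are killed by Theorem~\ref{t:no-self}, but only inside the proof of the Bubbling Theorem itself (the uniform non-concentration alternative there), where timelike energy is bounded below; they play no role in the null-concentration scenario. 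Moreover, the paper explicitly states that it \emph{cannot} prove that the energy near the cone decays to zero, so a contradiction of the kind you aim for is out of reach. What the paper does instead is prove Theorem~\ref{thm:no-null}: null concentration, together with small energy outside the cone and the $X_\veps$-monotonicity bound, implies that the connection admits a caloric gauge representation with small $L^4$/$\dot W^{-1,4}$ size, hence small energy dispersion; then the energy-dispersed continuation and scattering criterion Theorem~\ref{t:ED} of \cite{OTYM2} extends the solution past the putative blow-up time (or gives the $S^1$ bound at infinity). Your proposal also omits the necessary preliminary step (Lemma~\ref{l-final}, built on the excision result Proposition~\ref{t:chop-small}) of truncating the data outside the concentration cone: without it the exterior energy need not be small, and no energy-dispersion or smallness statement can be applied globally in space. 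As written, your argument therefore does not close in the non-bubbling case, which is precisely where the main new analytic input (Section~\ref{sec:no-null}) of the paper lies.
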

Here, $\dot{\H}^{\sgm} = \dot{H}^{\sgm} \times \dot{H}^{\sgm-1}$. For the norm $S^{1}$, see Remark~\ref{rem:scat}. The norm $\ell^{1} L^{2} \dot{H}^{\frac{1}{2}}$ is defined as $\nrm{u}_{\ell^{1} L^{2} \dot{H}^{\frac{1}{2}}} = \sum_{k} \nrm{P_{k} u}_{L^{2}_{t} \dot{H}^{\frac{1}{2}}_{x}}$; see Section~\ref{subsec:notation} below for our notation and conventions. 
\begin{remark} \label{rem:cal}
The preceding theorem is stated for initial data $(a, b) \in T^{L^{2}} \calC$ which are already in the caloric gauge. However, by our results on the Yang--Mills heat flow (in particular, the corresponding Threshold Theorem), any topologically trivial gauge covariant Yang--Mills data set $(\ta, e) \in \dot{H}^{1} \times L^{2}$ with energy below $2 \Egs$ admits a gauge-equivalent caloric data set $(a, b)$, with appropriate dependence properties; see Section~\ref{subsec:OTYM1} below.
\end{remark}

\begin{remark} \label{rem:scat}
  The $S^1$ norm represents, with only minor changes, the same combination
  of Strichartz, $X^{s,b}$ and null frame norms previously used in the
  study of the Maxwell-Klein-Gordon equation \cite{KST, OT2} and the small
  data problem for Yang--Mills in \cite{KT}. The $S^1$ bound on $A_x$ implies a
host of other dispersive bounds in the caloric gauge, including Strichartz bounds, renormalizability
property, elliptic bounds for $A_{0}$ etc. In particular, finiteness of the $S^{1}$ norm of $A_{x}$ can
  be viewed as a scattering statement, as it shows that the caloric
  solutions decay in Strichartz and other norms. 
\end{remark}

Our final result, which both extends and complements the Threshold Theorem, allows for data which are either topologically nontrivial,
or are topologically trivial but above the $2\Egs$ threshold.
It aims to establish the full dichotomy between the bubbling off property on the one hand, 
and the global well-posedness and scattering on the other:

\begin{theorem}[Dichotomy Theorem]\label{t:no-bubble}
The Yang--Mills equation \eqref{ym} 
is locally well-posed in the temporal gauge for arbitrary finite energy data. Further, one of the following two properties must hold for the forward maximal solution:

a) The solution is topologically trivial, global, and scatters at infinity $(t = \infty)$.

b) The solution bubbles off a soliton, in the sense that either
\begin{enumerate}
\item it blows up in finite time and the conclusion of Theorem~\ref{t:bubble-off}(a) holds; or
\item it exists globally (forward in time) and the conclusion of Theorem~\ref{t:bubble-off}(b) holds.
\end{enumerate}
\end{theorem}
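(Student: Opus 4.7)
The proof splits into three stages. Local well-posedness in the temporal gauge for arbitrary finite energy data is the main result of \cite{OTYM2.5}, so from the outset we may let $A$ denote the forward maximal temporal-gauge solution. In alternative (a), topological triviality is forced by scattering itself, as noted in the discussion preceding the main results: any solution whose curvature decays in a scale-invariant $L^{p}$ norm for some $p>2$ lies in the trivial topological class. The substantive content of the theorem is therefore the dichotomy: \emph{if $A$ is not global-and-scattering, then $A$ must bubble off a soliton}.

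Suppose $A$ is not global-and-scattering. Combining the small-data global theory and the continuation/scattering criteria from \cite{OTYM2} with finite speed of propagation, one localizes the obstruction either to a backward light cone $C=\set{|x-X|\leq T-t}$ (finite-time blow-up at $(T,X)$) or to an asymptotic forward cone (global non-scattering). In either scenario the energy in $S_{t}=C\cap(\set{t}\times\bbR^{4})$ remains bounded below along the relevant time sequence. We split the analysis according to whether this concentration is time-like. If there exists $\gamma\in(0,1)$ with $\limsup \nE_{C_{\gamma}\cap S_{t}}(A)>0$ along that sequence, then Theorem~\ref{t:bubble-off} applies verbatim and produces the soliton-bubbling conclusion of alternative (b).

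It remains to exclude the complementary possibility that $\limsup \nE_{C_{\gamma}\cap S_{t}}(A)=0$ for every $\gamma<1$, in which case the persistent energy hugs the null boundary of $C$. The plan is to show that this scenario generates a nontrivial finite-energy self-similar solution of \eqref{ym} on the interior of $C$, and then to prove that no such solution exists. Such a profile is constructed by rescaling $A$ around $(T,X)$ at scale $\lambda_{n}=T-t_{n}\to 0$, renormalizing to an adapted caloric gauge on slices (invoking the caloric framework of \cite{OTYM1} on subthreshold patches together with the temporal-to-caloric transfer proved in \cite{OTYM2.5}), and extracting a weak $H^{1}_{loc}$ limit; by our choice of $\lambda_n$ the limit is invariant under the scaling symmetry and carries positive energy in a truncated self-similar region. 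Nonexistence is then carried out through a sharp conformal/Morawetz energy identity on the self-similar foliation of the interior of $C$: monotonicity, together with the scaling invariance of the limit, forces the null fluxes and the positive bulk contributions to vanish, yielding $F\equiv 0$ and contradicting the nontrivial asymptotic energy.

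The main obstacle, and the technical heart of the plan, is carrying out both the rescaling/gauge renormalization and the integration by parts in the Morawetz identity at finite-energy regularity, where the gauge choice is delicate and the limit object may fail to have classical smoothness. This is where the caloric-gauge machinery from \cite{OTYM1,OTYM2} and the optimal-regularity local theory from \cite{OTYM2.5} become indispensable: they provide both the compactness framework needed for the weak limit and enough structural information to justify the Morawetz computation on the self-similar profile.
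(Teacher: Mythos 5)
Your overall skeleton (temporal-gauge LWP from \cite{OTYM2.5}, reduction to a backward or asymptotic cone, and invoking Theorem~\ref{t:bubble-off} whenever some time-like region $C_\gamma$ retains energy) matches the paper. The genuine gap is in your treatment of the remaining case, where $\limsup \nE_{C_{\gamma} \cap S_{t}}(A) = 0$ for \emph{every} $\gamma<1$. You propose to rescale at the self-similar scale, extract a weak $H^{1}_{loc}$ limit which is ``scale-invariant and carries positive energy in a truncated self-similar region,'' and then kill it with a Morawetz identity. But in this scenario no such nontrivial limit exists: by hypothesis the energy in every fixed time-like subcone tends to zero, and the flux on $\rd C$ decays by the energy--flux relation, so any rescaled limit vanishes identically on every compact subset of the interior of $C$. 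The monotonicity formula built from $X_{0}$ (Proposition~\ref{prop:monotonicity}) only controls interior, time-like weighted quantities and gives you nothing about the energy hugging the null boundary; the self-similar rigidity statement (Theorem~\ref{t:no-self}) is therefore powerless here. In the paper that rigidity theorem is used elsewhere entirely --- inside the proof of Theorem~\ref{t:bubble-off}, to rule out the scenario in which time-like energy persists but fails to concentrate at points --- not to dispose of null concentration.

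What the null-concentration case actually requires is a different mechanism: one must show that if essentially all the energy at a fixed time lives near the sphere $|x|=t$ (and is small outside the cone), then the connection admits a caloric-gauge representation which is \emph{energy dispersed} (Theorem~\ref{thm:no-null}, resting on the Uhlenbeck-type gauge constructions of Lemmas~\ref{l:annulus} and \ref{l:elliptic} and the heat-flow propagation in Proposition~\ref{p:parabolic}), and then invoke the energy-dispersed regularity/continuation theorem (Theorem~\ref{t:ED}) to continue the solution past the putative blow-up time, respectively to conclude $S^{1}$ bounds and hence scattering on $[t,\infty)$. There is also a preliminary step your plan omits and which is not cosmetic: the hypotheses of Theorem~\ref{thm:no-null} require smallness of the energy \emph{outside} the cone, which is not given a priori; the paper arranges it by excising and replacing the exterior data (Proposition~\ref{t:chop-small} together with finite speed of propagation, Lemma~\ref{l-final}). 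So while your time-like branch is fine, the null branch of your argument would not close as written, and the missing ingredient is precisely the energy-dispersion step rather than any self-similar nonexistence result.
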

Of course, by time reversibility, the same conclusion holds backward in time as well.

One can view the first two theorems as corollaries of this last
result, modulo the different gauge assumptions. However, we prefer to state them separately because each of
them represent key and largely disjoint steps in the the proof of this
last result. In addition, the Threshold Theorem represents a long
sought after goal in this field.

Some further comments are in order concerning the scattering property
in the first part of the last theorem.  As discussed in Remark~\ref{rem:scat}, in the context of
subthreshold solutions scattering means that solutions are global in
the caloric gauge with a bounded $S^1$ norm.  As it turns out, here scattering carries almost exactly the
same meaning. Precisely, we show that for large enough $T$, the
solution admits a caloric representation on the time interval
$[T,\infty)$, which has a finite $S^{1}$ norm.  One consequence of this is
that \emph{scattering solutions must always be topologically trivial}.

Also, as far as the soliton bubbling off property is concerned, blow
up solutions with this property are known to exist. The constructions in \cite{KST2, MR2929728} give such examples\footnote{The constructions in \cite{KST2, MR2929728} are for $\G = SU(2)$ in the first topological class with $c_{2} = 1$ and with energies close to $\Egs$, but a straightforward gluing argument at infinity produces the desired topologically trivial solutions.} whose energies may be arbitrarily close to the threshold $2 \Egs$, and the recent work \cite{JeLa} provides\footnote{We note that \cite{JeLa} moreover gives a complete classification of possible dynamics at the threshold energy under equivariance symmetry.} a blow up solution at exactly the threshold energy $2 \Egs$.
These solutions concentrate at the blow-up point
following a rescaled soliton profile, where the soliton scale differs
logarithmically from the self-similar scale. Similarly, solutions where bubbling occurs at infinity also exist. For Yang--Mills in the topologically trivial class, this was achieved in \cite{Je}; interestingly, the nonscattering solution in \cite{Je} has exactly the threshold energy $2 E_{GS}$ (see also \cite{JeLa}). Such solutions have been obtained
for related models such as the energy critical wave maps equation in the one-bubble case, see \cite{wm-blow-far}.

\subsection{A brief history and broader
  context} \label{subsec:literature} \ A natural point of view is to
place the present papers and results within the larger context of
geometric wave equations, which also includes wave maps (WM),
Maxwell--Klein--Gordon (MKG) and Einstein equations. Two common features
of all these problems are that they admit a Lagrangian formulation,
and have some natural gauge invariance properties. Following are some
of the key developments that led to the present work.

\medskip

{\em 1. The null condition.} A crucial early observation in the study
of both long range and low regularity solutions to geometric wave
equations was that the nonlinearities appearing in the equations have
a favorable algebraic structure, which was called {\em null
  condition}, and which can be roughly described as a cancellation
condition in the interaction of parallel waves. In the low regularity
setting, this was first explored in work of Klainerman and
Machedon~\cite{KlMa0}, and by many others later on.
 
\medskip

{\em 2. The $X^{s,b}$ spaces.}  A second advance was the introduction
of the $X^{s,b}$ spaces\footnote{The concept, and also the notation,
  is due to Bourgain, in the context of KdV and NLS type problems.},
also first used by Klainerman and Machedon~\cite{KlMa0} in the context of
the wave equation.  Their role was to provide enough structure in
order to be able to take advantage of the null condition in bilinear
and multilinear estimates. Earlier methods, based on energy bounds,
followed by the more robust Strichartz estimates, had proved inadequate 
to the task.

\medskip

{\em 3. The null frame spaces.}  To study nonlinear problems at
critical regularity one needs to work in a scale invariant
setting. However, it was soon realized that the homogeneous $X^{s,b}$
spaces are not even well defined, not to mention suitable for
this. The remedy, first introduced in work of the second
author~\cite{Tat} in the context of wave maps, was to produce a better
description of the fine structure of waves, combining frequency and
modulation localizations with adapted frames in the physical space.
This led to the {\em null frame spaces}, which played a key role
in subsequent developments for wave maps. 

\medskip

{\em 4. Renormalization.}  A remarkable feature of all semilinear
geometric wave equations is that while at high regularity
(and locally in time) the nonlinearity is perturbative, this is no longer 
the case at critical regularity. Precisely, isolating the 
non-perturbative component of the nonlinearity, one can see that this 
is of  paradifferential type; in other words, the high frequency waves 
evolve on a variable low frequency background.  To address this 
difficulty, the idea of Tao~\cite{Tao2}, also in the wave maps context, was to 
{\em renormalize} the paradifferential problem, i.e., to find a suitable 
approximate conjugation to the corresponding constant coefficient problem. In the case of wave maps, the conjugating operator is essentially a gauge transform (i.e., a Lie group-valued function), while in the case of Maxwell--Klein--Gordon and Yang--Mills one needs a Lie group-valued pseudo-differential operator; see \cite{RT, KS, KST, KT} and the discussion below.

\medskip

{\em 5. Induction of energy.} The ideas discussed so far seem to suffice
for small data critical problems. Attacking the large data problem
generates yet another range of difficulties. One first step in this
direction is Bourgain's {\em induction of energy} idea \cite{Bour},
which is a convenient mechanism to transfer information to higher and
higher energies. We remark that an alternate venue here, which
sometimes yields more efficient proofs, is the Kenig--Merle idea
\cite{MR2461508} of constructing {\em minimal blow-up
  solutions}. However, the implementation of this method in problems
which require renormalization seems to cause considerable trouble.
For a further discussion on this issue, we refer to \cite{KriSch}, where this 
method was carried out in the cases of energy critical wave maps into the hyperbolic plane.

\medskip

{\em 6. Caloric gauge.} Another difficulty arising in the context 
of large data solutions is that of finding a good gauge, which at the same time applies 
to large data and at the same time has good analytic properties. The caloric gauge,
used in our work, is a global version of a local caloric gauge previously introduced
by the first author \cite{Oh1, Oh2}, and is based on an idea proposed by Tao~\cite{Tao-caloric}
in the wave maps context.

\medskip

{\em 7. Energy dispersion.} One fundamental goal in the study of large
data problems is to establish a quantitative dichotomy between
dispersion and concentration.  The notion of {\em energy dispersion},
introduced in joint work~\cite{ST1, ST2} of the second author and Sterbenz in
the wave maps context, provides a convenient measure for pointwise
concentration. Precisely, at each energy there is an energy dispersion
threshold below which dispersion wins.  We remark that, when it can be
applied, the Kenig-Merle method \cite{MR2461508} yields more accurate
information; for instance, see \cite{KriSch}. However, the energy dispersion idea,
which is what we follow in the present series of papers, is much easier to
implement in conjunction with renormalization.

\medskip

{\em 8. The frequency gap.}  One obstacle in the transition from small
to large data in renormalizable problems is that the low frequency
background may well correspond to a large solution. Is this fatal to
the renormalized solution? The answer to that, also originating in
\cite{ST1, ST2}, is that may be a second hidden source of
smallness, namely a large {\em frequency gap} between the
high frequency wave and the low frequency background it evolves on.

\medskip

{\em 9. Morawetz estimates (monotonicity formulas).} The outcome of the ideas above is a dichotomy 
between dispersion and scattering on one hand, and very  specific concentration
patterns, e.g., solitons, self-similar solutions on the other hand. The Morawetz estimates,
first appearing in this role in the work of Grillakis~\cite{Gri},
are a convenient and relatively simple tool to identify and, if possible, eliminate such concentration scenarios. 
In the present work, in analogy with the elliptic and parabolic literature, such an estimate is interpreted as a monotonicity formula (Section~\ref{sec:energy}).

\medskip 

We now narrow the scope of discussion, and
review some earlier developments on the wave maps (WM), Maxwell--Klein--Gordon (MKG) and hyperbolic Yang--Mills (YM) equations related to the present paper.

\medskip

{\em (MKG) and (YM) above the scaling critical regularity.}  We start our discussion with
a short and incomplete survey of the (YM) problem above the scaling critical regularity. We also discuss the (MKG) problem, which has been often studied as a simpler model for (YM) with a commutative gauge group.

In the two and three dimensional cases, which are energy subcritical, global
regularity of sufficiently regular solutions was shown in the early
works \cite{MR649158, MR649159}.  These papers in fact
handled the more general Yang--Mills-Higgs system, which includes both (YM) and (MKG) as special cases.  In dimension $d=3$, local well-posedness
in the energy space of (MKG) and (YM) was proved in \cite{KlMa1} and \cite{KlMa2}, respectively.
 In the higher dimensional case $d \geq 4$, an essentially optimal local
well-posedness result for a model problem closely related to (MKG) was
 obtained in \cite{Kl-Tat}. 

However, a new difficulty arises in the large data\footnote{More
  precisely, a suitable scaling critical norm of the connection $A$
  (e.g., $\nrm{A}_{L^{d}_{x}}$) or the curvature $F$ (e.g.,
  $\nrm{F}_{L^{\frac{d}{2}}_{x}}$) is large.} problem for (YM):
Namely, the gauge transformation law is \emph{nonlinear} due to the
noncommutative gauge group. In particular, gauge transformations into the
Coulomb gauge obey a nonlinear elliptic equation, for which no
suitable large data regularity theory is available. Note, in
comparison, that such gauge transformations obey a linear Poisson
equation in the case of (MKG).  In \cite{KlMa2}, where
finite energy global well-posedness of the 3+1 dimensional (YM)
problem was proved, this issue was handled by localizing in spacetime
via the finite speed of propagation to gain smallness, and then
working in local Coulomb gauges\footnote{On the other hand, a closely related spacetime localization approach, but relying on new ``initial data surgery'' techniques, is developed in \cite{OTYM2.5}, which yields an alternative proof of the main result of \cite{KlMa2}.}.  An alternative, more robust approach
without spacetime localizations to the same problem has been put
forth by the first author in \cite{Oh1, Oh2}, inspired by
\cite{Tao-caloric}. The idea is to use an associated geometric flow,
namely the \emph{Yang--Mills heat flow}, to select a global-in-space
Coulomb-like gauge for data of any size.

\medskip

{\em The energy critical (WM) problem. } 
Before turning to the (MKG) and (YM) problems at critical regularity, we briefly recall some recent developments on the wave maps equation (WM), where many of the
methods we implement here have their roots. We confine our discussion
to the energy critical problem in $2+1$ dimensions, which is both the
most difficult and the most relevant to our present paper.  For the small data problem, 
global well-posedness was established in \cite{Tat}, \cite{Tao2},
\cite{Tat2}.  More recently, the \emph{Threshold Theorem} for large data wave maps, 
which asserts that global well-posedness and scattering hold below the ground state energy, 
was proved in \cite{ST1, ST2} in general, and independently in
\cite{KriSch} and \cite{Tao:2008wo, Tao:2008wn, Tao:2008tz, Tao:2009ta, Tao:2009ua} for specific targets (namely the hyperbolic
space). See also \cite{LO} for a sharp refinement in the case of a two-dimensional target, 
taking into account an additional topological invariant (namely, the degree of the wave map), in analogy with the refined threshold $2 \Egs$ in our work. 
Our present strategy was strongly influenced by \cite{ST1, ST2}.

For the energy critical (WM), in the important case of spherical targets, we also note the recent development due to Grinis \cite{MR3627409}, which says that along a well-chosen sequence of times, all time-like energy concentration must be in the form of a superposition of rescaled solitons. Our Bubbling Theorem (Theorem~\ref{t:bubble-off}) is a first step for proving an analogous result for (YM). In \cite{DJKM1}, this was complemented with a decay of the energy near the cone when the total energy is sufficiently close to the ground state.

\medskip

{\em The (MKG) and (YM) problems at critical regularity.} 
Next, we discuss the (YM) problems at critical regularity. As before, we simultaneously consider the corresponding problems for (MKG), which is a simpler commutative analogue of (YM). 

Before discussing history, let us clarify a key structural difference
between (WM) on the one hand and (MKG), (YM) on the other, whose
understanding is crucial for making progress on the latter two
problems.  Roughly speaking, all three equations can be written in a
form where the main `dynamic variables', which we denote by $\phi$,
obey a possibly nonlinear gauge covariant wave equation $\Box_{A} \phi
= \cdots$, and the associated curvature $F[A]$ is determined by
$\phi$.  In the case of (WM), this dependence is simply algebraic,
whereas for (MKG) and (YM) the curvature $F[A]$ obeys a wave equation
with a nonlinearity depending on $\phi$. This difference manifests in
the renormalization procedure for each equation: For (WM) it suffices
to use a physical space gauge transformation, whereas for (MKG) and
(YM) it is necessary to use a microlocal (more precisely,
\emph{pseudo-differential}) gauge transformation that exploits the
fact that $A$ solves a wave equation in a suitable gauge.

The first such renormalization argument appeared in \cite{RT}, in
which global regularity of (MKG) for small critical Sobolev data was
established in dimensions $d \geq 6$.  This work was followed by a
similar high dimensional result for (YM) in \cite{KS}.  The small data
result in the energy critical dimension $4+1$ was obtained in
\cite{KST}.  Finally, the large data result for (MKG) in dimension
$4+1$ was proved by the authors in \cite{OT1, OT2, OT3} and independently by \cite{KL}. 
Although the implementation differs in many places, the outline of the three
papers \cite{OT1, OT2, OT3} is broadly followed in the present work.  In particular we
borrow a good deal of notation, ideas and estimates from both
\cite{KST} and \cite{OT1, OT2, OT3}. On the other hand, we remark that \cite{KL} followed 
the strategy of \cite{KriSch}. For the (YM) problem in $4+1$ dimensions, 
the small data global result was only recently proved in \cite{KT}, which is another direct predecessor the present
work. 

We conclude with a remark on differences between (MKG) and (YM).
The issue of noncommutative gauge group for the large data problem
has already been discussed. Another important difference between (MKG)
and (YM) in $4+1$ dimensions is that the latter problem admits
nontrivial steady states (i.e., harmonic Yang--Mills connections). These solutions are known to lead to a finite time blow up;
see \cite{KST2,MR2929728}, so for (YM) one must prove the Threshold Theorem, instead of a unconditional result as in (MKG).
Finally, (YM) is more `strongly
coupled' as a system compared to (MKG), in the sense that the
connection $A$ itself obeys a covariant wave equation. This feature
 necessitates  a more involved renormalization procedure
compared to (MKG).

\medskip

{\em Other related works.} In related developments, one should also note the works
\cite{BH, BH1} on the closely related cubic Dirac equation, as
well as the massive Dirac--Klein--Gordon system, as well as \cite{GO} on
the {M}axwell--{D}irac equation and \cite{Gav} on the massive Maxwell--Klein--Gordon system.

\subsection{Notation, conventions and preliminaries} \label{subsec:notation}
Here we collect more notation and conventions used in the remainder of this paper.
\subsubsection*{Asymptotic notation}
\begin{itemize}
\item $A \aleq B$ and $A = O(B)$ both mean $A
  \leq C B$ for some constant $C > 0$. The dependence of $C$
  on various parameters is specified by subscripts. When $A \aleq B$ and $B \aleq A$, we write $A \aeq B$.
\end{itemize}

\subsubsection*{Tensor calculus}
\begin{itemize}
\item We employ the usual index notation, the Einstein convention of summing up repeated upper and lower indices. We use greek indices, such as $\alp, \bt, \gmm, \ldots$, for all coordinates $x^{0} = t, x^{1}, x^{2}, x^{3}, x^{4}$, and latin indices, such as $i, j, k, \ell, \ldots$, for the spatial coordinates $x = (x^{1}, \ldots, x^{4})$.
\item Given a metric (which is usually the Minkowski metric $\bfm$ on $\bbR^{1+4}$, unless otherwise stated), we write $\covnb$ for the associated Levi-Civita connection. Tensorial indices are raised and lowered using the metric.
\end{itemize}

\subsubsection*{Exterior differential calculus}
\begin{itemize}\item The wedge product $\wedge$ and the differential $\ud$ for differential forms are defined in the usual way. A $k$-form $\omg$ can be viewed as a $k$-covariant tensor; we have $\omg = \sum_{\bt_{1} < \cdots < \bt_{k}} \omg_{\bt_{1} \ldots \bt_{k}} \ud x^{\bt_{1}} \wedge \cdots \wedge \ud x^{\bt_{k}}$ where $\omg_{\bt_{1} \ldots \bt_{k}}$ is the coordinate expression of $\omg$ as a tensor. 
\item $\iota_{X}$ is the interior product with a vector field $X$, i.e., $(\iota_{X} \omg)_{\bt_{1} \ldots \bt_{k-1}} = X^{\alp} \omg_{\alp \bt_{1} \ldots \bt_{k-1}}$, and $\calL_{X}$ is the Lie derivative with respect to $X$. 
\item The pointwise inner product $(\cdot, \cdot)$ of two $k$-forms is defined so that $\set{\tht^{\alp_{1}} \wedge \cdots \wedge \tht^{\alp_{k}}}_{\set{\alp_{1} < \cdots < \alp_{k}}}$ is an orthonormal basis, where $\set{\tht^{\alp}}$ is any orthonormal basis with respect to a given metric on $T^{\ast} \bbR^{d}$ (note that this differs by the usual induced metric for tensors by $k!$). 
\item The Hodge star operator $\star$ is defined so that $\eta \wedge \star \omg = (\eta, \omg) \, \ud \hbox{Vol}$, where $\ud \hbox{Vol}$ is the volume form. The codifferential $\dlt$ is the adjoint of $\ud$ with respect to the $L^{2}$-product $\int \brk{\cdot, \cdot} \, \ud \hbox{Vol}$. The Hodge Laplacian is defined to be $-\lap = \dlt \ud + \ud \dlt$, so that it agrees with the usual Laplacian $\sum_{j} \rd_{j}^{2}$ for $0$-forms (or functions) on $\bbR^{4}$.
\end{itemize}

\subsubsection*{Subsets of $\bbR^{d}$}
\begin{itemize}
\item For a bounded open set $U \in \bbR^{d}$ and $\lmb > 0$, $\lmb U$ is defined to be rescaling
  of $U$ by the factor $\lmb$ centered at the barycenter of $U$.
  
 \item $B_{R}(x)$ is the $4$-dimensional ball of radius $r$ centered
  at $x$. When $x =0$, we simply write $B_{R} = B_{R}(0)$.
  
\item $\calA_{(R', R)}(x)$ is the $4$-dimensional annulus of inner and outer radii $R'$ and $R$, respectively, centered at $x \in \bbR^{4}$. When $x = 0$, we simply write $\calA_{(R', R)} = \calA_{(R', R)}(0)$.

\item Consider the forward light cone centered at $(0, 0)$:
\begin{equation*}
  C = \set{(t,x) : 0 < t < \infty, \abs{x} < t}.
\end{equation*}
For $t_{0} \in \bbR$ and $I
\subset \bbR$, we define
\begin{align*}
  C_{I} =& \set{(t, x) : t \in I, \abs{x} < t}, &
  \rd C_{I} =& \set{(t, x) : t \in I, \abs{x} = t}, \\
  S_{t_{0}} =& \set{(t, x) : t = t_{0}, \abs{x} < t}, & \rd
  S_{t_{0}} =& \set{(t, x) : t = t_{0}, \abs{x} = t}.
\end{align*}
For $\dlt \in \bbR$, we define the translated cones
\begin{align*}
  C^{\dlt} =& \set{(t,x) : \max\set{0,\dlt} < t < \infty, \abs{x}
    < t-\dlt}.
\end{align*}
The corresponding objects $C^{\dlt}_{I}$, $\rd C^{\dlt}_{I}$,
$S^{\dlt}_{t_{0}}$ and $\rd S^{\dlt}_{t_{0}}$ are defined in the
obvious manner.
\end{itemize}

\subsubsection*{Polar coordinate systems}
\begin{itemize}
\item {\it Hyperbolic polar coordinates on $C \subset \bbR^{1+4}$.} We parametrize the cone $C = \set{(t, x) \in \bbR^{1+4} : \abs{x} < t}$ by $(t, x) = \rho y$, where $\rho = \sqrt{t^{2} - \abs{x}^{2}} > 0$ and $y \in \bbH^{4} := \set{(t, x) \in \bbR^{1+4} : t^{2} - \abs{x}^{2} = 1, \, t > 0}$. We write $\ud y$ for the volume form on $\bbH^{4}$.

\item {\it Polar coordinates on $\bbR^{4}$.} We parametrize $\bbR^{4} \setminus \set{0}$ by $x = r \Tht$, where $r = \abs{x}$ and $\Tht \in \bbS^{3} = \set{x \in \bbR^{4} : \abs{x} = 1}$. 

	Given a covariant tensor $\omg_{\bt_{1} \ldots \bt_{k}}$ (e.g., a $k$-form) on $\bbR^{4} \setminus \set{0}$, we use the schematic notation $\omg_{\Tht \ldots \Tht}(r, \cdot)$ for its pullback onto each constant-$r$ sphere. To formulate the calculus of such objects, \emph{we view each $\omg_{\Tht \ldots \Tht}(r, \cdot)$ as a $1$-form on the unit sphere $\bbS^{3}$}. We write $\smet$ for the metric on the unit sphere $\bbS^{3}$, $\snb$ for the associated Levi--Civita connection, $\scovD = \snb + ad(A)$ for the covariant derivative, $\ud \Tht$ for the volume form and $L^{p}_{\Tht}$ for the Lebesgue spaces with respect to $\ud \Tht$. We write $\tensor{\omg}{^{\Tht}_{\Tht}}$ for the trace with respect to $\smet$ of a covariant tensor; accordingly, $\snb^{\Tht} \omg_{\Tht}$ is the divergence operator with respect to $\smet$ and $\ud \Tht$, and $\scovD^{\Tht} \omg_{\Tht}$ is the covariant divergence for a $\g$-valued $1$-form.
	
	Given a subset $U \subset \bbR^{4} \setminus \set{0}$, we \emph{define} the $\nrm{\omg_{\Tht \ldots \Tht}}_{L^{p}(U)}$ to be the $L^{p}(U)$ norm of $\abs{\omg_{\Tht \ldots \Tht}}$, where $\abs{\omg_{\Tht \ldots \Tht}}^{2} = \smet(\omg_{\Tht \ldots \Tht}, \omg_{\Tht \ldots \Tht})$. Note that $\nrm{\omg_{\Tht \ldots \Tht}}_{L^{p}(\calA_{(R', R)})} \aeq_{R', R} \nrm{\omg_{\Tht \ldots \Tht}}_{L^{p}_{r}((R', R); L^{p}_{\Tht})}$ for any $0 < R' < R$.
\end{itemize}

\subsubsection*{Functions spaces}
\begin{itemize}
\item $\rd$ (without sub- or superscripts) is the
  spatial gradient $\rd = (\rd_{1}, \rd_{2}, \ldots, \rd_{4})$, and
  $\nb$ is the spacetime gradient $\nb = (\rd_{0}, \rd_{1}, \ldots,
  \rd_{4})$. We write $\rd^{(n)}$ (resp. $\nb^{(n)}$) for the
  collection of $n$-th order spatial (resp. spacetime) derivatives,
  and $\rd^{(\leq n)}$ (resp. $\nb^{(\leq n)}$) for those up to order
  $n$.

\item $\dot{W}^{\sgm, p}(\bbR^{d}; V)$ is the homogeneous $L^{p}$-Sobolev space of order $\sgm$ for functions from
  $\bbR^{d}$ into a normed vector space $V$. In the special case $p = 2$, we write $\dot{H}^{\sgm}(\bbR^{d}; V) = \dot{W}^{\sgm, 2}(\bbR^{d}; V)$.
 The inhomogeneous counterparts are denoted by $W^{n, p}(\bbR^{d}; V)$
  and $H^{n}(\bbR^{d}; V)$, respectively. We often suppress $\bbR^{d}$ and $V$ when it is clear from the context.

\item The mixed spacetime norm $L^{q}_{t} \dot{W}^{n, r}_{x}$ of
  functions on $\bbR^{1+d}$ is often abbreviated as $L^{q} \dot{W}^{n,
    r}$.

\item Generally, a function space on an open subset $U \subseteq
  \bbR^{4}$ is defined by restriction, i.e., $\nrm{u}_{X(U)} = \inf
  \set{\nrm{\tilde{u}}_{X} : \tilde{u} \in X, \ \tilde{u}
    \restriction_{U} = u}$. A similar convention applies for a function
  space on an open subset $\calO \subseteq \bbR^{1+4}$.

\item The local function space $X_{loc}(U)$ is defined as
  \begin{equation*}
    X_{loc}(U) = \bigcap_{B_{r}(x): \overline{B}_{r}(x) \subseteq U} X(B_{r}(x)).
  \end{equation*}
\end{itemize}

\subsubsection*{Littlewood--Paley theory, dyadic function spaces and frequency envelopes}
\begin{itemize}
\item $\set{P_{k}}_{k \in \bbZ}$ denotes the usual Littlewood--Paley projections in the variable $x \in \bbR^{4}$.
\item A dyadic function space $X$ is a collection $\set{X_{k}}_{k \in \bbZ}$ of normed spaces on either $\bbR^{4}$ or $\bbR^{1+4}$. Often we use the same space for each $k$, in which case we simply write $X = X_{k}$. We define $\ell^{p} X$ by the norm $\nrm{u}_{\ell^{p}X}^{p} = \sum_{k} \nrm{P_{k} u}^{p}_{X}$, with the usual modification for $p = \infty$. An important example is the $L^{2}$-Sobolev space $H^{\sgm} = \ell^{2} H^{\sgm}$.
\item An admissible frequency envelope $c$ is a sequence $\set{c_{k}}_{k \in \bbZ}$ of positive numbers satisfying $\max\set{\frac{c_{j}}{c_{k}}, \frac{c_{k}}{c_{j}}} \leq C 2^{\dlt_{fe} \abs{k-j}}$ for some constant $C$ depending on $c$ and an absolute constant $\dlt_{fe} > 0$ inherited from \cite{OTYM1, OTYM2}. 
\item We define $\nrm{u}_{X_{c}} = \sup_{k} c_{k}^{-1} \nrm{P_{k} u}_{X_{k}}$. If $\nrm{u}_{X_{c}} \leq 1$, then we say that $c$ is a frequency envelope for $u$ in $X$. 
\end{itemize}

\subsection{Structure of the present paper} 
 The remainder of the paper is structured as follows.

\emph{ Section~\ref{sec:review}.}
 Here we review the main results in the first three papers of the series \cite{OTYM1}, \cite{OTYM2} and \cite{OTYM2.5},
 emphasizing the parts which are needed here.

\emph{ Section~\ref{sec:energy}.}
 This is where we state and prove all the conservation laws and monotonicity formulas that are
used in this paper. We also explore a few consequences of the monotonicity formulas.

\emph{ Section~\ref{sec:cpt}.} We use a ``good gauge'' representation theorem (Theorem~\ref{t:good-gauge}) for large
energy Yang--Mills connections in order to prove a strong local compactness statement,
Theorem~\ref{t:compact}, that we rely on on in our blow-up analysis. 

\emph{ Section~\ref{sec:reg}.} Here we study the regularity of connections that either stationary or self-similar, 
and show that such connections must be gauge equivalent to a smooth connection.
This is akin to elliptic theory for harmonic Yang--Mills connections.

\emph{Section~\ref{sec:self-similar}.} 
We show that there does not exist any nontrivial  self-similar solutions to Yang--Mills with finite  energy,
thus eliminating one of the main potential obstructions to our results.

\emph{Section~\ref{sec:bubble-off}.}  Here we carry out the blow-up
analysis and prove the Bubbling Theorem (Theorem~\ref{t:bubble-off}). 
This proof uses all of the previous five sections.

\emph{Section~\ref{sec:no-null}.}  In this section
 we prove that sharp energy concentration cannot occur near the null cone.
This is critical in order to be able to separate the bubbling-off scenario from the 
scattering, energy dispersed case.

\emph{Section~\ref{sec:proof}.} 
Finally, here we complete both the proof of the Threshold Conjecture, see Theorem~\ref{t:threshold}, and the  
dichotomy result in Theorem~\ref{t:no-bubble}.

\emph{Appendix~\ref{sec:gt}.}
We collect some technical tools needed for our analysis of gauge transformations, especially in Sections~\ref{sec:reg}, \ref{sec:self-similar} and \ref{sec:no-null}.

\addtocontents{toc}{\protect\setcounter{tocdepth}{-1}}
\subsection*{Acknowledgments} 
Part of the work was carried out during the semester program ``New Challenges in PDE''
held at MSRI in Fall 2015. S.-J. Oh was supported by the Miller Research Fellowship from the Miller Institute, UC Berkeley and the TJ Park Science Fellowship from the POSCO TJ Park Foundation. D. Tataru was partially
supported by the NSF grant DMS-1266182 as well as by a Simons
Investigator grant from the Simons Foundation.

\addtocontents{toc}{\protect\setcounter{tocdepth}{2}}

\section{An outline of the first three papers}
\label{sec:review}

Our aim here is to provide a brief outline of the first three papers \cite{OTYM1},\cite{OTYM2} and \cite{OTYM2.5},
to the extent necessary in order to complete the proof of the large data results in the present paper.
For a more comprehensive review of the full series of four papers we instead refer the reader to 
our survey article  \cite{OTYM0}.

Let us take as a starting point of our discussion the following small data result proved earlier  in \cite{KT}:
\begin{theorem} \label{thm:KrTa}
The hyperbolic Yang--Mills equation in $\bbR^{4+1}$   is globally well-posed in the Coulomb gauge for 
all initial data with small energy.   
\end{theorem}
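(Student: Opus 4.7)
The plan is to work in the Coulomb gauge $\partial^i A_i = 0$, which can be imposed on small-energy initial data by solving a perturbative nonlinear elliptic equation for the gauge transformation. In this gauge, $A_0$ is recovered elliptically from $(A_x, \partial_t A_x)$, so the evolution reduces to a system of nonlinear wave equations for the spatial components $A_i$ of schematic form
\begin{equation*}
\Box A_i = \mathcal{P}(A \partial A) + \mathrm{cubic},
\end{equation*}
where $\mathcal{P}$ is the Leray projection onto divergence-free vector fields. The Coulomb substitution exposes null structure in the leading bilinear interaction, which is the central algebraic fact powering everything else.

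First I would set up the critical function space $S^1$ adapted to the $4+1$ dimensional wave equation. This is built from dyadic frequency-localized pieces, each of which combines energy, Strichartz, $X^{s,b}$ and null-frame norms, in the style of \cite{Tat,Tao2,KST}. The companion dual-type space $N$ must be chosen so that the linear estimate $\|\phi\|_{S^1} \lesssim \|(\phi(0),\partial_t\phi(0))\|_{\dot H^1\times L^2} + \|\Box\phi\|_N$ holds, and simultaneously so that the null forms $Q_{ij}(A,A)$ arising from the Coulomb projection satisfy trilinear estimates $N\hookleftarrow S^1\cdot S^1$ in the balanced and low-high output regimes.

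Next I would address the paradifferential regime, which is non-perturbative at energy regularity. This corresponds to a high-frequency $A_i$ wave propagating on a low-frequency magnetic background $A_{<k}$, effectively replacing $\Box$ by the covariant $\Box_A$. Following \cite{RT} and \cite{KST}, I would construct a pseudodifferential renormalization $U_{<k}$ that approximately conjugates $\Box_A$ to $\Box$ modulo a perturbative remainder, adapted to the non-abelian setting by letting $U_{<k}$ act via the adjoint representation on $\g$-valued fields. The key analytic properties required are boundedness $U_{<k}:S^1\to S^1$, a parametrix bound for $\Box_A$, and suitable commutator estimates so that the conjugation error is $N$-perturbative.

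The main obstacle will be controlling this renormalization and its interaction with the null-form estimates in the non-abelian setting, since the conjugation is no longer a scalar phase as in (MKG) but a $\mathrm{Ad}(\G)$-valued pseudodifferential operator, and the various commutators with frequency projections and null-frame decompositions must be carefully tracked. Once the parametrix bound and the full suite of bilinear/trilinear estimates are in place, the proof closes by a standard contraction mapping in $S^1$ for small data, giving global existence, continuous dependence, and finiteness of the $S^1$ norm (i.e.\ scattering). Persistence of the Coulomb constraint and of higher regularity then follow from uniqueness of solutions and standard propagation arguments.
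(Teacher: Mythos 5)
Your outline is essentially the strategy of the actual proof: this theorem is not proved in the present paper at all, but is quoted as the small-energy result of Krieger--Tataru \cite{KT}, and that work proceeds exactly as you describe --- Coulomb gauge with $A_0$ recovered elliptically, null structure exposed by the Leray projection, critical $S^1$/$N$ spaces in the style of \cite{Tat, Tao2, KST}, and an $Ad$-valued pseudodifferential renormalization of the paradifferential magnetic interaction, closed by iteration in $S^1$. Be aware, though, that what you label ``the main obstacle'' --- constructing the non-abelian renormalization operator and establishing its $S^1$-boundedness, the parametrix bound for $\Box_A$, and the full suite of bi/trilinear null-form estimates --- is the bulk of \cite{KT}, so as written this is a faithful roadmap of the known proof rather than a self-contained argument.
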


Even before considering well-posedness results for the large data, the
first difficulty one faces is that the Coulomb gauge does not appear to
fully extend to large data in general, and not even to subthreshold data (see Remark~\ref{rem:coulomb} below). For this reason, our first
paper \cite{OTYM1} is devoted solely to the gauge problem; precisely,
inspired by earlier work of Tao~\cite{Tao-caloric} and of the first author
\cite{Oh1, Oh2}, we develop a new gauge for the hyperbolic Yang--Mills problem \eqref{ym},
namely the \emph{caloric gauge}. Using this gauge, the most difficult gauge-dependent analysis of the 
 Yang--Mills equation is carried out in \cite{OTYM2}.  The caloric gauge is the natural setting of our Threshold Theorem (Theorem~\ref{t:threshold}).

\begin{remark}\label{rem:coulomb}
To use the global Coulomb gauge, one would need the solution to the following open problem: the existence of a regular gauge transformation $O$ to Coulomb gauge for a general subthreshold (hence topologically trivial) connection $a$ on $\bbR^{4}$ with a quantitative control on the critical norm $\nrm{O_{;x}}_{\dot{H}^{1}}$. For the interested reader, we refer to \cite[Open~Problem~1.3]{PetRiv} and \cite{YWa}, where similar problems for other critical norms of $O_{;x}$ are studied on closed $4$-manifolds.
\end{remark}

On the other hand, for the large data analysis 
in \cite{OTYM2.5} causality plays a key role, so we shift the (soft part of the)
analysis to the \emph{temporal gauge}. The causality property of the temporal gauge allows us to formulate a local well-posedness theory of the hyperbolic Yang--Mills equation for arbitrary finite energy data (and in particular, in arbitrary topological class), which is the setting for our Dichtomy Theorem (Theorem~\ref{t:no-bubble}). We note, however, that the strong $S$-norm control on the solution is lost in the temporal gauge. 

We summarize the discussion on various gauges so far in the following table:
\begin{table}[h]
\centering
\begin{tabular}{ | c |c|m{5.5em}| m{15em}| } 
 \hline
 Gauge choice & Definition & Appearances & Remarks \\ 
 \hline
 (global) Coulomb & $\rd^{k} A_{k} = 0$ & Thm.~\ref{thm:KrTa} & Requires small initial energy; expect $S$-norm control; no direct usage in this paper. \\ 
 \hline
 Caloric & Definition~\ref{def:caloric} & Thms.~\ref{t:threshold}, \ref{t:no-bubble}${}^{\dagger}$, \ref{t:local}, \ref{t:ED} & Requires trivial topological class and finite caloric size; expect $S$-norm control. \\ 
 \hline
 Temporal & $A_{0} = 0$ & Thms.~\ref{t:no-bubble}${}^{\dagger}$, \ref{t:small-temp}, \ref{t:local-temp}, \ref{t:global-temp} & No restriction on the topological class or energy; no $S$-norm control. \\
 \hline
\end{tabular}
\smallskip
\caption{Gauge choices for the initial value problem. $\dagger$: In Theorem~\ref{t:no-bubble}, the initial value problem is posed in the temporal gauge, but the scattering statement involves the caloric gauge; see Remark~\ref{rem:scat}.} \label{tb:gauge}
\end{table}

Finally, we note that our Bubbling Theorem (Theorem~\ref{t:bubble-off}) is formulated in a gauge-independent fashion\footnote{However, the notion of scattering is formulated with the help of the caloric gauge; see Remark~\ref{rem:scat}.}. Indeed, most of the work in the present paper is carried out in a gauge-covariant fashion, while using the gauge-dependent results in \cite{OTYM2} and \cite{OTYM2.5} at critical junctures.

\subsection{The caloric gauge} \label{subsec:OTYM1}

The goal of the first paper \cite{OTYM1} is to 
\begin{itemize}
\item Introduce the caloric gauge;
\item Show that the caloric gauge is well-defined for all subthreshold data; and
\item Provide a comprehensive formulation of the hyperbolic Yang--Mills equation 
in the caloric gauge which is sufficiently accurate for the subsequent analysis.
\end{itemize}

The caloric gauge is defined using the Yang--Mills heat flow
\begin{equation}\label{ym-heat}
\partial_s A_j = \covD^k F_{jk}, \qquad A_j(s=0) = a_j
\end{equation}
which implicitly assumes the gauge condition $A_s=0$ (which we refer to as the \emph{local caloric gauge}) relative to the
fully covariant formulation of the same equation. This can be naively viewed as 
parabolic system for the curl of $A$ (or the curvature $F$), coupled with a transport equation
for the divergence of $A$. Because these two equations are strongly coupled,
this evolution displays an interesting mix of semilinear and quasilinear features.

Our interest is in initial data  $a$ for which this solution is global, with the curvature $F$
satisfying global parabolic bounds. To capture this, we use the $L^3$ norm of $F$ as 
a control norm, and call it the \emph{caloric size} of $a$,
\begin{equation}
\label{Q-def}
\hM(a) = \int_0^\infty \int_{\R^4} |F|^3 dx ds .
\end{equation}
We note that this is a gauge invariant quantity.
For solutions with finite caloric size, we have the following structure theorem:

\begin{theorem}[{\cite[Corollary~5.14]{OTYM1}}]\label{t:global-heat}
Let $a \in \dot H^1$ be a connection so that $\hM(a) < \infty$. Then 
this solution has the property that the limit
\[
\lim_{s \to \infty} A(s) = a_\infty
\]
exists in $\dot H^1$. Further, the limiting connection is flat,
$f_\infty = 0$, and the map $a \to a_\infty$ is locally Lipschitz in $
\dot H^1$, $H^{N}$ ($N \geq 1$) and $\dot H^1 \cap \dot H^N$ ($N \geq
2$).
\end{theorem}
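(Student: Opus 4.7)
The plan is to combine local well-posedness of the Yang--Mills heat flow in $\dot H^1$ with a continuation criterion controlled by the caloric size $\hM$, and then to exploit the monotonicity of the static energy $\spE(A(s))$ to establish convergence and flatness of the limit. A local well-posedness result and a continuation criterion in terms of a scale-invariant control norm of $F$ (of $L^3_{s,x}$ type) should have been established earlier in \cite{OTYM1}, so under the hypothesis $\hM(a) < \infty$ the flow extends globally on $[0,\infty)$, with quantitative parabolic bounds on $A$ and its derivatives depending only on $\hM(a)$ and $\spE(a)$.

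To obtain the limit $a_\infty$ and its flatness, the key identity is the energy dissipation law
\[
\frac{d}{ds}\spE(A(s)) = - \nrm{\covD^\ell F_{j\ell}}_{L^2}^2 = - \nrm{\rd_s A}_{L^2}^2,
\]
which after integration yields $\int_0^\infty \nrm{\rd_s A}_{L^2}^2 \, ds \leq \spE(a)$. Combined with the finiteness of $\hM(a) = \int_0^\infty \nrm{F(s)}_{L^3}^3 \, ds$ and parabolic smoothing (which upgrades $L^3$ to higher regularity at later heat-times), one can first find a sequence $s_n \to \infty$ along which $\nrm{F(s_n)}_{L^2} \to 0$, and then use monotonicity of $\spE$ plus parabolic propagation of smallness to conclude $\nrm{F(s)}_{L^2} \to 0$ as $s \to \infty$. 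To promote convergence of $A(s)$ itself to $\dot H^1$, I would write
\[
A(s_2) - A(s_1) = \int_{s_1}^{s_2} \covD^\ell F_{j\ell} \, ds,
\]
apply $\nb$, and verify that the right-hand side is Cauchy in $L^2$ as $s_1, s_2 \to \infty$ using parabolic bounds on $\nb \covD F$ that decay after a sufficiently large time $s_0$. The resulting limit $a_\infty \in \dot H^1$ then has $f_\infty = 0$, i.e., is flat.

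For the local Lipschitz dependence in the prescribed topologies, I would linearize the heat flow around a reference caloric solution. The perturbation $B = \dlt A$ satisfies a linear parabolic system of the schematic form
\[
\rd_s B = \covD^\ell \covD_\ell B + \text{lower order involving } F \text{ and } B,
\]
whose coefficients are uniformly controlled by $\hM$ and $\spE$ of the reference solution. Running a parabolic energy estimate on $\nrm{B(s)}_{\dot H^1}^2$, and analogous estimates on $H^N$ norms for higher regularity, gives Lipschitz dependence of $A(s)$ on initial data on $[0,\infty)$, and hence in the limit $s \to \infty$. The higher regularity cases follow by differentiating the linearized equation and using parabolic smoothing to absorb the additional terms. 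The main obstacle I expect is the long-time decay needed to close the $\dot H^1$ convergence argument: one must carefully interpolate between the gauge-invariant $L^3_{s,x}$ bound on $F$ and pointwise-in-$s$ parabolic regularity estimates to show that the tail contribution $\int_{s_0}^\infty \nb \covD F \, ds$ vanishes in $L^2$ as $s_0 \to \infty$, which is the quantitative heart of the convergence statement.
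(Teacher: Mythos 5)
You should first note that the paper you are working from does not actually prove Theorem~\ref{t:global-heat}: it is imported verbatim from \cite[Corollary~5.14]{OTYM1} as part of the review in Section~\ref{sec:review}, and is used here only as a black box. So there is no in-paper proof to compare against. Judged on its own terms, your skeleton (global extension via a continuation criterion in the scale-invariant $L^3_{s,x}$ norm of $F$, the dissipation identity $\frac{d}{ds}\spE(A(s)) = -\nrm{\rd_s A}_{L^2}^2$, parabolic smoothing, a fundamental-theorem-of-calculus argument in $s$ for $\dot H^1$ convergence, and linearization for Lipschitz dependence) is the natural one and is broadly the framework of \cite{OTYM1}.

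However, the two steps you defer are exactly where the content of the result lies, and as sketched they do not close. First, the intermediate claim that one can find $s_n \to \infty$ with $\nrm{F(s_n)}_{L^2} \to 0$ does not follow from the ingredients you cite: the dissipation identity controls $\covD^\ell F_{\ell j}$ in $L^2_{s,x}$, not $F$ itself, and finiteness of $\hM$ only yields $\nrm{F(s_n)}_{L^3_x} \to 0$ along a sequence; on the unbounded domain $\bbR^4$ the subcritical $L^3_x$ norm does not control the critical $L^2_x$ norm, and parabolic smoothing improves regularity and integrability upward rather than supplying such control. Indeed, $\nrm{F(s)}_{L^2} \to 0$ is essentially equivalent to the conclusion ($\dot H^1$ convergence of $A(s)$ together with $f_\infty = 0$), so it cannot serve as a cheap intermediate step. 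Second, for the tail estimate in your Cauchy argument, pure scaling from the bounded energy gives only $\nrm{\nb\, \covD^\ell F_{\ell j}(s)}_{L^2} \lesssim s^{-1}$, whose integral over dyadic blocks in $s$ is borderline (logarithmically divergent overall); the gain coming from $\hM(a) < \infty$ is a smallness factor $\eps_j$ per dyadic block with merely $\sum_j \eps_j^3 < \infty$, which is not the $\ell^1_j$ summability your argument needs. Closing this gap is precisely what the frequency-envelope and smoothing machinery of \cite{OTYM1} is for, and the same quantitative input is needed to make your linearized (difference) estimates uniform on $[0,\infty)$ at the critical $\dot H^1$ regularity for the Lipschitz statement. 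In short, your proposal has the right skeleton but leaves unproved the quantitative heart that makes the theorem nontrivial.
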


Next, using the monotonicity formula for the energy, we prove the Dichotomy Theorem for the Yang--Mills heat flow:
\begin{theorem} [{\cite[Theorem~6.1]{OTYM1}}] \label{t:ym-dich}
One of the following two
  properties must hold for the maximally extended $\dot{H}^{1}$ Yang--Mills heat flow:

  a) The solution is global and its caloric size is finite;

 b) The solution bubbles off a nontrivial harmonic Yang--Mills connection, either
\begin{enumerate}
\item at a finite blow-up time $s < \infty$, or
\item at infinity $s = \infty$.
\end{enumerate}
\end{theorem}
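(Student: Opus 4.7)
The plan is to argue by contrapositive: assuming the solution either blows up in finite heat-time or exists globally with infinite caloric size, we extract a nontrivial harmonic Yang--Mills connection by a parabolic rescaling procedure. The fundamental energy identity for the heat flow $\rd_s A_j = \covD^k F_{jk}$ reads
\begin{equation*}
\frac{\ud}{\ud s} \spE(A(s)) = - \int_{\bbR^4} \la \covD^\ell F_{\ell j}, \covD^k \tensor{F}{_k^j} \ra \, \ud x,
\end{equation*}
which yields the a priori bound $\int_0^{s_*} \int_{\bbR^4} |\covD^\ell F_{\ell j}|^2 \, \ud x \ud s \leq \spE(a)$. Monotonicity of the energy combined with the scale-invariance of $\hM$ identify curvature concentration at a particular scale as the sole obstruction to completing case (a).

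For the finite-time blow-up scenario $s_* < \infty$, the first step is an $\eps$-regularity result: there is a universal $\eps_0 > 0$ so that if $\sup_{y \in \bbR^4} \int_{B(y, r)} |F(s)|^2 \, \ud x < \eps_0$ uniformly for $s$ near $s_*$ at some fixed scale $r > 0$, then the flow extends past $s_*$, contradicting maximality. This forces a concentration point $x_*$ and a sequence $(x_n, s_n, \lambda_n)$ with $s_n \nearrow s_*$ and $\lambda_n \searrow 0$ realizing a critical density of $|F|^2$ at parabolic scale $\lambda_n$. I would then define rescaled connections $\tld{A}_n(y, \sigma) = \lambda_n A(x_n + \lambda_n y, s_n + \lambda_n^2 \sigma)$, which satisfy the Yang--Mills heat flow on expanding parabolic cylinders with uniformly bounded energy and with
\begin{equation*}
\int_{-R^2}^{0} \int_{B(0, R)} |\covD^\ell F_{\ell j}[\tld{A}_n]|^2 \, \ud y \ud \sigma \longrightarrow 0,
\end{equation*}
thanks to the a priori bound above together with $\lambda_n \to 0$. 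Applying Uhlenbeck's local gauge theorem (Theorem~\ref{t:harmonic-reg}) on each compact parabolic cylinder, one obtains uniform $H^1$ bounds for gauge-transformed representatives of $\tld{A}_n$, extracts a subsequential limit $\tld{A}_\infty$, and observes that its tension field vanishes. Hence $\tld{A}_\infty$ is static, i.e., a harmonic Yang--Mills connection, and the concentration property at scale $\lambda_n$ guarantees nontriviality.

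The infinite-time case $s_* = \infty$, $\hM(a) = \infty$ is handled by a parallel argument. Here the a priori $L^2_{s,x}$ bound on the tension holds on all of $(0, \infty) \times \bbR^4$, while the scale-invariant functional $\hM$ is infinite. One then finds a sequence of parabolic cylinders on which $\int\!\!\int |F|^3$ concentrates above a positive threshold while the rescaled tension still tends to zero; the same extraction procedure produces a nontrivial harmonic Yang--Mills connection in the limit.

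I anticipate that the main obstacle will be the gauge-invariant compactness step. The heat flow is not uniformly parabolic as an equation for $A$ (it is parabolic only modulo gauge), so the uniform energy and tension bounds on $\tld{A}_n$ do not directly yield a convergent subsequence in any $H^1$-type topology. The resolution is to apply Uhlenbeck's gauge-fixing theorem locally to produce regular representatives in, say, a Coulomb gauge, and then use the elliptic structure of the static limit (Theorem~\ref{t:harmonic-reg}) to upgrade the convergence. A secondary subtlety is to choose the rescaling scale $\lambda_n$ sharply, so that the resulting bubble is guaranteed to be nontrivial while still lying below the Uhlenbeck smallness threshold; this is where the scale-invariance of $\hM$ and the $\eps$-regularity theorem combine decisively.
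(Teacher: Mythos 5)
Your overall strategy --- arguing by contrapositive, using the dissipation identity $\tfrac{d}{ds}\spE(A(s))=-\int|\covD^\ell F_{\ell j}|^2\,dx$, exploiting the scale invariance in $d=4$ of both $\iint|\covD^\ell F_{\ell j}|^2\,dx\,ds$ and $\hM$, rescaling at concentration points, and passing to a static nontrivial limit via Uhlenbeck gauge fixing --- is the right one, and for the finite-time blow-up case it is essentially the standard bubbling argument behind the cited theorem: the continuation criterion forces energy concentration at scales $\lambda_n\to 0$, the rescaled dissipation vanishes by scale invariance plus absolute continuity of the time integral, and a Struwe-type choice of $(x_n,s_n,\lambda_n)$ gives both compactness (all unit balls below the $\eps$-regularity threshold) and nontriviality (a unit ball carrying exactly the threshold). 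Two small corrections: the gauge-fixing input is the Uhlenbeck lemma (Theorem~\ref{t:uhl}), not Theorem~\ref{t:harmonic-reg}, and the upgrade to strong local convergence comes from uniform interior parabolic estimates for the rescaled flows in those gauges (plus patching of the local Coulomb gauges), not from the regularity of the limit.

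The genuine gap is in the infinite-time case. You assert that $\hM(a)=\int_0^\infty\int|F|^3\,dx\,ds=\infty$ lets one ``find a sequence of parabolic cylinders on which $\iint|F|^3$ concentrates above a positive threshold.'' This does not follow: precisely because $\iint|F|^3$ is scale invariant in $d=4$, its total can be infinite while every parabolic cylinder, measured at its own scale, carries an arbitrarily small amount --- the mass may simply be spread over infinitely many disjoint cylinders --- so there is no pigeonhole to a concentration cylinder, and in the global case there is no blow-up criterion to fall back on as in the finite-time half. What is actually needed, and what is the crux of this half of the dichotomy, is the converse quantitative statement: if the energy never concentrates (at every time and point some scale has local energy below the threshold), then the caloric size is finite, $\hM\lesssim_{\spE(a)}1$. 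This requires a local $\eps$-regularity estimate bounding $\iint|F|^3$ on a parabolic cylinder of scale $r$ by quantities tied to the small local energy at that scale, together with a covering/summation argument over $(0,\infty)\times\bbR^4$ in which the infinitely many contributions are made summable using the monotonicity of $\spE(A(s))$ and the global dissipation bound $\iint|\covD^\ell F_{\ell j}|^2\le\spE(a)$; this is exactly the ``monotonicity formula for the energy'' ingredient the paper alludes to. Only after this step does $\hM=\infty$ yield \emph{energy} concentration along some $(x_n,s_n,r_n)$ --- note it must be the fixed-time energy, an $L^2$ quantity, that concentrates, since it is energy smallness (not $L^3_{s,x}$ smallness) that feeds the regularity theory you invoke for compactness and for nontriviality of the bubble --- at which point your rescaling and extraction apply as in the finite-time case (with the minor additional care that $s_n-\lambda_n^2R^2\to\infty$ so the tail of the dissipation integral vanishes). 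As written, the infinite-time case does not close.
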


Combined with topological triviality of $\dot{H}^{1}$ connections, we are led to the Threshold Theorem, with the identical threshold as in the hyperbolic case:

\begin{theorem} [{\cite[Theorem~6.6]{OTYM1}}] \label{t:ym-heat}
The Yang--Mills heat flow is globally well-posed in $\dot H^1$ for all subthreshold
initial data $a \in \dot H^1$. Precisely, there exists a nondecreasing function 
\[
\bhM: [0,2\Egs) \to \R^+
\]
so that for all subthreshold data $a$ with energy $\nE$ we have
\begin{equation}\label{bold-Q}
\hM(a) \leq \bhM(\nE) .
\end{equation}
\end{theorem}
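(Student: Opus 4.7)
The proof proceeds in two stages: first, a qualitative claim that every subthreshold initial datum has finite caloric size; second, a quantitative uniform bound in terms of the energy.

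For the qualitative step, the plan is to invoke the Dichotomy Theorem~\ref{t:ym-dich}. Either the heat flow is global with finite caloric size, or it bubbles off a nontrivial harmonic Yang--Mills connection $Q$ at some (possibly infinite) heat time. I would then exclude the bubbling scenario as follows. The Yang--Mills heat flow preserves the topological class, so starting from $a \in \dot{H}^{1}$ (which is topologically trivial, i.e.\ $\ch = 0$), the bubble $Q$ must also be topologically trivial in the sense that $\ch(Q) = 0$. Theorem~\ref{t:gs}(2) then forces $\spE(Q) \geq 2 \Egs$. On the other hand, the energy $\spE$ is monotone non-increasing along the Yang--Mills heat flow, so $\spE(Q) \leq \nE(a) < 2\Egs$, a contradiction. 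Consequently case (a) of Theorem~\ref{t:ym-dich} must hold, yielding global existence and $\hM(a) < \infty$.

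For the quantitative step, I would define
\[
\bhM(E) := \sup \set{\hM(a) : a \in \dot{H}^{1}, \ \spE(a) \leq E}, \qquad E \in [0, 2\Egs),
\]
which is automatically nondecreasing in $E$, and argue by contradiction that it is finite. If $\bhM(E_{0}) = \infty$ for some $E_{0} < 2 \Egs$, then there exists a sequence $a_{n} \in \dot{H}^{1}$ with $\spE(a_{n}) \leq E_{0}$ and $\hM(a_{n}) \to \infty$. Let $A_{n}(s)$ denote the associated Yang--Mills heat flows. The divergence of $\hM(a_{n})$ means that concentration of the gauge-invariant density $|F_{n}|^{3}$ must occur at some sequence of space-time-heat scales; using the Bogomoln'yi-type energy bounds together with the monotonicity formula for $\spE$, I would extract centers $(x_{n}, s_{n})$ and scales $\lmb_{n} > 0$ and rescale/translate to obtain a subsequence converging (modulo gauge) to a nontrivial harmonic Yang--Mills connection $Q$ with $\spE(Q) \leq E_{0}$. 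Since each $a_{n}$ is topologically trivial and the heat flow preserves this, the limit bubble inherits $\ch(Q) = 0$, so Theorem~\ref{t:gs}(2) again forces $\spE(Q) \geq 2 \Egs > E_{0}$, contradicting $\spE(Q) \leq E_{0}$. This yields $\bhM(E) < \infty$ on $[0, 2 \Egs)$ and establishes \eqref{bold-Q}.

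The main obstacle will be the concentration-compactness extraction in the quantitative step: one must show that unbounded caloric size genuinely forces a rescaled subsequence to converge to a nontrivial stationary solution in the appropriate topology, respecting the gauge ambiguity. This requires the parabolic $\veps$-regularity theory and local compactness for the Yang--Mills heat flow developed earlier in \cite{OTYM1}, together with the energy monotonicity to prevent energy loss at infinity during the rescaling process. Once the bubble is produced and its topological class identified, the rigidity provided by Theorem~\ref{t:gs} closes the argument cleanly.
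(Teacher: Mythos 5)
There is a genuine gap, and it sits at the heart of both stages of your argument: the claim that the bubble $Q$ ``must also be topologically trivial in the sense that $\ch(Q)=0$'' does not follow from the fact that the heat flow preserves the topological class of the full connection. The bubble is obtained as a rescaled \emph{local} limit around a concentration point, and the characteristic number is not stable under this operation: energy (and topology) can be left behind outside the concentration region, so a topologically trivial flow can perfectly well bubble off an instanton with $\ch(Q)\neq 0$ and $\abs{\ch(Q)}=\spE(Q)\geq \Egs$. This is precisely the scenario that determines the threshold $2\Egs$, and your argument as written simply excludes it by fiat. Note that Theorem~\ref{t:gs}(2) leaves open the alternative $\Egs \leq \abs{\ch(Q)} = \spE(Q) < 2\Egs$, which is compatible with $\spE(Q)\leq \nE(a) < 2\Egs$; so without ruling out this case you obtain no contradiction, and the same flaw propagates into your concentration--compactness proof of the uniform bound $\bhM(\nE)$.

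The correct mechanism, which is the one used in \cite[Section~6.2]{OTYM1} and reproduced for the hyperbolic flow in Section~\ref{sec:proof} of this paper (following \cite{LO}), is a \emph{compensation} argument: conservation of the characteristic number of the full (topologically trivial) connection forces the complement of the bubbling region to carry characteristic number $-\ch(Q)$, and the Bogomoln'yi bound \eqref{ch-vs-E} then shows the exterior carries energy at least $\abs{\ch(Q)}$. Hence the total energy is bounded below by $\spE(Q)+\abs{\ch(Q)} \geq 2\spE(Q) \geq 2\Egs$ in the nontrivially charged case (the case $\ch(Q)=0$ being handled by Theorem~\ref{t:gs}(2) as you do), contradicting the subthreshold assumption. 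Your qualitative step should be repaired by inserting this splitting of the characteristic number between the bubble and its exterior, with the $L^2_{loc}$ convergence of curvatures justifying the passage to the limit on a large ball as in Section~\ref{sec:proof}; the quantitative step needs the same fix, in addition to the (nontrivial, but available in \cite{OTYM1}) fact that divergence of the caloric size along a sequence of data forces bubble extraction.
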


For connections with finite caloric size, we define the \emph{(global) caloric gauge} as follows:

\begin{definition} \label{def:caloric}
  A connection $a \in \dot H^1$ is caloric if $\hM(a) < \infty$ and
  the global solution to its associated Yang--Mills heat flow has the
  property $a_{\infty}= 0$.
\end{definition}

It is easy to see that for all connections $a$ for which the conclusion of Theorem~\ref{t:ym-heat}
holds there is a unique\footnote{Up to constant conjugations} equivalent caloric gauge. 
This is because $a_{\infty}$ is flat and thus can be represented as 
\[
a_{\infty} =O^{-1}  \rd_{x} O
\]
for a suitable gauge transformation $O$. Then by gauge invariance the equivalent connection
\[
\tilde a = \calG(O) a = OaO^{-1} - \rd_{x} O O^{-1}
\]
is caloric. More precisely, we have

\begin{proposition}[{\cite[Proposition~7.2]{OTYM1}}] \label{p:cal-a} 
 For each $\dot H^1$ connection $\ta$ in $\R^4$ with $\hM(a) < \infty$, there
  exists an unique (up to constant gauge transformations) gauge-equivalent connection
  $a$, which is a caloric gauge connection.  Further, the map
$\ta \to a$ is continuous in the quotient topology defined by the distance
\[
d(a_1,a_2) = \inf_{O \in \G} \| Oa_1O^{-1} - a_2\|_{\dot H^1}.
\]
\end{proposition}

A key result in \cite{OTYM1} asserts that:

\begin{theorem}[{\cite[Proposition~7.7 and Theorem~7.8]{OTYM1}}]\label{t:calC}
The space $\calC$ of all caloric connections is a $C^1$ submanifold
of the space
\[
\bfH = \{ a \in \dot H^1: \partial^j a_j \in \ell^1 L^2\}.
\]
In addition, for all subthreshold caloric connections with energy $\nE$  and caloric size $\hM$ we have the bound
\[
\| a\|_{\bfH}^2 := \nrm{a}_{\dot{H}^{1}}^{2} + \nrm{\rd^{j} a_{j}}_{\ell^{1} L^{2}}^{2} \lesssim_{\nE,\hM} 1 .
\]
\end{theorem}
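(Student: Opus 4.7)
The theorem has two parts: the quantitative $\bfH$-norm bound on subthreshold caloric connections, and the $C^1$-submanifold structure of $\calC$ inside $\bfH$. I would attack them in this order, because the bound is what first guarantees that subthreshold elements of $\calC$ actually live in $\bfH$.

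\medskip
\textbf{Part 1: the bound $\|a\|_{\bfH}^2 \lesssim_{\nE, \hM} 1$.} Control of the $\dot H^1$ piece is immediate from the energy. The nontrivial content is the $\ell^1 L^2$ bound on $\partial^j a_j$. The starting point is a fundamental-theorem-of-calculus representation along the Yang--Mills heat flow $A(s)$ of $a$: since $a$ is caloric, $A(\infty) = 0$, so
\begin{equation*}
a_j = -\int_0^\infty \partial_s A_j(s) \, ds = -\int_0^\infty \covD^k F_{jk}(s) \, ds.
\end{equation*}
The key observation is that upon taking divergence, the principal linear term cancels by antisymmetry, $\partial^j \partial^k F_{jk} = 0$, leaving the purely quadratic expression
\begin{equation*}
\partial^j a_j = -\int_0^\infty \partial^j [A^k, F_{jk}](s) \, ds.
\end{equation*}
I would then close this via a parabolic bilinear estimate that matches the Littlewood--Paley frequency $2^k$ with the heat time $s \aeq 2^{-2k}$, so that the derivative $\partial^j$ is absorbed by parabolic smoothing of $F$. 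The global $L^3_{s,x}$ control on $F$ supplied by finiteness of the caloric size $\hM$, together with the energy-based bounds on $A$ furnished by Theorem~\ref{t:ym-heat}, feeds the bilinear estimate and yields the $\ell^1 L^2$ bound with the stated dependence.

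\medskip
\textbf{Part 2: the $C^1$-submanifold structure.} My plan is to apply the implicit function theorem to the asymptotic map
\begin{equation*}
\Phi : a \longmapsto a_\infty,
\end{equation*}
whose $C^1$ regularity on a $\bfH$-neighborhood of $\calC$ is furnished by the Lipschitz dependence statement in Theorem~\ref{t:global-heat} upgraded to differentiability. The target is the manifold of flat $\dot H^1$ connections, naturally parametrized by $a_\infty = O^{-1} d O$ with $O(\infty) = I$. By the very definition of $\calC$, $\calC = \Phi^{-1}(0)$, and membership of subthreshold elements in $\bfH$ is exactly Part~1. The candidate tangent space is
\begin{equation*}
T_a \calC = \ker d\Phi_a = \{ b \in \bfH : B(\infty) = 0 \},
\end{equation*}
where $B(s)$ solves the linearized Yang--Mills heat flow about $A(s)$ starting from $b$. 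To verify the hypotheses of the implicit function theorem, I would exhibit a transversal complement in $\bfH$ from the infinitesimal gauge directions $b^{\chi} = d\chi + [a, \chi]$: each $b^{\chi}$ lies in $\bfH$ (reusing the divergence-regularity mechanism of Part 1), and $d\Phi_a[b^{\chi}] \aeq d\chi$ modulo lower-order terms, so $d\Phi_a$ is surjective and $\bfH = T_a \calC \oplus \{b^{\chi}\}$. This splitting produces a local $C^1$ graph representation of $\calC$ over $T_a \calC$.

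\medskip
\textbf{Main obstacle.} The hardest step is the nonlinear estimate in Part 1: obtaining the $\ell^1 L^2$ bound with constants depending only on $\nE$ and $\hM$ requires carefully balancing the extra derivative on the commutator against the parabolic smoothing of $F$, and handling low frequencies where parabolic gain is weak; it is crucial that the caloric size provides \emph{global}-in-$s$ integrability of $F$, without which the outer $s$-integral would not close. In Part 2, the subsidiary technical point is ensuring that the gauge transversal lives in $\bfH$, which again reduces to the divergence regularity established in Part~1.
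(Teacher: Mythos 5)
First, a structural point: the present paper does not prove Theorem~\ref{t:calC} at all --- it is imported verbatim from the first paper of the series (\cite[Proposition~7.7, Theorem~7.8]{OTYM1}), so there is no in-paper proof to measure you against. That said, your outline is consistent with the mechanism this paper records from \cite{OTYM1}: your divergence identity
\[
\partial^j a_j = -\int_0^\infty \partial^j[A^k,F_{jk}](s)\,ds
\]
is exactly the origin of the ``nonlinear Coulomb condition'' reflected in the compatibility equation \eqref{main-compat}, $\partial^k A_k = \bfQ(A,A)+\DA^3(A)$, and the manifold structure in \cite{OTYM1} is likewise obtained from the heat-flow limit map $a\mapsto a_\infty$ together with the transversal action of gauge transformations (your observation that $d\Phi_a[d\chi+[a,\chi]]=d\chi$ for caloric $a$, by gauge covariance of the $s$-flow, is the right slice-theorem input). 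So in spirit you have reconstructed the cited argument.

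Two soft spots deserve to be named. (1) The claim that ``control of the $\dot H^1$ piece is immediate from the energy'' is false as stated: the energy controls only $\|f\|_{L^2}$, and a general gauge representative can be arbitrarily large in $\dot H^1$ with small curvature. For caloric $a$ the bound $\|a\|_{\dot H^1}\lesssim_{\nE,\hM}1$ does hold, but it must be extracted from the same heat-flow representation $a_j=-\int_0^\infty \covD^k F_{jk}\,ds$ (or from the div--curl system using your divergence bound plus an $L^4$ bootstrap for the commutator $[a_j,a_k]$), and the constant genuinely involves $\hM$, not just $\nE$; your Part~1 machinery covers this, but you should not dismiss it. (2) In Part~2 you invoke $C^1$ regularity of $\Phi:a\mapsto a_\infty$ on a $\bfH$-neighborhood of $\calC$, whereas Theorem~\ref{t:global-heat} only provides local Lipschitz dependence; upgrading to differentiability requires differentiating the Yang--Mills heat flow with respect to the data (linearized flow estimates in the same parabolic framework), which is a real step, not a citation. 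Relatedly, membership of the transversal directions $b^\chi$ in $\bfH$ is not ``the divergence-regularity mechanism of Part~1'' --- no cancellation occurs there, since $\partial^j b^\chi_j \approx \Delta\chi$; it is simply a matter of choosing the gauge-parameter space so that $\Delta\chi\in\ell^1 L^2$, which should be made explicit when setting up the splitting $\bfH = \ker d\Phi_a \oplus \{b^\chi\}$.
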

The second part of the state space norm $\bfH$ reflects the fact that caloric connections satisfy a nonlinear
form of the Coulomb gauge condition.

The second part of \cite{OTYM1} is devoted to modeling the hyperbolic Yang--Mills equation as an evolution 
on the caloric manifold. Precisely, the state space for this evolution is $T^{L^2} \calC$, which is the $L^2$
completion of the tangent space $T\calC$.  We will view the spatial components $(A_{x}, \rd_{t} A_{x}) \in T^{L^{2}} \calC$ as the dynamic variables, and the temporal parts $A_{0}, \rd_{0} A_{0}$ as auxiliary. Correspondingly, we call a pair $(a, b) \in T^{L^{2}} \calC$ (i.e., $a \in \calC$, $b \in T_{a}^{L^{2}} \calC$) a \emph{caloric gauge initial data set} for \eqref{ym}.
This is related to the gauge-covariant notion of initial data sets as follows:
\begin{theorem}[{\cite[Theorem~8.1]{OTYM1}}]\label{t:data}
a) Given any Yang--Mills initial data pair $(a, e) \in \dot H^1
    \times L^2$ with finite caloric size, there exists a caloric gauge
    data set $(\tilde a, b) \in T^{L^{2}} \calC$ and $a_0 \in
    \dot H^1$, so that $(\tilde a, \tilde e)$ is
    gauge equivalent to $(a,e)$, where
    \[
    \tilde e_k = b_k - \covD^{(\tilde{a})}_k a_0 .
    \]
b) Given any caloric gauge initial data set $(\tilde a, b) \in T^{L^{2}}
    \calC$, there exists a unique $a_0 \in \dot H^1$, with Lipschitz
    dependence on $(a, b) \in \dot H^1 \times L^2$, so that
    \[
    e_k = b_k - \covD^{(a)}_k a_0
    \]
    satisfies the constraint equation
    \eqref{eq:YMconstraint}. 
\end{theorem}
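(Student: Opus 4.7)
The proof splits naturally according to the two parts of the statement. Part (b) is essentially an elliptic problem on a fixed caloric background, while Part (a) combines a heat-flow-based gauge-fixing step with a variant of Part (b).

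For Part (b), given $(\tilde a, b) \in T^{L^2}\calC$, we seek $a_0 \in \dot H^1$ so that $e_k = b_k - \covD_k^{(\tilde a)} a_0$ satisfies the Gauss constraint $\covD^k e_k = 0$. This is equivalent to the covariant Poisson equation
\[
\Delta^{(\tilde a)} a_0 = \covD^k b_k, \qquad \Delta^{(\tilde a)} = \covD^k \covD_k.
\]
The plan is to solve this equation uniquely in $\dot H^1$. First, the right-hand side lies in $\dot H^{-1}$: the term $\partial^k b_k$ is in $\dot H^{-1}$ since $b \in L^2$, while $[\tilde a^k, b_k] \in L^{4/3} \hookrightarrow \dot H^{-1}$ by Sobolev, using $\tilde a \in \dot H^1 \hookrightarrow L^4$. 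Second, the operator $\Delta^{(\tilde a)}$ differs from $\Delta$ by first- and zeroth-order terms involving $\tilde a$ and $\partial^k \tilde a_k$; the most delicate of these is controlled precisely by the Coulomb-like $\ell^1 L^2$ bound on $\partial^k \tilde a_k$ afforded by Theorem~\ref{t:calC}. A Littlewood--Paley paraproduct analysis then inverts $\Delta^{(\tilde a)}$ in $\dot H^1$, treating the connection perturbatively at frequencies well above its own Littlewood--Paley support. Lipschitz dependence of $a_0$ on $(\tilde a, b)$ follows by linearizing and inverting the same operator.

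For Part (a), the plan is: (i) run the Yang--Mills heat flow from $a$, which by the assumption $\hM(a) < \infty$ and Theorem~\ref{t:global-heat} converges in $\dot H^1$ to a flat connection $a_\infty$; (ii) since $\bbR^4$ is simply connected and $a_\infty$ is flat, construct an admissible gauge transformation $O$ with $\calG(O) a_\infty = 0$ by approximating $a_\infty$ with smooth flat connections, integrating those radially from the origin to obtain smooth gauge transformations, and passing to the limit using the $\dot H^1$ bound on $a_\infty$; (iii) define $\tilde a = \calG(O) a$, which is caloric by gauge-equivariance of the heat flow, and $\tilde e = Ad(O) e$, which still satisfies $\covD^{k,(\tilde a)} \tilde e_k = 0$; (iv) decompose $\tilde e_k = b_k - \covD_k^{(\tilde a)} a_0$ with $b \in T^{L^2}_{\tilde a}\calC$, invoking the $L^2$-level transversal splitting
\[
L^2 = T^{L^2}_{\tilde a}\calC \,\oplus\, \{ \covD^{(\tilde a)} a_0 : a_0 \in \dot H^1 \},
\]
which is the infinitesimal manifestation of the caloric submanifold structure of Theorem~\ref{t:calC}. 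The corresponding equation for $a_0$ is again elliptic, analogous to Part (b).

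The main obstacles are twofold. First, the elliptic theory for $\Delta^{(\tilde a)}$ at this critical regularity is delicate because $\tilde a$ is only energy class: a naive Neumann series does not close, and one must instead perform a frequency-envelope paraproduct analysis that fully exploits the additional Coulomb-type control $\partial^j \tilde a_j \in \ell^1 L^2$ from the caloric gauge. Second, constructing the gauge transformation $O$ that trivializes the flat $\dot H^1$ connection $a_\infty$ is nonstandard, since classical Frobenius integration of connections requires more smoothness than $\dot H^1$; one must therefore work with smooth approximations and carefully propagate $\dot H^1$ bounds to a regularity statement for $O$ strong enough to qualify as an admissible gauge transformation.
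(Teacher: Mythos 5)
This theorem is only quoted in the present paper from \cite{OTYM1} (Theorem~8.1 there), so there is no in-paper proof to compare against; your outline does follow the natural route of the cited argument (heat flow limit plus trivialization of the flat limit plus the tangential/gauge splitting for part (a); a covariant Poisson equation for part (b)). However, two of your mechanisms are not substantiated as written. For part (b), the assertion that a Littlewood--Paley paraproduct analysis ``inverts $\Delta^{(\tilde a)}$ in $\dot H^1$, treating the connection perturbatively at frequencies well above its own Littlewood--Paley support'' does not close for a large connection: in the interactions where $\tilde a$ has frequency comparable to or larger than that of $a_0$ there is no smallness, and the $\ell^1 L^2$ bound on $\partial^k \tilde a_k$ does not by itself control them. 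The good news is that no such machinery is needed. Since the inner product on $\g$ is Ad-invariant, $\brk{[\tilde a_k, u], u} = 0$, whence the Kato/diamagnetic inequality $\abs{\nb \abs{u}} \leq \abs{\covD u}$ a.e., so $\nrm{u}_{L^4} \aleq \nrm{\covD u}_{L^2}$ and $\nrm{\covD u}_{L^2} \approx_{\nrm{\tilde a}_{\dot H^1}} \nrm{u}_{\dot H^1}$ with no smallness assumption. The equation $\covD^k \covD_k a_0 = \covD^k b_k$ is then uniquely solvable in $\dot H^1$ by Lax--Milgram (the right-hand side pairs as $-\int \brk{b_k, \covD^k v}$, bounded by $\nrm{b}_{L^2} \nrm{\covD v}_{L^2}$), and Lipschitz dependence follows by applying the same coercivity to differences. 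In particular, for the statement as quoted, part (b) needs neither smallness nor the caloric divergence bound, so the ``main obstacle'' you identify is not where the difficulty lies.

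For part (a), the step ``approximating $a_\infty$ with smooth flat connections'' is exactly the nontrivial point and cannot be had for free: mollification (or any linear smoothing) destroys flatness, so smooth flat approximants are not available by soft arguments, and radial integration of a merely $\dot H^1$ connection along rays is not well defined without further work. The standard repair is to use Uhlenbeck's lemma (Theorem~\ref{t:uhl}) on balls: since $f_\infty = 0$, the local Coulomb representative vanishes identically, i.e., one obtains local trivializations $a_\infty = - O_{;x}$ with $\dot H^1$-type bounds, which are then patched over the contractible $\bbR^4$ (in the spirit of Theorem~\ref{t:glue}) into a single admissible $O$ with $\calG(O) a_\infty = 0$. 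Finally, the $L^2$ splitting $L^2 = T^{L^2}_{\tilde a} \calC \oplus \set{\covD^{(\tilde a)} \chi : \chi \in \dot H^1}$ that you invoke in step (iv) is itself a substantive ingredient established in \cite{OTYM1}; it does not follow formally from the $C^1$-submanifold statement of Theorem~\ref{t:calC}, so in a self-contained proof it must either be cited explicitly or proved (again by a covariant elliptic argument of the same flavor as part (b), with the caloric characterization of $T_{\tilde a} \calC$ in place of the plain covariant divergence).
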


By this result, we may indeed fully describe Yang--Mills connections in the caloric gauge as continuous functions
\begin{equation*}
	I \ni t \to (A_{x}, \rd_{t} A_{x})(t) \in T^{L^{2}} \calC.
\end{equation*}
The equations for the dynamical variables $(A_{x}, \rd_{t} A_{x})$ are proved to have the form
\begin{equation}\label{main-wave}
\Box_A A_k = \bfP [A_j,\partial_k A_j] + 2\Delta^{-1} \partial_k \bfQ(\partial^\alpha A_j,\partial_\alpha A_j) + R(A,\partial_t A)
\end{equation}
together with a compatibility condition
\begin{equation}\label{main-compat}
\partial^k A_k = \DA(A) :=  \bfQ(A, A) + \DA^{3}(A),
\end{equation}
where  the temporal component $A_{0}$ and its time derivative
  $\rd_{t} A_{0}$ are uniquely determined by $(A_x,\partial_t A_x)$ and admit the expressions
  \begin{align}
    A_{0} =& \bfA_{0}(A) := \lap^{-1}[A, \rd_{t} A] + 2 \lap^{-1} \bfQ(A, \rd_{t} A) + \bfA_{0}^{3}(A), \label{eq:main-A0} \\
    \rd_{t} A_{0} =& \DA_{0}(A) := - 2 \lap^{-1} \bfQ (\rd_{t} A,
    \rd_{t} A) + \DA_{0}^{3}(A). \label{eq:main-DA0}
  \end{align}
Here $\bfP$ is the Leray projector,  $\bfQ$ is a symmetric bilinear form with symbol\footnote{Although the symbol looks anti-symmetric, it is compensated by the Lie bracket in the definition of bilinear multipliers for $\g$-valued functions; see \cite[Definition~3.1]{OTYM1}.}
\begin{equation}
\bfQ(\xi,\eta) = \frac{\xi^2 - \eta^2}{2(\xi^2+\eta^2)}.
\end{equation}
The cubic error terms are $R$, $\DA^{(3)}$, $\bfA_{0}^{3}$ and $\DA_{0}^{3}$ are ``better behaved'' in the following sense. First, we recall the following definition from \cite{OTYM1}:
\begin{definition} [Envelope preserving map; {\cite[Definition~9.1]{OTYM1}}] \label{def:env-pres} Let $X, Y$ be dyadic norms. A map $\bfF : X \to Y$ is said to be \emph{envelope-preserving
      of order $\geq n$} ($n \in \bbN$ with $n \geq 2$) if for any admissible frequency envelope $c$ for $a$ in $X$, we have
    \begin{equation*}
      \nrm{P_{k} \bfF(a)}_{Y} \aleq_{\nrm{a}_{X}} c_{k}^{n}.
    \end{equation*}
\end{definition}
The cubic error terms $R$,  $\DA^{(3)}$,  $\bfA_{0}^{3}$ and  $\DA_{0}^{3}$
 are envelope preserving maps between the following spaces:
\begin{equation}
R: \Str^1 \to  L^1 L^2 \cap L^2 \dot H^{-\frac12} ,
\end{equation}
\begin{equation}
\DA^{3}: \Str^1 \to  L^1 \dot H^1 \cap L^2 \dot{H}^{\frac{1}{2}},
\end{equation}
\begin{equation}
\bfA_0^3: \Str^1 \to  L^1 \dot H^2 \cap L^{2} \dot H^{\frac{3}{2}},
\end{equation}
\begin{equation}
\DA_0^{3}: \Str^1 \to  L^1 \dot H^1 \cap L^2 \dot{H}^{\frac{1}{2}}.
\end{equation}

Here $\Str^1$ collects several standard non-endpoint Strichartz norms
with the appropriate scaling. Since we work with solutions with $\ell^{2}$ dyadic summability, by the envelope preserving property, for each of the above bounds
we freely gain $\ell^1$ dyadic summability of the above norms. 
One should think of all these cubic  nonlinear expressions above as playing 
perturbative roles in the analysis. We remark that these expressions obey nice difference bounds as well; for details, see \cite[{Definition~9.1 and Theorem~9.2}]{OTYM1}.

\subsection{Local well-posedness in the caloric gauge and energy dispersed solutions} \label{OTYM2}

Our second paper \cite{OTYM2} aims to establish both a local well-posedness result
and a more refined continuation and scattering criteria for
subthreshold solutions to the hyperbolic Yang--Mills equation in the caloric gauge.

In what follows, we will call hyperbolic Yang--Mills connections in the caloric gauge simply \emph{caloric Yang--Mills waves}.

We begin with the  local well-posedness result. We define the $\eps$-\emph{energy
  concentration scale} $r_{c}^{\eps}$ of a finite energy Yang--Mills initial data set $(a,
e)$ to be
\begin{equation*}
	r^\eps_{c} = r_{c}(E)[a, e] = \sup \set{r > 0 : \nE_{B_{r}}(x)[a, e] \leq \eps \ \forall x \in \bbR^{4}}.
\end{equation*}
Then we have:

\begin{theorem}[Local well-posedness in caloric gauge, {\cite[Theorem~1.12]{OTYM2}}] \label{t:local}
  There exists a non-increasing function $\eps_\ast = \eps_{\ast}(\nE,\hM) > 0$ and
  a non-decreasing function $M_{\ast}(\nE,\hM)$ such that, the Yang--Mills equation in caloric gauge is
  locally well-posed on the time interval $I = [-r_{c}^{\eps_\ast}, r_{c}^{\eps_\ast}]$ for
  initial data with energy $\nE$ and initial caloric size $\hM$. More precisely, the following
  statements hold.

a) (Regular data) Let $(a, b) \in T^{L^2} \calC $ be a smooth initial data set with
  energy $\nE$ and initial caloric size $\hM$. Then there exists a
  unique smooth solution $A_{t,x}$ to the Yang--Mills equation in
  caloric gauge on $I$.

b) (Rough data) The data-to-solution map admits a continuous extension
\begin{equation*}
	T^{L^2} \calC  \ni (a, b) \mapsto (A_{x}, \rd_{t} A_{x}) \in C(I, T^{L^{2}} \calC)
\end{equation*}
within the class of initial data with energy concentration scale $\geq r_{c}$.

c) (A-priori bound) The solution defined as above obeys the a-priori bound
\begin{equation*}
	\nrm{A_{x}}_{S^{1}[I]} \leq M_{\ast}(\nE,\hM).
\end{equation*}

d) (Weak Lipschitz dependence) Let $(a', b') \in \calC \times
  L^{2}$ be another initial data set with similar bounds and energy concentration scale
  $\geq r_{c}$. Then for  $\sgm < 1$ close to $1$ we have the 
  Lipschitz bound
\begin{equation*}
	\nrm{A_{x} - A_{x}'}_{S^{\sgm}[I]} \aleq_{M_{\ast}(\nE), \hM, \sgm} \nrm{(a, f) - (a', f')}_{\dot{H}^{\sgm} \times \dot{H}^{\sgm-1}}.
\end{equation*}
\end{theorem}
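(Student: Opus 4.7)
My plan is to prove this local well-posedness result by combining a renormalized Picard iteration in an $S^{1}$-type function space with a rescaling that exploits the energy concentration scale $r_{c}^{\eps_{\ast}}$. First I would set up the functional framework: the space $S^{1}$, assembled from Strichartz, $X^{s,b}$, and null-frame atomic components in the tradition of \cite{Tat, KST, KT}, together with a dual space $N$ so that the linear wave estimate $\nrm{u}_{S^{1}} \aleq \nrm{u[0]}_{\dot{\H}^{1}} + \nrm{\Box u}_{N}$ holds. Within this framework I would prove multilinear bounds that control the right-hand side of \eqref{main-wave} in $N$ by products of $S^{1}$ norms of the inputs, exploiting the null structure in $2 \lap^{-1} \rd_{k} \bfQ(\rd^{\alp} A_{j}, \rd_{\alp} A_{j})$ and using the envelope-preserving cubic estimates already stated for $R$, $\DA^{3}$, $\bfA_{0}^{3}$, and $\DA_{0}^{3}$.

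The non-perturbative piece is the paradifferential high-low interaction inside the magnetic term $\bfP[A_{j}, \rd_{k} A_{j}]$. Following the small-energy approach of \cite{KT}, I would move this to the left-hand side to form a modified covariant d'Alembertian $\Box_{A}^{\mathrm{para}}$, and construct a pseudodifferential renormalization operator $U(A)$ of frequency-localized symbol class that approximately conjugates $\Box_{A}^{\mathrm{para}}$ to the flat $\Box$ modulo perturbative errors. The caloric gauge condition \eqref{main-compat}, which is a nonlinear analogue of the Coulomb relation $\rd^{k} A_{k} = 0$ with quadratically small right-hand side, is essential here: it provides the control on the non-solenoidal part of $A_{x}$ required for $U(A)$ to have the correct $L^{2}$ and $S^{1}$ mapping properties, with quantitative bounds depending on $(\nE, \hM)$ via Theorem~\ref{t:calC}. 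One also needs to verify that the elliptic expressions \eqref{eq:main-A0}--\eqref{eq:main-DA0} for $A_{0}$ and $\rd_{t} A_{0}$ contribute only perturbative terms when inserted back into the covariant d'Alembertian.

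With the renormalization in place, the core a priori estimate $\nrm{A_{x}}_{S^{1}[I]} \leq M_{\ast}(\nE,\hM)$ (part (c)) would be obtained by a bootstrap on the time interval $I$. To beat the possibly large data, I would rescale the solution by $r_{c}^{\eps_{\ast}}$, so that the rescaled initial data has unit energy concentration scale; the caloric size $\hM$ is scale invariant and stays bounded, while the energy in unit balls becomes $\leq \eps_{\ast}$. The smallness of $\eps_{\ast}$ enters the multilinear estimates through a spatial frequency-localization argument, gaining a positive power of $\eps_{\ast}$ on each high-frequency dyadic block and closing the bootstrap on the rescaled unit time interval. Part (a) then follows by smooth approximation plus persistence of regularity, part (b) by a standard density argument exploiting the frequency-envelope formulation of the estimates, and part (d) by analyzing the linear equation satisfied by the difference of two solutions as a paradifferential wave equation with coefficients controlled in $S^{1}$; the restriction $\sgm < 1$ is forced by a borderline logarithmic divergence in the null-form estimates for differences at the exact critical regularity, which is gained back by working slightly below.

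The hardest step is the uniform construction and use of the renormalization $U(A)$ across all caloric sizes up to $\hM$. In the small-energy setting of \cite{KT} the symbol of $U$ is automatically small and composition and invertibility are essentially perturbative, whereas for arbitrary $(\nE,\hM)$ one must quantify the nonlinear low-frequency background carefully and track the dependence of every multilinear and renormalization estimate on $\hM$. In particular, the trilinear interactions between a low-frequency paradifferential background $A_{<k}$ and high-frequency waves $A_{k}$ inside the cubic error $R$ must be closed without any top-order loss in $\hM$, which is the delicate quantitative point that drives the overall proof.
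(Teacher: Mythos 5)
The first thing to observe is that the present paper contains no proof of Theorem~\ref{t:local}: the statement is quoted verbatim from the second paper of the series, \cite[Theorem~1.12]{OTYM2}, in the background Section~\ref{sec:review}, and is used here purely as a black box (e.g.\ in Section~\ref{sec:proof}). So there is no argument in this paper against which your proposal can be checked; what you have written is an outline of how one might re-prove a central theorem of \cite{OTYM2}, whose actual proof occupies a large part of that separate paper.

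Taken as such an outline, your proposal identifies the right toolbox and is directionally consistent with the strategy of \cite{KT} and \cite{OTYM2}: the $S^{1}$/$N$ framework descending from \cite{Tat, KST}, null-structure multilinear estimates for \eqref{main-wave}, a pseudodifferential renormalization of the paradifferential magnetic interaction, the use of the caloric compatibility condition \eqref{main-compat} and of the elliptic expressions \eqref{eq:main-A0}--\eqref{eq:main-DA0} for the temporal components, frequency envelopes for the rough-data and difference bounds, and the loss to $\sgm<1$ in the weak Lipschitz dependence. But it is a roadmap rather than a proof: every genuinely hard ingredient (construction, invertibility and $S^{1}\to S^{1}$ mapping properties of the renormalization operator over a \emph{large} caloric background with quantitative $(\nE,\hM)$ dependence, the trilinear and higher null-form estimates, and the passage from a priori bounds to existence, uniqueness and continuous dependence) is asserted rather than established. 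Moreover, one concrete mechanism you invoke is dubious as stated: after rescaling so that $r_{c}=1$, smallness $\leq \eps_{\ast}$ of the energy in unit balls does \emph{not} make the high-frequency portion of the data globally small, nor does spatial localization yield a gain of a power of $\eps_{\ast}$ on each high-frequency dyadic block --- the total energy remains of size $\nE$, and the way the concentration scale actually produces the local time interval $[-r_{c},r_{c}]$ in \cite{OTYM2} is more delicate than a blockwise smallness gain. If your aim is to justify the theorem within this paper, the correct move is simply the citation; if your aim is to reprove \cite[Theorem~1.12]{OTYM2}, the proposal would need all of the above estimates carried out before it could be assessed as a proof.
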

We remark that bounds for the auxiliary variables $A_{0}, \rd_{0} A_{0}$ follow a-posteriori from the $S^{1}$ bound for $A_{x}$; see  \cite[Theorem~5.1]{OTYM2} for such bounds.

In particular, if the energy of the initial data set is smaller than
$\eps_{\ast} := \min \set{\eps_{\ast}(1), 1}$, then the
corresponding solution $A_{t,x}$ in caloric gauge exists globally and
obeys the bound
\[
\nrm{A_{x}}_{S^{1}[(-\infty, \infty)]} \leq M_{\ast}(\nE).
\]
Thus in particular this result also provides a caloric gauge version of the Coulomb
gauge small data result in \cite{KT}. 

One downside of using either 
the Coulomb or caloric gauge is that causality is lost. To remedy this, in \cite{OTYM2}
we prove that the well-posedness result can also be transferred to the temporal gauge
$A_0 = 0$:

\begin{theorem}[{\cite[Theorem~1.17]{OTYM2}}] \label{t:small-temp}
The hyperbolic Yang--Mills equation in $\bbR^{4+1}$   is globally well-posed in the temporal gauge for 
all initial data with small energy.   
\end{theorem}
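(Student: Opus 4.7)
The plan is to transfer the small-energy global well-posedness from the caloric gauge, already contained in Theorem~\ref{t:local} (for sufficiently small energy $\nE$, the energy concentration scale is effectively infinite, so the local-in-time caloric solution extends globally), to the temporal gauge by means of a dynamic gauge transformation. The key input is that the caloric gauge controls not only the spatial components via $\nrm{A_{x}^{cal}}_{S^{1}} \lesssim 1$, but, via the expressions \eqref{eq:main-A0}--\eqref{eq:main-DA0}, also the auxiliary temporal component $A_{0}^{cal}$ in Strichartz-type norms with sufficient $L^{1}_{t}$ integrability to drive a Gr\"onwall argument in the gauge transformation.

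Starting from small-energy initial data $(a, e)$ (say presented in the temporal gauge, so $a_{0} = 0$, with $e$ satisfying \eqref{eq:YMconstraint}), I would first invoke Theorem~\ref{t:ym-heat} and Theorem~\ref{t:data} to produce a gauge-equivalent caloric data set $(\tilde a, b) \in T^{L^{2}} \calC$, together with the initial spatial gauge transformation $O_{0}: \bbR^{4} \to \G$ with $\tilde a = \calG(O_{0}) a$. For small $\nE$ one has $\hM(\tilde a) \leq \bhM(\nE) < \infty$, so Theorem~\ref{t:local} yields a unique global caloric solution $A^{cal}_{\alpha}$. I would then construct the transfer map $U(t, x)$ to temporal gauge by solving the linear ODE
\begin{equation*}
	\rd_{t} U = U A^{cal}_{0}, \qquad U(0, x) = O_{0}^{-1}(x),
\end{equation*}
so that $A^{tem}_{\alpha} := \calG(U) A^{cal}_{\alpha}$ satisfies $A^{tem}_{0} = 0$ and restricts to $(a, e)$ at $t = 0$. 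The integrability of $A^{cal}_{0}$ ensures, via Gr\"onwall, that $U$ is an admissible gauge transformation, and $A^{tem}$ is the desired global temporal gauge solution.

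The main obstacle is establishing the full well-posedness package --- in particular continuous dependence on the initial data --- in the temporal gauge, where the nonlinear structure of \eqref{ym} is less amenable to direct analysis. This reduces to composing three continuous dependence statements: the Lipschitz dependence of the caloric representation $(\tilde a, b, O_{0})$ on $(a, e)$ from Theorem~\ref{t:data}, the continuous dependence of $A^{cal}$ on $(\tilde a, b)$ from Theorem~\ref{t:local}, and Gr\"onwall-type estimates for the linear ODE defining $U$ in terms of $A^{cal}_{0}$. A subtle additional point is that the temporal gauge condition $A_{0} = 0$ leaves a residual freedom of time-independent gauge transformations; this is fixed precisely by the choice $U(0) = O_{0}^{-1}$, which ensures the resulting temporal gauge solution realizes the prescribed initial data at $t = 0$.
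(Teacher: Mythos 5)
First, note that the present paper does not actually prove this statement: it is imported verbatim from the second paper of the series (\cite[Theorem~1.17]{OTYM2}), so the comparison below is against the approach of that companion paper as it is summarized in the series. Your high-level strategy --- solve the hyperbolic problem globally in the caloric gauge for small energy (Theorem~\ref{t:ym-heat}, Theorem~\ref{t:data}, Theorem~\ref{t:local}) and then transfer to the temporal gauge by the dynamic gauge transformation $\rd_{t}U = U A^{cal}_{0}$, $U(0)=O_{0}^{-1}$ --- is indeed the intended route. However, as written the proposal has two concrete gaps. The first is the claimed ``$L^{1}_{t}$ integrability of $A^{cal}_{0}$ sufficient for a Gr\"onwall argument.'' The caloric estimates do not give $A_{0}\in L^{1}_{t}L^{\infty}_{x}$: from \eqref{eq:main-A0} only the cubic error $\bfA_{0}^{3}$ carries $L^{1}_{t}$-type bounds, and the full component is controlled in norms like $\nrm{\nb A_{0}}_{\ell^{1}L^{2}\dot H^{1/2}}$, which do not embed into $L^{1}_{t}L^{\infty}_{x}$ in $\bbR^{4}$. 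Boundedness of $U$ is in any case free (since $\G$ is compact), so Gr\"onwall is not the issue; the real work is showing that $U$ is \emph{admissible}, i.e.\ $U_{;x},U_{;t}\in C_{t}H^{1}_{loc}$, and that the transformed connection is again a finite energy solution (a limit of regular solutions). At bare energy regularity the naive manipulations fail (e.g.\ $O_{;x}\cdot F$ is only $L^{4/3}_{loc}$), and the argument in the series avoids this by first carrying out the transfer for regular data --- where everything is classical --- and then passing to the limit using the uniform caloric bounds and continuous dependence; your proposal skips this regularization structure, which is where the actual analytic content lies.

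The second gap is uniqueness. ``Globally well-posed in the temporal gauge'' includes uniqueness of finite energy temporal-gauge solutions (this is precisely what is used later in the series, e.g.\ in Theorem~\ref{t:local-temp} and the causal patching arguments of \cite{OTYM2.5}), and constructing one temporal-gauge solution together with continuous dependence of \emph{that construction} on the data does not show that an arbitrary finite energy temporal-gauge solution with the same data coincides with it. One needs a separate uniqueness step: uniqueness for regular solutions in the temporal gauge (an energy/ODE-type argument available at subcritical regularity), combined with the definition of finite energy solutions as limits of regular ones and the weak Lipschitz dependence bounds, to conclude uniqueness in the rough class. Your final paragraph addresses continuous dependence of the composed map and the residual time-independent gauge freedom, both correctly, but the uniqueness component of well-posedness is left unaddressed.
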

This result includes existence, uniqueness, continuous dependence on
the initial data and propagation of higher regularity.  In particular
the finite energy solutions are identified as the unique limits of
regular solutions.  A downside of this theorem is that it does not
provide the $S^1$ regularity of solutions, or any other dispersive bounds.

The second main  result in \cite{OTYM2}  is the following theorem, which uses the energy dispersed norm 
$\ED$ defined on a time interval $I$ by 
\[
\| F\|_{\ED[I]} = \sup_{k \in \bbZ} 2^{-2k} \|P_k F\|_{L^\infty[I]}.
\]
The result asserts that caloric solutions to Yang--Mills with sufficiently small energy dispersion
are extendable and satisfy uniform bounds:

\begin{theorem} [Regularity of energy dispersed
  solutions {\cite[Theorem~1.15]{OTYM2}}] \label{t:ED} There exists a positive non-increa\-sing function
  $\eps(\nE)$ and a non-decreasing function
  $M(\nE)$ such that if $A_{t,x}$ is a solution (in the sense of
  Theorem~\ref{t:local}) to the Yang--Mills equation in caloric gauge
  on $I$ with energy $\nE$ and  that obeys
\begin{equation*}
	\nrm{F}_{\ED[I]} \leq \eps(\nE), \qquad \hM(A(0)) \leq 1, 
\end{equation*}
then it satisfies the a-priori bound
\begin{equation*}
	\nrm{A_x}_{S^{1}[I]} \leq M(\nE),
\end{equation*}
as well as
\begin{equation*}
\sup_{t \in I} \hM(A(t)) \ll 1.
\end{equation*}
Moreover, $A$ can be continued as a solution to the Yang--Mills
equation in caloric gauge past finite endpoints of $I$.
\end{theorem}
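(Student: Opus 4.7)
The plan is to combine a bootstrap argument on the $S^{1}$ norm with Bourgain's induction on energy, together with a paradifferential renormalization procedure that exploits the energy dispersion hypothesis. The base case of small energy is provided by the small data result in the caloric gauge, which in turn can be proved by adapting the Coulomb gauge analysis of \cite{KT}. For the inductive step, assuming the conclusion holds with some function $M(\cdot)$ for all energies $\leq \nE - \dlt$, I would fix a caloric solution with energy $\nE$ and $\nrm{F}_{\ED[I]} \leq \eps(\nE)$ sufficiently small, and set up a continuity argument in the length of a subinterval $I' \subset I$ on which the bootstrap bound $\nrm{A_{x}}_{S^{1}[I']} \leq 2 M(\nE)$ holds; the goal is to improve this to $M(\nE)$ so that $I'$ can be extended to all of $I$.

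To improve the $S^{1}$ bound, I would analyze \eqref{main-wave} with a frequency envelope $c_{k}$ capturing the $\dot H^{1}$ distribution of the initial data, and aim for a self-improving bound $\nrm{P_{k} A_{x}}_{S^{1}} \aleq c_{k}$. The principal non-perturbative contribution is the paradifferential interaction in which a high-frequency wave $P_{k} A$ propagates on a low-frequency background $P_{<k-c} A$. Following the strategy of \cite{KT, OT1, OT2, OT3}, this is handled by conjugating $P_{k} A$ by a microlocal phase $U_{<k-c}$ built as a pseudo-differential parametrix for parallel transport along null directions determined by $P_{<k-c} A$. The conjugated wave then satisfies a genuinely perturbative equation, with residual terms estimated using the null-frame $S^{1}$ machinery, $X^{s,b}$ spaces, and the cubic structure of \eqref{main-wave} built in Section~\ref{subsec:OTYM1}.

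The energy dispersion hypothesis enters as the source of smallness needed to close the bootstrap at large energy. Concretely, smallness of $\nrm{F}_{\ED}$ yields a divisibility property, allowing one to subdivide $I'$ into $N = N(\nE)$ sub-intervals on each of which the scale-invariant norms of $A$ that appear in the nonlinear estimates sit below a universal threshold; the residual nonlinearity is then perturbative on each piece and the frequency envelope is propagated piece by piece. The second conclusion $\hM(A(t)) \ll 1$ would be obtained in parallel using Theorem~\ref{t:global-heat}, together with uniform-in-$t$ control on the caloric data set inherited from the just-established $S^{1}$ bound. Finally, continuation past finite endpoints of $I$ would follow from the local well-posedness Theorem~\ref{t:local}: the $S^{1}$ bound enforces a uniform lower bound on the $\eps_{\ast}$-energy concentration scale, so the local existence time does not shrink to zero.

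The main obstacle I anticipate is controlling the renormalization $U_{<k-c}$ uniformly in the (possibly large) energy $\nE$. The frequency gap $c = c(\nE)$ must be chosen large enough that $U_{<k-c}$ is, in a suitable pseudo-differential class, a bounded perturbation of the identity despite the largeness of the low-frequency background; this is the mechanism by which \emph{frequency gap smallness}, first identified by Sterbenz and Tataru in \cite{ST1, ST2} and subsequently adapted to (MKG) in \cite{OT1, OT2, OT3}, compensates for the absence of data smallness. Making this quantitative in the Yang--Mills setting, where the curvature $F$ itself satisfies a wave equation and the caloric compatibility condition \eqref{main-compat} couples the temporal component $A_{0}$ non-trivially to $A_{x}$, is where the bulk of the technical work will lie.
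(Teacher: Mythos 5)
The first thing to note is that this paper does not prove Theorem~\ref{t:ED} at all: it is imported verbatim from the second paper of the series (\cite[Theorem~1.15]{OTYM2}) and used here as a black box, so there is no in-paper proof to compare yours against. Judged against the strategy actually carried out in \cite{OTYM2} (which descends from \cite{ST1, ST2} for wave maps and \cite{OT1, OT2, OT3} for Maxwell--Klein--Gordon), your outline has the right skeleton: paradifferential renormalization of \eqref{main-wave} by a pseudo-differential gauge transformation, energy dispersion as the source of smallness via terms that carry either an $\ED$ factor or a divisible spacetime norm, a frequency-envelope bootstrap, the frequency gap to tame the large low-frequency background, and Bourgain-style induction on energy.

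That said, what you have written is a program, not a proof, and the gaps are not only the expected mass of bilinear/null-frame estimates. Three specific points: (i) your induction on energy is never actually \emph{used} --- you assume the result for energies $\leq \nE - \dlt$ and then run a bootstrap on the full solution of energy $\nE$, but nowhere do you produce a lower-energy solution to which the induction hypothesis applies; in the Sterbenz--Tataru scheme this requires the frequency-localized splitting of the data into a low-frequency part of strictly smaller energy (whose evolution supplies the background) and a small-energy high-frequency remainder, and without that step the induction is vacuous. (ii) The conclusion $\sup_{t \in I} \hM(A(t)) \ll 1$ cannot come from Theorem~\ref{t:global-heat} plus the $S^{1}$ bound alone, since the hypothesis only gives $\hM(A(0)) \leq 1$; the smallness must be extracted from the energy dispersion hypothesis itself (smallness of $F$ in an $\ED$/$\dot W^{-1,4}$-type norm propagated along the Yang--Mills heat flow, in the spirit of \cite[Proposition~8.9]{OTYM1}), and this needs an argument. (iii) The continuation claim that the $S^{1}$ bound ``enforces a uniform lower bound on the $\eps_{\ast}$-energy concentration scale'' is asserted, not proved; an a priori $S^{1}$ bound does not by itself prevent energy concentration at small spatial scales as $t$ approaches an endpoint, and in Theorem~\ref{t:local} the local existence time is governed precisely by $r_c^{\eps_\ast}$ together with the caloric size, so this step has to be tied to (ii) and to a quantitative non-concentration statement rather than cited as obvious.
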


\subsection{Topological classes and large data solutions} \label{subsec:OTYM2.5}

Unlike the first two papers, the third one \cite{OTYM2.5} is concerned
with large data solutions which are not necessarily topologically
trivial, and thus cannot be directly studied using the global caloric
gauge. The goal of \cite{OTYM2.5} is two-fold:
\begin{itemize}
\item To describe finite energy initial data sets topologically and analytically.

\item To provide a good local theory for finite energy solutions.
\end{itemize}

Here we work in two settings:

\begin{enumerate}[label=\alph*)]
\item  For initial data in $\bbR^4$ and solutions in $\R^{1+4}$, or time sections thereof.

\item For initial data in a ball $B_R$ and solutions in the corresponding domain of dependence
$\calD(B_R) = \{ |x|+|t| < R\}$
or time sections thereof.
\end{enumerate}
In terms of the initial data, in addition to the energy, a key role is played by 
the $\eps$-energy concentration scale localized to the ball $B_{R}$
\[
r_c^{\eps} = \sup  \{ r > 0 : \nE_{B_{r}(x) \cap B_R}[(a,e)] \leq \eps \ \forall x \in B_R \},
\]
as well as the outer concentration radius
\[
R_c^{\eps} =  \inf  \{ r > 0 : \nE_{B_{r}(x)}[(a,e)] \leq \eps  \text{ for some } x \in \bbR^4\  \}.
\]

\subsubsection{Finite energy data sets}

We begin with an excision result, which provides small energy extensions outside an annulus:
 
\begin{proposition}[{\cite[Theorem~1.16]{OTYM2.5}}]\label{t:chop-small}
Let $(a,e)$ be a small energy data set in $B_4\setminus B_1$. Then we can find a small energy 
exterior data set $(\ta,\te)$ in $\R^4 \setminus B_1$ which agrees with $(a,e)$ in $B_2\setminus B_1$.
Furthermore, if $(a,e)$ is smooth then $(\ta,\te)$ can also be chosen to be smooth. 
\end{proposition}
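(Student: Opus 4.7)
The plan combines an Uhlenbeck-type gauge fixing on the annulus with a cutoff-plus-elliptic-correction argument. The main obstacle is the nonlinear Gauss constraint $\covD^{j} e_{j} = 0$: a naive smooth cutoff of $e$ necessarily violates the constraint in the transition region, and the required correction must be small in $L^{2}$ and must vanish identically on $B_{2} \setminus B_{1}$ so as not to disturb the prescribed values there.

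\emph{Step 1 (Gauge fixing).} The small energy hypothesis controls $\nrm{f}_{L^{2}(B_{4} \setminus B_{1})} \ll 1$, but does not bound $\nrm{a}_{\dot{H}^{1}}$ in general. I would invoke an Uhlenbeck-type lemma on the simply connected annulus $B_{4} \setminus B_{1} \subset \bbR^{4}$ to produce a gauge transformation $O : B_{4} \setminus B_{1} \to \G$ such that $a' := \calG(O) a$ obeys $\nrm{a'}_{\dot{H}^{1}(B_{4} \setminus B_{1})} \aleq \nrm{f}_{L^{2}} \ll 1$; moreover, $O$ is smooth whenever $(a, e)$ is smooth by elliptic regularity of the gauge-fixing equation. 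Setting $e' := Ad(O) e$, the pair $(a', e')$ still has small energy and satisfies $\covD^{j}_{a'} e'_{j} = 0$.

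\emph{Step 2 (Cutoff and correction).} Pick a radial cutoff $\chi \in C^{\infty}_{c}(\bbR^{4})$ with $\chi \equiv 1$ on $B_{2}$ and $\chi \equiv 0$ outside $B_{5/2}$, and define
\begin{equation*}
a'' := \chi a', \qquad e^{(0)} := \chi e',
\end{equation*}
extended by zero to all of $\bbR^{4} \setminus B_{1}$. A direct computation using $\covD^{j}_{a'} e'_{j} = 0$ gives
\begin{equation*}
\frkf := \covD^{j}_{a''} e^{(0)}_{j} = (\rd^{j} \chi) e'_{j} + \chi(\chi - 1) [(a')^{j}, e'_{j}],
\end{equation*}
which is supported in the transition annulus $B_{5/2} \setminus B_{2}$ and is small in $L^{2} \cap \dot{H}^{-1}$. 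To restore the constraint, solve the covariant Poisson equation
\begin{equation*}
\Delta^{a''} \phi = \frkf \text{ on } \bbR^{4} \setminus \bar B_{2}, \qquad \phi|_{\rd B_{2}} = 0, \quad \phi \to 0 \text{ at infinity},
\end{equation*}
and extend $\phi$ by zero to $B_{2} \setminus B_{1}$. Smallness of $a''$ makes $\Delta^{a''}$ a small perturbation of the flat Laplacian, so standard exterior elliptic theory in $\bbR^{4}$ yields a unique solution with $\nrm{\covD_{a''} \phi}_{L^{2}} \aleq \nrm{\frkf}_{\dot{H}^{-1}} \ll 1$; the $|x|^{-2}$ decay of the $4$D fundamental solution with compactly supported source places $\covD \phi$ in $L^{2}$ at infinity. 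Setting $e'' := e^{(0)} - \covD_{a''} \phi$ produces a small-energy pair $(a'', e'')$ on $\bbR^{4} \setminus B_{1}$ satisfying the Gauss constraint, which coincides with $(a', e')$ on $B_{2} \setminus B_{1}$ since $\chi \equiv 1$ and $\phi \equiv 0$ there.

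\emph{Step 3 (Transform back).} Extend $O$ smoothly to $O : \bbR^{4} \setminus B_{1} \to \G$ (e.g.\ by smoothly interpolating to a constant outside $B_{5}$) and define the final extension
\begin{equation*}
(\ta, \te) := (\calG(O^{-1}) a'', Ad(O^{-1}) e'').
\end{equation*}
Since energy and the Gauss constraint are gauge invariant, $(\ta, \te)$ is a small-energy, constraint-satisfying data set on $\bbR^{4} \setminus B_{1}$. On $B_{2} \setminus B_{1}$ we have $(a'', e'') = (a', e')$ and $O$ is the original Uhlenbeck transformation, so $\calG(O^{-1})(a', e') = (a, e)$, giving the required literal agreement. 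The smoothness statement follows from smoothness of $O$ (elliptic regularity for Uhlenbeck), of $\chi$, of $\frkf$, and of the elliptic solution $\phi$.
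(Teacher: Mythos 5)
First, a bookkeeping remark: the present paper does not prove Proposition~\ref{t:chop-small} at all; it is imported verbatim from \cite[Theorem~1.16]{OTYM2.5}, so your argument can only be judged on its own merits, not against a proof given here. Your overall strategy (Uhlenbeck gauge on the annulus, cutoff, then an elliptic correction to restore the Gauss constraint, then undo the gauge) is a reasonable gluing scheme, and your computation of the constraint error $\frkf = (\rd^{j}\chi)e'_{j} + \chi(\chi-1)[(a')^{j},e'_{j}]$ is correct. But Step~2 has a genuine gap: you solve $\Delta^{a''}\phi = \frkf$ on $\bbR^{4}\setminus \bar{B}_{2}$ with \emph{Dirichlet} data $\phi|_{\rd B_{2}}=0$ and then extend the correction by zero into $B_{2}\setminus B_{1}$. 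The Dirichlet condition kills $\phi$ on the interface but not its normal derivative, so the zero extension of $\covD_{a''}\phi$ has a jump in its radial component across $\rd B_{2}$. Consequently the distributional covariant divergence of $e'' = e^{(0)} - \covD_{a''}\phi$ on $\bbR^{4}\setminus B_{1}$ contains a surface measure $(\nu^{j}\covD_{j}\phi)\,d\sigma_{\rd B_{2}}$, which is generically nonzero: the constraint holds in the two open regions but fails across the interface, so $(a'',e'')$ is not an admissible data set. You cannot prescribe both Dirichlet and Neumann data to remove this term, so the argument as written does not close.

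The repair is to use the \emph{natural} (covariant Neumann) boundary condition instead: solve the variational problem for $\int \brk{\covD_{a''}\phi, \covD_{a''}\psi}$ on $\dot{H}^{1}(\bbR^{4}\setminus B_{2})$ with no boundary condition imposed in the space, which yields $\nu^{j}\covD_{j}\phi = 0$ on $\rd B_{2}$ (Lax--Milgram applies since $a''$ is small in $\dot H^{1}\cap L^{4}$, and on an unbounded domain there is no compatibility condition). Extending the one-form $\covD_{a''}\phi$ (not $\phi$ itself) by zero then matches normal components across $\rd B_{2}$, the surface term disappears, and $\covD^{j}_{a''}e''_{j}=0$ holds distributionally on all of $\bbR^{4}\setminus B_{1}$ while $e''=e'$ on $B_{2}\setminus B_{1}$. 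Two smaller points: the commutator part of $\frkf$ is only in $L^{4/3}\subset \dot{H}^{-1}$, not $L^{2}$, which is all you need but should be stated correctly; and Step~1 requires an Uhlenbeck-type lemma on an annulus with quantitative $\dot H^{1}$ control (not the ball version quoted as Theorem~\ref{t:uhl} here), which is indeed available in \cite{OTYM2.5} but should be cited as such. Finally, in Step~3 the extension of $O$ past $\rd B_{4}$ should be taken radially constant, $O(r,\Tht):=O(4,\Tht)$, rather than interpolated to a constant, since $O|_{\rd B_{4}}$ need not be null-homotopic; this does not affect the energy, which is gauge invariant.
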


For initial data sets in a ball, it is useful to work with a good gauge:

\begin{proposition} [{\cite[Theorem~1.4]{OTYM2.5}}]
 Given an initial data $(a,e)$ in $B_R$ with finite energy and $\eps$-energy concentration scale $r_C$, there exists 
a gauge-equivalent initial data $(\ta,\te)$ in $B_R$ which satisfies the bound
\begin{equation}
\| \ta\|_{\dot H^1 \cap L^4} \lesssim_{\eps, \frac{r_C}{R}} 1.
\end{equation}
\end{proposition}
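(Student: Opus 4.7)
The plan is to adapt Uhlenbeck's small-energy Coulomb gauge theorem via a localization and patching argument that exploits the hypothesis on the $\eps$-energy concentration scale. First I would fix $\eps$ below Uhlenbeck's threshold in dimension four, and cover $B_R$ by a finite collection of balls $\{B_i\}_{i=1}^{N}$ of radius comparable to $r_{C}$ with uniformly bounded overlap, so that $N \aleq (R/r_{C})^{4}$. By slightly enlarging each $B_i$ to $\tld B_i$ while preserving the bounded-overlap property, the energy concentration hypothesis guarantees $\nE_{\tld B_i}[a, e] \aleq \eps$. On each $\tld B_i$ I invoke Uhlenbeck's lemma to produce $O_i: \tld B_i \to \G$ such that $a^{(i)} := \calG(O_i) a$ satisfies the Coulomb condition $\rd^{j} a^{(i)}_j = 0$ with Neumann-type boundary data and the quantitative bound
\begin{equation*}
\| a^{(i)} \|_{\dot H^1(\tld B_i) \cap L^4(\tld B_i)} \aleq \| f \|_{L^2(\tld B_i)} \aleq \eps^{1/2}.
\end{equation*}

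Next I would patch these local Coulomb gauges into a single gauge transformation $O$ on $B_R$. Ordering the balls so that each $B_i$ meets the union of its predecessors in a fixed open set, at step $i$ I blend the current gauge to $O_i$ on the overlap by a partition-of-unity interpolation carried out at the level of $\G$ through the exponential map applied to $\log(O_i^{-1} O)$, which is small in $C^0 \cap H^2$ by elliptic regularity for the PDE satisfied by each transition $O_i O_j^{-1}$ (both conjugates of $a$ being divergence free, the transition satisfies a uniformly elliptic system with $L^4$ coefficients and $L^2$ data). The resulting $\tld a = \calG(O) a$ differs from each $a^{(i)}$ on $B_i$ by a bounded conjugation with controlled derivatives, so summing the local bounds yields
\begin{equation*}
\| \tld a \|_{\dot H^1(B_R) \cap L^4(B_R)}^{2} \aleq N \eps + (\text{patching errors}) \aleq_{\eps, R/r_{C}} 1.
\end{equation*}

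The main obstacle will be the patching step. Because $\G$ is nonabelian, the naive convex-combination partition-of-unity trick is unavailable, and one must control each transition $O_i O_j^{-1}$ in a sufficiently strong norm (typically $C^0 \cap H^2$) so that the blended gauge remains close to the local Coulomb choices in $\dot H^1 \cap L^4$. This in turn demands careful bookkeeping of the cumulative constants through the $\aleq (R/r_{C})^{4}$ inductive steps, which is the source of the $R/r_{C}$ dependence in the final estimate. Since the statement makes no further claim about $\tld e$, the proof is complete once $\tld a$ is constructed; the electric field transforms by the standard rule $\tld e = \text{Ad}(O) e$.
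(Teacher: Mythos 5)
This proposition is not proved in the present paper at all: it is imported verbatim from \cite[Theorem~1.4]{OTYM2.5}, so there is no in-paper argument to compare against line by line. Your outline — covering $B_R$ by balls at scale $\sim r_C$ so that the $\eps$-concentration hypothesis puts each ball below the Uhlenbeck threshold, taking local Coulomb gauges with $\|a^{(i)}\|_{\dot H^1\cap L^4}\aleq \|f\|_{L^2}$, observing that the transitions between two small Coulomb gauges have $O_{ij;x}$ controlled in $\dot H^1\cap L^4$ and are therefore close to constants, and then gluing inductively with constants allowed to degrade in $R/r_C$ — is essentially the strategy of \cite{OTYM2.5} and is the same patching mechanism this paper itself invokes in Section~\ref{sec:no-null} (transition maps close to constants plus the standard argument of \cite[Proposition~3.2]{MR648356}), with the understood caveat that $\eps$ is taken below the universal small-energy threshold, which is how the result is used throughout the series.
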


Consider now finite energy initial data $(a,e)$ in $\R^4$. Here we need to distinguish 
between $a$ in different topological classes. We begin with the topologically 
trivial maps:

\begin{theorem}[\cite{OTYM2.5}] \label{zero-class}
A finite energy connection $a$ is topologically trivial if and only if it admits a representation $a \in \dot H^1$
in a suitable gauge.
\end{theorem}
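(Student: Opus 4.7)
The plan is to prove the two directions of the equivalence separately, the easier $(\Leftarrow)$ implication by a density argument and the harder $(\Rightarrow)$ direction by a patching construction based on Uhlenbeck's local Coulomb gauge theorem.

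For $(\Leftarrow)$, suppose $a \in \dot{H}^{1}$ in some gauge. The key observation is that smooth compactly supported connections are dense in $\dot{H}^{1}$, and every such connection $a_{n}$ is manifestly in the topological class $[0]$ since the straight-line path $t \mapsto t a_{n}$ connects it to $0$ through finite energy connections. I then argue that convergence in $\dot{H}^{1}$ on $\bbR^{4}$ implies convergence in the finite energy topology: this uses the Sobolev embedding $\dot{H}^{1} \hookrightarrow L^{4}$ on $\bbR^{4}$ to control the quadratic term $[a, a]$ in the curvature, so that $a_{n} \to a$ in $\dot{H}^{1}$ forces $f_{n} \to f$ in $L^{2}$. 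Since topological classes are connected components in the finite energy topology and $[0]$ is therefore closed, we conclude $a \in [0]$.

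For $(\Rightarrow)$, suppose $a \in [0]$. The plan is to construct a global good gauge by patching local Coulomb gauges. First, cover $\bbR^{4}$ with balls $\set{U_{i}}$ of bounded overlap on each of which the local energy of $a$ is below the Uhlenbeck small-energy threshold; this is possible since $\nE(a) < \infty$. On each $U_{i}$, Uhlenbeck's theorem furnishes a gauge transformation $O_{i}$ so that $\calG(O_{i}) a$ lies in $\dot{H}^{1}(U_{i}) \cap L^{4}(U_{i})$ with norm controlled by the local energy. On overlaps $U_{i} \cap U_{j}$, the transition functions $O_{i} O_{j}^{-1}$ form a $\G$-valued cocycle whose cohomology class is precisely the topological obstruction distinguishing $[a]$ from $[0]$. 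The hypothesis $[a] = [0]$ is exactly the statement that this cocycle is a coboundary: there exist maps $h_{i}: U_{i} \to \G$ with $O_{i} O_{j}^{-1} = h_{i} h_{j}^{-1}$, so the adjusted transformations $h_{i}^{-1} O_{i}$ agree on overlaps and glue into a single global gauge transformation $O$. Summing the local Coulomb bounds, using the bounded overlap of the cover, then yields $\calG(O) a \in \dot{H}^{1}$.

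The main obstacle I anticipate is the patching step: producing the trivializing maps $h_{i}$ with enough regularity to preserve the local $\dot{H}^{1}$ estimates is delicate in the non-abelian setting, and requires carefully matching the topological cocycle with the analytic bounds from Uhlenbeck's theorem. A potentially cleaner alternative is to attempt to run the Yang--Mills heat flow from $a$ and exploit the Dichotomy Theorem~\ref{t:ym-dich}: global existence of the heat flow with finite caloric size would immediately yield, via Theorem~\ref{t:global-heat}, a caloric representative in $\dot{H}^{1}$; however, ruling out the bubbling-off alternative for topologically trivial data of arbitrary energy is itself nontrivial, and would rest on a careful argument using the Bogomolnyi bound~\eqref{ch-vs-E} together with Theorem~\ref{t:gs} to show that any harmonic bubble carries a definite topological charge incompatible with $[a] = [0]$.
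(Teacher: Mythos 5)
You should first note that the present paper does not prove this statement at all: it is imported from \cite{OTYM2.5}, where the proof runs through the Good Global Gauge Theorem (quoted here as Theorem~\ref{thm:goodrep}). There one shows every finite energy connection has a representative $a = -\chi O_{(\infty);x} + b$ with $b \in \dot H^{1}$ and $O_{(\infty)}$ a $0$-homogeneous $\G$-valued map, the topological class being recorded by the homotopy class of $O_{(\infty)}\colon \bbS^{3} \to \G$; triviality of that class allows one to gauge away the $-\chi O_{(\infty);x}$ tail, and conversely an $\dot H^{1}$ representative corresponds to $O_{(\infty)} = \mathrm{id}$. Your $(\Leftarrow)$ direction is essentially sound, and can even be simplified: for $a \in \dot H^{1}$ the straight-line path $t \mapsto t a$ already stays in the finite energy class (since $\dot H^{1} \hookrightarrow L^{4}$ puts $t\,da + t^{2}[a,a]$ in $L^{2}$, continuously in $t$), so no density argument or closedness-of-components discussion is needed.

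The $(\Rightarrow)$ direction, however, has a genuine gap. Over the contractible base $\bbR^{4}$ the \v{C}ech cocycle $O_{i}O_{j}^{-1}$ built from local Uhlenbeck gauges is \emph{always} a coboundary, whatever the class of $a$: every $\G$-bundle over $\bbR^{4}$ is trivial. The topological class of a finite energy connection is not a bundle class over $\bbR^{4}$ but an obstruction at spatial infinity (equivalently, a bundle over the one-point compactification, or the class $[O_{(\infty)}] \in \pi_{3}(\G)$ in Theorem~\ref{thm:goodrep}). So your assertion that $[a]=[0]$ ``is exactly'' triviality of the cocycle is false, and the step where the hypothesis must actually be used is missing: one can always glue the local gauges into a global one, but without controlling the trivializing maps $h_{i}$ uniformly over the infinitely many patches accumulating at infinity, the glued representative generically carries a tail of the form $\chi\, O_{(\infty);x} \sim |x|^{-1}$, which is borderline \emph{not} in $\dot H^{1}$ (log-divergent), and removing it is possible precisely when $[O_{(\infty)}]$ is null-homotopic --- that is the content of the theorem, not a formality. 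Your proposed heat-flow alternative does not repair this: Theorems~\ref{t:global-heat}, \ref{t:ym-dich} and \ref{t:ym-heat} all presuppose $a \in \dot H^{1}$, which is exactly what the $(\Rightarrow)$ direction must produce, so that route is circular (and bubbling could not be excluded for arbitrary energy in any case, since Theorem~\ref{t:ym-heat} is only subthreshold).
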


Finally, for topologically nontrivial initial data in $\R^n$ we also can find a good global gauge:

\begin{theorem} [Good global gauge theorem {\cite[Theorem~1.5]{OTYM2.5}}] \label{thm:goodrep}
Let $a \in H^1_{loc}$ be a finite energy connection. 
Then there exists a global representative $a$ such that 
\begin{equation*}
  a = - \chi O_{(\infty); x} + b
\end{equation*}
where $1-\chi$ is smooth and compactly supported, $O_{(\infty)} (x)$ is a smooth $0$-homogeneous map taking values in $\G$ and $b \in \dot{H}^{1}$. 
\end{theorem}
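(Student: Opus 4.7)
My plan is to extract the topological nontriviality of $a$ into a smooth $0$-homogeneous pure gauge at infinity, leaving an $\dot{H}^{1}$ remainder. The key input is that finite energy means $f \in L^{2}(\bbR^{4})$, so the energy on large dyadic annuli at infinity tends to zero, placing us in the small-energy regime.

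\textbf{Small-energy Uhlenbeck gauges and patching.} Fix $k_{0}$ large enough that for every $k \geq k_{0}$, the energy of $a$ on the (slightly overlapping) dyadic annulus $A_{k} = \set{2^{k} \leq \abs{x} < 2^{k+2}}$ is below the Uhlenbeck small-energy threshold (Theorem~\ref{t:harmonic-reg} and its companion gauge lemma). Rescale each $A_{k}$ to unit scale and apply Uhlenbeck's small-energy gauge construction to produce transformations $O_{k}$ such that $a^{(k)} := \calG(O_{k}) a$ lies in a Coulomb-type gauge with $\nrm{a^{(k)}}_{\dot{H}^{1}(A_{k})} \aleq \nrm{f}_{L^{2}(A_{k})}$. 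On each overlap $A_{k} \cap A_{k+1}$, the transitions $g_{k} := O_{k} O_{k+1}^{-1}$ are $\G$-valued with small derivative, hence close to a constant. Iteratively twisting each $O_{k}$ by these $\G$-valued corrections aligns the annular gauges into a single transformation $O$ defined on $\set{\abs{x} \geq 2^{k_{0}}}$ for which $\tilde{a} := \calG(O) a \in \dot{H}^{1}$ on the exterior with dyadically summable norms.

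\textbf{Extracting $O_{(\infty)}$.} The asymptotic behavior of $O$ on large spheres encodes the topological class. From the $\dot{H}^{1}$ control of $\tilde{a}$ and the identity $\partial_{j} O \cdot O^{-1} = O a_{j} O^{-1} - \tilde{a}_{j}$, the boundary traces of $O$ on $\set{\abs{x} = R}$ have bounded Sobolev trace norm and, along a suitable subsequence $R \to \infty$, converge to a limit $O_{\infty} : S^{3} \to \G$. Mollifying on the compact Lie group $\G$ produces a smooth representative on $S^{3}$ in the same homotopy class, whose $0$-homogeneous extension defines $O_{(\infty)} : \bbR^{4} \setminus \set{0} \to \G$, smooth away from the origin. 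Since the radial derivative of $O$ is integrable from the exterior $\dot H^1$ bound, the approximation $O(R\omega) \to O_{(\infty)}(\omega)$ is quantitative in a useful norm.

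\textbf{Final decomposition and verification.} Choose a cutoff $\chi$ with $\chi \equiv 1$ for $\abs{x} \geq 2^{k_{0}+2}$ and $\chi \equiv 0$ near the origin. Apply one further gauge transformation matching $O$ to $O_{(\infty)}$ in the asymptotic region (using smoothness of $O_{(\infty)}$ and smallness of the Uhlenbeck remainder), and smoothly interpolating to the identity in the interior; this produces the desired representative of $a$. Setting $b := a + \chi O_{(\infty);x}$, one checks that on each annulus $A_{k}$ with $k \geq k_{0}$ the gradient $\nabla b$ obeys $\nrm{\nabla b}_{L^{2}(A_{k})} \aleq \nrm{f}_{L^{2}(A_{k})} + (\text{approximation error from } O \to O_{(\infty)})$, both of which are dyadically summable. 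Interior regularity of $b$ follows from $a \in H^{1}_{loc}$ and the smoothness of $\chi O_{(\infty);x}$ away from the origin, and summing in $k$ gives $b \in \dot{H}^{1}(\bbR^{4})$.

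The main obstacle I anticipate is the non-abelian patching across infinitely many annuli in the second step: showing the iterated transitions compose consistently while preserving dyadic $\dot{H}^{1}$ summability is the crux of any Uhlenbeck-type global gauge construction, and relies essentially on the summable smallness $\nrm{f}_{L^{2}(A_{k})}^{2}$ coming from $f \in L^{2}(\bbR^{4})$. Once the exterior gauge $O$ is built, extracting a smooth $0$-homogeneous limit in the correct homotopy class (Step 3) and verifying the Sobolev bound for $b$ (Step 4) are more routine, though the latter uses the marginal borderline scaling that $\partial O_{(\infty)} \sim \abs{x}^{-1}$ correctly cancels the asymptotic part of $a$.
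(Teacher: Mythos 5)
First, a point of reference: this paper does not prove Theorem~\ref{thm:goodrep} at all — it is imported verbatim from \cite[Theorem~1.5]{OTYM2.5} — so there is no in-paper proof to compare against. Your overall strategy (dyadic annuli at infinity where $\nrm{f}_{L^2}$ is small, rescaled Uhlenbeck gauges, patching via transition maps close to constants along the chain of annuli, then an asymptotic homotopy class realized by a smooth $0$-homogeneous map) is indeed the standard route and is in the spirit of the argument in \cite{OTYM2.5}; your Step~1, including the identification of the non-abelian patching with square-summable smallness $\nrm{f}_{L^2(A_k)}$ as the crux, is sound.

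However, Step~2 contains a genuine gap. You extract $O_{(\infty)}$ as a limit of sphere traces of the exterior gauge transformation $O$, justified by ``bounded Sobolev trace norm'' and ``the radial derivative of $O$ is integrable from the exterior $\dot H^1$ bound.'' But from $\tilde a = \calG(O)a$ one only gets $\rd O\, O^{-1} = Ad(O) a - \tilde a$: the derivative of $O$ is controlled by the \emph{original} connection $a$, which is merely $H^1_{loc}$ with $f \in L^2$ and has no decay or integrated control at infinity — the exterior $\dot H^1$ bound controls $\tilde a$, not $a$, hence not $\nb O$. A flat example shows the claim fails: take $a = -U_{;x}$ with $U(x) = W_{k(|x|)}(x/|x|)$, where $W_k : \bbS^3 \to \G$ have derivatives growing with $k$ (e.g.\ $W_k = \exp(\sin(k\,\omega_1)X)$) and $k(r) \to \infty$; then $f = 0$, the good representative is $b = 0$, but the traces of the gauge transformation on spheres $\abs{x}=R$ have unbounded derivatives and no subsequence converging to a $\G$-valued map in any norm strong enough to preserve a homotopy class (weak-$*$ limits need not even take values in $\G$, so the subsequent mollification step is not justified either). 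The correct device is weaker and suffices: one only needs the \emph{homotopy class} of $O\restriction_{\set{\abs{x}=R}}$, which is well defined for a.e.\ $R$ because $O \in W^{1,4}_{loc} \cap H^2_{loc}$ gives $W^{1,4} \subset \mathrm{VMO}$ traces, and is independent of $R$; one then \emph{chooses} $O_{(\infty)}$ to be any smooth $0$-homogeneous map in the appropriate (inverse) class, so that the composite $O_{(\infty)} O$ is nullhomotopic on large spheres and can be extended/corrected to a globally admissible gauge transformation — an extension argument for rough nullhomotopic sphere maps, not a ``smooth interpolation to the identity.'' This gap propagates to Step~4: your bound for $\nb b$ invokes an ``approximation error from $O \to O_{(\infty)}$'' which is not controlled once the convergence claim is dropped; in the corrected argument the $\dot H^1$ bound for $b$ comes instead from $b = Ad(O_{(\infty)})\tilde a$ plus the (square-summable) gluing errors, with no approximation of $O$ by $O_{(\infty)}$ needed.
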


Here one can identify the topological class of $a$ with the homotopy class $[O]$ of $O:\mathbb S^3 \to \G$.
in particular $O$ in the last theorem can be chosen arbitrarily within this homotopy class.

\subsubsection{Finite energy solutions}

A consequence of \cite{KT} and of  the first two papers in the series \cite{OTYM1, OTYM2} 
is that the small data problem for the $4+1$ dimensional 
hyperbolic Yang--Mills equation is well-posed in several gauges: Coulomb, caloric, and temporal.  
 In \cite{OTYM2.5}  we exploit  the temporal gauge small data result, combined with causality,
to obtain results for the large data problem.   

For the subsequent results, set  $r_c = r_c^{\eps_0}$ where  $\eps_0 \ll 1$ is  the energy bound for the small data result.
The main local-in-time result is as follows:
\begin{theorem}[{\cite[Theorem~1.22]{OTYM2.5}}]\label{t:local-temp}

a) For each finite energy data set $(a,e)$ in $\bbR^4$  with concentration scale $r_c$ 
there exists a unique finite energy  solution $A$ to \eqref{ym} in the time interval $[-r_c,r_c]$ in the 
temporal gauge $A_0 = 0$, depending continuously on the initial data. Furthermore, 
any other finite energy solution with the same data must be gauge equivalent to $A$.

b) The same result holds for data in a ball $B_R$ and  the solution in
the corresponding domain of uniqueness $\calD(B_{R}) \cap (I \times \bbR^{4})$.
\end{theorem}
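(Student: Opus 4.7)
The plan is to prove Theorem~\ref{t:local-temp} by a patching argument built on the small data temporal gauge global well-posedness result from \cite{OTYM2}, exploiting crucially that causality is respected by the temporal gauge $A_{0}=0$. I describe part (a); part (b) proceeds identically, with all localizations restricted to $\calD(B_{R})$.

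\emph{Localization via excision.} For each point $x_{0}\in\bbR^{4}$, the definition of $r_{c}=r_{c}^{\eps_{0}}$ yields $\nE_{B_{r_{c}}(x_{0})}[(a,e)]\leq\eps_{0}$; by a finite covering argument the energy on $B_{2r_{c}}(x_{0})$ is also controlled by a dimensional multiple of $\eps_{0}$. Applying a rescaled version of Proposition~\ref{t:chop-small} in an annulus around $\rd B_{r_{c}}(x_{0})$, I produce a small energy initial data set $(\ta^{(x_{0})},\te^{(x_{0})})$ on $\bbR^{4}$ that coincides with $(a,e)$ on $B_{r_{c}}(x_{0})$. The small data global well-posedness in temporal gauge then yields a unique finite energy temporal gauge solution $A^{(x_{0})}$ on $\bbR^{1+4}$.

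\emph{Patching via causality.} Since causality is preserved by the temporal gauge, the restriction of $A^{(x_{0})}$ to the backward cone $\set{(t,x):|x-x_{0}|+|t|\leq r_{c}}$ depends only on the data in $B_{r_{c}}(x_{0})$. Where two such cones overlap, the two candidate solutions are both finite energy temporal gauge solutions of the same restricted Cauchy problem, and uniqueness in the small data result of \cite{OTYM2} forces them to agree. Consequently the prescription $A(t,x):=A^{(x_{0})}(t,x)$ for any $x_{0}$ with $|x-x_{0}|+|t|\leq r_{c}$ is unambiguous and defines a finite energy temporal gauge solution of \eqref{ym} on $[-r_{c},r_{c}]\times\bbR^{4}$. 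Continuous dependence descends from the small data continuous dependence of each $A^{(x_{0})}$, uniformly in $x_{0}$.

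\emph{Uniqueness up to gauge, and the main obstacle.} For any other finite energy solution $\bar{A}$ with the same data, I pass locally on each $\calD(B_{r_{c}}(x_{0}))$ to the temporal gauge by an admissible gauge transformation $O^{(x_{0})}$; small data uniqueness of \cite{OTYM2} identifies the result with $A^{(x_{0})}$, hence with $A$, and the transition functions $O^{(x_{0})}(O^{(x_{1})})^{-1}$ on overlaps, being time-independent residual temporal gauge transformations intertwining the same pair of connections, can be glued into a single admissible gauge transformation $O$ on the slab relating $\bar A$ to $A$. The delicate point underlying this step is that the temporal gauge retains a time-independent residual spatial gauge freedom, so a priori $A^{(x_{0})}$ and $A^{(x_{1})}$ might differ by a nontrivial residual gauge on the overlap of their dependence cones; the resolution is that this residual gauge is pinned by the initial data itself on each $B_{r_{c}}(x_{j})$, and since the original $(a,e)$ genuinely agrees (not merely gauge-agrees) on the overlap $B_{r_{c}}(x_{0})\cap B_{r_{c}}(x_{1})$, both residual gauges are pinned to the same one and the two local solutions literally coincide.
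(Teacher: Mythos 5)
Your architecture --- excise the data around each ball of radius $r_c$ using (a rescaled) Proposition~\ref{t:chop-small}, solve each localized problem with the small-energy temporal-gauge global result of \cite{OTYM2}, and patch the local solutions on domains of dependence --- is exactly the route the paper attributes to \cite{OTYM2.5} (``the temporal gauge small data result, combined with causality'', implemented through initial data surgery). So the skeleton is right, and your observation that the local temporal gauge transformations $O^{(x_j)}$ in the uniqueness step all solve the same ODE $\rd_t O = O\bar{A}_0$ with $O\vert_{t=0}=Id$, so that no genuine residual-gauge gluing problem arises, is also correct (and simpler than the discussion you give).

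The genuine gap is the step where you assert that two local candidates ``agree on the overlap'' because of ``uniqueness in the small data result of \cite{OTYM2}''. That theorem compares two finite-energy temporal-gauge solutions with the \emph{same global data} on $\bbR^4$; here the excised data sets $(\ta^{(x_0)},\te^{(x_0)})$ and $(\ta^{(x_1)},\te^{(x_1)})$ agree only on the lens $B_{r_c}(x_0)\cap B_{r_c}(x_1)$, and since $\calD(B_{r_c}(x_0))\cap\calD(B_{r_c}(x_1))=\calD(B_{r_c}(x_0)\cap B_{r_c}(x_1))$, what you actually need is a localized uniqueness/finite-speed-of-propagation statement at energy regularity: two finite-energy temporal-gauge solutions whose data coincide on $U$ coincide on $\calD(U)$. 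This is not a formal consequence of the global small-data uniqueness, and at critical regularity it cannot be obtained from a difference/Gronwall estimate. For \emph{regular} solutions it does follow from gauge-covariant finite speed of propagation plus the remark that a gauge transformation which is the identity at $t=0$ and preserves $A_0=0$ is the identity; but finite-energy solutions are by definition limits of regular ones, so to pass this to the limit you must approximate the two data sets by regular data sets that still agree on a slightly shrunk $U$ --- a joint approximation preserving the coincidence region and the constraint, i.e.\ yet another surgery-type lemma of the kind \cite{OTYM2.5} develops (recall the paper stresses that even plain density of regular data is nontrivial because of the constraint equation). The same localized uniqueness is silently used again when you identify the temporal-gauge version of $\bar A$, which lives only on $\calD(B_{r_c}(x_0))$, with the globally defined $A^{(x_0)}$; since this local uniqueness is essentially part of the statement being proved, it must be supplied explicitly (via the regular-solution FSP plus a coincidence-preserving approximation argument) rather than quoted from the global small-data theorem. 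A smaller point of the same flavor: continuous dependence of the patched solution does not simply ``descend'' from the small data theory unless the excision step is shown to be stable under perturbation of the data.
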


Now we consider the continuation question. The next result asserts that temporal solutions can be continued
until energy concentration (i.e., a blow-up) occurs. Thus, temporal solutions are also maximal solutions
for the Yang--Mills equation.

\begin{theorem}[{\cite[Theorems~1.22 and 1.23]{OTYM2.5}}]\label{t:global-temp}
a) For each finite energy data set $(a,e)$ in $\bbR^4$, let $(T_{min},T_{max})$ be the maximal time interval on which the 
temporal gauge solution $A$ exists. If $T_{max}$ is finite then we have
\[
\lim_{t \to T_{max}} r_c(t) = 0.
\]
Further, there exists some $X \in \R^4$ so that the energy concentration occurs in the backward light cone 
$C = \{ |x - X| \leq T_{max} - t\}$    centered at $(T_{max},X)$, in the sense that
\begin{equation}
\lim_{t \nearrow T} \nE_{C \cap S_{t}}(A) > \eps_0.
\end{equation}
The similar result holds for  for $T_{min}$. 

b) The same result holds for data in a ball $B_R$ and  the solution in
the corresponding domain of uniqueness $\calD(B_R)$.
\end{theorem}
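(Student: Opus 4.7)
The plan is to argue by contraposition, invoking Theorem~\ref{t:local-temp}: part (a) provides a temporal-gauge solution on a time interval whose length is the $\eps_{0}$-concentration scale, while part (b), via finite speed of propagation, gives such a solution on the domain of dependence of a ball. Monotonicity of the energy in backward light cones, an immediate consequence of the divergence-free energy-momentum tensor \eqref{divT}, ensures that the limits of cone energies exist as $t \nearrow T_{max}$.

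First I would show $\lim_{t \to T_{max}} r_{c}(t) = 0$. If instead $r_{c}(t_{n}) \geq r_{0} > 0$ along a sequence $t_{n} \nearrow T_{max}$, then Theorem~\ref{t:local-temp}(a) applied at initial time $t_{n}$ yields a temporal-gauge solution on $[t_{n}-r_{0}, t_{n}+r_{0}]$, which for $n$ large extends past $T_{max}$; the uniqueness-up-to-gauge clause identifies it with the original solution and prolongs it, contradicting the maximality of $T_{max}$.

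Next, to locate the concentration, suppose for contradiction that for every $X \in \R^{4}$ we have $\lim_{t \nearrow T_{max}} \nE_{B_{T_{max}-t}(X) \cap S_{t}}[A] \leq \eps_{0}$. Fix a small $\delta > 0$. For each $X$, by monotonicity of the cone energy and continuity of the total energy, there exists $\tau_{X} < T_{max}$ with $\nE_{B_{T_{max}+\delta-\tau_{X}}(X)}[A(\tau_{X})] < \eps_{0}$ (the thin annulus contributing negligibly as $\delta \to 0$). Theorem~\ref{t:local-temp}(b) applied at time $\tau_{X}$ on this ball yields a temporal-gauge solution on the truncated cone $\set{(t,x) : |x-X| \leq T_{max}+\delta-t, \ \tau_{X} \leq t \leq T_{max}+\delta}$. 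As $X$ varies, these cones cover the slab $[T_{max}, T_{max}+\delta/2] \times \R^{4}$; passing to a subcover (finitely many on each compact set, with the small-data theorem supplying a uniform lifespan past $T_{max}$ outside a large ball where the total energy is less than $\eps_{0}$) and gluing to the original solution via uniqueness-up-to-gauge, we prolong it past $T_{max}$, a contradiction. The claim for $T_{min}$ follows by time reversal, and part (b) of the theorem is proved identically with $\R^{4}$ replaced by $B_{R}$ and all cones truncated within $\calD(B_{R})$.

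The hard part will be the gluing step, because the temporal gauge condition $A_{0} = 0$ is preserved by the residual group of time-independent gauge transformations, so local temporal-gauge solutions on overlapping domains agree only after such a residual transformation. My resolution is to proceed inductively along a nested exhaustion of $\R^{4}$ by increasing balls, fixing the gauge of the first piece to continue the original temporal-gauge data and, for each subsequent piece, using uniqueness-up-to-gauge on the overlap to extract a time-independent gauge transformation which brings the new local solution into agreement, then extending it trivially into the new region. The result is a single temporal-gauge solution on $[0, T_{max} + \delta']$ for some $\delta' > 0$, which is the sought contradiction.
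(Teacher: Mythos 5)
Note first that this theorem is not proved in the present paper: it is imported from \cite[Theorems~1.22 and 1.23]{OTYM2.5}, and the remark immediately following the statement says that the vanishing of $r_c$ comes from that paper while the existence of a concentration point ``follows by a standard argument'' as in \cite[Lemma~8.1]{OT3}. Your reconstruction --- contraposition via the temporal-gauge local theory (Theorem~\ref{t:local-temp}), monotonicity of the energy in backward cones, a covering of the blow-up slice with small-energy exterior control, and patching by uniqueness --- is precisely that standard argument, so in outline you are on the intended route.

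Two concrete soft spots remain. First, the quantifier order in ``Fix a small $\delta>0$\dots there exists $\tau_X$\dots (the thin annulus contributing negligibly as $\delta\to 0$)'' is wrong: for a $\delta$ fixed in advance, the energy in the annulus $\{T_{max}-t\le|x-X|\le T_{max}+\delta-t\}$ need not tend to zero as $t\nearrow T_{max}$, since a fixed chunk of energy can sit at distance $O(\delta)$ outside the backward cone from $(T_{max},X)$ while contributing arbitrarily little to \emph{every} backward cone with vertex on $\{t=T_{max}\}$; the contradiction hypothesis does not exclude this. You must choose $\tau_X$ first (using the hypothesis) and only then $\delta=\delta_X$, by absolute continuity of the energy integral at the fixed time $\tau_X$; the radii are then non-uniform in $X$, and the uniform slab past $T_{max}$ is recovered only through the finite-subcover/compactness step you mention, plus the small-energy exterior region. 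Relatedly, since the stated conclusion is strict, its negation only gives cone energies $\le\eps_0$, and the above procedure then yields enlarged balls of energy $\le\eps_0+\eta$ rather than $<\eps_0$; you need genuinely subthreshold energy in the ball so that the concentration scale of the restricted data is comparable to the ball radius and Theorem~\ref{t:local-temp}(b) produces the solution on the \emph{full} domain of dependence, past the tip --- so either run the argument against the nonstrict form of the conclusion or use the slack in $\eps_0$. Second, the ``hard part'' you isolate is a non-issue: each local solution evolves the restriction of the original data, so by the uniqueness clause of Theorem~\ref{t:local-temp}(b) it coincides identically (not merely up to gauge) with the original solution below $T_{max}$ on its cone, and any two local solutions coincide on the overlap above $T_{max}$ by restarting from a common time slice just below $T_{max}$, where they share data, and invoking local uniqueness again. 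Hence the pieces patch exactly with no residual gauge adjustment; conversely, the step you propose --- extracting a time-independent $O$ on an overlap and ``extending it trivially into the new region'' --- is both unnecessary and, as written, unjustified, since such a map into $\G$ need not extend.
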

We remark that vanishing of $r_{c}$ follows from \cite[Theorem~1.22]{OTYM2.5}, and existence of an energy concentration point follows by a standard argument; see, e.g., \cite[Lemma~8.1]{OT3}.

The temporal gauge is convenient in order to deal with causality, but not so much in terms of regularity, as it
lacks good $S$ bounds. For this reason it is convenient to borrow the caloric gauge regularity:

\begin{theorem}[{\cite[Theorem~1.25]{OTYM2.5}}] \label{t:good-gauge}
  Let $A$ be a finite energy Yang--Mills solution in a cone section
  $C_{[t_1,t_2]}$ with energy concentration scale $r_c$.  Then in
  a suitable gauge $A$ satisfies the bound
\begin{equation}\label{good-gauge}
\| A\|_{L^\infty (\dot H^1 \cap L^4)}
 + \| \partial_t A\|_{L^\infty L^2}
 + \| \partial^j A_j \|_{\ell^1 L^{2} \dot H^\frac12} 
 + \| \nb A_0\|_{\ell^1 L^{2} \dot{H}^{\frac{1}{2}}}
+ \| \Box A_x\|_{\ell^{1} L^2 \dot H^{-\frac12}} \lesssim_{E,\frac{r_c}{t_2}} 1
\end{equation}
in the smaller cone $C_{[t_1,t_2]}^{4r_c}$ where the radius has been decreased by $4 r_c$.
\end{theorem}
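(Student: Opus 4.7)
The plan is to reduce the problem to the small data caloric gauge theory of \cite{OTYM2}, using spatial and temporal localization at scale $r_c$. By finite speed of propagation (applied in the temporal gauge via Theorem~\ref{t:local-temp}), it suffices to cover the smaller cone $C_{[t_1,t_2]}^{4r_c}$ by a bounded number, depending on $E$ and $r_c/t_2$, of local sub-cones with time-width $\sim r_c$ whose base balls $B_{2r_c}(x_0)$ have energy at most $\epsilon_0$—this is possible by the definition of the energy concentration scale, after choosing $\epsilon_0$ somewhat smaller than the small-data threshold.

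On each such local base ball, I would apply the excision result (Proposition~\ref{t:chop-small}) to extend the local small-energy data to a global small-energy data set on $\bbR^4$. Since this extension is of $\dot H^1$-type, Theorem~\ref{zero-class} guarantees topological triviality, so by Theorem~\ref{t:ym-heat} it admits a caloric gauge representative with caloric size $\hM \lesssim 1$. Applying the caloric gauge local well-posedness theorem (Theorem~\ref{t:local}) in the small-data regime produces a global solution on the sub-cone with the a priori bound $\|A_x\|_{S^1} \lesssim 1$.

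The $S^1$ bound immediately delivers $\|A\|_{L^\infty(\dot H^1 \cap L^4)}$ and $\|\partial_t A\|_{L^\infty L^2}$, while the caloric gauge structure gives the remaining bounds in \eqref{good-gauge}: the compatibility condition \eqref{main-compat} yields $\|\partial^j A_j\|_{\ell^1 \dot H^{\frac12}}$, the expression \eqref{eq:main-A0} yields $\|A_0\|_{\ell^1 \dot H^{\frac32}}$, and the main wave equation \eqref{main-wave} yields $\|\Box A_x\|_{L^2 \dot H^{-\frac12}}$ via bilinear and perturbative cubic estimates in $S^1$.

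The main obstacle is the patching step: the caloric gauge representatives on different sub-cones are a priori unrelated, so to obtain a single good gauge on $C_{[t_1,t_2]}^{4r_c}$ one needs compatible gauge transformations across overlaps. I would handle this by using the temporal gauge solution on the full cone (guaranteed by Theorem~\ref{t:local-temp}) as a backbone, then performing finitely many gauge transformations adapted to a locally finite cover of the cone, exploiting the fact that every norm appearing in \eqref{good-gauge} is local in space-time. The accumulated loss in constants depends only on the number of patches, which in turn is controlled by $E$ and $r_c/t_2$, matching the form of the asserted bound.
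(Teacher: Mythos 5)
Note first that this paper does not prove Theorem~\ref{t:good-gauge} at all: it is imported verbatim from the third paper (\cite{OTYM2.5}, Theorem~1.25), and the only comment made here is that ``the proof of this theorem requires a good gluing technique for local connections with suitable regularity.'' Your outline does reproduce what is presumably the first half of that argument — slab the cone into sub-cones of time-width $\sim r_c$ with small-energy base balls, extend the local data to global small-energy data by excision (Proposition~\ref{t:chop-small}), invoke the small-data caloric theory (Theorems~\ref{t:ym-heat}, \ref{t:local}) to get per-patch $S^1$ bounds, and read off the structural bounds for $\partial^j A_j$, $A_0$ and $\Box A_x$ from \eqref{main-compat}, \eqref{eq:main-A0} and \eqref{main-wave}. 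Up to that point the proposal is sound and matches the intended reduction.

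The genuine gap is exactly the step the paper flags as the heart of the matter: producing a \emph{single} gauge on $C^{4r_c}_{[t_1,t_2]}$ in which the patchwise bounds survive. Your justification — that ``every norm appearing in \eqref{good-gauge} is local in space-time'' — is false for the norms that matter: $\ell^1 \dot H^{1/2}$, $\ell^1 \dot H^{3/2}$ and $L^2 \dot H^{-1/2}$ are defined through dyadic frequency decompositions and are not local, and in any case these bounds are consequences of the caloric structure, which is destroyed once you apply the transition gauge transformations needed to match neighboring patches. Moreover, gauge transformations are $\G$-valued, so they cannot be combined by cutting off against a partition of unity; the known route is to show that the transition maps on overlaps are close to constants (using smallness of energy and the Coulomb/caloric gauge conditions, which give $O_{;x}$ small in $\ell^1\dot H^1$-type spaces) and then run an Uhlenbeck-style patching argument — compare the way transition maps are handled in the proof of Lemma~\ref{l:elliptic} in Section~\ref{sec:no-null}, or Theorem~\ref{t:glue}. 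One must then verify that the glued gauge still satisfies bounds of the form \eqref{good-gauge}, including the compatibility-type identities you used patchwise; this is precisely the ``initial data surgery''/gluing technology developed in \cite{OTYM2.5}, and without it the proposal does not close. A secondary point to make explicit: the energy concentration scale is defined on time slices, so you must propagate non-concentration across each time slab of width $\sim r_c$ via local energy conservation before the small-data theory applies; this is routine but should be stated, and it is also where the loss from $C_{[t_1,t_2]}$ to $C^{4r_c}_{[t_1,t_2]}$ enters.
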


For the notation $C_{[t_{1}, t_{2}]}$ and $C_{[t_{1}, t_{2}]}^{4 r_{c}}$, we refer to Section~\ref{subsec:notation}. The proof of this theorem requires a good gluing technique for local connections
with suitable regularity; see \cite{OTYM2.5} for details. 
We note that the term $\nrm{\rd^{j} A_{j}}_{\ell^{1} L^{2} \dot{H}^{\frac{1}{2}}}$ is missing in the statement of \cite[Theorem~1.25]{OTYM2.5}, but is evident from the proof (see, in particular, \cite[Eqs.~(5.6), (5.8)]{OTYM2.5}).

\section{Monotonicity formulas}\label{sec:energy}

 \subsection{The energy-momentum tensor and conservation laws}
We start by introducing the notion of a null frame and the associated null decomposition of a curvature $2$-form, which provides a very useful decomposition of the energy momentum tensor.
At each point $p= (t_{0}, x_{0}) \in \bbR^{1+4}$, we introduce the null pair
\begin{equation*}
	L = \rd_{t} + \frac{x}{\abs{x}} \cdot \rd_{x}, \qquad
	\uL= \rd_{t} - \frac{x}{\abs{x}} \cdot \rd_{x},
\end{equation*}
and also orthonormal vectors
$\set{e_{\mfa}}_{\mfa=1,\ldots, 3}$ which are orthogonal to $L$ and
$\uL$. Observe that each $e_{\mfa}$ is tangent to the sphere $\rd
B_{t_{0}, r_{0}} := \set{t_{0}} \times \rd B_{r_{0}}(0)$ where $r_{0}
= \abs{x_{0}}$. The set of vectors $\set{L, \uL, e_{1}, e_{2}, e_{3}}$
at $p$ is called a \emph{null frame at $p$ associated to $L, \uL$}.

We define the \emph{null decomposition} of the 2-form $F$ with
respect to $\set{L, \uL, e_{\mfa}}$ as
\begin{equation*}
  \alp_{\mfa} := F(L, e_{\mfa}), \quad
  \ualp_{\mfa} := F(\uL, e_{\mfa}), \quad
  \varrho := \frac{1}{2} F(L, \uL), \quad
  \sgm_{\mfa\mfb} := F(e_{\mfa}, e_{\mfb}).
\end{equation*}
Note that $\varrho$ is a $\g$-valued function, $\alp_{\mfa}, \ualp_{\mfb}$ are $\g$-valued 1-forms
on $\rd B_{t_{0}, r_{0}}$ and $\sgm_{\mfa\mfb}$ is a $\g$-valued 2-form on $\rd
B_{t_{0}, r_{0}}$. We define their pointwise absolute values as
\begin{equation*}
  \abs{\alp}^{2} := \sum_{\mfa=1, \ldots, 3} \alp_{\mfa}^{2}, \quad
  \abs{\ualp}^{2} := \sum_{\mfa=1, \ldots, 3} \ualp_{\mfa}^{2}, \quad
  \abs{\sgm}^{2} := \sum_{1 \leq \mfa < \mfb \leq 3} \sgm_{\mfa\mfb}^{2}.
\end{equation*}

Recall from Section~\ref{sec:intro} that the \emph{energy-momentum tensor} associated to a connection $A$ is
\begin{equation} 
\EM_{\aa \bb}(A) = 2 \la \tensor{F}{_{\aa}^{\gamma}}, F_{\bb \gamma}\ra - \frac{1}{2} \met_{\aa \bb }\la F_{\gamma \delta}, F^{\gamma \delta}\ra.
\end{equation}
 We observe that $\EM$ is a symmetric 2-tensor, which is gauge invariant at
each point. Moreover for each finite energy solution solution to
\eqref{ym}, the energy-momentum tensor satisfies
\begin{equation} \label{eq:div4EMT} 
\rd^{\aa} \EM_{\aa \bb}(A) = 0.
\end{equation}
This is verified directly for smooth connections, and it then transfers to finite energy Yang--Mills connections
by approximation with smooth connections.

A simple way of obtaining energy identities for Yang--Mills equation is to contract
the energy-momentum tensor with a well-chosen vector field, an then integrate 
over a suitable domain.

Given a vector field $X$ on $\calO$, we define its \emph{deformation
  tensor} to be the Lie derivative of the metric with respect to $X$,
i.e., $\defT{X} := \calL_{X} \bfm$. Using covariant derivatives,
$\defT{X}$ also takes the form
\begin{equation*}
  \defT{X}_{\aa \bb} = \covnb_{\aa} X_{\bb} + \covnb_{\bb} X_{\aa}
\end{equation*}
or expressed in coordinates
\begin{equation} \label{eq:defTcoord} \defT{X}_{\mu \nu} = X(\bfm_{\mu
    \nu}) + \rd_{\mu} (X^{\alp}) \bfm_{\alp \nu} + \rd_{\nu}
  (X^{\alp}) \bfm_{\alp \mu}
\end{equation}

Using the deformation tensor, we define the associated $1$- and
$0$-currents of $A$ as
\begin{equation} \label{eq:vsC-X}
  \begin{aligned}
    \vC{X}_{\aa} (A) :=& \EM_{\aa \bb} (A) X^{\bb}, \\
    \sC{X} (A) :=& \frac12 \EM_{\aa \bb} (A) 
    \defT{X}^{\aa \bb}.
  \end{aligned}
\end{equation}
Then by \eqref{eq:div4EMT}  we obtain
\begin{equation} \label{eq:vsC-X:div} 
\covnb^{\aa}  (\vC{X}_{\aa}(A)) = \sC{X}(A).
\end{equation}
Now energy identities for the Yang--Mills flow are obtained by integrating 
this identity over spacetime regions. Of course, this is most useful when
$ \defT{X}$ either vanishes (i.e. $X$ is Killing) or when it has a sign.

The simplest choice for $X$ is $X=T = \rd_{0}$, the unit vector in the time direction.
Then $ \defT{T} = 0$, so \eqref{eq:vsC-X:div} becomes
\begin{equation} \label{eq:vsC-T:div} 
\covnb^{\aa}  (\vC{T}_{\aa}(A)) = 0.
\end{equation}
In particular we have 
\[
\vC{T}_{0} = \frac12 \brk{F_{jk}, F^{jk}} + \brk{F_{0j}, \tensor{F}{_{0}^{j}}} = \sum_{\alp < \bt} \abs{F_{\alp \bt}}^{2},
\]
therefore integrating \eqref{eq:vsC-T:div} between time slices yields 
the well-known conservation of energy
\[
\nE_{\set{t} \times \bbR^{4}}(A) = \int \vC{T}_{0}(A) \, \ud x = \int \sum_{\alp < \bt} \abs{F_{\alp \bt}}^{2} \quad \hbox{ is constant in $t$}.
\]
In general, for $U \subset \bbR^{4}$ we introduce the notation
\begin{equation*}
	\nE_{\set{t} \times U}(A) = \int_{\set{t} \times U} \vC{T}_{0} (A)\, \ud x .
\end{equation*}

We also need to use energy estimates in sections $C_{[t_1,t_2]}$ of the cone $C$.
For this we define the  \emph{energy flux} on the lateral surface of the cone section by
\begin{equation*}
	\EFlux_{\rd C_{[t_{0}, t_{1}]}} (A)= 
\frac{1}{2} \int_{\rd C_{[t_{0}, t_{1}]}} \vC{T}_{L}(A) r^{3} \, \ud v \ud \sgm_{\bbS^{3}}
\end{equation*}
Then we have

\begin{lemma} \label{l:flux-decay}
Let $A$ be a finite energy Yang--Mills connection on $I \times \bbR^{4}$ where $I \subset \bbR^+$ is an open interval. Then for every $t_{0}, t_{1} \in I$ with $t_{0} \leq t_{1}$, the following statements hold:

a) The energy flux on $\EFlux_{\rd C_{[t_{0}, t_{1}]}}(A)$ is non-negative and additive, i.e.,
\begin{equation} \label{eq:flux-additive}
\EFlux_{\rd C_{[t_{0}, t_{1}]}}(A) = \EFlux_{\rd C_{[t_{0}, t']}}(A) + \EFlux_{\rd C_{[t', t_{1}]}}(A) \quad \hbox{ for } t' \in [t_{0}, t_{1}].
\end{equation}

b) The energy-flux relation holds:
\begin{equation} \label{energy-flux}
\nE_{S_{t_{1}}}(A) - \nE_{S_{t_{0}}}(A)= \EFlux_{\rd C_{[t_{0}, t_{1}]}}(A) .
\end{equation}
\end{lemma}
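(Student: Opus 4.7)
The plan is to derive both (a) and (b) from a single application of the divergence identity \eqref{eq:vsC-T:div} combined with a pointwise null-frame computation of $\vC{T}_{L}$. First I note that $\vC{T}_{L}$ is just $L^{\alp} \vC{T}_{\alp} = \EM(L, T)$, and compute it explicitly in the null frame at each point of $\rd C_{[t_{0}, t_{1}]}$. Using $T = \tfrac{1}{2}(L + \uL)$ together with the null-frame components $m_{L \uL} = -2$, $m_{\mfa \mfb} = \dlt_{\mfa \mfb}$ of the Minkowski metric and the null decomposition of $F$ into $\alp$, $\ualp$, $\varrho$ and $\sgm$, a direct expansion of the energy-momentum tensor yields the standard identity
\begin{equation*}
\vC{T}_{L} = \abs{\alp}^{2} + \varrho^{2} + \abs{\sgm}^{2} \geq 0.
\end{equation*}
This gives pointwise non-negativity of the integrand, hence of $\EFlux_{\rd C_{[t_{0}, t_{1}]}}$; additivity in (a) is immediate from additivity of the integral over $\rd C_{[t_{0}, t_{1}]} = \rd C_{[t_{0}, t']} \cup \rd C_{[t', t_{1}]}$.

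For the energy-flux identity (b), I will first handle a regular Yang--Mills solution $A$ by applying the standard Euclidean divergence theorem on the solid cone section $C_{[t_{0}, t_{1}]}$ to the vector field $W^{0} = -\vC{T}_{0}$, $W^{j} = \vC{T}_{j}$. The conservation law \eqref{eq:vsC-T:div} translates into $\rd_{\alp} W^{\alp} = 0$, so the bulk integral vanishes. The contributions from the horizontal slices $S_{t_{0}}$ and $S_{t_{1}}$ compute directly to $\nE_{S_{t_{0}}}(A) - \nE_{S_{t_{1}}}(A)$. On the lateral null surface $\rd C_{[t_{0}, t_{1}]}$, parametrized by $(t, \omg) \in [t_{0}, t_{1}] \times \bbS^{3}$ with $x = t \omg$, the Euclidean outward unit normal is $\tfrac{1}{\sqrt{2}}(-1, \hat{x})$ and the Euclidean surface measure is $\sqrt{2}\, r^{3}\, \ud t\, \ud \sgm_{\bbS^{3}}$; contracting yields the lateral integrand $(\vC{T}_{0} + \hat{x}^{j} \vC{T}_{j}) r^{3}\, \ud t\, \ud \sgm_{\bbS^{3}} = \vC{T}_{L} r^{3}\, \ud t\, \ud \sgm_{\bbS^{3}}$. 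Recording $\ud t = \tfrac{1}{2}\, \ud v$ on the cone $\set{r = t}$ (where $v = t + r$), this matches $\EFlux_{\rd C_{[t_{0}, t_{1}]}}(A)$, and summing the three boundary contributions gives \eqref{energy-flux} for regular $A$.

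Finally I will extend the identity to finite energy solutions by density. Given a finite energy $A$, approximate it by a sequence of regular solutions $A^{(n)}$ as in the definition. The slice energies $\nE_{S_{t_{0}}}(A^{(n)})$ and $\nE_{S_{t_{1}}}(A^{(n)})$ converge to the corresponding quantities for $A$, so the $\EFlux_{\rd C_{[t_{0}, t_{1}]}}(A^{(n)})$ converge as well, and we may \emph{define} $\EFlux_{\rd C_{[t_{0}, t_{1}]}}(A)$ as the common limit; non-negativity and additivity then pass trivially to the limit. The main subtlety in this last step is that the integrand $\vC{T}_{L}$ a priori involves traces of $F$ on the null hypersurface $\rd C_{[t_{0}, t_{1}]}$, which are not immediately well-defined from $L^{2}_{loc}$ bulk control alone; this is precisely why the flux is interpreted via the approximation (equivalently as a difference of slice energies), which is unambiguous because the slice energies depend continuously on the $\dot H^{1} \times L^{2}$ data. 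Aside from this bookkeeping point the argument is a textbook application of the divergence theorem.
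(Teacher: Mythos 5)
Your proposal is correct and follows essentially the same route as the paper: non-negativity via the null-frame identity $\vC{T}_{L} = \abs{\alp}^{2} + \abs{\varrho}^{2} + \abs{\sgm}^{2}$, the identities via the divergence theorem for regular solutions, and passage to finite energy solutions by approximation. Your extra care in interpreting the flux of a rough solution as the limit of fluxes of regular approximants (equivalently via the slice-energy difference) is consistent with how the paper itself later treats fluxes of finite energy solutions.
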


The nonnegativity is straightforward since the flux density is
expressed in terms of the curvature components in the null frame as
\[
 \vC{T}_{L}(A) = |\varrho|^2 + \abs{\sigma}^2 + \abs{\alp}^{2}.
\]
The identities are again straightforward for smooth solutions, and obtained by approximation
with smooth solutions otherwise.

A consequence of Lemma~\ref{l:flux-decay} is a simple but crucial decay result for the flux:

\begin{corollary} \label{cor:flux-decay}
Let $A$ be a finite energy Yang--Mills connection on $I \times \bbR^{4}$ where $I \subset \bbR^{4}$ is an open interval. Then the following statements hold.

a) If $(0, \dlt] \subseteq I$ for some $\dlt > 0$, then we have
\begin{equation} \label{eq:flux-decay:0}
	\lim_{t_{1} \to 0} \EFlux_{\rd C_{(0, t_{1}]}}(A) = 0,
\end{equation}

b) If $[\dlt, \infty) \subseteq I$ for some $\dlt > 0$, then we have
\begin{equation} \label{eq:flux-decay:infty}
	\lim_{t_{0}, t_{1} \to \infty} \EFlux_{\rd C_{[t_{0}, t_{1}]}}(A) = 0.
\end{equation}
\end{corollary}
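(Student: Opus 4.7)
The plan is to reduce both statements to the monotone convergence of the cone energy $\nE_{S_{t}}(A)$, exploiting the non-negativity of the flux and the fact that it is bounded by the (conserved) total energy.

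First I would observe that, by the energy-flux relation \eqref{energy-flux} together with non-negativity of the flux (Lemma~\ref{l:flux-decay}), the map $t \mapsto \nE_{S_{t}}(A)$ is non-decreasing on $I \cap \bbR^{+}$. Moreover, since $\nE_{S_{t}}(A) \leq \nE_{\set{t}\times\bbR^{4}}(A) = \nE(A) < \infty$ by conservation of the total energy, this map is bounded. Hence the one-sided limits
\begin{equation*}
L_{0} := \lim_{t \to 0^{+}} \nE_{S_{t}}(A), \qquad L_{\infty} := \lim_{t \to \infty} \nE_{S_{t}}(A)
\end{equation*}
exist and are finite, provided the corresponding portion of $I$ is included.

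For part~(a), I would fix $t_{1} \in (0, \dlt]$ and, using the additivity \eqref{eq:flux-additive} together with \eqref{energy-flux}, write for any $0 < s \leq t_{1}$
\begin{equation*}
\EFlux_{\rd C_{[s, t_{1}]}}(A) = \nE_{S_{t_{1}}}(A) - \nE_{S_{s}}(A).
\end{equation*}
Since the left-hand side is monotone non-increasing in $s$ (again by additivity and non-negativity) and the right-hand side converges as $s \to 0^{+}$, one defines $\EFlux_{\rd C_{(0, t_{1}]}}(A) := \lim_{s \to 0^{+}} \EFlux_{\rd C_{[s, t_{1}]}}(A) = \nE_{S_{t_{1}}}(A) - L_{0}$. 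Taking $t_{1} \to 0^{+}$ gives $\lim_{t_{1}\to 0^{+}}\EFlux_{\rd C_{(0, t_{1}]}}(A) = L_{0} - L_{0} = 0$, which is \eqref{eq:flux-decay:0}.

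For part~(b), I would directly apply \eqref{energy-flux} in the form
\begin{equation*}
\EFlux_{\rd C_{[t_{0}, t_{1}]}}(A) = \nE_{S_{t_{1}}}(A) - \nE_{S_{t_{0}}}(A)
\end{equation*}
for $\dlt \leq t_{0} \leq t_{1}$. As $t_{0}, t_{1} \to \infty$, both terms on the right converge to $L_{\infty}$, yielding \eqref{eq:flux-decay:infty}. There is no real obstacle here — the only mild subtlety is justifying the half-open flux $\EFlux_{\rd C_{(0, t_{1}]}}(A)$ in part~(a), which is handled by the monotone additivity already established in Lemma~\ref{l:flux-decay}; everything else is an immediate consequence of the fact that a bounded monotone function has limits at the endpoints.
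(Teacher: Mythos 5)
Your proof is correct and is exactly the argument the paper has in mind: the corollary is stated as an immediate consequence of Lemma~\ref{l:flux-decay}, via monotonicity and boundedness of $t \mapsto \nE_{S_{t}}(A)$ (non-negative flux plus energy conservation), so the flux over a cone section is a difference of terms with a common limit. Your handling of the half-open flux in part~(a) as a monotone limit is the right way to make that endpoint precise.
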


\subsection{Monotonicity formulas}

Here we are interested in the case when the expression $  \sC{X}(A)$ is nonnegative definite.
Our primary vector field here will be 
\[
X_0 = \frac{1}{\rho_{0}} \left( t \rd_{t} + x \cdot \rd_{x} \right), \qquad \rho_{0} = \sqrt{t^{2} - \abs{x}^{2}}
\]
for $(t,x) \in C$. We also introduce the null coordinates
\begin{equation*}
	u_{0} = t - \abs{x}, \qquad v_{0} = t + \abs{x}.
\end{equation*}

Straightforward computations (see \cite{OT3}) lead to the relation
\begin{equation}
\sC{X}(A) = \frac{2}{\rho_{0}} | \iota_{X_0} F|^2,
\end{equation}
where $ | \iota_{X_0} F|^2 =  \met^{\alp \bt} \iota_{X_{0}} F_{\alp} \iota_{X_{0}} F_{\bt}$. Of course $\met$ is indefinite, but $ | \iota_{X_0} F|^2$ is nonnegative due to the fact that $X_0$ is time-like inside the cone $C$.
Thus the relation \eqref{eq:vsC-X:div} becomes
\begin{equation}\label{eq:vsC-X0:div} 
	\covnb^{\aa} ({}^{(X_{0})} P_{\aa}) = \frac{2}{\rho_{0}} \abs{\iota_{X_{0}} F}^{2} \geq 0
\end{equation}
where the interesting components of ${}^{(X_{0})} P$ are
\begin{align} 
	{}^{(X_{0})} P_{L}
=&	\Big(\frac{v_{0}}{u_{0}}\Big)^{\frac{1}{2}} \abs{\alp}^{2}
	+ \Big(\frac{u_{0}}{v_{0}}\Big)^{\frac{1}{2}} \Big( \abs{\varrho}^{2} + \abs{\sgm}^{2} \Big), \\
	{}^{(X_{0})} P_{\uL}
=&	\Big(\frac{u_{0}}{v_{0}}\Big)^{\frac{1}{2}} \abs{\ualp}^{2}
	+ \Big(\frac{v_{0}}{u_{0}}\Big)^{\frac{1}{2}} \Big( \abs{\varrho}^{2} + \abs{\sgm}^{2} \Big). 
\end{align}

All expressions above are singular on the cone, so we cannot\footnote{Unless the flux is zero. This is in general not the case, instead we will work in settings where the flux is merely small.} directly integrate the relation
\eqref{eq:vsC-X0:div} on sections of the cone $C$.
 To remedy this, we will translate the field $X_0$ 
downward by $\veps$. Defining 
\begin{gather*}
	\rho_{\veps} = \sqrt{(t+\veps)^{2} + \abs{x}^{2}}, \quad
	X_{\veps} = \frac{1}{\rho_{\veps}} \left( (t+\veps) \rd_{t} + x \cdot \rd_{x} \right) \\
	u_{\veps} = t + \veps - \abs{x}, \quad
	v_{\veps} = t + \veps + \abs{x}, \quad
\end{gather*}
we now have the shifted relations
\begin{equation}
	\covnb^{\aa} ({}^{(X_{\veps})} P_{\aa}) = \frac{2}{\rho_{\veps}} \abs{\iota_{X_{\veps}} F}^{2}
\end{equation}
where
\begin{align} 
	{}^{(X_{\veps})} P_{L}
=&	 \Big(\frac{v_{\veps}}{u_{\veps}}\Big)^{\frac{1}{2}} \abs{\alp}^{2}
	+  \Big(\frac{u_{\veps}}{v_{\veps}}\Big)^{\frac{1}{2}} \Big( \abs{\varrho}^{2} + \abs{\sgm}^{2} \Big),  	\label{eq:mono-L} \\
	{}^{(X_{\veps})} P_{\uL}
=&	 \Big(\frac{u_{\veps}}{v_{\veps}}\Big)^{\frac{1}{2}} \abs{\ualp}^{2}
	+  \Big(\frac{v_{\veps}}{u_{\veps}}\Big)^{\frac{1}{2}} \Big( \abs{\varrho}^{2} + \abs{\sgm}^{2} \Big). \label{eq:mono-uL} 
\end{align}
We also remind the reader that since $\rd_{t} = \frac{1}{2} (L + \uL)$, we have
\begin{equation*}
\mvC{X_{\veps}}_{0} = \frac{1}{2} (\mvC{X_{\veps}}_{L} + \mvC{X_{\veps}}_{\uL}).
\end{equation*}
Integrating now the relation \eqref{eq:vsC-X0:div} over an appropriate section of the cone $C$ yields the following:

\begin{proposition} \label{prop:monotonicity}
Let $A$ be a finite energy Yang--Mills connection on $[\veps, 1] \times \bbR^{4}$, 
where $\veps \in (0, 1)$. Suppose furthermore that $A$ satisfies
\begin{equation} \label{eq:monotonicity:hyp}
	\nE_{S_{1}}(A) \leq E, \quad \EFlux_{\rd C_{[\veps, 1]}} (A) \leq \veps^{\frac{1}{2}} E.
\end{equation}
Then
\begin{equation} \label{eq:monotonicity}
\int_{S_{1}} \mvC{X_{\veps}}_{0}(A) \, \ud x 
+ \iint_{C_{[\veps, 1]}} \frac{2}{\rho_{\veps}} \abs{\iota_{X_{\veps}} F}^{2} \ud t \ud x \aleq E,
\end{equation}
where the implicit constant is independent of $\veps, E$. 
\end{proposition}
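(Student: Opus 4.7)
My plan is to integrate the pointwise divergence identity for the translated current,
\begin{equation*}
\nb^{\alp} \mvC{X_{\veps}}_{\alp}(A) = \frac{2}{\rho_{\veps}} \abs{\iota_{X_{\veps}} F}^{2} \geq 0,
\end{equation*}
over the truncated cone $C_{[\veps, 1]}$ via the Minkowskian divergence theorem. For smooth connections, this produces an identity of the form
\begin{equation*}
\int_{S_{1}} \mvC{X_{\veps}}_{0} \, \ud x
	+ \iint_{C_{[\veps, 1]}} \frac{2}{\rho_{\veps}} \abs{\iota_{X_{\veps}} F}^{2} \, \ud t \ud x
	= \int_{S_{\veps}} \mvC{X_{\veps}}_{0} \, \ud x
	+ \frac{1}{2} \int_{\rd C_{[\veps,1]}} \mvC{X_{\veps}}_{L} \, r^{3} \, \ud v \ud \sgm_{\bbS^{3}},
\end{equation*}
with the same sign conventions as in the derivation of \eqref{energy-flux}. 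Both LHS terms are manifestly nonnegative: the bulk density is a sum of squares, and $\mvC{X_{\veps}}_{0} = \tfrac{1}{2}(\mvC{X_{\veps}}_{L} + \mvC{X_{\veps}}_{\uL}) \geq 0$ by the displayed formulas for $\mvC{X_{\veps}}_{L}$ and $\mvC{X_{\veps}}_{\uL}$. Thus the task reduces to bounding each of the two RHS terms by a constant multiple of $E$, uniformly in $\veps$.

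The bottom cap presents no difficulty. On $S_{\veps}$ we have $t = \veps$ and $\abs{x} \leq \veps$, hence $u_{\veps} = 2\veps - \abs{x} \in [\veps, 2\veps]$ and $v_{\veps} = 2\veps + \abs{x} \in [2\veps, 3\veps]$. In particular the weights $(u_{\veps}/v_{\veps})^{\pm 1/2}$ are all comparable to $1$ on $S_{\veps}$, which yields $\mvC{X_{\veps}}_{0} \aleq \mvC{T}_{0}$ pointwise there, and hence $\int_{S_{\veps}} \mvC{X_{\veps}}_{0} \, \ud x \aleq \nE_{S_{\veps}}(A)$. By the energy-flux identity \eqref{energy-flux} and the nonnegativity of the flux, $\nE_{S_{\veps}}(A) \leq \nE_{S_{1}}(A) \leq E$.

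The lateral integral carries the essential difficulty, and is where the flux smallness hypothesis enters. On $\rd C_{[\veps, 1]}$ one has $\abs{x} = t$, so $u_{\veps} \equiv \veps$ while $v_{\veps} = 2t + \veps \in [3\veps, 2+\veps]$. In particular the weight $(v_{\veps}/u_{\veps})^{1/2}$ in the $\abs{\alp}^{2}$ term of $\mvC{X_{\veps}}_{L}$ blows up like $\veps^{-1/2}$ as $t \to 1$, while $(u_{\veps}/v_{\veps})^{1/2} \leq 1$. Consequently
\begin{equation*}
\mvC{X_{\veps}}_{L} \aleq \veps^{-1/2} \bigl( \abs{\alp}^{2} + \abs{\varrho}^{2} + \abs{\sgm}^{2} \bigr) = \veps^{-1/2} \mvC{T}_{L}
\end{equation*}
on the lateral surface, and the lateral term is bounded by $\veps^{-1/2} \EFlux_{\rd C_{[\veps, 1]}}(A) \leq \veps^{-1/2} \cdot \veps^{1/2} E = E$ by the flux hypothesis in \eqref{eq:monotonicity:hyp}. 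This is the crux of the proposition: the exponent $1/2$ in the flux smallness assumption is sharply calibrated to absorb the singular null weight $(v_{\veps}/u_{\veps})^{1/2}$.

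Finally, the passage from smooth to finite energy connections should be routine. All quantities in the displayed identity are continuous in the $C_{t} \dot{H}^{1}_{loc}$ topology on the closed region $C_{[\veps, 1]}$, and both $X_{\veps}$ and $\rho_{\veps}$ are smooth there (the singularity of $X_{\veps}$ lies at $t = -\veps$, well outside the integration domain). One thus approximates a finite energy solution by regular solutions, which exist on $[\veps, 1]$ by the local theory reviewed in Section~\ref{sec:review}, and passes to the limit termwise to conclude \eqref{eq:monotonicity}.
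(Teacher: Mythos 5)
Your proof is correct and follows essentially the argument the paper has in mind (the paper omits the proof, deferring to the analogous computation in \cite{OT3}): integrate the divergence identity for the $X_{\veps}$-current over $C_{[\veps,1]}$, bound the bottom cap by the energy using that $u_{\veps}\sim v_{\veps}$ there, and absorb the $\veps^{-1/2}$ null weight on the lateral boundary with the $\veps^{1/2}E$ flux hypothesis, finishing by approximation with regular solutions exactly as in Lemma~\ref{l:flux-decay}. Only a cosmetic remark: the singular set of $X_{\veps}$ is the translated null cone $\set{\abs{x}=t+\veps}$ rather than just $\set{t=-\veps}$, but this set indeed avoids a neighborhood of $C_{[\veps,1]}$, so your limiting argument stands.
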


Using Proposition~\ref{prop:monotonicity}, we can also establish a
version of \eqref{eq:monotonicity} that is localized away from
the boundary of the cone. This statement will be useful for
propagating lower bounds in a time-like region towards $(0, 0)$.

\begin{proposition} \label{prop:monotonicity:t-like}
Let $A$ a finite Yang--Mills connection in $[\veps, 1] \times \bbR^{4}$, where $\veps \in (0, 1)$. Suppose furthermore that $A$ satisfies \eqref{eq:monotonicity:hyp}. Then for $2 \veps \leq \dlt_{0} < \dlt_{1} \leq t_{0} \leq 1$, we have
\begin{equation} \label{eq:monotonicity:t-like}
	\int_{S_{1}^{\dlt_{1}}} \mvC{X_{0}}_{0}(A) \, \ud x
	\leq \int_{S_{t_{0}}^{\dlt_{0}}} \mvC{X_{0}}_{0}(A) \, \ud x + C\Big( (\dlt_{1} / t_{0})^{\frac{1}{2}}+ \abs{\log (\dlt_{1} / \dlt_{0})}^{-1} \Big) E.
\end{equation}
\end{proposition}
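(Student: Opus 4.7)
The strategy is to apply the divergence theorem for the current $\vC{X_0}$ on a time-like region kept at a ``safety distance'' $\veps^{*} \in [\dlt_0, \dlt_1]$ from the boundary cone $\rd C$, and then choose $\veps^{*}$ optimally via a pigeonhole argument. The singularity of $\vC{X_0}$ on $\rd C$ (manifested by the weight $\sqrt{v_0/u_0}$ in $\mvC{X_0}_0$) is precisely what forces the logarithmic loss $\abs{\log(\dlt_1/\dlt_0)}^{-1}$.

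For $\veps^{*} \in [\dlt_0, \dlt_1]$ (to be chosen), introduce the time-like region $R^{\veps^{*}} = \set{(t,x) : t_0 \leq t \leq 1,\ u_0(t,x) \geq \veps^{*}}$, whose lateral boundary is the inner null piece $\Lmb_{\veps^{*}} = \set{u_0 = \veps^{*},\ t_0 \leq t \leq 1}$. Since $u_0 \geq \veps^{*} > 0$ on $R^{\veps^{*}}$, $\vC{X_0}$ is smooth there. Integrating the identity $\nb^a \vC{X_0}_a = \tfrac{2}{\rho_0} \abs{\iota_{X_0} F}^2 \geq 0$ and discarding the nonnegative bulk yields
\[
\int_{S_1^{\veps^{*}}} \mvC{X_0}_0 \, \ud x \leq \int_{S_{t_0}^{\veps^{*}}} \mvC{X_0}_0 \, \ud x + \Phi(\veps^{*}), \qquad \Phi(\veps^{*}) := \int_{\Lmb_{\veps^{*}}} \mvC{X_0}_L \, \ud S \geq 0.
\]
Because the slab integrals $\dlt \mapsto \int_{S_t^\dlt} \mvC{X_0}_0 \, \ud x$ are nonincreasing in $\dlt$ and $\dlt_0 \leq \veps^{*} \leq \dlt_1$, the inclusions $S_1^{\dlt_1} \subseteq S_1^{\veps^{*}}$ and $S_{t_0}^{\veps^{*}} \subseteq S_{t_0}^{\dlt_0}$ then give
\[
\int_{S_1^{\dlt_1}} \mvC{X_0}_0 \, \ud x \leq \int_{S_{t_0}^{\dlt_0}} \mvC{X_0}_0 \, \ud x + \Phi(\veps^{*}),
\]
so it suffices to find $\veps^{*}$ with $\Phi(\veps^{*}) \aleq \bigl((\dlt_1/t_0)^{1/2} + \abs{\log(\dlt_1/\dlt_0)}^{-1}\bigr) E$.

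On $\Lmb_{\veps^{*}}$ we decompose $\mvC{X_0}_L = \sqrt{v_0/\veps^{*}} \abs{\alp}^2 + \sqrt{\veps^{*}/v_0}(\abs{\varrho}^2 + \abs{\sgm}^2)$ and write $\Phi = \Phi_\alp + \Phi_{\varrho \sgm}$. For $\Phi_{\varrho \sgm}$, the weight $\sqrt{\veps^{*}/v_0}$ is bounded pointwise by $(\dlt_1/t_0)^{1/2}$ since $v_0 \geq t_0$ on $\Lmb_{\veps^{*}}$, while applying the divergence theorem for the conserved current $\vC{T}$ on $R^{\veps^{*}}$ yields $\int_{\Lmb_{\veps^{*}}} (\abs{\alp}^2 + \abs{\varrho}^2 + \abs{\sgm}^2) \, \ud S \leq \nE_{S_{t_0}^{\veps^{*}}}(A) \leq E$. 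Thus $\Phi_{\varrho\sgm}(\veps^{*}) \leq (\dlt_1/t_0)^{1/2} E$ uniformly in $\veps^{*}$, which accounts for the first error term.

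For $\Phi_\alp$ we pigeonhole over $\veps^{*}$ in log-measure. Expanding $X_0 = \tfrac{1}{2}(\sqrt{u_0/v_0}\, \uL + \sqrt{v_0/u_0}\, L)$ in the null frame, the bulk density $\tfrac{2}{\rho_0}\abs{\iota_{X_0} F}^2$ contains a contribution of the form $\tfrac{\sqrt{v_0}}{2 u_0^{3/2}} \abs{\alp}^2$. Passing to null coordinates $(u, v, \sgm)$ with $\ud t\, \ud x = \tfrac{1}{2} r^3 \, \ud u \, \ud v \, \ud \sgm_{\bbS^3}$, and applying Proposition~\ref{prop:monotonicity} with a fixed $\veps \in (0, \dlt_0/2]$ (on $R^{\dlt_0}$ the shifted quantities $u_\veps, v_\veps, \rho_\veps$ are comparable to their unshifted counterparts, so the corresponding bulks are equivalent), the coarea formula gives
\[
\int_{\dlt_0}^{\dlt_1} \frac{\ud \veps^{*}}{\veps^{*}} \, \Phi_\alp(\veps^{*}) \aleq \iint_{R^{\dlt_0}} \frac{2}{\rho_0} \abs{\iota_{X_0} F}^2 \, \ud t \, \ud x \aleq E.
\]
The mean value theorem then produces a $\veps^{*} \in [\dlt_0, \dlt_1]$ with $\Phi_\alp(\veps^{*}) \aleq E/\abs{\log(\dlt_1/\dlt_0)}$. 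Choosing this $\veps^{*}$ and combining with the bound for $\Phi_{\varrho\sgm}$ completes the proof. The main technical obstacle is the coarea identification in the last display: one must carefully expand $\abs{\iota_{X_0} F}^2$ in the null frame, isolate the $\abs{\alp}^2$-piece with its $v_0^{1/2} u_0^{-3/2}$ weight, and verify that after slicing by $\veps^{*} = u_0$ this reproduces exactly $\Phi_\alp(\veps^{*})/\veps^{*}$, all while managing the comparability between $X_\veps$ and $X_0$ on the safety region.
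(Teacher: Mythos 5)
Your architecture is the intended one: the paper omits the proof of Proposition~\ref{prop:monotonicity:t-like} and defers to the analogous argument in \cite{OT3}, and that argument has exactly your shape --- divergence theorem for the $X_{0}$-current on $\set{t_{0} \leq t \leq 1,\ u_{0} \geq \veps^{*}}$, monotone dependence on the slab parameter to pass to $S_{1}^{\dlt_{1}}$ and $S_{t_{0}}^{\dlt_{0}}$, the $(\varrho,\sgm)$ part of the lateral flux bounded by $(\dlt_{1}/t_{0})^{1/2}E$, and a pigeonhole in $\ud\veps^{*}/\veps^{*}$ producing the $\abs{\log(\dlt_{1}/\dlt_{0})}^{-1}$ loss. (One small slip: the $\vC{T}$-flux through $\Lmb_{\veps^{*}}$ equals $\nE_{S_{1}^{\veps^{*}}} - \nE_{S_{t_{0}}^{\veps^{*}}}$, so it is bounded by the \emph{top} energy $\nE_{S_{1}} \leq E$, not by $\nE_{S_{t_{0}}^{\veps^{*}}}$; this is harmless.)

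The genuine gap is precisely the step you defer as ``the main technical obstacle'': the bulk does \emph{not} contain an $\abs{\alp}^{2}$ piece. In the null frame one has exactly
\begin{equation*}
\abs{\iota_{X_{0}} F}^{2} = \abs{\varrho}^{2} + \frac{1}{4} \Big| \Big(\frac{v_{0}}{u_{0}}\Big)^{\frac{1}{2}} \alp + \Big(\frac{u_{0}}{v_{0}}\Big)^{\frac{1}{2}} \ualp \Big|^{2},
\end{equation*}
a complete square, which does not pointwise dominate $\frac{v_{0}}{4u_{0}}\abs{\alp}^{2}$ because of the cross term $\brk{\alp,\ualp}$; so the coarea inequality $\int_{\dlt_{0}}^{\dlt_{1}} \Phi_{\alp}(\veps^{*}) \frac{\ud\veps^{*}}{\veps^{*}} \aleq \iint \frac{2}{\rho_{0}}\abs{\iota_{X_{0}}F}^{2}$ cannot be obtained by ``isolating the $\abs{\alp}^{2}$-piece''. (For the same reason your claimed pointwise equivalence of the $X_{0}$- and $X_{\veps}$-bulks on $\set{u_{0} \geq \dlt_{0}}$ fails: complete squares with comparable but unequal coefficients need not be comparable.) The missing ingredient is an absorption using the energy of $\ualp$: on $\set{\dlt_{0} \leq u_{0} \leq \dlt_{1},\ t_{0} \leq t \leq 1}$ write $\frac{v_{0}}{u_{0}}\abs{\alp}^{2} \aleq \big| (\tfrac{v_{\veps}}{u_{\veps}})^{1/2}\alp + (\tfrac{u_{\veps}}{v_{\veps}})^{1/2}\ualp \big|^{2} + \frac{u_{0}}{v_{0}}\abs{\ualp}^{2}$, using $u_{\veps} \sim u_{0}$, $v_{\veps} \sim v_{0}$ there (valid since $2\veps \leq \dlt_{0} \leq u_{0}$). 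Multiplying by $\rho_{0}^{-1}$ (note $\rho_{0}^{-1} \tfrac{v_{0}}{u_{0}} = v_{0}^{1/2}u_{0}^{-3/2}$, which is exactly the coarea density of $\Phi_{\alp}(\veps^{*})/\veps^{*}$), the first term is $\aleq \frac{2}{\rho_{\veps}} \abs{\iota_{X_{\veps}}F}^{2}$, whose spacetime integral is $\aleq E$ by Proposition~\ref{prop:monotonicity}, while the second contributes $\iint u_{0}^{1/2} v_{0}^{-3/2} \abs{\ualp}^{2} \leq \dlt_{1}^{1/2} \int_{t_{0}}^{1} t^{-3/2}\, \nE_{S_{t}}\, \ud t \aleq (\dlt_{1}/t_{0})^{1/2} E$, which is an admissible error. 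With this correction your pigeonhole produces the desired $\veps^{*}$, and the rest of your argument stands.
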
 
The proofs of Propositions~\ref{prop:monotonicity} and \ref{prop:monotonicity:t-like} are similar to those of Propositions~5.4 and 5.5 in \cite{OT3}, respectively, and thus are omitted.

\section{A compactness result}\label{sec:cpt}

Here we establish the following compactness result for a locally uniformly bounded sequence of Yang--Mills connections which are asymptotically stationary:

\begin{theorem}\label{t:compact}
Let $A^{(n)}$ be a sequence of finite energy Yang--Mills connections in $[-2, 2] \times B_{2R}$
which is uniformly bounded, in the sense that the norms
\begin{equation} \label{eq:unif-bnd}
\| A^{(n)}\|_{L^\infty (\dot H^1 \cap L^4)}, \, 
\| \partial_t A^{(n)}\|_{L^\infty L^2}, \,
\|\nb A^{(n)}_0\|_{\ell^1 L^{2} \dot{H}^{\frac{1}{2}}}, \, 
\| \Box A^{(n)}_x\|_{\ell^{1} L^2 \dot H^{-\frac12}} 
\end{equation}
on $[-2, 2] \times B_{2R}$ are uniformly bounded in $n$. Assume moreover that 
\[
\lim_{n \to \infty} \| \iota_{V} F^{(n)} \|_{L^2([-2, 2] \times B_{2R})} = 0,
\]
where $V$ is a smooth time-like vector field (i.e., $\met(V, V) < 0$). Then on a subsequence we have 
\[
A^{(n)} \to A \qquad \text{in} \ \  H^1([-1, 1] \times B_{R})
\]
where $A$ solves the hyperbolic Yang--Mills equation (in the sense of distributions), satisfies $\iota_{V} F=0$,  and has regularity
\[
A_x \in \ell^1 H^{\frac32}([-1, 1] \times B_{R}), \qquad \nb A_0 \in \ell^1 L^2 H^\frac12 ([-1, 1] \times B_{R}).
\]
\end{theorem}
Clearly, by scaling, this result is applicable to any spacetime cylinder $(t_{0} - 2T, t_{0} + 2T)  \times B_{2R}(x_{0})$. In the sequel, Theorem~\ref{t:compact} will be applied to a sequence $A^{(n)}$ on nested open sets $\calO^{(n)} (\subset \calO^{(n+1)})$, which satisfies the hypothesis on each spacetime cylinder inside $\calO^{(n)}$ for sufficiently large $n$ after taking a suitable gauge transformation; see Section~\ref{sec:bubble-off} below. Thus, on any open set $\calO' \subset \overline{\calO'} \subset \bigcup_{n} \calO^{(n)}$, we will extract a subsequential limit via a diagonal procedure possessing the local regularity
\begin{equation*}
	A_{x} \in \ell^{1} H^{\frac{3}{2}}_{loc}(\calO'), \quad \nb A_{0} \in \ell^{1} L^{2} H^{\frac{1}{2}}_{loc}(\calO').
\end{equation*}

\begin{proof}
Let $\chi$ be a smooth cutoff supported in $[-2, 2] \times B_{2R}$, which is identically $1$ in $[-1, 1] \times B_{R}$.
Consider the sequence $\set{\chi A^{(n)}}$, which is now globally defined in $\bbR^{1+4}$. Moreover, we claim that $\set{\chi A^{(n)}}$ is uniformly bounded with respect to $n$ in the global-in-spacetime version of the norms listed in \eqref{eq:unif-bnd}; the bound depends on $R$ and the corresponding norms of $A^{(n)}$ on $[-2, 2] \times B_{2R}$. Indeed, it is straightforward to reduce the claim to the following global-in-spacetime commutator bounds:
\begin{align*}
	\nrm{[\nb, \chi] B}_{L^{\infty} L^{2}} & \aleq_{R} \nrm{B}_{L^{\infty} L^{4}}, \\
	\nrm{[\nb, \chi] B}_{\ell^{1} L^{2} \dot{H}^{\frac{1}{2}}} & \aleq_{R} \nrm{\nb B}_{L^{\infty} L^{2}}, \\
	\nrm{[\Box, \chi] B}_{\ell^{1} L^{2} \dot{H}^{-\frac{1}{2}}} & \aleq_{R} \nrm{\nb B}_{L^{\infty} L^{2}},
\end{align*}
These commutator bounds, in turn, follow from the usual Littlewood--Paley trichotomy analysis. 

Next, we microlocally split the connections $\chi A^{(n)}$ into a high modulation part and a low modulation part
\[
\chi(t, x) A^{(n)} =   A^{(n),lo} + A^{(n),hi} := \eta(D_{t,x}) \chi(t, x) A^{(n)} + (1-\eta(D_{t,x})) \chi(t,x)  A^{(n)} 
\]
i.e., with a classical order zero multiplier $\eta$ which is supported in a small neighborhood 
$\set{\abs{\abs{\tau}-\abs{\xi}} < \kpp \abs{\xi}}$ of the null cone
$\{ \tau^2 = \xi^2\}$ and which is identically $1$ in the smaller neighborhood $\set{\abs{\abs{\tau}-\abs{\xi}} < \frac{\kpp}{2} \abs{\xi}}$. We choose $\kpp$ small enough so that $\rd_{0}$ and $V^{\alp} \rd_{\alp}$ are microlocally elliptic (i.e., $\abs{\tau} \ageq \abs{\tau} + \abs{\xi}$ and $\abs{V^{0} \tau + V^{k} \xi_{k}} \ageq \abs{\tau} + \abs{\xi}$) on the support of $\eta(\tau, \xi) \chi(t,x) $, which is possible since $\rd_{0}$ and $V^{\alp} \rd_{\alp}$ are time-like.

Since $\Box$ is microlocally elliptic in the support of $1-\eta(\tau, \xi)$, the uniform bound for $\Box (\chi A^{(n)})$ implies that the high modulation parts
$A_{x}^{(n),hi}$ are uniformly bounded in $\ell^1 H^\frac32$.  The same happens
with the $\nabla (\chi A_0^{(n)})$ component in its entirety in $\ell^1 L^2 H^{\frac12}$. On a
subsequence we get convergence in $H^1([-1, 1] \times B_{R})$ for $A_x^{(n),hi}$ and $A_{0}^{(n)}$ with the limits $A_{x}^{hi} \in \ell^{1} H^{\frac{3}{2}}([-1, 1] \times B_{R})$ and $\nb A_{0} \in \ell^1 L^2
H^\frac12([-1, 1] \times B_{R})$, respectively.

It remains to consider the low modulation part of $A_x^{(n)}$. For this we expand $\iota_{V} F$ as 
\[
(\iota_{V} F)_\beta = V^\alpha F_{\alpha \beta} = V^\alpha (\partial_\alpha A_\beta - \partial_\beta A_\alpha+ [A_\alpha,A_\beta])
\]
Separating the cases $\beta = 0$ and $\beta \neq 0$, we view this as a system for $A_k$ of the form
\begin{align}
V^\alpha \partial_\alpha A_k - \partial_k (V^{j} A_{j}) = & \left( - (\iota_{V} F)_k +   V^\alpha [A_\alpha,A_k] + V^0 \partial_k A_0 \right)
- (\rd_{k} V^{j}) A_{j}, \label{eq:iX-F-k} \\
\partial_0 (V^j  A_j) = & \left( - (\iota_{V} F)_0 +  V^j (\partial_j A_0  + [A_j,A_0]) \right)
+ (\rd_{0} V^{j}) A_{j}. \label{eq:iX-F-0}
\end{align}
Here the LHS can be viewed as a system in $A_k$ which is microlocally elliptic of order $1$ on the support of $\eta(\tau, \xi) \chi (t,x)$. 
To exploit this fact, we apply $\eta(D_{t,x}) \chi(t,x)$ to the both sides and rewrite the above system as
\begin{align*}
	V^{\alp} \rd_{\alp} (\eta(D_{t,x}) \chi A_{k}) - \rd_{k} (\eta(D_{t,x}) \chi V^{j} A_{j}) & = \eta(D_{t,x}) \chi \hbox{(RHS of \eqref{eq:iX-F-k})} \\
	& \phantom{=} + [V^{\alp} \rd_{\alp}, \eta(D_{t,x}) \chi] A_{k} - [\rd_{k}, \eta(D_{t,x}) \chi] V^{j} A_{j} \\
	\rd_{0} (\eta(D_{t,x}) \chi V^{j} A_{j}) & = \eta(D_{t,x}) \chi \hbox{(RHS of \eqref{eq:iX-F-0})} + [\rd_{0}, \eta(D_{t,x}) \chi] V^{j} A_{j}.
\end{align*}
It is straightforward to check that the resulting RHS has size
\[
o_{L^2}(1) + O_{\ell^1 H^\frac12}(1),
\]
where we note that only the terms of the form $(\iota_{V} F)_{\bt}$ contribute $o_{L^{2}}(1)$.
Thus, on a subsequence, we get convergence in $H^1([-1, 1] \times B_{R})$ first for $\eta(D_{t,x}) \chi V^{j} A_{j}^{(n)}$ and then for $A_{x}^{(n),lo} = \eta(D_{t,x}) (\chi A_{x})$, with the limits in $\ell^1 H^\frac32 ([-1, 1] \times B_{R})$. Now the convergence of $F^{(n)}$ in $L^2([-1, 1] \times B_{R})$ is easy to establish. \qedhere

\end{proof}

\section{Regularity of stationary connections}\label{sec:reg}

Here  we consider the solutions provided as limiting connections in Theorem~\ref{t:compact} (see the discussion following the theorem).
These have the local regularity
\begin{equation}\label{A-reg}
A_x \in \ell^1 H^\frac32_{loc}(\calO),  \qquad  \nb A_0 \in \ell^1 L^{2} H^{\frac12}_{loc}(\calO)
\end{equation}
on some open subset $\calO$ of $\bbR^{1+4}$, and satisfy
\begin{equation}
\iota_V F = 0. \label{eq:F-stat}
\end{equation}
We further specialize to the following two cases:
\begin{enumerate}
\item[(i)] $V$ is constant and time-like, or
\item[(ii)] $V = S = x^{\alp} \rd_{\alp}$ is the scaling vector field.
\end{enumerate}
Moreover, in Case~(ii), $A$ is defined in the forward light cone, i.e., $\calO \subset C$. The goal of this section is to establish the following (qualitative) regularity result.
\begin{proposition} \label{p:stat} 
Let $A$ be a hyperbolic Yang--Mills connection on an open set $\calO \subset \bbR^{1+4}$ that satisfies the above properties. Then in any open set $\calO'$ such that $\overline{\calO'} \subset \calO$, the connection $A$ is gauge-equivalent, via continuous local gauge transformations to a smooth connection. More precisely, there exists an open covering $\set{B}$ of $\calO'$ and a continuous gauge transformation $O$ on each $B$ such that $\tA = \calG(O) A$ is smooth.
\end{proposition}

Passing to a smaller set $\calO'$ makes the statement and the proof simple. This point will not be problematic for our application, thanks to the following extension result for a smooth hyperbolic Yang--Mills connection satisfying \eqref{eq:F-stat}:
\begin{proposition} \label{p:stat-ext}
Let $A$ be a hyperbolic Yang--Mills connection on an open set $\calO \subset \bbR^{1+4}$ that satisfies the above properties. Assume furthermore that $A$ is smooth. Then $A$ is gauge-equivalent, via a smooth gauge transformation, to a smooth connection $\tilde{A} = \calG(O) A$ obeying $\iota_{V} \tilde{A} = 0$ and $\calL_{V} \tilde{A} = 0$. Moreover, the extension of $\tilde{A}$ to $\bigcup_{s \in \bbR} {}^{(V)} \Phi_{s} (\calO)$ via $\calL_{V} \tilde{A} = 0$, which we still denote by $\tilde{A}$, remains a smooth hyperbolic Yang--Mills connection satisfying \eqref{eq:F-stat}.
\end{proposition}
Here, ${}^{(V)} \Phi_{s}$ is the one-parameter family of diffeomorphisms generated by the vector field $V$ and $\calL_{V}$ is the Lie derivative with respect to $V$, i.e., $\calL_{V} \tilde{A} = \frac{\ud}{\ud s} {}^{(V)} \Phi_{s}^{\ast} \tilde{A} \vert_{s = 0}$.

In the sequel, we will apply Proposition~\ref{p:stat} so that 
\begin{equation*}
\calO' = 
\begin{cases}
(-\frac{1}{4}, \frac{1}{4}) \times \bbR^{4} & \hbox{in Case~(i)}, \\
C_{[\frac{3}{2}, \infty)}^{\frac{3}{2}} & \hbox{in Case~(ii)}.
\end{cases}
\end{equation*}
In both cases $\calO'$ is contractible so that the gauge-equivalent smooth connection may be represented by a single $\g$-valued 1-form $\tA$ on $\calO'$. By continuity of the local gauge transformations, it follows that the global gauge transformation $O$ from $A$ to $\tA$ is continuous. Moreover, by the formula
\begin{equation*} 
	O_{;\alp} = Ad(O) A - \tA,
\end{equation*}
regularity \eqref{A-reg} and smoothness of $\tA$, $O$ is admissible in the sense of Definition~\ref{def:gt}. Finally, by Proposition~\ref{p:stat-ext} the smooth connection $\tA$ extends to a smooth stationary Yang--Mills connection on $\bbR^{1+4} = \bigcup_{s \in \bbR} {}^{(V)}\Phi_{s}(\calO')$ in Case~(i), and to a smooth self-similar Yang--Mills connection on $C = \bigcup_{s \in \bbR} {}^{(V)}\Phi_{s}(\calO')$ in Case~(ii); see Section~\ref{sec:bubble-off} below.

The remainder of the section is devoted to the proofs of Propositions~\ref{p:stat} and \ref{p:stat-ext}.

\begin{proof}[Proof of Proposition~\ref{p:stat}]
We first describe the main idea.
In both cases, the basic observation is that we can use the relation $ \iota_V F = 0$ to
  change the Yang--Mills equation to be elliptic in spacetime. More precisely, $A$ obeys the Yang--Mills equation with the (inverse) metric changed from $\bfm^{-1}$, which is Lorentzian,
  to 
\begin{equation} \label{eq:stat-met}
  \bfe^{-1} = \bfm^{-1} - \frac{2}{\bfm(V, V)}V \otimes V.
\end{equation}
Indeed, by the variational formulation of the Yang--Mills equation, it suffices to show that, under the condition \eqref{eq:F-stat}, the Lagrangian with respect to $\bfm$ agrees with that with respect to $\bfe$, i.e.,
\begin{equation*}
(\bfm^{-1})^{\alp \alp'} (\bfm^{-1})^{\bt \bt'} F_{\alp \bt} F_{\alp' \bt'} \ud \hbox{Vol}_{\bfm}
= (\bfe^{-1})^{\alp \alp'} (\bfe^{-1})^{\bt \bt'} F_{\alp \bt} F_{\alp' \bt'} \ud \hbox{Vol}_{\bfe},
\end{equation*}
where $\ud \hbox{Vol}_{\bfm}$ (resp. $\ud \hbox{Vol}_{\bfe}$) is the volume form associated with $\bfm$ (resp. $\bfe$). This property is easily verified by choosing at each point an $\bfm$-orthonormal frame that includes $\hat{V} = \abs{\bfm(V, V)}^{-1/2} V$, which is also $\bfe$-orthonormal (and vice versa) by \eqref{eq:stat-met}.

Therefore, our connection $A$ is harmonic in $(\calO, \bfe)$.
  Since it has local regularity $A_{x} \in \ell^{1} H^{\frac32}_{loc}(\calO)$ and $\nb A_{0} \in \ell^{1} L^{2} H^{\frac{1}{2}}_{loc}(\calO)$, which is the critical
  regularity in $5$ dimensions, by an argument similar to Theorem~\ref{t:harmonic-reg} we conclude it is  
locally smooth in a suitable gauge, and thus globally smooth in a suitable gauge. 

In Case~(i), $\bfe$ is simply the Euclidean metric. In Case~(ii), $\bfe$ takes the form
\begin{equation*}
	\bfe = \ud \rho^{2} + \rho^{2} \ud y^{2}
\end{equation*}
in the hyperbolic polar coordinates $(\rho, y)$ on $C \subset \bbR^{1+4}$ (see Section~\ref{subsec:notation}), where $\ud y^{2}$ denotes the standard metric on the hyperbolic space $\bbH^{4}$ with sectional curvature $-1$. The one difference is that our elliptic problems now
 have variable coefficients.

For a more detailed analysis, we  implement the ideas above in three steps:

\pfstep{Step~1: Local Coulomb gauge with respect to $\bfe$} 
Let $x \in \calO'$. Our aim is to place the equations in a local Coulomb gauge with respect to the metric $\bfe$,
\begin{equation*}
	\covnb^{\alp} A_{\alp} = 0,
\end{equation*}
in a sufficiently small ball $B$ in $\calO$ centered at $x$ via a gauge transformation $O$ with regularity 
\begin{equation}\label{gauge-reg-0}
O_{;\alp} \in L^{5}(B), \quad \nb O_{;\alp} \in L^{2} H^{\frac{1}{2}}(B).
\end{equation}
Here and in the sequel, $\covnb$ is the Levi-Civita connection associated with the Riemannian metric $\bfe$ as in \eqref{eq:stat-met}, and we raise and lower greek (spacetime) indices using $\bfe$. 

Let $\bar{\bfe}_{\alp \bt} = \bfe_{\alp \bt}(x)$. We take the ball $B$ centered at $x$ to be small enough so that 
\begin{equation*}
\nrm{\nb A}_{L^{2} H^{\frac{1}{2}}_{B}} + \nrm{A}_{L^{5}(B)}, \quad
\nrm{\bfe_{\alp \bt} - \bar{\bfe}_{\alp \bt}}_{C^{2}(2B)}
\end{equation*}
are sufficiently small (here, $2B$ is the double enlargement of $B$). Then we may find extensions of $A$ and $\bfe$ from $B$ to $\bbR^{5}$ such that $\bfe - \bar{\bfe}$ is supported in $2B$ and $\nrm{\nb A}_{L^{2} \dot{H}^{\frac{1}{2}}} + \nrm{\bfe - \bar{\bfe}}_{C^{2}}$ is small. We look for a global gauge transformation $O$ into the Coulomb gauge; note that $\Omg_{\alp} = O^{-1} \rd_{\alp} O$ must solve
\begin{equation} \label{eq:div-curl-O-5d}
	\left\{
	\begin{aligned}
		\covnb^{\alp} \Omg_{\alp} &= \covnb^{\alp} A_{\alp} + [\Omg^{\alp}, A_{\alp}], \\
	\covnb_{\alp} \Omg_{\bt} - \covnb_{\bt} \Omg_{\alp} &= - [\Omg_{\alp}, \Omg_{\bt}].
\end{aligned}
	\right.
\end{equation}
By Proposition~\ref{prop:div-curl-nonlin-5d} and the smallness properties of $A$, $\bfe - \bar{\bfe}$ (as well an extra iteration procedure to include the term $[\Omg^{\alp}, A_{\alp}]$), we may find a unique solution $\Omg$ to \eqref{eq:div-curl-O-5d} such that $\Omg \in L^{5}$ and $\nb \Omg \in L^{2} \dot{H}^{\frac{1}{2}}$. Integrating the system of ODEs $O^{-1} \rd_{\alp} O = \Omg$, for which the curl condition serves as the compatibility condition needed for integrability\footnote{To make this procedure rigorous, one first approximate $A$ by smooth $1$-forms, so that the corresponding $\Omg$'s are smooth, then take the limit by compactness using the bounds on $\Omg$.}, we find a gauge transformation $O$ in $B$ with regularity 
\begin{equation}\label{gauge-reg-0}
O^{-1} \rd_{\alp} O \in L^{5}(B), \quad \nb (O^{-1} \rd_{\alp} O) \in L^{2} H^{\frac{1}{2}}(B),
\end{equation}
such that $\tA = \calG(O) A$ obeys the Coulomb condition in $B$. Note that $O_{;\alp} = Ad(O) (O^{-1} \rd_{\alp} O)$ has the regularity \eqref{gauge-reg-0} as well.

\pfstep{Step~2: Elliptic regularity} Here we examine the output connection
$\tilde A$, which a-priori has the same regularity as $O_{;\alpha}$, i.e.,
\begin{equation*}
\tilde{A} \in L^{5}(B), \quad \nb \tilde{A} \in L^{2} L^{\frac{8}{3}}(B),
\end{equation*}
solves the harmonic Yang--Mills equation
\[
\tilde{\covD}^\alpha \tF_{\alpha \beta} = 0,
\]
where $\tilde{\covD} = \covnb + ad(\tilde{A})$, and satisfies the Coulomb gauge condition
\begin{equation*}
	\covnb^{\alp} \tilde{A}_{\alp} = 0.
\end{equation*}
Together these form an elliptic system for $\tA_\alpha$ of the form
\[
\Delta_\tA \tA_\alpha - Ric[\bfe]_{\alp \bt} \tA^{\bt} = [\tA^\beta, \tilde{\covD}_\alpha \tA_\beta],
\]
where $\Delta_{\tA} = \tilde{\covD}^{\bt} \tilde{\covD}_{\bt}$ and $Ric[\bfe]$ is the Ricci curvature of $(B, \bfe)$. Since a-priori $\tilde A \in L^{5}(B)$, which is critical regularity, by (perturbative) elliptic theory it follows that the solutions 
are smooth in any smaller ball $B' \subset \overline{B'} \subset B$; for simplicity, we shrink the ball $B$ so that $\tA$ is smooth on $B$.

\pfstep{Step~3: The regularity of the gauge} 
So far, we have shown that at each point $x \in \calO'$, there exists a ball $B \ni x$ in $\calO$ and a gauge transform $O$ with regularity \eqref{gauge-reg-0} on $B$ such that $\tA = \calG(O) A$ is smooth. Here the goal is to use \eqref{A-reg} to boost this regularity to
\begin{equation} \label{eq:gauge-reg-goal}
\nb O_{;\alp} \in \ell^1 L^{2} H^{\frac{1}{2}}(B).
\end{equation}
By localization of the global-in-spacetime bound
\begin{equation} \label{eq:a0-trace}
	\nrm{u}_{\ell^{1} L^{\infty} \dot{H}^{1}} \aleq \nrm{\nb u}_{\ell^{1} L^{2} \dot{H}^{\frac{1}{2}}},
\end{equation}
which follows by applying the trace theorem to each $P_{k} u$ and summing up in $k$, it would follow that $O_{;\alp} \in \ell^{1} L^{\infty} H^{1}(B)$. By Lemma~\ref{lem:ptwise-O}, we would have $O \in C^{0}(B)$ as well, as desired.

By \eqref{A-reg} (as well as $A_{\alp} \in L^{\infty} H^{1}_{loc}(\calO)$), \eqref{gauge-reg-0}, smoothness of $\tA$ and the formula 
\begin{equation} \label{eq:O-A-tA}
	O_{;\alp} = Ad(O) A_{\alp} - \tA_{\alp},
\end{equation}
it follows that 
\begin{equation}\label{gauge-reg}
O_{;\alp} \in L^{\infty} H^{1}(B), \quad \nb O_{;\alpha} \in L^{2} H^{\frac{1}{2}}(B).
\end{equation}
To exploit the additional regularity $\nb A \in \ell^{1} L^{2} H^{\frac{1}{2}}(B)$ from \eqref{A-reg}, we note that the following global-in-spacetime bounds hold:
\begin{align*}
	\nrm{u v}_{\ell^{1} L^{2} \dot{H}^{\frac{1}{2}}} & \aleq \nrm{\nb u}_{L^{2} \dot{H}^{\frac{1}{2}}} \nrm{\nb v}_{L^{2} \dot{H}^{\frac{1}{2}}} \\
	\nrm{Ad(O) u}_{\ell^{1} L^{2} \dot{H}^{\frac{1}{2}}} & \aleq_{\nrm{O}_{L^{\infty}}, \nrm{O^{-1}}_{L^{\infty}}} \nrm{u}_{\ell^{1} L^{2} \dot{H}^{\frac{1}{2}}} + \nrm{\nb O_{;x}}_{L^{2} \dot{H}^{\frac{1}{2}}} \nrm{u}_{L^{2} \dot{H}^{\frac{1}{2}}} .
\end{align*}
Both estimates are straightforward to establish by standard Littlewood--Paley trichotomy, so we omit the proof. Localization of the above estimates, combined with \eqref{A-reg} and \eqref{gauge-reg}, imply the desired estimate \eqref{eq:gauge-reg-goal}. \qedhere 
\end{proof}

\begin{proof}[Proof of Proposition~\ref{p:stat-ext}]
Solving the nonlinear transport equation $V^{\alp} \rd_{\alp} O = O \iota_{V} A$ on $\calO$ (with arbitrary smooth data), we find a smooth gauge transformation $O$ on $\calO$ such that $\tA = \calG(O) A$ obeys $\iota_{V} \tA = 0$. By Cartan's formula, note that
\begin{equation*}
	\calL_{V} \tA =  \iota_{V} \ud \tA + \ud \iota_{V} \tA = \iota_{V} \tF - \frac{1}{2} \iota_{V} [\tA \wedge \tA] + \ud \iota_{V} \tA = 0,
\end{equation*}
where in the last equality, we used \eqref{eq:F-stat} for the first term and $\iota_{V} \tA = 0$ for the others. Thus the extension of $\tA$ to $\bigcup_{s \in \bbR} {}^{(V)} \Phi_{s}(\calO)$ by solving $\calL_{V} \tA = 0$, which amounts to solving an ODE along each integral curve of $V$, is well-defined. That the extension satisfies $\iota_{V} \tA = 0$ is clear, whereas \eqref{eq:F-stat} follows by reversing the preceding computation. By Cartan's formula applied to $\calL_{V} F$, it also follows that $\calL_{V} F = 0$.

It remains to show that the extension still solves the Yang--Mills equation. First, note that \eqref{ym} may be rewritten in the form
\begin{equation*}
	\tilde{\bfd} (\star F) = 0,
\end{equation*}
where $\tilde{\bfd}$ is the covariant exterior derivative associated with $\tA$\footnote{This operator is characterized by linearity and $\tilde{\bfd} (u \otimes \omg) = \tilde{\covD} u \wedge \omg + u \ud \omg$, where $u$ is a $\g$-valued function and $\omg$ is a $k$-form.} and $\star$ is the Hodge star operator associated with $\bfm$. By $[\ud, \calL_{V}] = 0$ and $\calL_{V} \tA = 0$, it follows that $[\calL_{V}, \tilde{\bfd}] = 0$. Moreover, since $\calL_{V} \bfm = 0$ in Case~(i) and $\calL_{V} \bfm = 2 \bfm$ in Case~(ii), it follows that $[\calL_{V},  \star] F = c \star F$, where $c = 0$ in Case~(i) and $c = 1$ (i.e., the spacetime dimension minus $4$) in Case~(ii). In conclusion, $\calL_{V} (\tilde{\bfd} (\star F)) = c \tilde{\bfd} (\star F)$, from which the desired conclusion follows. \qedhere
\end{proof}

\section{No finite energy self-similar solutions}\label{sec:self-similar}

One of the main enemies in proving the bubbling-off result is given by self-similar solutions.
Here we prove that no nontrivial finite energy self-similar Yang--Mills connections exist:

\begin{theorem}\label{t:no-self}
 There are no smooth nontrivial self-similar solutions
to the hyperbolic Yang--Mills equation (i.e., $\iota_{S} F = 0$) defined on the whole forward light cone $C$ which have finite energy.  
\end{theorem}
In our application, smoothness of the self-similar solution in $C$ (the \emph{open} forward light cone) follows from the results in Section~\ref{sec:reg}, but the only information a-priori available near the boundary is the finite energy condition. The main issue in the proof of Theorem~\ref{t:no-self} is indeed the analysis near the boundary.
\begin{proof}
We proceed in several steps.
\pfstep{Step~1}
We first recast the problem in hyperbolic polar coordinates, parametrizing the forward light
cone $C$ as
\[
C = \{ \rho y \in \bbR^{1+4}: \rho \in (0, \infty), \, y \in \bbH^4\},
\]
where we remind the reader that $\rho = \sqrt{t^{2} - \abs{x}^{2}}$ and $\bbH^{4} = \set{(t, x) \in C : \rho = 1}$ (see Section~\ref{subsec:notation}). The Minkowski metric becomes 
\[
\ud s^2 = - \ud \rho^2 + \rho^{2} \ud y^2,
\]
where $\ud y^{2}$ denotes the induced metric on $\bbH^{4}$; as is well-known, it is the standard metric on the hyperbolic space $\bbH^{4}$ with sectional curvature $-1$. 

Inside the light cone $C$, the self-similarity condition $\iota_S F = 0$ becomes $\iota_{ \partial_\rho} F = 0$.
By Proposition~\ref{p:stat-ext}, we may make a smooth gauge transformation to make $A_{\rho} = \iota_{\partial_{\rho}} A = 0$.
Then our connection is still smooth, and also independent of $\rho$ in the sense that $\calL_{\rd_{\rho}} A = 0$ (see the proof of Proposition~\ref{p:stat-ext}). We may furthermore check that the pullback of $A$ to $\bbH^{4} = \set{\rho = 1}$, which we still denote by $A$, is a solution to the harmonic Yang--Mills equation in $\bbH^{4}$.

Using the stereographic projection, we represent $\bbH^4$ as the unit disc $\bbD^4$ in $\R^4$ with metric 
\[
\ud s^2 =  \Omega^2 \ud x^2, \qquad \Omega = \frac{2}{1-\abs{x}^2}.
\]
By conformal invariance of the harmonic Yang--Mills equation in dimension $4$, the conformal factor drops out, and we obtain the 
elliptic Yang--Mills system in $\bbD^4$ 
\[
\covD^i F_{ij} =0.
\]
with respect to the Euclidean metric $\ud s^{2} = \ud x^{2}$.

We now move the finite energy condition in hyperbolic polar coordinates, then translate it to $\bbD^{4}$. This computation is equivalent 
to that in \cite[Section~7.2]{OT3}, and yields
\begin{equation} \label{eq:F-vanish}
\int_{\bbD^4} \frac{1+r^2}{1-r^2} |F|^2 \ud x < \infty,
\end{equation}
where $r = \abs{x}$. At this point we know that the connection is smooth inside $\bbD^4$, but nothing about its behavior at the boundary.
Let $\calA_{(\frac{1}{2}, 1)} = \set{x \in \bbD^{4} : \frac{1}{2} < \abs{x} < 1}$. We claim that there exists a gauge such that $A \in H^{1}(\calA_{(\frac{1}{2}, 1)})$ and
\begin{equation} \label{eq:A-vanish}
	\int_{\calA_{(\frac{1}{2}, 1)}} \frac{1}{1-r^{2}} \abs{\nb A}^{2} + \frac{1}{(1-r^{2})^{3}} \abs{A}^{2} \, \ud x < \infty.
\end{equation}
Assuming the claim, the proof of the theorem may be completed as follows.
By \eqref{eq:A-vanish}, it follows that $A \vert_{\partial \bbD^4}$ vanishes. Thus its zero extension $\bar{A}$ outside the ball
is also in $H^1$, and its curvature $\bar{F}$ is the zero extension of $F$. We conclude that the zero extension of $A \in H^{1}(\bbD^{4})$ satisfying \eqref{eq:A-vanish} still solves the harmonic Yang--Mills system
\[
\bar{\covD}^i \bar{F}_{i j} = 0.
\]
By the classical elliptic regularity results of Uhlenbeck (Theorem~\ref{t:harmonic-reg}), the connection $\bar{A}$ is gauge equivalent to a 
smooth connection $\tA$ in $\bbR^{4}$. To continue we write elliptic equations for $\tF$, 
\[
\Delta_{\tA} \tF = [\tF, \tF]
\]
Since $\tF$ has compact support and $\tA$ is smooth (thus bounded in the support of $\tF$), it follows from the classical elliptic unique continuation result\footnote{For further references on unique continuation for second order elliptic PDEs, see, for instance, \cite{KoTa}} due to Aronszajn \cite{Aro} that $\tF= 0$.  Thus the connection $A$ is trivial. 

It remains to prove our claim, and show that 
a representation satisfying \eqref{eq:A-vanish} exists.

\pfstep{Step~2}
For $0 < d < \frac{1}{6}$, $k \geq 0$ and $2 \leq p \leq \infty$, we claim that
\begin{equation} \label{eq:F-cov-w}
	\nrm{\covD^{(k)} F}_{L^{p}(\calA_{(1-2d, 1-d)})} \aleq d^{-\alp + \frac{1}{2}} \nrm{d^{-\frac{1}{2}} F}_{L^{2}(\calA_{(1-4d, 1-\frac{d}{2})}}), \quad \alp = k + 2 - \frac{4}{p}.
\end{equation}
This is proved by applying Uhlenbeck's lemma (Theorem~\ref{t:uhl}) in balls of size proportional to their distance to the boundary, and using the fact that the harmonic Yang--Mills equation becomes strictly elliptic in the Coulomb gauge, which allows us to use interior elliptic regularity.

\pfstep{Step~3}
In the remainder of this proof, we work in the polar coordinates $x = r \Tht$ on $\bbD^{4} \setminus \set{0}$. As stated in  Section~\ref{subsec:notation}, we write $A_{\Tht}(r, \cdot)$ for the pullback of a $1$-form $A$ to each constant $r$-sphere, which \emph{we then view as a $1$-form on the unit sphere $(\bbS^{3}, \smet)$.} Alternatively, one may think of the whole analysis in the rest of this proof as taking place on $(0, 1)_{r} \times \bbS^{3}_{\Tht}$ equipped with the metric $\ud s^{2} = \ud r^{2} + \ud \Tht^{2}$.

Roughly speaking, the idea is to fix the gauge by specifying the conditions
\begin{equation} \label{eq:self-sim-gauge}
	A_{r} = 0 \hbox{ in } \calA_{(\frac{1}{2}, 1)}, \quad
	A (r, \cdot) \to 0 \hbox{ and } \snb^{\Tht} A_{\Tht}(r, \cdot) \to 0 \hbox{ as } r \to 1,
\end{equation}
which is possible, at least heuristically, by the decay of $F$ as $r \to 1$ and Uhlenbeck's lemma.
For technical reasons, however, we proceed slightly differently and work with a sequence of gauges approximately satisfying \eqref{eq:self-sim-gauge}.

We start with the connection $A^{(0)}$ in the exponential gauge at the origin (i.e., $A^{(0)}_{r} = 0$ in $\bbD^{4}$ and $A^{(0)}(0) = 0$). Observe that $A^{(0)}$ is smooth. By \eqref{eq:F-vanish}, we may find a sequence $r_{n} \to 1$ such that $\nrm{(1-r_{n})^{-\frac{1}{2}} F(r_{n}, \cdot)}_{L^{2}_{\Tht}} \to 0$. Viewing $A_{\Tht}^{(0)}(r_{n}, \cdot)$ as a connection 1-form on the unit sphere $\Tht \in \bbS^{3}$ and applying Uhlenbeck's lemma on $\bbS^{3}$ (Proposition~\ref{p:uhl-S3}), we find gauge transformations $O^{(n)} = O^{(n)}(\Tht)$ on $\bbS^{3}$ such that the following property holds: Viewing $O^{(n)}$ as defined on $\bbD^{4} \setminus \set{0}$ by $O^{(n)}(r, \Tht) = O^{(n)}(\Tht)$, the representation $A^{(n)} = \calG(O^{(n)}) A^{(0)}$ obeys
\begin{equation} \label{eq:self-sim-gauge-approx}
	\snb^{\Tht} A^{(n)}_{\Tht}(r_{n}, \cdot) = 0, \quad 
	\nrm{\snb_{\Tht} A^{(n)}_{\Tht}(r_{n}, \cdot)}_{L^{2}_{\Tht}} \aleq \nrm{F(r_{n}, \cdot)}_{L^{2}_{\Tht}},
\end{equation}
as well as
\begin{equation} \label{eq:self-sim-gauge-approx-radial}
	A^{(n)}_{r} (r, \cdot) = 0 \quad \hbox{ for } 0 < r < 1,
\end{equation}
simply due to $A^{(0)}_{r} = 0$ and the $r$-independence of $O^{(n)}$. Thanks to smoothness of $A^{(0)}$ and \eqref{eq:F-cov-w} (elliptic regularity for $F$), note that $O^{n} \in C^{\infty}(\bbS^{3})$.

\pfstep{Step~4}
Let
\begin{equation*}
	D^{2} = \int_{\calA_{(\frac{1}{3}, 1)}} \frac{1}{1-r} \abs{F}^{2} \, \ud x, \quad
	\eps_{n}^{2} = \frac{1}{1-r_{n}}\int_{\bbS^{3}} \abs{F}^{2}(r_{n}, \Tht) \, \ud \Tht,
\end{equation*}
where $\calA_{(\frac{1}{3}, 1)} = \set{x \in \bbD^{4} : \frac{1}{3} < \abs{x} < 1}$. By \eqref{eq:F-vanish}, $D^{2} < \infty$, and by construction, $\eps_{n} \to 0$.

From now on, we work in the gauge constructed in the previous step. Without loss of generality, we may assume that $r_{n} > \frac{1}{2}$. 
Note that on the annulus $\calA_{(\frac{1}{2}, r_{n})} = \set{x \in \bbD^{4} : \frac{1}{2}< r < r_{n}}$, we have the equivalence $\nrm{g}_{L^{p}(\calA_{(\frac{1}{2}, r_{n})})} \aeq \nrm{g}_{L^{p}_{r} L^{p}_{\Tht}((\frac{1}{2}, r_{n}) \times \bbS^{3})}$ for any $1 \leq p \leq \infty$, where the constant is independent of $n$.

As $A^{(n)}_{r} =0$ and $\rd_{r} A^{(n)}_{\Tht} = F^{(n)}_{r \Tht}$, we immediately have
\begin{equation} \label{eq:A-w-rad}
	\int_{1/2}^{r_{n}} \int \frac{1}{1-r} \abs{\rd_{r} A^{(n)}}^{2} \, \ud r \ud \Tht \aleq D^{2}.
\end{equation}

To proceed, recall the following elementary inequality (essentially one-dimensional Hardy's inequality): For $1 \leq p \leq \infty$ and $0 < r \leq r_{n}$, we have
\begin{equation} \label{eq:hardy-1d}
	\int_{r}^{r_{n}} \left( (1-r')^{\bt - 1} \abs{g} \right)^{p} \, \ud r' \aleq_{\bt, p} \int_{r}^{r_{n}} \left( (1-r')^{\bt} \abs{\rd_{r} g} \right)^{p} \, \ud r'
	+ \left( (1-r_{n})^{\bt - \frac{p -1}{p}} \abs{g} \right)^{p}(r_{n})
\end{equation}
provided that $\bt < \frac{p-1}{p}$. 

In our gauge, \eqref{eq:hardy-1d} implies
\begin{equation} \label{eq:A-w-rad-0}
	\int_{1/2}^{r_{0}} \int \frac{1}{(1-r)^{3}} \abs{A^{(n)}}^{2} \, \ud r \ud \Tht \aleq D^{2} + \frac{1}{(1-r_{0})^{2}} \int \abs{A^{(n)}}^{2}(r_{0}, \Tht) \, \ud \Tht,
\end{equation}
for any $1/2 < r_{0} \leq r_{n}$.

\pfstep{Step~5} 
To complete the proof, in view of \eqref{eq:A-w-rad} and \eqref{eq:A-w-rad-0}, it remains to establish
\begin{equation} \label{eq:A-w-ang}
	\limsup_{n \to \infty} \int_{1/2}^{r_{n}} \int \frac{1}{1-r^{2}} \abs{\snb_{\Tht} A^{(n)}}^{2} \, \ud r \ud \Tht \aleq_{D} 1.
\end{equation}
Once \eqref{eq:A-w-ang} is proved, then it is a routine matter to extract a limit $O^{(n)} \weakto O$ in $H^{2}(\bbD^{4})$ such that $A = Ad(O) A^{(0)} - O_{;x}$ obeys the desired vanishing condition \eqref{eq:A-vanish}. 

As a first attempt to prove \eqref{eq:A-w-ang}, note that we have control of $\rd_{r} \rd_{\Tht} A^{(n)}_{\Tht} = \covD^{(n)}_{\Tht} F^{(n)}_{r \Tht} + O(A^{(n)}_{\Tht}, F^{(n)}_{r \Tht}$) by \eqref{eq:F-cov-w}. However, if we naively use the $L^{2}$ bound in \eqref{eq:F-cov-w}, we encounter a logarithmic divergence. To rectify this, we use an additional cancellation from the harmonic Yang--Mills equation.

The idea is to compute the div-curl system on $\bbS^{3}$ satisfied by $A^{n}_{\Tht}(r, \cdot)$. First, note that
\begin{equation} \label{eq:div-FrTht}
	\scovD^{\Tht} F_{r \Tht} = r^{2} \covD^{j} (\iota_{\rd_{r}}F)_{j} = r x^{j} \left( \covD^{i} F_{ij} \right),
\end{equation}
where the last term is zero if the harmonic Yang--Mills equation holds. Therefore, we have
\begin{align*}
	\rd_{r} \snb^{\Tht} A^{(n)}_{\Tht} = \snb^{\Tht} F^{(n)}_{r \Tht} 
	=  (\scovD^{(n)})^{\Tht} F^{(n)}_{r \Tht} - ad(A^{(n) \Tht}) F^{(n)}_{r \Tht}) = - ad(A^{(n) \Tht}) F^{(n)}_{r \Tht}.
\end{align*}
We furthermore note that
\begin{equation} \label{eq:A-w-L4}
	\int_{1/2}^{r_{n}} \int \frac{1}{(1-r)^{2}} \abs{A^{(n)}}^{4} \, \ud r \ud \Tht \aleq D^{4} + \frac{1}{1-r_{n}} \int \abs{A^{(n)}}^{4}(r_{n}, \Tht) \, \ud \Tht \aleq D^{4} + (1-r_{n}) \eps_{n}^{4}
\end{equation}
by \eqref{eq:F-cov-w} with $(k, p) = (0, 4)$, \eqref{eq:hardy-1d} with $p = 4$ and \eqref{eq:self-sim-gauge-approx}. Then thanks to \eqref{eq:F-cov-w} and \eqref{eq:A-w-L4}, we have 
\begin{equation*}
\limsup_{n \to \infty} \nrm{\rd_{r} \snb^{\Tht} A^{(n)}_{\Tht}}_{L^{2}_{r} L^{2}_{\Tht}((1/2, r_{n}) \times \bbS^{3})} \aleq D^{2}.
\end{equation*}
Recall from \eqref{eq:self-sim-gauge-approx} that $\snb^{\Tht} A^{(n)}_{\Tht}(r_{n}, \Tht) = 0$. By \eqref{eq:hardy-1d}, it follows that
\begin{equation*}
	\limsup_{n \to \infty} \nrm{(1-r)^{-1} \snb^{\Tht} A^{(n)}_{\Tht}}_{L^{2}_{r} L^{2}_{\Tht}((1/2, r_{n}) \times \bbS^{3})} \aleq D^{2}.
\end{equation*}
On the other hand, by the schematic relation
\begin{equation*}
	(\ud A)_{\Tht \Tht} = F_{\Tht \Tht} - [A_{\Tht}, A_{\Tht}]
\end{equation*}
and the bound \eqref{eq:A-w-L4}, we have
\begin{equation*}
	\limsup_{n \to \infty} \nrm{(1-r)^{-\frac{1}{2}} (\ud A^{(n)})_{\Tht \Tht}}_{L^{2}_{r} L^{2}_{\Tht} ((1/2, r_{n}) \times \bbS^{3})} \aleq D + D^{2}.
\end{equation*}
By the div-curl estimate on $\bbS^{3}$, we obtain
\begin{equation*}
	\limsup_{n \to \infty} \nrm{(1-r)^{-\frac{1}{2}} \snb_{\Tht} A^{(n)}_{\Tht}}_{L^{2}_{r} L^{2}_{\Tht}((1/2, r_{n}) \times \bbS^{3})} \aleq D + D^{2},
\end{equation*}
which implies the desired bound \eqref{eq:A-w-ang}.
\end{proof}

\section{The bubbling-off result}
\label{sec:bubble-off}
In this section we prove the bubbling off result in
Theorem~\ref{t:bubble-off}. Much of the argument is similar to that in \cite[Sections~8.3--8.6]{OT3} (see also \cite[Sections~6.5--6.8]{ST2}), from which we borrow many results.

Throughout this section, we assume that $A$ is a finite energy Yang--Mills connection satisfying the hypothesis of either Theorem~\ref{t:bubble-off}.a) (finite time blow-up at $(T, X)$) or b) (infinite time blow-up). We write $E = \nE(A)$ and $E_{1} = \limsup_{t \nearrow T} \nE_{C_{\gmm} \cap S_{t}} (A)$, where $T = \infty$ in the infinite time blow-up case. Moreover, in the finite time blow-up case, we translate the point $(T, X)$ to $(0, 0)$, and reverse the time direction so that the blow up occurs inside $C$ backward in time towards $(0, 0)$.

Our first goal is to prove that from the
connection $A$ we can extract a sequence of smooth connections $A^{(n)}$ in
increasing cone sections and with decreasing fluxes:

\begin{lemma}
There exists a sequence of smooth hyperbolic Yang--Mills connections $A^{(n)}$ in cone sections
$C_{[\veps_n,1]}$ with $\veps_n \to 0$ which satisfy the following properties:
 \begin{enumerate}
\item Closeness to $A$. There exists a sequence $\tA^{(n)}$ of rescaled and translated 
copies of $A$ so that 
\begin{equation} \label{eq:ini-seq:approx}
\lim_{n \to \infty} \sup_{t \in [\veps_n,1]} \| A^{(n)} -\tA^{(n)}\|_{\dot H^1 \cap L^4(S_t)} = 0 
\end{equation}
  \item Bounded energy in the cone.
    \begin{equation} \label{eq:ini-seq:energy} \nE_{S_{t}} (A^{(n)}) \leq E + o(1) \quad \hbox{ for every }
 t \in [\veps_{n}, 1],
    \end{equation}
   \item Decaying flux on $\rd C$.
    \begin{equation} \label{eq:ini-seq:flux} \calF_{[\veps_{n}, 1]}
      (A^{(n)})  \leq \veps_{n}^{\frac{1}{2}} E,
    \end{equation}
  \item Time-like energy concentration at $t = 1$.
\begin{equation}
 \nE_{C_{\gmm} \cap S_1} (A^{(n)}) \geq E_1 > 0
\end{equation}
with some  $\gamma < 1$.
  \end{enumerate}
\end{lemma}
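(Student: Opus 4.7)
\emph{Proof plan.} The plan is to first reduce, by translation, time-reversal and parabolic rescaling, to a forward-cone configuration in which the time-like concentration hypothesis at $t \to T$ translates into a positive lower bound of the energy in $\tilde C_\gamma \cap \tilde S_1$, and then to smooth out the rescaled finite energy connections by approximating their data on $\tilde S_1$. Throughout I treat case (a) of Theorem~\ref{t:bubble-off} in detail; case (b) is analogous, with the rescaling adapted to infinity.

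First I would translate so that $(T, X) = (0, 0)$ and apply a time reversal to recast the backward cone $\set{\abs{x - X} \leq T - t}$ as the forward cone $\tilde C = \set{\abs{y} \leq s}$. By the time-like concentration hypothesis, I can pick $t_n \to T$ with $\nE_{C_\gamma \cap S_{t_n}}(A) \geq E_1 > 0$; after rescaling at scale $\lambda_n = T - t_n$, which maps $S_{t_n}$ to $\tilde S_1$ and the tip $(T, X)$ to the tip of the forward cone, scale-invariance of both the energy and of $C_\gamma$ immediately yields property (4) for the rescaled $\tilde A^{(n)}$. Property (2) then follows from the energy-flux relation \eqref{energy-flux} on the forward cone combined with the fact that each slice energy of $A$ is bounded by the conserved total energy $\nE(A)$. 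For property (3), I would invoke the flux decay at the blow-up tip from Corollary~\ref{cor:flux-decay}, which yields $\calF_{\rd C_{[t_n, T)}}(A) \to 0$, and then choose $\veps_n \to 0$ slowly enough that the scale-invariant flux of $\tilde A^{(n)}$ on $\rd \tilde C_{[\veps_n, 1]}$ is dominated by $\veps_n^{1/2} E$.

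Next I would upgrade the $\tilde A^{(n)}$ to smooth Yang--Mills connections $A^{(n)}$ on $\tilde C_{[\veps_n, 1]}$ obeying \eqref{eq:ini-seq:approx}. For this, I would apply the good gauge representation of Theorem~\ref{t:good-gauge} on the cone section to place $\tilde A^{(n)}$ in a gauge with bounded $\dot H^1 \cap L^4$ norm on $\tilde S_1$, approximate the resulting initial data by smooth data converging in $\dot H^1 \cap L^4 \times L^2$, and then invoke local well-posedness in the temporal gauge (Theorem~\ref{t:local-temp}) together with propagation of regularity to produce smooth solutions $A^{(n)}$. Continuous dependence in the energy topology then transfers closeness of data on $\tilde S_1$ to uniform closeness on every slice $\tilde S_t$ with $t \in [\veps_n, 1]$, which in turn preserves each of (2), (3), (4) up to $o(1)$ losses.

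The principal obstacle will be ensuring that the smooth approximations $A^{(n)}$ actually exist on the full cone section $\tilde C_{[\veps_n, 1]}$, whose temporal length is of unit order while $\veps_n \to 0$. Theorem~\ref{t:local-temp} only gives existence on a time interval comparable to the energy concentration scale of the data, so this forces one to control the concentration scale uniformly from below along the flow. The key input here is the smallness of the flux \eqref{eq:ini-seq:flux}: by the energy-flux relation, small flux limits the amount of energy that can migrate into the cone section through its lateral boundary, and combined with the good gauge bounds of Theorem~\ref{t:good-gauge} this yields a uniform lower bound on the concentration scale, allowing one to iterate the local theory all the way down to $t = \veps_n$.
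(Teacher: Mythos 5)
Your reduction to a forward cone, the choice of $t_n$ and of the rescaling parameter, and the derivation of properties (2)--(4) from the energy--flux relation \eqref{energy-flux}, the flux decay of Corollary~\ref{cor:flux-decay}, and a slow diagonal choice of $\veps_n$ all match the paper's argument. The gap is in your smoothing step. You propose to smooth the data on $\tilde S_1$ and re-solve via Theorem~\ref{t:local-temp}, iterating the local theory down to $t=\veps_n$, and you justify this by claiming that the flux smallness \eqref{eq:ini-seq:flux}, combined with the good gauge bounds of Theorem~\ref{t:good-gauge}, yields a uniform lower bound on the energy concentration scale along the flow. This claim is false, and in fact it is exactly backwards: small lateral flux only controls the energy entering or leaving the cone section through $\rd C$, and says nothing about how concentrated the interior energy is. The entire point of the bubbling analysis is that solutions with vanishing flux can concentrate at scales $r_n \to 0$ well inside the cone (a shrinking soliton bubble is the model case), and in the blow-up scenario $r_c(t)\to 0$ as $t$ approaches the tip by Theorem~\ref{t:global-temp}, so no bound uniform in $n$ is available on $[\veps_n,1]$. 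Moreover the appeal to Theorem~\ref{t:good-gauge} is circular, since the constants in \eqref{good-gauge} themselves depend on $r_c/t_2$; that theorem cannot be used to bound $r_c$ from below.

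The construction can be repaired, because no uniformity in $n$ is actually needed: for each fixed $n$ the rescaled solution lives on the compact interval $[\veps_n,1]$, strictly away from the blow-up time, so its concentration scale has a positive (but $n$-dependent) lower bound, and a finite, $n$-dependent number of applications of local well-posedness plus continuous dependence would do. But this is far more machinery than required, and it is not how the paper proceeds. The paper's route is essentially definitional: a finite energy solution is, by definition, a limit of regular solutions in $C_t(H^1_{loc})$ with curvature in $C_t(L^2_{loc})$, so smooth approximants $\tA^{(n)}_\delta$ with
\begin{equation*}
\sup_{t \in [\veps_n+\delta,1-\delta]} \| \tA^{(n)}_\delta - \tA^{(n)} \|_{\dot H^1 \cap L^4(S_t^\delta)} \leq \tfrac{1}{n}
\end{equation*}
exist with no Cauchy problem being re-solved; the only remaining issue is the slight domain mismatch (interior cone sections), which is fixed by translating and rescaling $\tA^{(n)}_\delta$ by $O(\delta)$ with $\delta$ small depending on $n$. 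You should replace your re-solving argument, and in particular the flux-implies-non-concentration claim, by this approximation-by-definition step.
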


\begin{proof}
To clarify the ideas we assume at first that $A$ is smooth in the closure of $C$. We start from the flux-energy relation 
\eqref{energy-flux}, which shows that the flux decays toward the tip of the cone in the blow-up case,
\[
\lim_{t \searrow 0} \calF_{(0, t]}(A) = 0.
\]
respectively toward infinity in the non-scattering case,
\[
\lim_{t \nearrow \infty} \calF_{[t,\infty)}(A) = 0.
\]
Using these properties, it easily follows in both cases that we can find a sequence of connections $A^{(n)}$ 
 which are obtained from $A$ simply by rescaling.

Suppose now that $A$ is a finite energy solution inside the cone. Then its energies on each time slice
$\nE_{S_t}(A)$ are still well defined and nondecreasing in $t$. Thus its fluxes are at least formally defined via the 
energy flux relation \eqref{energy-flux}, therefore we obtain the sequence $\tA^{(n)}$ of rescaled copies
of $A$  which satisfies the properties (2)--(4). We now consider a smooth approximation $\tA^{(n)}_\delta$ of $\tA^{(n)}$
so that 
\[
\sup_{t \in [\veps_n+\delta ,1-\delta]} \| \tA^{(n)}_\delta -\tA^{(n)}\|_{\dot H^1 \cap L^4(S_t^\delta)} \leq \frac{1}{n}.
\] 
Then the desired smooth connections $A^{(n)}$ are obtained by slightly
(by $O(\delta) $ to be precise) translating and rescaling
$\tA^{(n)}_\delta$ provided that $\delta$ is small enough, depending
on $n$. \qedhere
\end{proof}

At this point, we may apply Proposition~\ref{prop:monotonicity} to $A^{(n)}$ and obtain
\begin{equation} \label{eq:monotonicity-n}
\iint_{C_{[\veps_{n}, 1]}} \frac{2}{\rho_{\veps_{n}}} \abs{\iota_{X_{\veps_{n}}} F^{(n)}}^{2} \ud t \ud x \aleq E.
\end{equation}
This property implies a decay of $F^{(n)}$ towards the tip of the cone $C$ for large $n$, which is the key ingredient of the proof (see Lemma~\ref{lem:final-rescale}.(4) below).

Next, we show that the energy concentration at time $t = 1$ persists in time:

\begin{lemma} \label{lem:t-like-e}
Let $A^{(n)}$ be the sequence of smooth Yang--Mills connections in the previous lemma.
 Then there exist $E_{2} > 0$ and $\gmm_{2}  \in (0, 1)$ such that
\begin{equation} \label{eq:t-like-e}
	\int_{C_{\gmm_{2}} \cap S_{t}} \mvC{X_{0}}_{0}(A^{(n)})  \, \ud x \geq E_{2} \quad \hbox{ for every } t \in [\veps_{n}^{\frac{1}{2}}, \veps_{n}^{\frac{1}{4}}].
\end{equation}
\end{lemma}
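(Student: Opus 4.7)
The plan is to propagate the time-like energy concentration from $t = 1$ down to the range $t_0 \in [\veps_n^{1/2}, \veps_n^{1/4}]$ using the time-like monotonicity formula of Proposition~\ref{prop:monotonicity:t-like}, applied to each $A^{(n)}$. The hypothesis \eqref{eq:monotonicity:hyp} is supplied by \eqref{eq:ini-seq:energy} and \eqref{eq:ini-seq:flux}, with $E$ of the order $\nE(A) + o(1)$.

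First I would convert the hypothesis $\nE_{C_\gamma \cap S_1}(A^{(n)}) \geq E_1$ into a lower bound for the weighted density $\mvC{X_0}_0$. On $C_\gamma \cap S_1 = \set{(1,x) : \abs{x} \leq \gamma}$ we have $u_0 = 1 - \abs{x} \in [1-\gamma, 1]$ and $v_0 = 1 + \abs{x} \in [1, 2]$, so the null weights $(u_0/v_0)^{\pm 1/2}$ that appear in $\mvC{X_0}_L$ and $\mvC{X_0}_{\uL}$ are bounded from above and below by constants depending only on $\gamma$. Consequently there is $c_\gamma > 0$ with $\mvC{X_0}_0 \geq c_\gamma \mvC{T}_0$ pointwise on $C_\gamma \cap S_1$, which yields
\begin{equation*}
\int_{S_1^{\dlt_1}} \mvC{X_0}_0 (A^{(n)}) \, \ud x \geq \int_{C_\gamma \cap S_1} \mvC{X_0}_0 (A^{(n)}) \, \ud x \geq c_\gamma E_1
\end{equation*}
as long as $\dlt_1 \leq 1 - \gamma$, for then $C_\gamma \cap S_1 \subseteq S_1^{\dlt_1}$.

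Next I would choose parameters. Fix $\eps_0 \in (0, 1)$ small (to be selected), set $\gmm_2 = 1 - \eps_0$, and, for each $t_0 \in [\veps_n^{1/2}, \veps_n^{1/4}]$, set
\begin{equation*}
\dlt_0 = \eps_0 t_0, \qquad \dlt_1 = \sqrt{\eps_0}\, t_0.
\end{equation*}
This makes $S_{t_0}^{\dlt_0} = C_{\gmm_2} \cap S_{t_0}$, $\dlt_0 < \dlt_1 \leq t_0$, and $\dlt_1 \leq \sqrt{\eps_0} \leq 1 - \gamma$ once $\eps_0 \leq (1-\gamma)^2$. The admissibility $\dlt_0 \geq 2 \veps_n$ reduces to $\eps_0 t_0 \geq 2 \veps_n$, which follows from $t_0 \geq \veps_n^{1/2}$ provided $\eps_0 \geq 2 \veps_n^{1/2}$; this holds for $n$ large once $\eps_0$ is fixed. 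With these choices $\dlt_1/t_0 = \sqrt{\eps_0}$ and $\dlt_1/\dlt_0 = \eps_0^{-1/2}$, so Proposition~\ref{prop:monotonicity:t-like} gives
\begin{equation*}
\int_{S_1^{\dlt_1}} \mvC{X_0}_0 \leq \int_{C_{\gmm_2} \cap S_{t_0}} \mvC{X_0}_0 + C \Big( \eps_0^{1/4} + \frac{2}{\abs{\log \eps_0}} \Big) E.
\end{equation*}
Combining with the first step and absorbing the $o(1)$ in $E$ into the constant, one obtains
\begin{equation*}
\int_{C_{\gmm_2} \cap S_{t_0}} \mvC{X_0}_0 (A^{(n)}) \, \ud x \geq c_\gamma E_1 - C \Big( \eps_0^{1/4} + \tfrac{2}{\abs{\log \eps_0}} \Big) (\nE(A) + 1).
\end{equation*}
Finally, choose $\eps_0$ small enough (independent of $n$) that the error is at most $\tfrac{1}{2} c_\gamma E_1$; this fixes $\gmm_2 = 1 - \eps_0$ and yields the desired bound with $E_2 = \tfrac{1}{2} c_\gamma E_1$, uniformly for all $n$ large and all $t_0 \in [\veps_n^{1/2}, \veps_n^{1/4}]$.

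The main obstacle is the tension between the two error terms in the monotonicity formula: $(\dlt_1/t_0)^{1/2}$ demands $\dlt_1 \ll t_0$, while $\abs{\log(\dlt_1/\dlt_0)}^{-1}$ demands $\dlt_1 \gg \dlt_0$. Since $\dlt_0$ is essentially forced to be $(1-\gmm_2) t_0$ in order to identify $S_{t_0}^{\dlt_0}$ with $C_{\gmm_2} \cap S_{t_0}$, the only way out is to take $\gmm_2$ close to $1$, motivating the choice $\gmm_2 = 1 - \eps_0$ and the intermediate scale $\dlt_1 = \sqrt{\eps_0}\, t_0$. The constraint $\dlt_0 \geq 2 \veps_n$ is precisely what pins down the lower end $t_0 \geq \veps_n^{1/2}$ in the statement.
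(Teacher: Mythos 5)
Your proposal is correct and follows essentially the same route as the paper: the paper's proof simply invokes the localized monotonicity formula, Proposition~\ref{prop:monotonicity:t-like}, applied to $A^{(n)}$ with the energy/flux hypotheses \eqref{eq:ini-seq:energy}--\eqref{eq:ini-seq:flux}, deferring the parameter bookkeeping to the analogous MKG argument in \cite[Lemma~8.10]{OT3}. Your write-up fills in exactly those details (comparability of $\mvC{X_0}_0$ with the energy density on $C_\gamma \cap S_1$, the choices $\dlt_0 = \eps_0 t_0$, $\dlt_1 = \sqrt{\eps_0}\,t_0$, $\gmm_2 = 1-\eps_0$, and the admissibility constraint $\dlt_0 \geq 2\veps_n$ explaining the range $t \geq \veps_n^{1/2}$), all consistent with the intended proof.
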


This is a gauge independent property, which follows from the localized
monotonicity formulas as in the (MKG) case, via
Proposition~\ref{prop:monotonicity:t-like}; see \cite[Proof of Lemma~8.10]{OT3} for details.

At this point, we can freely replace
$\mvC{X_{0}}$ in \eqref{eq:t-like-e} by $\mvC{T}$ (i.e., the energy density) at the expense of adjusting $E_2$ as we are away from the
cone. Now a final rescaling leads us to

\begin{lemma} \label{lem:final-rescale} There exists a sequence of smooth Yang--Mills connections $A^{(n)}$ on $[1, T_{n}] \times
  \bbR^{4}$ with $T_{n} \to \infty$ satisfying the following
  properties:
\begin{enumerate}
\item Closeness to $A$. There exists a sequence $\tA^{(n)}$ of rescaled and translated 
copies of $A$ so that 
\begin{equation} \label{eq:ini-seq:approx+}
\lim_{n \to \infty} \sup_{t \in [1,T_n]} \| A^{(n)} -\tA^{(n)}\|_{\dot H^1 \cap L^4(S_t)} = 0 
\end{equation}
\item Bounded energy in the cone,
\begin{equation} \label{eq:final-rescale:energy}
	\nE_{S_{t}}(A^{(n)})\leq E + o(1), \quad
	 \quad \hbox{ for every } t \in [1, T_{n}],
\end{equation}

\item Nontrivial energy in a time-like region,
\begin{equation} \label{eq:final-rescale:nontrivial}
	\nE_{C_{\gmm_{2}} \cap S_{t}} (A^{(n)})  \geq E_{2} \quad \hbox{ for every } t \in [1, T_{n}],
\end{equation}
\item Asymptotic self-similarity,
\begin{equation} \label{eq:final-rescale:asymp-ss}
	\iint_{K} \abs{\iota_{X_{0}} F^{(n)}}^{2} \ud t \ud x \to 0 \quad \hbox{ as } n \to \infty
\end{equation}
	for every compact subset $K$ of the interior of $C_{[1, \infty)}$.
\end{enumerate}
\end{lemma}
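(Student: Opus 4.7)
\emph{Strategy.} Write $\mathbf{A}^{(n)}, \tilde{\mathbf{A}}^{(n)}$ for the sequences produced by the preceding lemma (on $C_{[\veps_n, 1]}$). The plan is to rescale these so that the interval $[\veps_n^{1/2}, \veps_n^{1/4}]$ on which Lemma~\ref{lem:t-like-e} guarantees time-like energy persistence is dilated to $[1, T_n]$ with $T_n \to \infty$. Set $\lambda_n := \veps_n^{1/2}$, $T_n := \veps_n^{-1/4}$, and define
\begin{equation*}
  A^{(n)}(t, x) := \lambda_n \mathbf{A}^{(n)}(\lambda_n t, \lambda_n x), \qquad \tA^{(n)}(t, x) := \lambda_n \tilde{\mathbf{A}}^{(n)}(\lambda_n t, \lambda_n x).
\end{equation*}
By the scaling symmetry of \eqref{ym}, these are Yang--Mills connections on the rescaled cone section $C_{[\lambda_n, \lambda_n^{-1}]} \supset C_{[1, T_n]}$; extension to the full slab $[1, T_n] \times \bbR^{4}$ is achieved outside the cone by solving \eqref{ym} with small data, which is applicable since the lateral flux of $\mathbf{A}^{(n)}$ is $O(\veps_n^{1/2})$.

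\emph{Properties (1)--(3).} Each follows by direct rescaling from the preceding lemma and Lemma~\ref{lem:t-like-e}. The closeness bound \eqref{eq:ini-seq:approx+} uses the scale invariance of $\dot H^1 \cap L^4$ in dimension four; the energy bound \eqref{eq:final-rescale:energy} uses energy invariance and the inclusion $[\veps_n^{1/2}, \veps_n^{1/4}] \subset [\veps_n, 1]$; and the time-like bound \eqref{eq:final-rescale:nontrivial} follows by rescaling Lemma~\ref{lem:t-like-e} and passing from $\mvC{X_0}_0$ to $\mvC{T}_0$, which are uniformly comparable on the time-like region $C_{\gmm_2}$ by the positivity of the energy-momentum tensor applied to the time-like vectors $X_0$ and $T$.

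\emph{Property (4): asymptotic self-similarity.} This is the main new ingredient, and my plan is to exploit that $\tA^{(n)}$ is an exact rescaling of the single finite energy solution $A$. Applying Proposition~\ref{prop:monotonicity} to $A$ on shrinking intervals approaching the concentration point, and using the flux decay from Corollary~\ref{cor:flux-decay}, I obtain that the global monotonicity integral
\begin{equation*}
  M_A := \iint_{C_A} \frac{1}{\rho_0^A} \abs{\iota_{X_0^A} F^A}^2 \, \ud t \, \ud x
\end{equation*}
is finite, where $C_A$ is the concentration cone of $A$. Since the density $\frac{1}{\rho_0} \abs{\iota_{X_0} F}^2 \, \ud t \, \ud x$ is scale invariant, for any fixed compact $K \Subset \mathrm{int}(C_{[1, \infty)})$ the integral $\iint_K \frac{1}{\rho_0} \abs{\iota_{X_0} \tF^{(n)}}^2$ equals the corresponding integral for $A$ over a subregion of $C_A$ that shrinks to the concentration point as $\lambda_n \to 0$. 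Absolute continuity of $M_A$ then forces this shrinking integral to tend to zero, and uniform positivity of $\rho_0$ on $K$ upgrades this to $\iint_K \abs{\iota_{X_0} \tF^{(n)}}^2 \to 0$. Finally, the closeness bound \eqref{eq:ini-seq:approx+} gives $L^2$ convergence $F^{(n)} \to \tF^{(n)}$ on $K$ (the quadratic term being handled via the $L^4$ bound on the connections), transferring the vanishing to $A^{(n)}$ and yielding \eqref{eq:final-rescale:asymp-ss}.

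\emph{Main obstacle.} The essential difficulty is Property (4). A direct application of the monotonicity formula to $A^{(n)}$ only gives boundedness by $E$ of the monotonicity integral on the (expanding) rescaled cone, which is insufficient for vanishing on a fixed compact $K$. The vanishing emerges only by identifying, via scale invariance, the integral on $K$ with a shrinking-region piece of the fixed finite integral $M_A$ for the original solution $A$.
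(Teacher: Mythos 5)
Your reductions of properties (1)--(3) to the previous lemma and to Lemma~\ref{lem:t-like-e} are fine, but the argument for property (4) has a genuine gap: the finiteness of the ``global monotonicity integral'' $M_A$ is not justified, and it is in fact exactly the kind of statement the monotonicity machinery does \emph{not} give. Proposition~\ref{prop:monotonicity} only bounds the spacetime integral under the flux hypothesis \eqref{eq:monotonicity:hyp}, i.e. $\EFlux_{\rd C_{[\veps,1]}}(A) \leq \veps^{1/2}E$, and this coupling between the flux and the lower endpoint $\veps$ is essential: without it the lateral boundary term in the integrated identity carries the weight $(v_\veps/u_\veps)^{1/2} \sim \veps^{-1/2}$ against $|\alp|^2$, so one only gets a bound of size $\veps^{-1/2}\,\calF$. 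For the original solution $A$ centered at its concentration point, the flux tends to zero by Corollary~\ref{cor:flux-decay}, but at an \emph{unknown rate}; on any interval $[\veps,\dlt]$ with $\dlt$ fixed and $\veps \to 0$ the flux is essentially the fixed positive number $\calF_{(0,\dlt]}(A)$, while the hypothesis would require it to be $\aleq (\veps/\dlt)^{1/2}E \to 0$. So ``applying Proposition~\ref{prop:monotonicity} on shrinking intervals'' does not produce a uniform bound, let alone finiteness of the unshifted integral over the whole cone down to the tip, and the absolute-continuity step then has no finite measure to appeal to. The flux smallness \eqref{eq:ini-seq:flux} holds only for the specially selected scales $\veps_n$ of the previous lemma, which yields precisely the uniform bound $\iint_{C_{[\veps_n,1]}} \frac{2}{\rho_{\veps_n}} |\iota_{X_{\veps_n}}F^{(n)}|^2 \aleq E$ and nothing stronger.

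This is why the choice $T_n = \veps_n^{-1/4}$ with the fixed rescaling $\lambda_n = \veps_n^{1/2}$ cannot work as stated: with only the uniform bound by $E$, there is no reason the integral over a fixed compact $K \subset \mathrm{int}\, C_{[1,\infty)}$ tends to zero after your rescaling. The paper's route (following \cite[Lemma~8.11]{OT3}) extracts the missing smallness by a pigeonhole in the time variable: split $[\veps_n^{1/2}, \veps_n^{1/4}]$ into multiplicatively disjoint subintervals of ratio $K_n$, with $K_n \to \infty$ chosen slowly enough that the number of subintervals also tends to infinity; since the total monotonicity integral is $\aleq E$, some subinterval carries an $o(1)$ portion, and \emph{that} subinterval is rescaled to $[1,T_n]$ with $T_n = K_n$ (so $T_n$ is dictated by the pigeonhole, not fixed in advance). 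On compact subsets of the interior of the cone the shifted quantities $\rho_{\veps}, X_{\veps}$ are comparable to $\rho_0, X_0$, which gives \eqref{eq:final-rescale:asymp-ss}; properties (1)--(3) survive because the selected subinterval lies inside $[\veps_n^{1/2},\veps_n^{1/4}]$ where Lemma~\ref{lem:t-like-e} applies. Two smaller points: your extension of $A^{(n)}$ from the cone section to the full slab ``by solving with small data outside'' needs the excision result (Proposition~\ref{t:chop-small}) rather than just small lateral flux; and your scale-invariance computation for the density $\rho_0^{-1}|\iota_{X_0}F|^2\,\ud t\,\ud x$ is correct, so the only structural repair needed is to replace the $M_A<\infty$ claim by the pigeonhole selection.
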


Here, $E_{2}$ is the constant from Lemma~\ref{lem:t-like-e}, after making an adjustment mentioned before Lemma~\ref{lem:final-rescale}. This lemma is essentially rescaling and pigeonhole principle; see \cite[Proof of Lemma~8.11]{OT3}. Note that Property~(4) follows from \eqref{eq:monotonicity-n}, which in turn was a consequence of the monotonicity formula (Proposition~\ref{prop:monotonicity}).

To proceed, we introduce few definitions. For each $j=1, 2, \ldots$, let
\begin{align*}
	C_{j} =& \set{(t, x) \in C^{1}_{[1, \infty)} : 2^{j} \leq t < 2^{j+1}}, \\
	\tilde{C}_{j} = & \set{(t, x) \in C^{1/2}_{[1/2, \infty)} : 2^{j} \leq t < 2^{j+1}}.
\end{align*}
Note that $C_{j}$, respectively $\tilde{C}_{j}$, is simply the set of points in the truncated cone $C_{[2^{j}, 2^{j+1})}$ at distance $\geq 1$, respectively $\geq \frac{1}{2}$, from the lateral boundary $\rd C$.

We have the following lemma, which is basically \cite[Lemma~8.12]{OT3}, for understanding concentration scales:

\begin{lemma} \label{lem:conc-scales} Let $A^{(n)}$ be a sequence of
hyperbolic Yang--Mills connections as in the previous lemma.
 Let $E_{0}$ be sufficiently small.  Then for each $j = 1, 2, \cdots$, after
  passing to a subsequence, one of the following alternatives holds:
\begin{enumerate}
\item {Concentration of energy:} There exist points $(t_{n}, x_{n}) \in \widetilde{C}_{j}$, scales $r_{n} \to 0$ and $0 < r = r(j) < 1/4$ such that the following bounds hold:
\begin{align} 
	\nE_{\set{t_{n}} \times B_{r_{n}}(x_{n})}(A^{(n)}) = & \ \eps_{0}, \label{eq:conc-scales:conc:1} \\
	\sup_{x \in B_{r}(x_{n})} \nE_{\set{t_{n}} \times B_{r_{n}}(x)}(A^{(n)}) \leq & \ \eps_{0},
\label{eq:conc-scales:conc:2} \\
	\frac{1}{4 r_{n}} \int_{t_{n}-2 r_{n}}^{t_{n}+2 r_{n}} \int_{B_{r}(x_{n})} 
				\abs{\iota_{X_{0}} F^{(n)}}^{2}  \, \ud t \ud x 
	\to & \ 0 \quad \hbox{ as } n \to \infty. \label{eq:conc-scales:conc:3}
\end{align}

\item {Uniform non-concentration of energy:} There exists $0 < r =
  r(j) < 1/4$ such that the following bounds hold:
\begin{align} 
\nE_{C_{\gmm} \cap S_{t}} (A^{(n)}) \geq  &\  E_{2} 
\quad \hbox{ for } t \in [2^{j}, 2^{j+1}), \label{eq:conc-scales:non-conc:1}\\
\sup_{(t, x) \in C_{j}} \nE_{\set{t} \times B_{r}(x)} (A^{(n)}) \leq & \ \eps_{0}, \label{eq:conc-scales:non-conc:2}\\
\iint_{\widetilde{C}_{j}} \abs{\iota_{X_{0}} F^{(n)}}^{2} \, \ud t \ud x \to & \ 0 \quad \hbox{ as } n \to \infty. 
\label{eq:conc-scales:non-conc:3}
\end{align}
\end{enumerate}

\end{lemma}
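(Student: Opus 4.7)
\medskip
\noindent\textbf{Proof proposal.}
The plan is to establish the dichotomy by following the scheme used for the analogous lemma \cite[Lemma~8.12]{OT3} in the (MKG) setting, adapted here to the Yang--Mills energy density and to the cone geometry. Fix $j$. For each $n$ and $r \in (0,1/4)$, define the concentration function
\[
\mathcal{E}^{(n)}_{j}(r) := \sup_{(t,x) \in \tilde C_{j}} \nE_{\set{t} \times B_{r}(x)}(A^{(n)}),
\]
and its limsup
\[
\mathcal{E}_{j}(r) := \limsup_{n\to\infty} \mathcal{E}^{(n)}_{j}(r).
\]
The function $r \mapsto \mathcal{E}_{j}(r)$ is nondecreasing and bounded by $\nE(A)$, so the limit $\mathcal{E}_{j}(0^{+}) := \lim_{r \to 0^{+}} \mathcal{E}_{j}(r)$ exists. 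The key dichotomy is whether $\mathcal{E}_{j}(0^{+}) > E_{0}$ (concentration) or $\mathcal{E}_{j}(0^{+}) \leq E_{0}$ (non-concentration).

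\emph{Non-concentration case.} If $\mathcal{E}_{j}(0^{+}) \leq E_{0}$, then for some fixed $r = r(j) \in (0, 1/4)$ we have $\mathcal{E}_{j}(r) \leq E_{0}$, and after passing to a subsequence the supremum bound \eqref{eq:conc-scales:non-conc:2} holds. The lower bound \eqref{eq:conc-scales:non-conc:1} is just a restatement of \eqref{eq:final-rescale:nontrivial} restricted to $t \in [2^j, 2^{j+1})$, and the convergence \eqref{eq:conc-scales:non-conc:3} follows immediately from \eqref{eq:final-rescale:asymp-ss} applied to the compact subset $\tilde C_{j}$.

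\emph{Concentration case.} If $\mathcal{E}_{j}(0^{+}) > E_{0}$, then for each sufficiently small $r_*$ we can find arbitrarily large $n$ and points $(t_{n}, x_{n}) \in \tilde C_{j}$ for which the energy on a ball of radius $r_*$ exceeds $E_{0}$. Since the map $r \mapsto \nE_{\set{t_{n}} \times B_{r}(x_{n})}(A^{(n)})$ is continuous and tends to $0$ as $r \to 0$ (using smoothness of $A^{(n)}$ and $L^{2}$ absolute continuity), by the intermediate value theorem there exists a scale $r_{n} \in (0, r_*)$ realizing exactly the value $E_{0}$, yielding \eqref{eq:conc-scales:conc:1}. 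By choosing $(t_{n}, x_{n})$ to be nearly the maximizer of $\nE_{\set{t_n} \times B_{r_n}(\cdot)}(A^{(n)})$ over a neighborhood, and then shrinking the fixed radius $r(j)$ if necessary, we may arrange \eqref{eq:conc-scales:conc:2}. Letting $r_* \to 0$ along $n$ and extracting a diagonal subsequence ensures $r_{n} \to 0$.

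\emph{Getting the asymptotic self-similarity at concentration.} The subtle point—which I expect to be the main obstacle—is arranging \eqref{eq:conc-scales:conc:3}, since it requires the $\iota_{X_{0}} F$ integral over a parabolic box of temporal size $r_{n}$ to decay \emph{after} dividing by $r_{n}$, and a priori we only know the full integral over $\tilde C_{j}$ decays. The remedy is a standard pigeonhole/diagonal argument: from \eqref{eq:final-rescale:asymp-ss} applied to the compact set $\tilde C_{j}$ we have $\iint_{\tilde C_{j}} \abs{\iota_{X_{0}} F^{(n)}}^{2} = \delta_{n}$ with $\delta_{n} \to 0$. Once we have chosen the candidate points $(t_{n}, x_{n})$ and scales $r_{n} \to 0$ producing \eqref{eq:conc-scales:conc:1}--\eqref{eq:conc-scales:conc:2}, we pass to a further subsequence ensuring $r_{n}$ does not decay too fast, e.g.\ $r_{n} \geq \delta_{n}^{1/2}$; this is possible since the defining condition \eqref{eq:conc-scales:conc:1} imposes only a one-sided bound (any smaller $r_{n}'$ would have $\nE_{B_{r_{n}'}(x_{n})}(A^{(n)}) \leq E_{0}$, so we may freely enlarge $r_{n}$ up to the first such crossing scale on each time slice). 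With this choice,
\[
\frac{1}{4 r_{n}} \int_{t_{n} - 2 r_{n}}^{t_{n} + 2 r_{n}} \int_{B_{r}(x_{n})} \abs{\iota_{X_{0}} F^{(n)}}^{2}\, \ud t \ud x \leq \frac{\delta_{n}}{4 r_{n}} \leq \frac{\delta_{n}^{1/2}}{4} \to 0,
\]
which is \eqref{eq:conc-scales:conc:3}. This completes the extraction of the required subsequence.
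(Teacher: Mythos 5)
Your overall skeleton --- the sup-over-balls concentration function, the intermediate value theorem to land exactly on the value $E_{0}$, and the observation that in the non-concentration case \eqref{eq:conc-scales:non-conc:1} and \eqref{eq:conc-scales:non-conc:3} come for free from the previous lemma --- is indeed the scheme the paper has in mind (the paper itself gives no proof here, deferring to \cite[Lemma~8.12]{OT3}). Two smaller imprecisions first: the deduction ``$\mathcal{E}_{j}(0^{+}) \leq E_{0}$ implies $\mathcal{E}_{j}(r) \leq E_{0}$ for some $r>0$'' fails at the boundary value, since a nondecreasing function can have limit exactly $E_{0}$ at $0^{+}$ while exceeding $E_{0}$ for every $r>0$; the clean dichotomy is ``either some fixed $r$ works for infinitely many $n$, or for every $r$ one eventually has $\sup > E_{0}$''. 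Likewise the ``near-maximizer'' selection for \eqref{eq:conc-scales:conc:2} is circular as written, because $r_{n}$ itself depends on the chosen center; the standard fix is to (near-)minimize in $x$ the first crossing radius $\rho^{(n)}(t,x) := \inf\{\rho : \nE_{\{t\}\times B_{\rho}(x)}(A^{(n)}) \geq E_{0}\}$.

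The genuine gap is \eqref{eq:conc-scales:conc:3}. You cannot ``pass to a further subsequence ensuring $r_{n} \geq \delta_{n}^{1/2}$'': for each fixed $n$ both $r_{n}$ and $\delta_{n}$ are already determined, and discarding terms of the sequence does not change their ratio; nor can you ``freely enlarge $r_{n}$'', because the energy in a ball is nondecreasing in the radius, so past the first crossing scale the equality \eqref{eq:conc-scales:conc:1} and the bound \eqref{eq:conc-scales:conc:2} are both lost. The concentration scale is dictated by $A^{(n)}$ and may well satisfy $r_{n} \ll \delta_{n}$ (say concentration at scale $e^{-n}$ while $\delta_{n} \sim 1/n$); this mismatch is precisely the difficulty the lemma must overcome. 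Note that the spatial ball in \eqref{eq:conc-scales:conc:3} has the \emph{fixed} radius $r$, so the quantity is a time-average of $h_{n}(t) = \int_{B_{r}(x_{n})} \abs{\iota_{X_{0}} F^{(n)}}^{2}\, dx$ over a window of length $4r_{n}$, and the correct mechanism exploits freedom in the choice of the time $t_{n}$, not of the scale: the crossing radius $\rho^{(n)}(t,x)$ is $1$-Lipschitz in $(t,x)$ (monotonicity in the radius for $x$, the local energy inequality for $t$), so if concentration at scale $m_{n} \to 0$ occurs at some time $t_{*}$, then at every $t$ with $\abs{t - t_{*}} \leq \eta_{n}$ there is still concentration at scale $\leq m_{n} + \eta_{n}$. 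Taking $\eta_{n} \to 0$ slowly and using a weak-type maximal function (or pigeonhole) bound in $t$ for $h_{n}$, whose integral over a unit time interval is $\leq \delta_{n}$, one can pick $t_{n}$ in that window so that \emph{every} average of $h_{n}$ over an interval centered at $t_{n}$ is $\lesssim \delta_{n}^{1/3}$, uniformly in the (unknown, possibly tiny) value of $r_{n}$; one then chooses $x_{n}$ minimizing $\rho^{(n)}(t_{n}, \cdot)$ and sets $r_{n} = \rho^{(n)}(t_{n}, x_{n})$ to secure \eqref{eq:conc-scales:conc:1}--\eqref{eq:conc-scales:conc:2}. Without an argument of this type, your derivation of \eqref{eq:conc-scales:conc:3} does not go through.
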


In applying this lemma there are two scenarios we need to consider. Either 

\begin{enumerate}
\item[(i)] Property (1) holds for some $j$, or
\item[(ii)] Property (2) holds for all $j$.
\end{enumerate}

\medskip

{\em (i) Concentration scenario.} 
Now we need to run a compactness argument. 
On a subsequence we can assume that 
\[
\lim_{n\to \infty}  \frac{x_n}{t} = v, \qquad   |v| < 1.
\]
We denote $V = (1,v)$, which is a future pointing time-like vector.

We restrict the connections $A^{(n)}$  to the regions $ [t_n-r_n,t_n+r_n] \times B_{r}(x_n)$, with fixed  $r \ll 1$.
Then we rescale to unit time  and translate to center them at $(0,0)$. 
We obtain a sequence of Yang--Mills connections $A^{(n)}$ in the time interval $[-1,1]$ with the properties that
\begin{align} 
	\nE_{\set{0} \times B_{1}(0)}(A^{(n)}) = & \  E_0, \label{conc:1} \\
	\sup_{x \in B_{R_n}(0)} \nE_{\set{0} \times B_{1}(x)} (A^{(n)}) \leq & \ E_0, \label{conc:2} \\
 \int_{-1}^{1} \int_{B_{R_n}(0)} \abs{\iota_{V} F^{(n)}}^{2}  \, \ud t \ud x 
	\to & \ 0 \quad \hbox{ as } n \to \infty, \label{conc:3}
\end{align}
where $R_n = r_{n}^{-1} r \to \infty$. In \eqref{conc:3}, we replaced $X_{0}$ by $V = \sqrt{1-\abs{v}^{2}} X_{0} \vert_{(1, v)}$ by the convergence $\frac{x_{n}}{t} \to v$, the decay $r_{n} \to 0$ and $0$-homogeneity of $X_{0}$. 

Our next aim is to find a sequence of admissible gauge transformations such that, after passing to a subsequence and for some $A$ on $[-1/2, 1/2] \times \bbR^{4}$ such that
\[
A \in \ell^{1} H^\frac32_{loc}, \qquad 
A_0 \in \ell^1 H^{1,\frac12}_{loc},
\]
we have
\[
\calG(O^{(n)}) A^{(n)} \to A \qquad \text{in}\   H^1_{loc}([-1/2, 1/2] \times \bbR^{4}),
\]
and in addition 
\[
\iota_V F = 0 \qquad \text{in}\ [-1/2, 1/2] \times \bbR^{4}.
\]

So far, we have only used energy considerations, which are gauge independent. For the next step, however,
we need better regularity information, so for each $R = 1, 2, \ldots$, we place the above solutions in a ``good gauge'' $A^{R(n)}$, as provided 
by Theorem~\ref{t:good-gauge} on $[-1, 1] \times B_{2R}$ for $n$ sufficiently large $r/r_{n} \gg R$ (the theorem is stated on a truncated cone $C_{[t_{1}, t_{2}]}$, but the domain $[-1, 1] \times B_{2R}$ is essentially the same). We apply the compactness result in Theorem~\ref{t:compact} to conclude that
on a subsequence we have local convergence to a finite energy Yang--Mills connection $A^{R}$ in $[-1/2,1/2] \times B_{R}$. Moreover, for $R' < R$, note that $A^{R'(n)}$ and $A^{R(n)}$ are connected by a gauge transformation $O^{RR'(n)}$ in $B_{2R'}$ such that 
\begin{equation*}
\nb O^{RR'(n)}_{;t, x} \in \ell^{1} L^{2} H^{\frac{1}{2}}_{loc}([-1, 1] \times B_{2R'}),
\end{equation*}
with uniform bounds on compact subsets (indeed, for $O_{;t}$, we use the regularity $\nb A_{0} \in \ell^{1} L^{2} H^{\frac{1}{2}}$, whereas for $\rd O_{;x}$ we use $\rd^{k} A_{k} \in \ell^{1} L^{2} \dot{H}^{\frac{1}{2}}$ and Lemma~\ref{lem:div-curl-O}. Finally, for $\rd_{t} O_{;x}$ we use $\rd_{t} O_{;j} = \rd_{j} O_{;t} + [O_{;j}, O_{;t}]$). Then, passing to a subsequence, we obtain a gauge transformation $O_{;t,x}^{RR'} \in \ell^{1} L^{2} H^{\frac{1}{2}}_{loc}([-1/2, 1/2] \times B_{R'})$, which is admissible, such that $A^{R} =  \calG(O^{RR'}) A^{R'}$. By patching together $A^{R}$ for $R=1, 2, \ldots$ (see, for instance, \cite[Section~3.5, Scenario~(3)]{OTYM2.5}), we obtain a global solution $A$ on $[-1/2, 1/2] \times \bbR^{4}$, as desired.

By the regularity result in Proposition~\ref{p:stat}, the
connection $A$ is gauge equivalent to a smooth connection in the
domain $(-1/4, 1/4) \times \bbR^4$, which we still denote be $A$, that moreover satisfies

(i) Nontriviality, $\nE(A) > 0$;

(ii) Finite energy, $\nE(A) \leq E$;

(iii) Stationarity, $\iota_V F = 0$.

Applying Proposition~\ref{p:stat-ext}, we may place $A$ in the gauge $\iota_{V} A = 0$, and extend it to the whole spacetime $\bbR^{1+4}$. 
Then $A$ is a Lorentz transform of a nontrivial harmonic Yang--Mills connection $Q$, namely $A = L_v Q$.  The theorem is proved in this case.

\medskip
{\em (ii) Non-concentration scenario.}  The argument is similar
here. By Theorem~\ref{t:good-gauge} we can use gauge-equivalent representations of the connections
$A^{(n)}$ which are in a ``good gauge'' as provided by Theorem~\ref{t:good-gauge}, and thus are bounded in the sense of \eqref{good-gauge}, 
uniformly on compact subsets of $C_{[1,T)}^1$ for each $T = 2, 3, \ldots$. There applying the compactness result in
Theorem~\ref{t:compact} for each $T$, and patching together the resulting limits as in Case~(i), we obtain a global nontrivial self-similar, finite energy connection $A$ in $C_{[\frac{3}{2},\infty)}^\frac{3}{2}$. Applying the regularity result in
Proposition~\ref{p:stat}, and then Proposition~\ref{p:stat-ext} with $\calO' = C_{[2, \infty)}^{2}$, we obtain a smooth global self-similar
solution with finite energy inside the light cone $C$. The nontrivial
energy of $A^{(n)}$ inside the cone \eqref{eq:conc-scales:non-conc:1} insures that this limiting connection is
nontrivial. But such a connection does not exist by
Theorem~\ref{t:no-self}.

\section{No null concentration}\label{sec:no-null}

A key step in the transition from Theorem~\ref{t:bubble-off} to Theorems~\ref{t:threshold} and \ref{t:no-bubble}
is to deal with the possibility that the  energy stays concentrated near the boundary of the 
light cone. Whereas it is not implausible that the energy near the cone must necessarily decay to zero (in particular, see \cite{DJKM1} for the small data wave maps problem),
at this point we are not able to prove this. Instead, here we prove a weaker statement which asserts
that if almost all  energy stays near the cone, then our connection admits an energy dispersed 
caloric representation:

\begin{theorem} \label{thm:no-null}
Let $A$ be a  finite energy Yang--Mills connection on $\set{1} \times \bbR^{4}$. Suppose that
\begin{equation} \label{eq:no-null:hyp-small}
	\nE_{S_{1}^{\gmm}}(A) + \nE_{\set{1} \times \bbR^{4} \setminus S_{1}}(A) \leq \eps_{1},
\end{equation}
and
\begin{equation} \label{eq:no-null:hyp-bdd}
	\nE_{S_{1}}(A) + \int_{S_{1}} \mvC{X_{\veps}}_{0} (A) \, \ud x \leq  \En.
\end{equation}
Given any $\eps, \En > 0$, for sufficiently small $\eps_{1}, \veps$ and $\gmm$ close enough to $1$ (depending only on $\eps, \En$), there exists a caloric gauge representation of the connection $A_{x}(1)$
so that 
\[
\| A_{x}(1)\|_{\dot H^1} \lesssim_\En 1, \qquad \hM(A_{x}(1)) \lesssim_\En 1,
\]
whereas 
\[
\|A_{x}(1)\|_{L^{4}} + \|F(1)\|_{\dot W^{-1,4}}  \lesssim_\En \eps .
\]
\end{theorem}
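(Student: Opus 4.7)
The plan is to decompose the initial data near the cone's lateral boundary, construct a caloric gauge representation using the Yang--Mills heat flow, and exploit the concentration of the energy in a thin shell near $\rd S_{1}$ to extract smallness in the dispersive norms.

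\emph{Decomposition and caloric gauge.} Using the surgery procedure of Proposition~\ref{t:chop-small}, I would split the data $(a, e)$, modulo an admissible gauge transformation, as $(a_{\mathrm{sm}}, e_{\mathrm{sm}}) + (a_{\mathrm{sh}}, e_{\mathrm{sh}})$, where $(a_{\mathrm{sm}}, e_{\mathrm{sm}})$ is supported in a neighborhood of $S_{1}^{\gmm} \cup (\bbR^{4} \setminus S_{1})$ with energy $O(\eps_{1})$, and $(a_{\mathrm{sh}}, e_{\mathrm{sh}})$ is supported in a slightly enlarged annular shell $S_{1} \setminus S_{1}^{\gmm}$ of $4$-volume $\lesssim 1 - \gmm$. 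The caloric representation of $A$ is then obtained by running the Yang--Mills heat flow; its globalization and caloric size bound reduce, via the Dichotomy Theorem~\ref{t:ym-dich}, to excluding soliton bubbling. Any such bubble would concentrate energy around some point $x_{*} \in \bbR^{4}$ at some scale $\lmb$. The monotonicity-weighted bound $\int_{S_{1}} \mvC{X_{\veps}}_{0}(A) \, \ud x \leq \En$ combined with Proposition~\ref{prop:monotonicity:t-like} and the smallness of $\nE_{S_{1}^{\gmm}}$ rules out concentration at scales substantially larger than $1 - \gmm$, while concentrations at scales at or below $1 - \gmm$ are excluded by the small-energy heat flow theory once $\gmm$ is chosen close enough to $1$.

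\emph{Dispersion.} The small piece $(a_{\mathrm{sm}}, e_{\mathrm{sm}})$ directly satisfies $\|a_{\mathrm{sm}}\|_{L^{4}} + \|F_{\mathrm{sm}}\|_{\dot{W}^{-1, 4}} \lesssim \eps_{1}^{1/2}$ via the Sobolev embeddings $\dot{H}^{1} \hookrightarrow L^{4}$ and $L^{2} \hookrightarrow \dot{W}^{-1, 4}$, so this part is below $\eps/2$ for sufficiently small $\eps_{1}$. For the shell piece, I would exploit the anisotropic geometry of the support: since $F_{\mathrm{sh}}$ is $L^{2}$-supported in a thin annular region, a Hardy-type inequality involving the distance $|1 - |x||$ to the cone, combined with a trace analysis on the sphere $\rd S_{1}$, should yield
\begin{equation*}
\|F_{\mathrm{sh}}\|_{\dot{W}^{-1, 4}} \lesssim (1-\gmm)^{\alp} \En^{1/2}
\end{equation*}
for some $\alp > 0$. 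To transfer the bound to $\|A_{\mathrm{sh}}\|_{L^{4}}$, I would use the approximate Coulomb condition of the caloric gauge from Theorem~\ref{t:calC}: modulo manageable quadratic and cubic corrections, $A \sim -\lap^{-1} \rd^{k} F_{k \cdot}$, whence $\|A_{\mathrm{sh}}\|_{L^{4}} \lesssim \|F_{\mathrm{sh}}\|_{\dot{W}^{-1, 4}}$ plus controllable errors. Choosing $\gmm$ close enough to $1$ and $\eps_{1}, \veps$ small enough depending on $\eps, \En$ then pushes both bounds below $\eps$.

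\emph{Main obstacle.} I expect the hardest step to be the verification in the first stage that the Yang--Mills heat flow globalizes and the caloric size remains bounded in terms of $\En$, despite the possibly supercritical total energy. The underlying tension is between the heat flow's tendency to generate concentration at small spatial scales on the one hand and the spacelike-energy distribution imposed by the hypothesis on the other. The monotonicity formula for the time-like vector field $X_{\veps}$, together with the finite-speed-of-propagation-type heat flow localization arguments developed in \cite{OTYM1}, should be the principal technical ingredients needed to resolve this tension.
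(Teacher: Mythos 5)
There is a genuine gap, and it sits at the two places where you generate smallness. First, your dispersive step for the shell piece — ``$F_{\mathrm{sh}}$ is $L^{2}$-supported in a thin annulus, hence $\|F_{\mathrm{sh}}\|_{\dot W^{-1,4}} \lesssim (1-\gmm)^{\alp}\En^{1/2}$ by Hardy plus a trace argument'' — is false as a general statement: both $L^{2}$ and $\dot W^{-1,4}$ are scale-invariant norms for the curvature in $\bbR^{4}$, so a bump of curvature of energy $\sim \En$ concentrated at a scale $\rho \ll 1-\gmm$ inside the shell has $\dot W^{-1,4}$ norm comparable to $\En^{1/2}$, uniformly in $\rho$ and in $1-\gmm$. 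Thin support alone buys nothing. The hypotheses do exclude such a bump, but only through the \emph{component structure}: the weighted bound \eqref{eq:no-null:hyp-bdd} forces the null components $\alp, \varrho, \sgm$ to be small in $L^{2}$ (the weight $(v_{\veps}/u_{\veps})^{1/2}$ blows up near the cone), the constraint equation then makes $\covD^{\Tht}F_{\Tht r}$ small in $\dot H^{-1}_{A}$, and the Bianchi identity ties the angular curl of the large component $F_{r\Tht}\sim\ualp$ to the small component $F_{\Tht\Tht}$. A small-scale bubble would force $F_{\Tht\Tht}$ to be unit-size in $L^{2}$, contradicting the hypotheses — but extracting this requires exactly the gauge-invariant reduction (Proposition~\ref{p:parabolic}) and the polar-coordinate gauge construction (Lemmas~\ref{l:annulus} and \ref{l:elliptic}: Uhlenbeck near $0$ and $\infty$, a Coulomb-type gauge on the annulus built by a continuity/bootstrap argument, and patching) in which $B_{r}$ and $\nb_{\Tht}B_{\Tht}$ are small while only $\rd_{r}B_{\Tht}$ is large; anisotropic Bernstein then gives the $L^{4}$ smallness of the connection, and $\dot W^{-1,4}$ smallness of $F$ follows from that, not the other way around. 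None of this structure appears in your proposal.

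Second, the same defect undermines your treatment of what you correctly identify as the main obstacle, the globalization of the Yang--Mills heat flow with $\hM \lesssim_{\En} 1$. You propose to exclude bubbling via Theorem~\ref{t:ym-dich} by arguing that concentration at scales $\leq 1-\gmm$ is ruled out ``by the small-energy heat flow theory once $\gmm$ is close enough to $1$'' — but the shell carries energy $\approx \En$, possibly far above $2\Egs$, and nothing in the volume of the shell prevents that energy from sitting in a ball of radius much smaller than $1-\gmm$; also Proposition~\ref{prop:monotonicity:t-like} is a spacetime monotonicity statement for the hyperbolic flow inside the cone and does not control heat-flow concentration on a fixed time slice. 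The paper instead obtains the caloric representation and the bound on $\hM$ by a continuity argument along a family of connections in the good gauge, bootstrapping $\|F\|_{L^{3}_{s,x}}$ and propagating the $\dot W^{-1,4}$ smallness of the curvature along the heat flow; that is, energy dispersion, not small energy, is what tames the heat flow here. (A more minor point: Proposition~\ref{t:chop-small} is an excision/extension statement and does not produce an additive decomposition of nonabelian data compatible with the constraint, so the splitting $(a,e)=(a_{\mathrm{sm}},e_{\mathrm{sm}})+(a_{\mathrm{sh}},e_{\mathrm{sh}})$ would itself need justification.)
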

We emphasize that the term $\nrm{F(1)}_{\dot{W}^{-1, 4}}$ contains both spatial and temporal components $F_{jk}$ and $F_{0j}$, respectively, of $F$.
In our application, control of the second term on the LHS of \eqref{eq:no-null:hyp-bdd} will come from the monotonicity formula (Proposition~\ref{prop:monotonicity}).

As an immediate consequence of the last bound, we obtain the smallness of the fixed-time energy dispersion in the caloric gauge:
\begin{corollary}
 The caloric connection $A(1)$ provided by the above theorem satisfies
\begin{equation} \label{eq:no-null}
	\sup_{k} 2^{-2k} \nrm{P_{k} F (1)}_{L^{\infty}} \lesssim \eps.
\end{equation}
\end{corollary}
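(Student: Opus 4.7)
The plan is to deduce \eqref{eq:no-null} as a direct two-step Bernstein estimate from the bound $\nrm{F(1)}_{\dot W^{-1,4}} \aleq_{\En} \eps$ provided by Theorem~\ref{thm:no-null}. No further structural information about $A$ or $F$ beyond this single small $\dot W^{-1,4}$ bound is required, since the negative regularity exponent $-1$ of $\dot W^{-1,4}$ and the prefactor $2^{-2k}$ in the $\ED$-type quantity are precisely matched by the Bernstein losses in dimension $d=4$.

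Concretely, first I would note that since $P_{k} F$ is frequency-localized at $\abs{\xi} \sim 2^{k}$, Bernstein's inequality in $\bbR^{4}$ from $L^{4}$ to $L^{\infty}$ gives
\begin{equation*}
\nrm{P_{k} F}_{L^{\infty}} \aleq 2^{4k/4} \nrm{P_{k} F}_{L^{4}} = 2^{k} \nrm{P_{k} F}_{L^{4}}.
\end{equation*}
Next, the Littlewood--Paley characterization of the negative-regularity Sobolev norm (equivalently, a Bernstein bound for $\nb^{-1}$ on a dyadic block) yields
\begin{equation*}
\nrm{P_{k} F}_{L^{4}} \aleq 2^{k} \nrm{P_{k} F}_{\dot W^{-1,4}} \leq 2^{k} \nrm{F}_{\dot W^{-1,4}}.
\end{equation*}
Composing these two estimates and invoking Theorem~\ref{thm:no-null} gives
\begin{equation*}
\nrm{P_{k} F(1)}_{L^{\infty}} \aleq 2^{2k} \nrm{F(1)}_{\dot W^{-1,4}} \aleq_{\En} 2^{2k}\, \eps,
\end{equation*}
which upon multiplication by $2^{-2k}$ is exactly \eqref{eq:no-null}, uniformly in $k \in \bbZ$.

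There is no genuine obstacle here: the corollary is essentially a one-line rescaling of a single hypothesis of the theorem. The only point worth emphasizing is that the smallness factor $\eps$ is carried entirely by the $\dot W^{-1,4}$ bound; neither the $\dot H^{1}$ bound on $A(1)$ nor the finite caloric size $\hM(A) \aleq_{\En} 1$ from Theorem~\ref{thm:no-null} is used in this particular estimate. Those two bounds will, however, be essential downstream in order to feed the caloric representation of $A$ produced by Theorem~\ref{thm:no-null} into the energy-dispersed continuation/regularity machinery of Theorem~\ref{t:ED}, where both a caloric-size smallness and the fixed-time $\ED$-smallness \eqref{eq:no-null} (propagated to a neighborhood of $t = 1$) are needed simultaneously.
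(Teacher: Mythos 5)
Your proof is correct and is exactly the argument the paper has in mind: the corollary is stated as an immediate consequence of the $\dot W^{-1,4}$ bound from Theorem~\ref{thm:no-null}, obtained by the two Bernstein-type estimates $\nrm{P_k F}_{L^\infty} \aleq 2^{k}\nrm{P_k F}_{L^4} \aleq 2^{2k}\nrm{F}_{\dot W^{-1,4}}$ in dimension four. Your closing remark correctly identifies that the $\dot H^1$ and caloric-size bounds are not needed here but are used downstream with Theorem~\ref{t:ED}.
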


The hypothesis of the theorem involves the full connection $A$ at time
$t=1$, which includes both information about $A_x$, $A_0$ and the
corresponding curvature components $F_{ij}$ and $F_{0j}$. Our first task is
to peel off the nonessential parts $A_0$ and $F_{0j}$ and to reduce the problem 
to a statement about only the spatial part of the connection. To state the result, we introduce an orthonormal frame $(e_{r}, e_{1}, e_{2}, e_{3})$ at every point of $\bbR^{4} \setminus \set{0}$, where $e_{r} = \rd_{r}$ in the polar coordinates $(r, \Tht)$ and $\set{e_{\frka}}_{\frka=1, 2, 3}$ is an orthonormal frame at $x$ tangent to the sphere $\rd B_{r}(0)$ (with $r = \abs{x}$). We also fix a small constant $0 < \dltnc \ll \frac{1}{100}$.

 \begin{proposition}\label{p:parabolic}
Let $A$ be an $\dot H^1$ connection in $\bbR^4$ with energy at most $\En$, which satisfy the following properties:

i) $F_{\frka \frkb} = F(e_{\frka}, e_{\frkb})$ $(\frka, \frkb = 1, 2, 3)$ is small in $L^2$,
\begin{equation}
\| F_{\frka \frkb} \|_{L^2} \leq \epsilon.
\end{equation}

ii)  $F_{r \frka} = F(e_{r}, e_{\frka})$ $(\frka=1,2,3)$ is small outside an annulus,
\[
\|  F_{r \frka}  \|_{L^2(\{ \frac78 \leq |x| \leq 1\}^{c})} \leq \epsilon.
\]

iii) $\covD^{\frka} F_{r \frka}$ (i.e., the covariant angular divergence) is small in $\dot H_A^{-1}$,
\begin{equation} \label{eq:small-div-FrTht}
\| \covD^{\frka} F_{r \frka} \|_{\dot H_A^{-1}} \leq \epsilon.
\end{equation}

Assume that $\epsilon$ is  sufficiently small,
\[
\epsilon \ll_{\En} 1.
\]
Then there exists a caloric gauge representation of the connection $A$
so that 
\begin{equation} 
\| A \|_{\dot H^1} \lesssim_\En 1,  \qquad \hM(A) \lesssim_\En 1,
\end{equation}
whereas\footnote{The factor $\frac{3}{8}$ can be improved to $\frac{3}{4}$ by using further techniques in \cite{OTYM1}, but for our purposes it is unnecessary.}
\begin{equation}	 \label{eq:parabolic-L4}
\|A\|_{L^4}  \lesssim_\En \eps^{\frac{3}{8} (1-\dltnc)}.
\end{equation}
\end{proposition} 

We remark that the assumptions in the proposition are all formulated in a gauge invariant fashion.
Most notably, assumption (iii) involves the space $\dot H^{-1}_A$, which is the dual of the space
$\dot H^1_A$ with norm
\[
\|B\|_{\dot H^1_A}^2 = \| \covD_A B\|_{L^2}^2 .
\]
In particular, nothing is assumed about the $\dot H^1$ size of $A$ and its various components.
This turns out to be a problem in the proof, where it would be very convenient to have as a starting point
a connection $A$ with some good bounds. To address this difficulty, 
 the key ingredient of the proof of the proposition is the following lemma, which we now state in the polar coordinates $x = r \Tht$:

\begin{lemma}\label{l:elliptic}
Let $A$ be a connection which satisfies the hypotheses of Proposition~\ref{p:parabolic}. Then there 
exists a gauge-equivalent connection $B$ which has the following properties:

\begin{enumerate}
\item $B$ is bounded in $\dot H^1$,
\[
\|B \|_{\dot H^1} \lesssim_\En 1.
\]

\item $B$ is small away from the unit sphere, 
\[
\|B \|_{\dot H^1(\{ \frac34 \leq |x| \leq 1\}^{c})} \lesssim_\En \epsilon .
\]

\item $B_r$ is small in $\calA_{(\frac{2}{3}, \frac{4}{3})}$,
\[
\|B_r \|_{\dot H^1(\calA_{(\frac{2}{3}, \frac{4}{3})})} \lesssim_\En \epsilon^{1-\dltnc}.
\]

\item $B_\Theta$ has small angular derivatives in $\calA_{(\frac{2}{3}, \frac{4}{3})}$,
\[
\| \snb_\Theta B_{\Theta} \|_{L^2(\calA_{(\frac{2}{3}, \frac{4}{3})})} + \nrm{r^{-1} B_{\Tht}}_{L^{2}(\calA_{(\frac{2}{3}, \frac{4}{3})})}  \lesssim_\En \epsilon^{1-\dltnc} .
\]
\end{enumerate}
\end{lemma}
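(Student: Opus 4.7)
The plan is to realize $B$ from $A$ through a sequence of gauge transformations, each targeted at achieving one of the four properties while preserving those obtained in earlier steps. The overall strategy exploits the fact that the curvature is small outside the annulus $\{7/8 \leq |x| \leq 1\}$ (by hypotheses (i) and (ii)), so that on each of the inner region $\{|x| < 7/8\}$ and outer region $\{|x| > 1\}$ the connection is gauge-equivalent to a small one; the task is to find a \emph{single} global gauge that realizes both forms of smallness and interpolates cleanly across the annulus.

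First, fix a preliminary representative $A \in \dot H^1$ with $\|A\|_{\dot H^1} \lesssim_{\En} 1$; this is available since $A$ is topologically trivial (Theorem~\ref{zero-class}) and Uhlenbeck-type gauge-fixing bounds the $\dot H^1$ norm in terms of the energy. Then pass to the \emph{radial gauge at infinity}, solving $\rd_r O = -A_r O$ along each ray with boundary condition $O(r, \Theta) \to I$ as $r \to \infty$. The resulting representative satisfies $B_r \equiv 0$ (so property (3) holds trivially) and
\begin{equation*}
    B_\Theta(r, \Theta) = -\int_r^\infty F_{r\Theta}(s, \Theta) \, \mathrm{d} s.
\end{equation*}
For $r > 1$, hypothesis (ii) directly bounds $\|\rd_r B_\Theta\|_{L^2(\{r>1\})} \leq \eps$; a Bianchi-type identity relating $\nabla_\Theta F_{r\Theta}$ to $\rd_r F_{\Theta\Theta}$ together with the integrated version of hypothesis (iii) yields matching smallness of $\|\nabla_\Theta B_\Theta\|_{L^2(\{r>1\})}$, giving the outer half of property (2).

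For the interior $\{r < 3/4\}$ the same formula shows $B_\Theta(r) \approx B_\Theta(7/8)$ modulo an $\eps$-error, but $B_\Theta(7/8)$ itself is only bounded. To gauge it down, apply Uhlenbeck's lemma on $S^3$: restrict $B_\Theta$ to the sphere of radius $3/4$, view it as a connection on the simply-connected 3-sphere whose curvature is controlled by $\|F_{\Theta\Theta}\|_{L^2(S^3_{3/4})} \lesssim \eps$ from (i), and obtain $U \in H^2(S^3; \G)$ close to the identity that sends this connection to one of size $O(\eps)$ in $H^1(S^3)$. Extend via a radial cutoff: define $O(r, \Theta) = \exp(\chi(r) V(\Theta))$ with $V = \log U$ (well-defined by smallness, via a short path in $\G$ if necessary), where $\chi$ is a smooth cutoff equal to $1$ for $r \leq 3/4$ and $0$ for $r \geq 7/8$. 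Applying this gauge transformation leaves $B_r$ supported in the thin annulus $\{3/4 < r < 7/8\}$ with $|B_r| \sim |\chi'(r)| |V(\Theta)|$, so that $\|B_r\|_{\dot H^1} \lesssim \eps$, maintaining (3); it renders $B_\Theta$ small on $\{r < 3/4\}$, completing (2); and it keeps $\|B\|_{\dot H^1}$ controlled by $\En$, giving (1).

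The main technical obstacle is property (4), namely the \emph{global} smallness of $\|\nabla_\Theta B_\Theta\|_{L^2}$, which must hold even on the annulus $\{3/4 \leq r \leq 1\}$ where $B$ itself is only bounded. The antisymmetric part of $\nabla_\Theta B_\Theta$ reduces to $F_{\Theta\Theta}$ up to a commutator $[B_\Theta, B_\Theta]$, the former being small by (i) and the latter controlled by the Sobolev bound $\|B\|_{L^4} \lesssim_{\En} 1$ together with (i). The angular divergence (trace) part is absorbed by hypothesis (iii), which was designed precisely for this purpose. The remaining traceless-symmetric part calls for an additional sphere-by-sphere Coulomb-type refinement $\mathrm{div}_\Theta B_\Theta = 0$, implemented as a further gauge transformation near the identity on each $S^3_r$ thanks to the smallness of $F_{\Theta\Theta}$; arranging all of these refinements to be compatible with the preceding steps, without spoiling the radial-gauge normalization or inflating any constant beyond dependence on $\En$, constitutes the bulk of the technical work.
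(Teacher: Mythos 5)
There are genuine gaps, and they sit exactly at the points this lemma was designed to address. First, your opening move — ``fix a preliminary representative with $\|A\|_{\dot H^1}\lesssim_{\En} 1$, available since $A$ is topologically trivial and Uhlenbeck-type gauge-fixing bounds the $\dot H^1$ norm in terms of the energy'' — is unjustified: Uhlenbeck's lemma requires \emph{small} curvature, and for large energy no global representative with a bound depending only on $\En$ is available a priori (this is essentially conclusion (1) of the lemma, and the paper explicitly flags that ``nothing is assumed about the $\dot H^1$ size of $A$'' as the main difficulty). Second, the radial gauge $\rd_r O = -A_r O$ is problematic at critical regularity ($A_r\in \dot H^1\cap L^4$ has no trace on rays), and even formally it does not deliver property (1): in that gauge $\rd_r B_\Theta = F_{r\Theta}$ is controlled, but $\nb_\Theta B_\Theta = -\int_r^\infty \nb_\Theta F_{r\Theta}\,ds$ is not controlled by $F\in L^2$, so the claim that this step ``keeps $\|B\|_{\dot H^1}$ controlled by $\En$'' is unsupported. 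Third, and most importantly, your treatment of property (4) on the annulus $\{3/4\le |x|\le 1\}$ — where $F_{r\Theta}$ may carry energy of size $\En$ — is circular: the curl of $B_\Theta$ equals $F_{\Theta\Theta}$ minus the commutator $[B_\Theta,B_\Theta]$, and bounding the commutator by $\|B\|_{L^4}^2\lesssim_{\En}1$ gives only boundedness, not $O(\eps)$ smallness; smallness of the commutator requires $L^4$ smallness of $B$, which is precisely what you are trying to prove. Finally, the ``additional sphere-by-sphere Coulomb-type refinement'' you invoke for the remaining part of $\nb_\Theta B_\Theta$ is a further gauge change that would interact with (and generically destroy) the radial normalization and the earlier smallness; you yourself defer its compatibility as ``the bulk of the technical work,'' which is to say the hardest part of the lemma is left unproven.

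For contrast, the paper avoids all three issues by never leaving Coulomb-type gauges: it applies Uhlenbeck's lemma (Theorem~\ref{t:uhl}) in $B_{3/4}$ and, after conformal inversion, in $B_1(0)^c$ (where the curvature \emph{is} small by hypotheses (i)--(ii)), and handles the large-curvature annulus by Lemma~\ref{l:annulus}: a Coulomb gauge with respect to the cylindrical metric $dr^2+d\Theta^2$ with boundary condition $B_r=0$, constructed by a continuity/bootstrap argument in which the $L^4$ smallness $\|B\|_{L^4}\le C\eps^{3/4}$ is the bootstrap hypothesis — this is exactly what breaks the circularity you run into — with the elliptic equation for $B_r$ and the angular div-curl system (using hypothesis (iii) for the divergence and (i) for the curl) closing the estimates, and a scaling path $A^{(h)}(x)=hA(hx)$ after a small extension providing the continuity family. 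The three local gauges are then patched using the fact that the transition maps satisfy a favorable div-curl system and are close to constants. If you want to salvage your outline, you would need to replace the radial gauge and the a priori $\dot H^1$ bound by some such bootstrap mechanism on the annulus; as written, properties (1) and (4) are not established.
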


We remark that the connection $B$ provided by the above lemma has the property that it is 
small in $L^4$.  This is explained in what follows.

From properties (2)--(4) and Hardy's inequality, it follows that each component $B_{j}$ in the rectangular coordinates obeys
\begin{equation*}
	\nrm{r^{-1} B_{j}}_{L^{2}} \aleq_{\En} \eps^{1-\delta_0}.
\end{equation*}
Thus we may localize $B_{j}$ via a smooth cutoff outside the annulus $\set{\frac{3}{4} \leq \abs{x} \leq 1}$, and show that the $L^{4}$ norm of this portion is small using property (2). To bound the 
$L^4$ norm of  the localized remainder, the following variant of the Sobolev (or Bernstein) inequality applies:
\begin{lemma} \label{l:bern}
	Let $u$ be supported in an annulus $\calA_{(r_{0}, r_{1})}$. Then
	\begin{equation*}
	\nrm{u}_{L^{4}} \aleq_{r_{0}, r_{1}} \nrm{u}_{\dot{H}^{1}}^{\frac{1}{4}} \left( \nrm{\snb_{\Tht} u}_{L^{2}} + \nrm{r^{-1} u}_{L^{2}} \right)^{\frac{3}{4}}.
\end{equation*}
\end{lemma}
\begin{proof}
In what follows, we suppress the dependence of constants on $r_{0}, r_{1}$. Using a smooth partition of unity in the angular variables, we may assume that $u$ is supported in an angular sector $\Gmm = \set{r \Tht \in \calA_{(r_{0}, r_{1})} : \Tht \in \kpp}$, where $\kpp$ is a spherical cap in $\bbS^{3}$. Then we may use a diffeomorphism from $\kpp$ to a ball $B \subset \bbR^{3}$ to map $\Gmm$ to $[-1, 1] \times B \subset \bbR^{4}$. 

We are left to prove
\begin{equation} \label{eq:bern-R4}
	\nrm{u}_{L^{4}} \aleq \nrm{u}_{H^{1}}^{\frac{1}{4}} \nrm{u}_{L^{2} H^{1}}^{\frac{3}{4}}
\end{equation}
for a function $u$ supported on the cylinder $[-1, 1] \times B \subset \bbR^{4}$; here, the mixed norms are defined with respect to $x^{1}$ and $x' = x^{2}, x^{3}, x^{4}$. By the Littlewood--Paley inequality, it suffices to verify this inequality for a single piece $P_{k} u$. We also introduce the Littlewood--Paley projections $P'_{j}$ associated with $x'$. Then by Bernstein's inequality,
\begin{equation*}
	\nrm{P_{k} P'_{j} u}_{L^{4}} \aleq 2^{\frac{1}{4} k} 2^{\frac{3}{4} j} \min \set{2^{-j} \nrm{u}_{L^{2} H^{1}}, 2^{-k} \nrm{u}_{H^{1}}},
\end{equation*}
and the LHS vanishes for $j \geq k+ O(1)$. Now summing up in $j$, the desired bound follows. \qedhere
\end{proof}
It follows that 
\[
\|B\|_{L^4}  \lesssim_\En \epsilon^{\frac{3}{4}(1-\dltnc)} ,
\]
which in turn shows that in this gauge,
$B$ is energy dispersed (i.e., $2^{-k} \nrm{P_{k} B}_{L^{\infty}} \aleq_{\En} \eps^{\frac{3}{4}(1-\dltnc)}$). 

One minor downside of Lemma~\ref{l:elliptic} is that the polar coordinates are not 
so convenient to use near zero and near infinity.  However, both near zero and near infinity
we have small $L^2$ curvature, so we may directly apply Uhlenbeck's lemmas; see Theorems~\ref{t:uhl} and \ref{t:uhl-ext} in the appendix.
Thus, after standard partitioning and regluing operations, the problem reduces to the simpler case when we work in an annulus:

\begin{lemma} \label{l:annulus}
Let $A \in \dot H^1$ be a connection in the annulus $\calA_{(1, 2)}$ with energy at most $\En$, which has the following properties (all norms are implicitly defined on $\calA_{(1, 2)}$ by restriction):

i) Small tangential curvature,
\[
\| F_{\Theta\Theta} \|_{L^2} \leq \epsilon.
\]

ii) Small angular covariant divergence of the transversal curvature,
\begin{equation} \label{eq:small-sdiv-FrTht}
\| \scovD^{\Tht} F_{\Theta r}\|_{\dot H^{-1}_{A}} \leq \epsilon.
\end{equation}
If $\epsilon \ll_\En 1$ then there is a gauge  equivalent connection $B$ in the 
Coulomb gauge\footnote{Here we reserve the right to choose the metric favorably.}
with the following properties:

a) Bounded size,
\[
\| B \|_{\dot H^1} \lesssim_\En 1.
\]

b) Small components:
\[
\| B_r\|_{\dot H^1} + \| \snb_\Theta B_\Theta\|_{L^2} + \nrm{r^{-1} B_{\Tht}}_{L^{2}} \lesssim_\En \epsilon^{1-\dltnc}.
\]
\end{lemma}

\begin{remark} \label{rem:small-L4-ann}
As a corollary of Lemma~\ref{l:bern} and Properties (a)--(b) in Lemma~\ref{l:annulus}, we have
\begin{equation}
\|B_r \|_{L^4}  \lesssim_{\En} \epsilon^{1-\dltnc}, \qquad \|B_\Theta\|_{L^4} \lesssim_{\En} \epsilon^{\frac{3}{4}(1-\dltnc)}.
\end{equation}
Indeed, the bound for $B_{r}$ is simply the Sobolev embedding. To apply Lemma~\ref{l:bern} to $B_{\Tht}$, we need to find an $\dot{H}^{1}$-extension $\bar{B}_{\Tht}$ of $B_{\Tht}$ outside $\calA_{(1, 2)}$ that is supported in (say) $\calA_{(\frac{1}{2}, \frac{5}{2})}$ and
\begin{equation*}
\nrm{\rd_{r} \bar{B}_{\Tht}}_{L^{2}} \aleq_{\En} 1, \quad
\nrm{\covnb_{\Tht} \bar{B}_{\Tht}}_{L^{2}} + \nrm{r^{-1} \bar{B}_{\Tht}}_{L^{2}} \aleq_{\En} \eps^{1-\dltnc}.
\end{equation*}
For this purpose, we take an even reflection of $B_{\Tht}$ across the boundaries of $\rd \calA_{(1, 2)}$ (see Lemma~\ref{lem:ext-simple}) and apply a radial cutoff that equals one on $\calA_{(1, 2)}$ and supported in $\calA_{(\frac{1}{2}, \frac{5}{2})}$.

Conversely, if such a bound holds  then the $\dot H^1$ type bounds follow 
by solving linear elliptic systems; see the proof of Lemma~\ref{l:annulus} below.
\end{remark}

We now successively prove the above results in reverse order:
\begin{proof}[Proof of Lemma~\ref{l:annulus}]
We seek the connection $B$, which is gauge-equivalent to $A$, so that it satisfies the Coulomb gauge condition $\covnb^{k} B_{k} = 0$ with respect to the metric
\begin{equation*}
	\bfe = \ud r^{2} + \ud \Tht^{2},
\end{equation*}
and with the boundary condition 
\[
B_r = 0 \qquad \text{on } \partial \calA_{(1, 2)}. 
\]
We claim that such a connection exists, and satisfies the conclusion of the lemma.

We use an elliptic bootstrap (i.e., a continuity) argument. Suppose that 
we have a continuous one parameter family of connections 
\[
A^{(h)} \in (\dot H^1\cap L^4)(\calA_{(1, 2)}), \qquad h \in [0,1], \qquad  A^{(0)}= 0, \quad A^{(1)} = A 
\]
so that the hypotheses of the lemma hold uniformly in $h \in [0,1]$. Then the Coulomb connection $B^{(0)}=0$
is the obvious solution when $h = 0$, and we seek to extend this property by continuity up to $h = 1$.
For this we consider the following bootstrap assumption:

\bigskip
\emph{ The connection $A^{(h)}$ admits a Coulomb gauge representation $B^{(h)}$ as above,
and which satisfies the additional property
\begin{equation}\label{boot}
\| B^{(h)} \|_{H^{1}} \leq C_{0}, \quad \| B^{(h)}\|_{L^4} \leq \epsilon^{\frac34(1-\dltnc)}.
\end{equation}
}

We will establish that, if $C_{0}$ is large enough and $\eps$ is sufficiently small (depending on $\En$ and chosen in this order), the set 
\[
H = \{ h \in [0,1] : \ \text{ \eqref{boot} holds} \},
\]
which trivially contains $0$, is both open and closed, and thus contain $h = 1$.

\pfstep{Fractional Sobolev spaces and elliptic operators on $\bbS^{3}$ and $\calA_{(1, 2)}$}
In what follows, we will employ fractional Sobolev spaces on $\bbS^{3}$ and $\calA_{(1, 2)}$, where we will distinguish between tangential and tranversal regularities in the latter case (for more details, see below). We start with the case of the unit sphere $\bbS^{3}$. Denote by $\calX$ a finite set of smooth vector fields $X$ on $\bbS^{3}$ that spans the tangent space at each point (e.g., the set of rotations $\Omg_{jk}$ in the $x^{j}x^{k}$-plane for $j, k=1, \ldots, 4$ would do). The $L^{2}$-Sobolev space of $k$-forms on $\bbS^{3}$ of order $m \in \bbN$ is defined by the norm
\begin{equation*}
	\nrm{\omg}_{H^{m}(\bbS^{3}; \Lmb^{k})}^{2} = \sum_{m'=0}^{m} \sup_{\substack{X_{1}, \ldots X_{m'} \in \calX \\ X'_{1}, \ldots, X'_{k} \in \calX}} \nrm{X_{1} \cdots X_{m'} (\omg(X'_{1}, \ldots, X'_{k}))}_{L^{2}(\bbS^{3})}^{2}.
\end{equation*}
Clearly, any different choice of $\calX$ gives rise to an equivalent norm. As usual, these spaces are extended to negative orders by duality, and to fractional orders by complex interpolation. 

A basic operator in this setting is the Hodge Laplacian on $k$-forms, which we denote by $\slap_{k}$. It is a second order elliptic operator that is nonpositive on $L^{2}$. Thus, for any $\alp > 0$ and $\gmm \in \bbR$, $(-\slap_{k} + 1)^{\alp} : H^{\gmm+\alp}(\bbS^{3}; \Lmb^{k}) \to H^{\gmm}(\bbS^{3}; \Lmb^{k})$ has a well-defined inverse, which we denote by $(-\slap_{k} + 1)^{-\alp}$. Moreover, the first and second de Rham cohomology groups of $\bbS^{3}$ are trivial, so $\slap_{1}$ and $\slap_{2}$ have trivial kernel by the Hodge theorem. Thus, for $k = 1, 2$, the preceding discussion holds with $-\slap_{k} + 1$ replaced by $-\slap_{k}$.

Next, we consider the domain $\calA_{(1, 2)}$. For the moment, we view it as an open submanifold $(1, 2) \times \bbS^{3}$ of the compact manifold $\calM = (\bbR / 4 \bbZ)_{r} \times \bbS_{\Tht}^{3}$, equipped with the product metric $\bfe = \ud r^{2} + \ud \Tht^{2}$. We say that a vector field $X$ (resp. a $k$-form) on $\calM$ is \emph{tangential} (to the constant-$r$ spheres) if $\ud r(X) = 0$ (resp. $\iota_{\rd_{r}} \omg = 0$). A general $k$-form $\omg$ on $\calM$ may be decomposed into its tangential part, which may be identified with the pullback $\omg_{\Tht \ldots \Tht}$, and its transversal part $\iota_{\rd_{r}} \omg$, which is a tangential $(k-1)$-form. 

Let $\calX_{tan}$ be a finite set of smooth tangential vector fields on $\calA_{(1, 2)}$ that spans the tangent space of $\set{r = const}$ at every point. For $\sgm \in \bbR$ and $m \in \bbN$, we define the anisotropic $L^{2}$-Sobolev norm of order $(\sgm, m)$ for a tangential $k$-form $\omg$ by
\begin{equation*}
	\nrm{\omg}_{H^{\sgm, m}(\calM; \Lmb_{tan}^{k})}^{2} =  \sum_{m'=0}^{m} \sup_{\substack{X_{1}, \ldots X_{m'} \in \calX_{tan} \\ X'_{1}, \ldots, X'_{k} \in \calX_{tan}}} \nrm{X_{1} \cdots X_{m'} (\omg(X'_{1}, \ldots, X'_{k}))}_{H^{\sgm}(\calM)}^{2}.
\end{equation*}
Again, any other choice of $\calX_{tan}$ gives rise to an equivalent norm.
This definition is extended to negative $m$ by duality, and to fractional orders by complex interpolation (while keeping $\sgm$ fixed). Then the norm $H^{\sgm, \gmm}((1, 2) \times \bbS^{3}, \Lmb^{k}_{tan})$ is defined by restriction (cf. Section~\ref{subsec:notation}).

The Hodge Laplacian $\slap_{k}$ acts on a tangential $k$-form $\omg$ by viewing each $\omg(r, \Tht)$ as a $k$-form on the unit sphere $\bbS^{3}_{\Tht}$. 
It is not difficult to verify (via induction on $\gmm \in \bbN$, duality for $\gmm < 0$ and interpolation for $\gmm \in \bbR \setminus \bbZ$) that $(-\slap_{k} + 1)^{\alp} : H^{\sgm, \gmm+\alp}(\calM, \Lmb^{k}_{tan}) \to H^{\sgm, \gmm}(\calM, \Lmb^{k}_{tan})$ is invertible for any $\alp > 0$, $\sgm, \gmm \in \bbR$; the same property holds on $(1, 2) \times \bbS^{3}$, too. Moreover, it is clear that $H^{\sgm, \gmm}(\calM; \Lmb^{k}_{tan})$ admits the following useful spectral characterization in terms of the commuting self-adjoint operators $-\calL_{\rd_{r}}^{2} + 1$ and $-\slap_{k} + 1$ on $L^{2}(\calM; \Lmb^{k}_{tan})$:
\begin{equation} \label{eq:sob-spec}
	\nrm{\omg}_{H^{\sgm, \gmm}(\calM; \Lmb^{k}_{tan})} \aeq \nrm{(-\calL_{\rd_{r}}^{2} - \slap_{k} + 2)^{\sgm} (-\slap_{k} + 1)^{\gmm} \omg}_{L^{2}(\calM; \Lmb^{k}_{tan})}.
\end{equation}
The same conclusions hold with $-\slap_{k} + 1$ replaced by $-\slap_{k}$ if $k = 1, 2$. 

Finally, to connect back to the original setting, we note that for each $\sgm \in \bbR$ and $m \in \bbN$, we have the equivalence
\begin{equation*}
	\nrm{\omg}_{H^{\sgm, m}((1, 2) \times \bbS^{3}, \Lmb^{k}_{tan})}
	\aeq  \sum_{m'=0}^{m} \sup_{\substack{X_{1}, \ldots X_{m'} \in \calX_{tan} \\ X'_{1}, \ldots, X'_{k} \in \calX_{tan}}} \nrm{X_{1} \cdots X_{m'} (\omg(X'_{1}, \ldots, X'_{k}))}_{H^{\sgm}(\calA_{(1, 2)})}^{2}
\end{equation*}
where the norm $H^{\sgm}(\calA_{(1, 2)})$ on the RHS is defined with respect to the Euclidean metric on $\bbR^{4}$. Note also the equivalences $\nrm{\cdot}_{H^{\sgm}((1, 2) \times \bbS^{3})} = \nrm{\cdot}_{H^{\sgm, 0}((1, 2) \times \bbS^{3})} \aeq \nrm{\cdot}_{H^{\sgm}(\calA_{(1, 2)})}$ and $\nrm{\cdot}_{L^{p}_{r} L^{p}_{\Tht}((1, 2) \times \bbS^{3})} \aeq \nrm{\cdot}_{L^{p}(\calA_{(1, 2)})}$. Accordingly, in what follows we will refer to these norms simply by $H^{\sgm}$ and $L^{p}$, respectively, without any possibility of confusion.

\pfstep{A-priori estimates for $B^{(h)}$}
We now begin our proof in earnest. First, we establish some a-priori bounds for $B^{(h)}$, which improve the bootstrap assumption \eqref{boot} and also imply the desired bounds stated in the lemma.

In what follows, we suppress $h$ and just write $B = B^{(h)}$. We also abuse the notation a bit and write $F$ for the curvature $2$-form associated to $B$. We omit the dependence of constants on $\En$ and write $c$ for a small positive constant that may vary from line to line. We use the roman indices $a, b, \ldots$ for coordinates on $\bbS^{3}$ and use the metric $\ud \Tht^{2}$ to raise and lower these indices. We also suppress $\Lmb_{tan}^{k}$ in the norms when the degree of the differential form is clear from the context.

Note that $(\iota_{\rd_{r}} F) = F_{r \Tht}$, viewed as a tangential $1$-form, satisfies the following div-curl system on each sphere:
\begin{equation*}
\left\{
\begin{aligned}
	\snb^{a} F_{a r} &= \scovD^{a} F_{a r} - [B^{a}, F_{a r}], \\
	\rd_{a} F_{b r} - \rd_{b} F_{a r}  &= - \rd_{r} F_{ab} - [B_{r}, F_{ab}] - [B_{a}, F_{rb}] + [B_{b}, F_{ra}].
\end{aligned}
\right.
\end{equation*}
Thus
\begin{equation} \label{eq:boot-G-1}
	\slap_{1} F_{a r} + \rd_{a} [B^{b}, F_{b r}] + \snb^{b} ([B_{b}, F_{ra}] - [B_{a}, F_{rb}]) = G,
\end{equation}
where
\begin{equation} \label{eq:boot-G-2}
G = \rd_{a} (\scovD^{b} F_{b r} - [B^{b}, F_{b r}]) - \snb^{b} (\rd_{r} F_{ba} + [B_{r}, F_{ba}]).
\end{equation}

We claim that
\begin{equation} \label{eq:boot-key}
\nrm{[B^{a}, F_{a r}]}_{H^{-1+\dltnc, -\dltnc}((1, 2) \times \bbS^{3})} 
 \aleq_{C_{0}} \eps^{c} \nrm{F_{\Tht r}}_{H^{-1+\dltnc, 1-\dltnc}((1, 2) \times \bbS^{3})}.
\end{equation}
We defer the proof until later, but remark that this estimate (barely) fails when $\dltnc = 0$; this failure is the reason why we introduce $\dltnc > 0$ and the fractional anisotropic Sobolev spaces. Combined with the bound $\nrm{F_{\Tht r}}_{H^{-1+\dltnc, 1-\dltnc}((1, 2) \times \bbS^{3})} \aleq \nrm{F}_{L^{2}(\calA)} \aleq 1$, which is obvious from \eqref{eq:sob-spec}, it follows that the terms involving $B_{a}$ on the LHS may be absorbed into the main term. Therefore,
\begin{equation*}
\nrm{F_{\Tht r}}_{H^{-1+\dltnc, 1-\dltnc}((1, 2) \times \bbS^{3})} \aleq \nrm{G}_{H^{-1+\dltnc, -1-\dltnc}((1, 2) \times \bbS^{3})}.
\end{equation*}
On the one hand, by \eqref{eq:boot-G-2} and the assumptions, $G$ is $O(\eps)$ in $H^{-1, -1}((1, 2) \times \bbS^{3})$. On the other hand, by \eqref{eq:boot-G-1} and the estimate
\begin{equation} \label{eq:boot-key'}
\nrm{\rd_{a} [B^{b}, F_{b r}] + \snb^{b} ([B_{b}, F_{ra}] - [B_{a}, F_{rb}]))}_{H^{0, -2}((1, 2) \times \bbS^{3})}
 \aleq_{C_{0}} \eps^{c} \nrm{F}_{L^{2}((1, 2) \times \bbS^{3})},
\end{equation}
whose proof we also defer until later, it follows that $G$ is $O(1) + O_{C_{0}}(\eps^{c})$ in $H^{0, 2}((1, 2) \times \bbS^{3})$.
By interpolation, which is obvious from \eqref{eq:sob-spec}, we then have
\begin{equation*}
\nrm{F_{\Tht r}}_{H^{-1+\dltnc, 1-\dltnc}((1, 2) \times \bbS^{3})} \aleq \eps^{1-\dltnc},
\end{equation*} 
provided that $\eps$ is small enough depending on $C_{0}$.

Next, we can write an elliptic equation for $B_r$ in $(1, 2) \times \bbS^{3}$,
\begin{equation*}
(\rd_{r}^{2} + \slap_{0}) B_r + \snb^{a} [B_{a}, B_{r}] = \scovD^{a} F_{a r} - [B^{a}, F_{a r}],
\end{equation*}
with the Dirichlet boundary condition $B_{r} = 0$ on $\rd ((1, 2) \times \bbS^{3})$. Here the RHS has size $O(\eps) + O_{C_{0}}(\eps^{c} \epsilon^{1-\dltnc})$ in $H^{-1}((1, 2) \times \bbS^{3})$ because
of the hypothesis, \eqref{eq:boot-key} and the simple embedding (see \eqref{eq:sob-spec})
\begin{equation*}
H^{-1+\dltnc, -\dltnc}((1, 2) \times \bbS^{3}) \hookrightarrow H^{-1}((1, 2) \times \bbS^{3}).  
\end{equation*}
Also the coefficient $B_{a}$ on the 
left is small in $L^4$. Hence the elliptic problem is uniquely solvable, and the solution $B_r$ satisfies
\[
\nrm{B_{r}}_{H^{1}} \aleq \eps + C_{C_{0}} \eps^{c} \eps^{1-\dltnc}.
\]

Finally, for $B_\Theta$ we have the following div-curl system on each sphere:
\begin{equation*}
\left\{
\begin{aligned}
	\snb^{a} B_{a} &= - \rd_{r} B_{r}, \\
	\rd_{a} B_{b} - \rd_{b} B_{a} &= F_{ab} - [B_{a}, B_{b}].
\end{aligned}
\right.
\end{equation*} 
The first RHS is $O(\eps) + O_{C_{0}}(\eps^{c} \epsilon^{1-\dltnc})$ in $L^{2}$, whereas the second RHS is $O(\eps)$.
It follows that
\[
\| \covnb_{\Tht} B_\Theta\|_{L^2} + \|B_\Theta\|_{L^2} \lesssim \eps + C_{C_{0}} \eps^{c} \epsilon^{1-\dltnc}.
\]
On the other hand we can use $F_{r\Theta}$ to bound
\[
\| \covnb_{r} B_\Theta\|_{L^2} \leq \nrm{F_{r \Theta}}_{L^{2}} + \eps + C_{C_{0}} \eps^{c} \epsilon^{1-\dltnc} \aleq 1,
\]
if $\eps$ is sufficiently small depending on $C_{0}$. Thus, we have proved that the conclusion of the lemma holds for $h \in H$. Furthermore, we have
\begin{equation}\label{boot-out}
\| B\|_{H^1}  \aleq 1, \quad
\| B\|_{L^4} \aleq (\eps + C_{C_{0}} \epsilon^{c} \eps^{1-\dltnc})^{\frac{3}{4}},
\end{equation}
where we used Remark~\ref{rem:small-L4-ann} for the second estimate. Once we choose $C_{0}$ large enough and $\eps$ sufficiently small, the bootstrap assumption for $B$ is improved.

\pfstep{Proof of \eqref{eq:boot-key} and \eqref{eq:boot-key'}}
To conclude the proof of the a-priori estimates, it remains to establish \eqref{eq:boot-key} and \eqref{eq:boot-key'}. In both cases, the idea is to reduce the problem to global-in-spacetime estimates via localization and change of variables (cf.~proof of Lemma~\ref{l:bern}). The key ingredient in the reduction are the invariance of the anisotropic Sobolev spaces on $(1, 2) \times \bbS^{3}$ under multiplication by smooth functions and pullback by diffeomorphisms, both of which are straightforward to verify. 

First, we extend $B_{\Tht}$ and $F_{\Tht r}$ by an even reflection across the boundaries of $(1, 2) \times \bbS^{3}$ and apply a smooth cutoff that equals $1$ on $(1, 2) \times \bbS^{3}$ and is supported in $(\frac{1}{2}, \frac{5}{2}) \times \bbS^{3}$. Using a partition of unity on $\bbS^{3}$, it suffices to consider $B_{\Tht}$ and $F_{\Tht r}$ that are supported in $(\frac{1}{2}, \frac{5}{2}) \times \kpp$, where $\kpp$ is a spherical cap in $\bbS^{3}$. Finally, we use the invariance under pullback by diffeomorphisms to straighten $\kpp$ to a unit ball $B'$ in $\bbR^{3}$, and also the invariance under multiplication by smooth functions to strip away the (variable coefficient) metric $\bfe$ and the volume form. 

As a result, estimates \eqref{eq:boot-key} and \eqref{eq:boot-key'} are reduced, respectively, to the following estimates for functions $u, v$ on $\bbR^{4}$:
\begin{align}
	\nrm{(-\lap + 1)^{-\frac{1-\dltnc}{2}} (u v)}_{L^{2} H^{-\dlt_{0}}} & \aleq \eps^{c} \nrm{(-\lap + 1)^{-\frac{1 - \dltnc}{2}} v}_{L^{2} H^{1-\dltnc}}, \label{eq:boot-key-core} \\
	\nrm{u v}_{L^{2} H^{-1}} & \aleq \eps^{c} \nrm{v}_{L^{2}}, \label{eq:boot-key'-core}
\end{align}
where the mixed norms are defined with respect to $x^{1}$ and $x' = (x^{2}, x^{3}, x^{4})$, and $u$ obeys 
\begin{equation} \label{eq:boot-core}
	\nrm{u}_{H^{1}} \aleq 1, \quad \nrm{u}_{L^{4}} \aleq \eps^{\frac{3}{4}(1-\dltnc)}.
\end{equation}

Before we turn to the proof of \eqref{eq:boot-key-core} and \eqref{eq:boot-key'-core}, we first deduce from \eqref{eq:boot-core}
\begin{equation} \label{eq:boot-core-L3}
	\nrm{u}_{L^{\infty} L^{3}} \aleq \eps^{c}.
\end{equation} 
We introduce the inhomogeneous Littlewood--Paley projections $\set{\tilde{P}_{j}}_{j \geq 0}$ on $\bbR^{4}$ (i.e., $\tilde{P}_{0} = P_{\leq 0}$ and $\tilde{P}_{j} = P_{j}$ for $j \geq 1$), as well as their analogues $\set{\tilde{P}'_{j}}_{j \geq 0}$ defined with respect to $x' = (x^{2}, x^{3}, x^{4})$. In view of the refined Sobolev embedding \cite[Theorem~1.43]{BCD}\footnote{To be precise, \cite[Theorem~1.43]{BCD} is formulated in terms of homogeneous spaces, but the inhomogeneous version stated here follows immediately.} on $\bbR^{3}$,
\begin{equation*}
	\nrm{u}_{L^{\infty} L^{3}}^{3} \aleq \nrm{u}_{L^{\infty} H^{\frac{1}{2}}}^{2} \sup_{j} \nrm{\tilde{P}'_{j} u}_{L^{\infty}},
\end{equation*}
it suffices to show that $\nrm{u}_{L^{\infty} H^{\frac{1}{2}}} \aleq 1$ and $\sup_{j} \nrm{\tilde{P}'_{j} u}_{L^{\infty}} \aleq \eps^{c}$. The former assertion follows from \eqref{eq:boot-core} and the trace theorem. For the latter assertion, we introduce a parameter $m > 1$ and estimate
\begin{align*}
2^{-j} \nrm{\tilde{P}'_{j} u}_{L^{\infty}}
& \aleq 2^{-j} \nrm{\tilde{P}'_{j} \tilde{P}_{\leq j+m}u}_{L^{\infty}} + 2^{-j} \nrm{\tilde{P}'_{j} \tilde{P}_{>j+m}u}_{L^{\infty}} \\
& \aleq \sum_{k \leq j+m} 2^{k} \nrm{\tilde{P}_{k} u}_{L^{4}} + \sum_{k > j-m} 2^{\frac{1}{2} j} \nrm{\tilde{P}_{k} u}_{L^{\infty} L^{2}} \\
& \aleq 2^{m} \nrm{u}_{L^{4}} + 2^{-\frac{1}{2} m} \nrm{u}_{H^{1}}.
\end{align*}
Then using \eqref{eq:boot-core} and optimizing the choice of $m$, the desired estimate follows.

Next, we establish \eqref{eq:boot-key-core}. We normalize $v$ so that $\nrm{(-\lap + 1)^{-\frac{1 - \dltnc}{2}} v}_{L^{2} H^{1-\dltnc}} \leq 1$. We decompose $u v = \sum_{j, k, \ell \geq 0} \tilde{P}_{j} (\tilde{P}_{k} u \tilde{P}_{\ell} v)$ and divide the proof into the following (overlapping) cases:
\begin{enumerate}
\item {\it Low-High interaction, $\abs{j - \ell} < 3$, $k < j + 5$.} We introduce the exponents $2-$ and $6-$ defined by the relations $\frac{1}{2-} = \frac{1}{2} + \frac{\dltnc}{3}$ and $\frac{1}{6-} = \frac{1}{6} + \frac{\dltnc}{3}$. We estimate
\begin{align*}
	2^{(-1+\dltnc)j} \nrm{\tilde{P}_{j}(\tilde{P}_{<j+5} u \tilde{P}_{\ell} v)}_{L^{2} H^{-\dlt_{0}}}
	& \aleq 2^{(-1+\dltnc)j} \nrm{\tilde{P}_{j} (\tilde{P}_{<j+5} u \tilde{P}_{\ell} v)}_{L^{2} L^{2-}} \\
	& \aleq 2^{(-1+\dltnc)j} \nrm{\tilde{P}_{< j+5} u}_{L^{\infty} L^{3}} \nrm{\tilde{P}_{\ell} v}_{L^{2} L^{6-}}  \\
	& \aleq \nrm{u}_{L^{\infty} L^{3}} 2^{(-1+\dltnc)\ell} \nrm{\tilde{P}_{\ell} v}_{L^{2} H^{1-\dltnc}},
\end{align*}
which is acceptable thanks to \eqref{eq:boot-core-L3}.

\item {\it High-Low interaction, $\abs{j - k} < 3$, $\ell < j + 5$.} This case can be handled similarly as in the Low-High interaction case; we even get an additional gain of $2^{(-1+\dltnc) (j-\ell)}$.
\item {\it High-High interaction, $\abs{k - \ell} < 3$, $j < \min \set{k, \ell} - 3$.} Let $m > 1$ be a parameter to be fixed later, and let $(\frac{3}{2}-)^{-1} = \frac{2}{3} + \frac{\dltnc}{3}$. For $\ell > j + m$, we estimate
\begin{align*}
	2^{(-1+\dltnc)j} \nrm{\tilde{P}_{j}(\tilde{P}_{k} u \tilde{P}_{\ell} v)}_{L^{2} H^{-\dlt_{0}}}
& \aleq 2^{(-1+\dltnc)j} \nrm{\tilde{P}_{j}(\tilde{P}_{k} u \tilde{P}_{\ell} v)}_{L^{2} L^{2-}} \\
& \aleq 2^{\dltnc j} \nrm{\tilde{P}_{j}(\tilde{P}_{k} u \tilde{P}_{\ell} v)}_{L^{1} L^{\frac{3}{2}-}} \\
& \aleq 2^{-\dltnc(\ell-j)} 2^{k} \nrm{\tilde{P}_{k} u}_{L^{2}} \, 2^{(-1+\dltnc) \ell}\nrm{\tilde{P}_{\ell} v}_{L^{2} L^{6-}}.
\end{align*}
After summation, the contribution of these terms is $O(2^{- \dlt_{0} m} )$.
On the other hand, for $\ell \leq j + m$, we estimate
\begin{align*}
	2^{(-1+\dltnc)j} \nrm{\tilde{P}_{j}(\tilde{P}_{k} u \tilde{P}_{\ell} v)}_{L^{2} H^{-\dlt_{0}}}
	& \aleq 2^{(1-\dltnc)(\ell - j)} \nrm{\tilde{P}_{k} u}_{L^{\infty }L^{3}} 2^{(-1+\dltnc) \ell} \nrm{\tilde{P}_{\ell} v}_{L^{2} L^{6-}},
\end{align*}
which contributes $O(2^{(1-\dlt_{0}) m} \nrm{u}_{L^{\infty} L^{3}})$ after summation. Using \eqref{eq:boot-core-L3} and optimizing the choice of $m$, the desired estimate follows.
\end{enumerate}

Finally, we prove \eqref{eq:boot-key'-core}. By the Sobolev embeddings on $\bbR^{3}$, we have
\begin{equation*}
	\nrm{uv}_{L^{2} H^{-1}} \aleq \nrm{uv}_{L^{2} L^{\frac{6}{5}}} \aleq \nrm{u}_{L^{\infty} L^{3}} \nrm{v}_{L^{2}},
\end{equation*}
which implies \eqref{eq:boot-key'-core} in view of \eqref{eq:boot-core-L3}.

\pfstep{Completion of the continuity argument}
Next, we consider a perturbative problem,  and prove  that  if 
$B$ is Coulomb, $\dot H^1$ and small in $L^4$
 as above,  then all connections $\tA$ which are sufficiently close to $A$ in $\dot H^1$  admit 
a similar Coulomb representation. 

Abusing the notation a bit, we write $A$ instead of $B$ and redefine $\tA$ by applying the same gauge transformation that takes $A$ to $B$. Hence, $\rd^{k} \tA_{k}$ is small. Applying a further gauge transformation (see Lemma~\ref{lem:Ar=0}), we may assume that $\tA_{r} = 0$ on $\rd \calA_{(1, 2)}$ as well. Then to find a gauge transformation $O$ which takes $\tA$ into the Coulomb gauge, we end up having to solve for $\Omg_{k} = O_{;k}$ the system\footnote{An alternative idea would have been to work with $O^{-1} \rd_{k} O$ as in Section~\ref{sec:reg}, which has the advantage that no $O$ appears in the div-curl system; see \eqref{eq:div-curl-O-5d}. However, for the boundary value problem on the annulus, the cokernel (and also the kernel) of the associated Neumann problem is nontrivial. The system \eqref{eq:div-curl-O-ann} has the virtue of having a cokernel independent of $O$, while it depends on $O$ for \eqref{eq:div-curl-O-5d}.}
\begin{equation} \label{eq:div-curl-O-ann}
\left\{
\begin{aligned}
	\rd^{k} \Omg_{k} &= \rd^{k}(Ad(O) \tA_{k}) = Ad(O) \rd^{k} \tA_{k} + [O_{;k}, Ad(O) \tA^{k}], \\
	\rd_{j} \Omg_{k} - \rd_{k} \Omg_{j} &= - [\Omg_{j}, \Omg_{k}],
\end{aligned}
\right.
\end{equation}
with the boundary condition $\Omg_{r} = 0$ on $\rd \calA_{(1, 2)}$. To solve this system, we start with $O^{(0)} = Id$ and construct $\Omg^{(n)}$ by applying Proposition~\ref{prop:div-curl-nonlin-ann} with $B = Ad(O^{(n-1)}) A$. Then $O^{(n)}$ is constructed by integrating the system of ODEs $O^{(n)}_{;j} = \rd_{j} O^{(n)} (O^{(n)})^{-1} = \Omg^{(n)}$, which is possible thanks to the curl condition for $\Omg^{(n)}$. By smallness of $\nrm{\rd^{k} \tA_{k}}_{L^{2}}$ and $\nrm{\tA}_{L^{4}}$, this iteration procedures goes through and we obtain a uniform bound $\nrm{O_{;x}^{(n)}}_{H^{1}} \aleq \nrm{\rd^{k} \tA_{k}}_{L^{2}}$. Taking the limit (along a suitable subsequence), we obtain a desired gauge transformation $O$ that also satisfies $\nrm{O_{;x}}_{H^{1}} \ll 1$.

The a-priori bound shows that if $h \in H$ then the stronger bound
\eqref{boot-out} holds. Then the perturbative argument shows
that for $h \in H$ there exists a fixed size neighborhood $[h-c,h+c]$
which is in $H$. We conclude that $H= [0,1]$, which completes the continuity
argument.  

\pfstep{Existence of a continuous path $A^{(h)}$}
The remaining issue is that of constructing a continuous path from $A$
to $0$.  In effect it suffices to show that there exists an extension
of $A$ inside the full unit ball which still satisfies the assumptions
of the lemma and so that $A$ vanishes near $x = 0$. Then we can obtain
the desired family by scaling\footnote{As in the original proof of 
  Uhlenbeck's lemma in \cite{MR648356}.},
\[
A^{(h)}(x) = hA(hx), \qquad h \in [0,1].
\] 

This can be done as follows:
\begin{enumerate}
\item In a suitable gauge set $A_r = 0$ on the boundary; see Lemma~\ref{lem:Ar=0}.
\item Double the annulus inside, and extend the connection as odd for $A_r$ and even for $A_\theta$.
This extension is still $H^1$, and the smallness hypothesis still holds in the double annulus; see Lemma~\ref{lem:ext-simple}.

\item Choose a sphere $S$ within the extended part on which
  $F_{\Theta \Theta}$ is small in $L^2$. Using Uhlenbeck's lemma on the $3$-sphere (Proposition~\ref{p:uhl-S3}), we may set $\nrm{A_\Theta}_{H^{1}(S)}$ small in a suitable gauge. In addition, again using Lemma~\ref{lem:Ar=0}, we may set $A_{r} = 0$ on $S$.

  \item Choose an extension of $A$ inside $S$ which is small in $H^1$. 
  More precisely, since the trace of $A_{r}$ vanishes on $S$, it follows that the extension of $A_{r}$ by zero inside of $S$ is in $H^{1}$. Similar considerations apply to $A_{\Tht}$ after subtracting an extension of the boundary values, which can be made to have a small $H^{1}$ norm. Overall, the $H^{1}$ norm of the extension is small inside $S$, so that the assumptions of the lemma are kept. Finally, by smallness we may harmlessly cutoff $A_{\Tht}$ near $0$ as well, as desired.
  \qedhere
\end{enumerate}

\end{proof}

\bigskip

\begin{proof}[Lemma~\ref{l:annulus} $\implies$ Lemma~\ref{l:elliptic}.]
In accordance with the choice of metric in Lemma~\ref{l:annulus}, we endow $\bbR^{4}$ with a smooth Riemannian metric that coincides with $\ud r^{2} + \ud \Tht^{2}$ in $\calA_{(1, 2)}$ and with the Euclidean metric $\ud r^{2} + r^{2} \ud \Tht^{2}$ outside $\calA_{(\frac{1}{2}, \frac{5}{2})}$. We formulate the Coulomb gauge conditions in the proof with respect to this metric. As already noted in \cite{MR648356}, Uhlenbeck's lemmas work just as well on Riemannian manifolds if we take $\epsU$ small enough; so does Lemma~\ref{lem:div-curl-O}, which is an interior elliptic regularity result .

By Uhlenbeck's lemma (Theorem~\ref{t:uhl}) we obtain a gauge-equivalent connection $A_{in}$ in $B_{\frac{3}{4}}$ which is $\epsilon$- small in $H^1$.
Next, by Uhlenbeck's lemma in the exterior of a ball (Theorem~\ref{t:uhl-ext}, we obtain a gauge-equivalent connection $A_{out}$ in $\bbR^{4} \setminus B_{1}$, which is $\epsilon$- small in $\dot H^1 \cap L^{4} (\bbR^{4} \setminus B_{1})$.
Now note that \eqref{eq:small-div-FrTht} is equivalent to \eqref{eq:small-sdiv-FrTht}, since
\begin{equation*}
\covD^{\frka} F_{\frka r} = r^{-2} \scovD^{\Tht} F_{\Theta r}.
\end{equation*}
Thus by Lemma~\ref{l:annulus} we obtain a gauge-equivalent connection $A_{mid}$ in the annulus $\calA_{(\frac23,\frac43)}$.
The $L^4$ smallness allows us to patch the three connections cleanly (without any topological obstructions). More precisely, the Coulomb gauge conditions imply that the transition maps $O$ in the intersections obey a favorable div-curl system. The $L^{4}$ bounds on $A_{in}$, $A_{mid}$ and $A_{out}$ imply that $O_{;x}$ (defined in each intersection) is small in $L^{4}$. Then, by the div-curl system we may upgrade this bound to smallness in $\dot{H}^{1}$, and then via Lemma~\ref{lem:div-curl-O} to smallness in $\ell^{1} \dot{H}^{1}$ (where we shrink the domain at each step). Thus each $O$ is uniformly closed to a constant (Lemma~\ref{lem:ptwise-O}), and a standard patching argument (see, e.g., \cite[Proposition~3.2]{MR648356}) now works.
\end{proof}

\bigskip

\begin{proof}[Lemma~\ref{l:elliptic} $\implies$ Proposition~\ref{p:parabolic}]
We start with a continuity argument.
Using the equivalent connection $A$ given by the lemma, we produce
  a continuous family of connections $A^h= hA$ with $h \in [0,1]$ so
  that $A^0 = 0$ and $A^1 = A$, and which satisfies uniformly the
  hypotheses of the proposition.

We consider the subset $H$ of $h \in [0,1]$ for which the following property holds:

\bigskip

\emph{ The Yang--Mills heat flow of  $A$ is global and satisfies the bound 
\[
\|F\|_{L^3_{x,s}} \leq 1.
\]}

Clearly $0 \in H$. Also by the continuity properties of the Yang--Mills
heat flow, $H$ is closed.  It remains to show that $H$ is open, which
would imply that $H = [0,1]$. For this it suffices to take the above
bound as a bootstrap assumption, and show that we can improve it.

Under this assumption, it follows immediately from Proposition~\ref{p:cal-a} that we have a gauge 
transformation $O$ with 
\[
\| O_{;x} \|_{\dot H^1} \lesssim_\En 1,
\]
which transforms $A$ into its caloric representation $\tA$. In turn $\tA$ 
must also satisfy
\[
\| \tA\|_{\dot H^1} \lesssim_{\En} 1.
\]
Further, since $A$ was small in $L^4$, it curvature is small in $\dot W^{-1,4}$,
\[
\|F \|_{\dot W^{-1,4}} \lesssim_\En \epsilon^{\frac{3}{4}(1-\dltnc)}.
\]
By this and the bound for $O_{;x}$, the curvature of $\tA$, namely $\tF = O F O^{-1}$, must also be small,
\[
\|\tF \|_{\dot W^{-1,4}} \lesssim_\En \epsilon^{\frac{3}{4}(1-\dltnc)}.
\]
Propagating this bound along the caloric flow \cite[Proposition~8.9; Eq.~(8.44)]{OTYM1}, we obtain
\begin{align*}
	\nrm{P_{k} \tF(s)}_{\dot{W}^{-1, 4}} \aleq_{\En}  \eps^{\frac{3}{4}(1-\dltnc)} (1+2^{2k} s)^{-100},
\end{align*}
On the other hand, by the bootstrap assumption and \cite[{Proposition~7.13; Eq.~(7.20)}]{OTYM1}, we also have
\begin{equation*}
	\nrm{P_{k} \tF(s)}_{L^{2}} \aleq_{\En} c_{k} (1+2^{2k} s)^{-100},
\end{equation*}
where $\nrm{c_{k}}_{\ell^{2}} \aleq_{\En} 1$. By Bernstein (for the second bound) and interpolation, we have
\begin{equation*}
	\nrm{P_{k} \tF(s)}_{L^{3}} \aleq_{\En} c_{k}^{\frac{2}{3}} \eps^{\frac{1}{4}(1-\dltnc)} 2^{\frac{2}{3}k} (1+2^{2k} s)^{-100}.
\end{equation*}
Then by Schur's test, we obtain
\begin{equation*}
	\nrm{\tF}_{L^{3}_{s,x}} \aleq_{\En} \eps^{\frac{1}{4}(1-\dltnc)} \nrm{c_{k}^{\frac{2}{3}}}_{\ell^{3}} \aleq_{\En} \eps^{\frac{1}{4}(1-\dltnc)},
\end{equation*}
which improves the bootstrap assumption. Moreover, by Bernstein and \cite[Proposition~8.9; Eq.~(8.45)]{OTYM1}, it follows that $\nrm{\tA}_{\ell^{\infty} \dot{W}^{-1, \infty}} \aleq \nrm{\tA}_{\ell^{\infty} L^{4}}  \aleq \eps^{\frac{3}{4}(1-\dltnc)}$. Then by the bound $\nrm{\tA}_{\dot{H}^{1}} \aleq_{\En} 1$ and the improved Sobolev inequality \cite[Theorem~1.43]{BCD}, \eqref{eq:parabolic-L4} follows.
This completes the proof of the proposition. \qedhere
\end{proof}

\bigskip

\begin{proof}[Proposition~\ref{p:parabolic} $\implies$   Theorem~\ref{thm:no-null}]
 
We express the curvature components in the null frame.
By \eqref{eq:no-null:hyp-small} and \eqref{eq:no-null:hyp-bdd}, as well as \eqref{eq:mono-L} and \eqref{eq:mono-uL} for the expression of $\mvC{X_{\veps}}_{0}(A) = \frac{1}{2} (\mvC{X_{\veps}}_{L}(A) + \mvC{X_{\veps}}_{\uL}(A))$, the null
components $\alp$, $\varrho$ and $\sgm$ are already small in
$L^{2}$ provided that $\eps_{1}$, $\veps$ and $1-\gmm$ are sufficiently small. We now use the constraint equation to express 
\begin{equation} \label{eq:div-F-small}
\covD^{\frka} F_{\frka r} 
= \covD^{\frka} (F_{\frka 0} +  F_{\frka L}) 
= - r^{-3} \covD_{r}(r^{3} F_{r 0}) +   \covD^{\frka} \alpha_{\frka}
= -r^{-3} \covD_{r}(r^{3} \varrho) +   \covD^{\frka} \alpha_{\frka},
\end{equation}
which implies the desired smallness of $\covD^{\frka} F_{\frka r}$ in the gauge-invariant space $H^{-1}_{A}$. 
Thus, we have established that the spatial part of the connection $A_x$ 
satisfies the hypotheses of Proposition~\ref{p:parabolic}. 

Suppose  now that $A_x$ is in the caloric gauge and satisfies the 
bounds in   Proposition~\ref{p:parabolic}. It remains to consider the temporal components 
of $F$. For $F_{0\frka}$ we write
\[
F_{0\frka} = F_{L\frka} + F_{r \frka} = \alp_{\frka} + F_{r \frka},
\]
and the $\dot W^{-1,4}$ smallness follows. For $F_{0r}$
we simply have
\[
F_{0r} = \frac{1}{2} F_{L \uL} = \varrho,
\]
which is small even in $L^2$. \qedhere
\end{proof}

\section{Proof of the Threshold Theorem and the Dichotomy Theorem} 
\label{sec:proof}

In this section, we finally prove the Threshold and Dichotomy Theorems (i.e., Theorems~\ref{t:threshold} and \ref{t:no-bubble}, respectively).

For both theorems, we argue by contradiction. Suppose that the conclusion of the Dichotomy Theorem (Theorem~\ref{t:no-bubble}) is false, i.e., there exists a solution $A$ for which both alternatives a) and b) are false.  Then we are in one of the following two scenarios:

\begin{enumerate}[label=(\roman*)]

\item The solution blows up in finite time, and the hypothesis of Theorem~\ref{t:bubble-off}
is false near the tip of the cone $C$.

\item The solution is global but the hypothesis of Theorem~\ref{t:bubble-off}
is false near the infinite end  of the cone $C$.
\end{enumerate}

On the other hand, assume now that the conclusion of the Threshold
Theorem (Theorem~\ref{t:threshold}) is false.  We seek to show that the conclusion of 
Theorem~\ref{t:bubble-off} is false, and therefore we are again in one of the two scenarios above.
To achieve this, we need to use the energy assumption $\nE(A) < 2 \Egs$ along with vanishing of the characteristic number $\ch = 0$ (as a consequence of topological triviality $A \in \dot{H}^{1}$). Our argument is similar to \cite{LO} (see also \cite[Section~6.2]{OTYM1}).

If the conclusion of
Theorem~\ref{t:bubble-off} were true, this would imply that a sequence
of translated,  rescaled and gauge transformed  copies $A^{(n)}$ of $A$ converges (modulo gauge
transformations) in $H^{1}_{loc}$
to a Lorentz transform of a nontrivial harmonic Yang--Mills
connection $L_v Q$. This implies (spacetime) $L^{2}_{loc}$ convergence of curvature tensors $F^{(n)}$, and thus for almost every $t \in (-1/2, 1/2)$ (and possibly passing to a subsequence) 
\begin{equation*}
	\nE_{\set{t} \times B_{R}}(A^{(n)}) = \frac{1}{2} \int_{B_{R}} \brk{F^{(n)}, F^{(n)}}(t) \to \calE_{\set{t} \times B_{R}}(L_{v} Q) \qquad \hbox{ for any } R > 0,
\end{equation*}
which in turn implies
\begin{equation*}
	\nE(Q) \leq \nE(A) < 2 \Egs.
\end{equation*}
By Theorem~\ref{t:gs}, the only possibility for $Q$ is that $\abs{\ch(Q)} = \spE(Q)$. Moreover, since Lorentz transform preserves the topological class, we have $\ch(L_{v}(Q)) = \ch(Q)$. From here on, we assume that $\ch(Q) > 0$; the alternative case is similar. 

Fix a large number $R \gg 1$ and $t \in (-1/2, 1/2)$. By topological triviality of $A^{(n)}(t)$, we have
\begin{equation*}
	0 = \ch(A^{(n)}(t)) = \int_{B_{R}} - \brk{F^{(n)} \wedge F^{(n)}}(t) + \int_{\bbR^{4} \setminus B_{R}} - \brk{F^{(n)} \wedge F^{(n)}}(t).
\end{equation*}
Again by the (spacetime) $L^{2}_{loc}$ convergence of $F^{(n)}$, there exists a subsequence such that
\begin{equation*}
	\int_{\bbR^{4} \setminus B_{R}} \brk{F^{(n)} \wedge F^{(n)}}(t) = \int_{B_{R}} - \brk{F^{(n)} \wedge F^{(n)}}(t) \to \int_{B_{R}} -\brk{F[L_{v} Q] \wedge F[L_{v} Q]}.
\end{equation*}
By \eqref{ch-vs-E}, we have
\begin{align*}
	\nE(A) 
	\geq & \limsup_{n \to \infty} \Big( \frac{1}{2} \int_{B_{R}} \brk{F^{(n)}, F^{(n)}}(t) + \abs{\int_{\bbR^{4} \setminus B_{R}} \brk{F^{(n)} \wedge F^{(n)}}(t)} \Big) \\
	\geq & \nE_{\set{t} \times B_{R}}(L_{v} Q) + \abs{\int_{B_{R}} -\brk{F[L_{v} Q] \wedge F[L_{v} Q]}}.
\end{align*}
Sending $R \to \infty$, the RHS tends to $\nE(L_{v} Q) + \ch(L_{v} Q) \geq 2 \nE(Q) \geq 2 E_{GS}$, which is a contradiction. 

It follows that the conclusion of Theorem~\ref{t:bubble-off}
is false, and thus its hypothesis is false. Hence we have reduced the problem again to the above alternative
(i)--(ii).  From here on, the proofs of the two theorems
are identical.  The analysis is largely similar in the two cases (i) and (ii), but there are still
some differences so we consider them separately.

\begin{remark} 
A difference in the properties of the solutions is that in the
subthreshold case we can work globally in the caloric gauge, whereas
otherwise we need the local solutions given by Theorem~\ref{t:local}.
However this makes no essential differences in the proofs below.
\end{remark}

\medskip

{\em (i) The blow-up scenario.}
Let $E = \nE(A)$.
 If $[0,T)$ is a maximal existence time, then the temporal gauge local well-posedness result 
 (Theorem~\ref{t:global-temp}) implies that there exists a point $X \in \bbR^4$ so
  that the energy does not decay to zero in the backward cone of
  $(X,T)$. 
  By translation invariance we will set $ (X,T)= (0,0)$ and, reversing time, 
  denote its forward cone by $C$. Thus we now have a 
 Yang--Mills connection $A$ with the property that
\begin{equation}\label{ft1}
\lim_{t \searrow 0} \nE_{S_t}(A) > \eps_0,
\end{equation}
where $\eps_0$ is a universal positive constant corresponding to the small data result.

We also know that the hypothesis of Theorem~\ref{t:bubble-off} is false, which 
gives
\begin{equation}\label{ft2}
\lim_{t \searrow 0} \nE_{S^\gamma_t}(A) = 0, \qquad 0 < \gamma < 1.
\end{equation}

We would like to use these two properties in order to show
that the connection $A$ admits a caloric representation near the tip of the cone,
which is also energy dispersed. Then we  could directly  apply the energy dispersed result in  Theorem~\ref{t:ED} 
to conclude that  the solution can be extended beyond the blow-up time $T = 0$, which is a contradiction. 
 However, this strategy cannot work unless the energy of $A$ is very small also outside the cone, which 
is not at all guaranteed a-priori.  To resolve this difficulty, we first truncate the solution outside 
the cone in order to insure that the outer energy stays small:

\begin{lemma}\label{l-final}
For each $\veps > 0$ there exists a $t_\veps > 0$ and a finite energy Yang--Mills solution $\tA$ in $(0,t_\veps]$
with the following properties:
\begin{enumerate}
\item Gauge equivalence: $\tA$ is gauge equivalent to $A$ in $C_{(0,t_\veps]}$.
\item Small energy outside the cone
\begin{equation} \label{ft3} \nE_{(\set{t} \times
        \bbR^{4}) \setminus S_{t}} (\tA) \leq
      \veps^{8} E \quad \hbox{ for every } t \in (0,t_\veps],
 \end{equation}
  \item Small flux on $\rd C$
    \begin{equation} \label{eq:ini-seq:flux1} \calF_{(0,t_\veps]}
      (\tA) \leq \veps^9 E.
    \end{equation}
  \end{enumerate}
 \end{lemma}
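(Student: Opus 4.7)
The plan is to argue in two stages, following the strategy used for the analogous truncation lemma in the (MKG) analysis of \cite{OT3}: first choose $t_\veps$ small enough to secure the flux bound (3), and then construct a small-energy extension of $A$ past $\rd C$ to produce $\tA$ on $(0, t_\veps] \times \bbR^{4}$.

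For the first stage, we invoke Corollary~\ref{cor:flux-decay}(a), which applied to the forward cone from the blow-up point at the origin gives $\EFlux_{\rd C_{(0, t]}}(A) \to 0$ as $t \to 0$. We select $t_\veps$ so that $\EFlux_{\rd C_{(0, t_\veps]}}(A) \leq \veps^{10} E$; since the flux is gauge invariant and $\tA$ is required to be gauge equivalent to $A$ in $C_{(0, t_\veps]}$, condition (3) will follow automatically from (1).

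For the second stage, we first apply Theorem~\ref{t:good-gauge} to place $A$ in a good gauge on a slight inward retraction $C^{4 \eta}_{[\eta, t_\veps]}$, obtaining the uniform regularity \eqref{good-gauge}. We then construct initial data $(\ta, \te)$ on $\set{t_\veps} \times \bbR^{4}$ that coincides with the data of $A$ on $B_{t_\veps - \eta}$ and has $O(\veps)$ energy outside. This proceeds by first using the smallness of the flux on $\rd C$ to exhibit a thin spacetime tube inside the cone near $\rd C$ in which $A$ is gauge-equivalent to a connection that is small in $\dot{H}^{1} \cap L^{4}$ (via a local Uhlenbeck-type gauge fixing, as in Theorem~\ref{t:uhl}), and then invoking the small-energy excision/extension scheme of Proposition~\ref{t:chop-small} to match with a trivial exterior across $\rd B_{t_\veps - \eta}$. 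Evolving $(\ta, \te)$ via the temporal gauge local well-posedness (Theorem~\ref{t:local-temp}) and invoking finite speed of propagation together with uniqueness gives a finite energy solution $\tA$ satisfying (1); small data propagation then yields (2) uniformly in $t \in (0, t_\veps]$.

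The principal obstacle is the extension step just described: at a generic time $t_\veps$, the energy of $A$ in a thin annular region near $\rd B_{t_\veps}$ is of order $E$ (this is precisely the null-concentration scenario we are in, by \eqref{ft1}--\eqref{ft2}), so a naive truncation at $t = t_\veps$ cannot produce a small exterior. One must instead convert the smallness of the characteristic flux on $\rd C$---which controls only the $\alp$, $\varrho$, $\sgm$ components of $F$---into genuine $\dot{H}^{1} \cap L^{4}$ smallness of a gauge-equivalent representative of $A$ in a tubular neighborhood of $\rd C$ inside $C$, while respecting the nonlinear Gauss constraint and the non-abelian gauge structure. This delicate step, which combines characteristic surface analysis with the gauge-patching tools of \cite{OTYM2.5} (notably the arguments behind Theorem~\ref{t:glue}), constitutes the main technical content of the lemma.
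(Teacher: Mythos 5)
Your first stage (choosing $t_\veps$ via flux decay and getting (3) from gauge invariance) matches the paper, but your second stage takes a wrong turn at exactly the point you flag as ``the main technical content.'' You propose to excise \emph{inside} the cone, in a thin tube along $\rd C$, by converting the smallness of the flux into $\dot H^1\cap L^4$ smallness of a gauge-equivalent representative of $A$ there. That conversion is impossible in the regime we are in: by \eqref{ft1}--\eqref{ft2} essentially all of the energy $\geq E_0$ concentrates precisely in the region $S_t\setminus S_t^\gamma$ near the lateral boundary, and $\dot H^1\cap L^4$ smallness of \emph{any} representative would force $\|F\|_{L^2}$ to be small there, i.e.\ small local energy --- a contradiction. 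The flux only controls the good null components $\alp,\varrho,\sgm$ on $\rd C$, not $\ualp$ and not the interior energy, so no Uhlenbeck-type argument can produce such a gauge; likewise Proposition~\ref{t:chop-small} requires small energy in the excision annulus and hence cannot be invoked there in any gauge. A secondary defect: since your data agree with $A$ only on $B_{t_\veps-\eta}$, finite speed of propagation gives gauge equivalence only on a retracted cone $C^{\eta}$, which is weaker than condition (1) and would interfere with the later application of Theorem~\ref{thm:no-null}.

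The paper avoids all of this by excising in a thin annulus just \emph{outside} the cone, where smallness is free: at the fixed time $t_\veps$ the energy of $A$ in $\{t_\veps<|x|<t_\veps+3\dlt\}$ tends to $0$ as $\dlt\to0$ by absolute continuity of the energy integral (no null-concentration issue arises, since the concentration is interior to $C$). The flux--energy relation, together with the already-secured flux smallness, propagates this exterior annular smallness to all $t\in(0,t_\veps]$, since energy can enter the region between $\rd C$ and its outward $3\dlt$-translate only through $\rd C$. After resetting $t_\veps\leq\dlt$ so that the annulus $\{t_\veps<|x|<4t_\veps\}$ carries energy $\lesssim\veps^9E$, a rescaled Proposition~\ref{t:chop-small} yields truncated data agreeing with $(a,e)(t_\veps)$ on all of $B_{2t_\veps}\supset S_{t_\veps}$ and small outside. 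Evolving by temporal-gauge local well-posedness, uniqueness/finite speed give gauge equivalence with $A$ on the \emph{full} cone (hence (1) and, by gauge invariance of the flux, (3)), the exterior flux--energy relation gives (2), and $\tA$ persists on all of $(0,t_\veps]$ because it cannot blow up inside the cone (being gauge equivalent to $A$ there) nor outside (small energy). You should restructure your second stage along these lines; as written, the key extension step is not merely unproved but unprovable.
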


\begin{proof}
The flux energy relation \eqref{energy-flux} implies that the flux decays to zero near the tip of the cone,
\[
\lim_{t \to 0}  \calF_{(0,t]} (A) = 0.
\]
so we  first choose $t_\veps$ small enough so that the last condition is satisfied for $A$. We then choose
$\delta > 0$ so that the energy of $A$ in a $\delta$-annulus around the  cone is small,
\[
\nE_{\{ t_{\veps}\} \times \{ t_\veps < |x| < t_\veps+ 3\delta\}}(A) \leq \veps^9 E.
\]
Again by the flux energy relation, this propagates to all smaller times,
\[
\nE_{\{ t \} \times \{ t < |x| < t + 3\delta\}}(A) \leq 2 \veps^9 E, \qquad 0 < t < t_\veps.
\]
We now reset $t_\veps$ to a smaller value, 
\[
t_\veps \to \min\{ t_\veps,\delta\}.
\]
For this new choice we have 
\[
\nE_{\{ t_{\veps}\} \times \{ t_\veps < |x| < 4 t_\veps\}}(A) \leq 2 \veps^9 E.
\]

By (a rescaled form of) Proposition~\ref{t:chop-small}, we can truncate the data 
$(a,e)(t_\veps)$ for $A$ at time $t_\veps$. We obtain a new data set $(\ta,\te)(t_\veps)$
which  agrees with $(a,e)(t_\veps)$ inside the cone, but is small outside,
\[
\nE_{\{ t_{\veps}\} \times \{ t_\veps < |x| < 4 t_\veps\}}(\tA) \lesssim \veps^9 E.
\]

Now we consider the solution $\tA$ generated by the truncated data
$(\ta,\te)(t_\veps)$ below time $t_\veps$.  For as long as it exists it is gauge
equivalent with $A$ inside the cone, since they are gauge equivalent
initially, see Theorem~\ref{t:global-temp}. This shows that it
cannot blow-up inside the cone.  On the other hand outside the cone it
satisfies the second condition in the lemma by the flux-energy
relation, so it does not have enough energy blow up there either. It follows that $\tA$
persists as a finite energy solution and satisfies the conditions in the lemma up to time $t=0$. \qedhere
\end{proof}

From here on, we work with the connection $\tA$ which satisfies 
the properties \eqref{ft1}, \eqref{ft2} and \eqref{ft3} where for the latter 
we choose 
\[
\veps \ll_E 1.
\]
Since the flux decays to zero at the tip of the cone, Proposition~\ref{prop:monotonicity}
applied in the interval $[\veps t,t]$ also implies that for small enough $t$ we have
\begin{equation}\label{X-en}
\int_{S_{t}}  \mvC{X_{\veps t}}_{0}(\tA) \, \ud x \lesssim E.
\end{equation}

The bounds  \eqref{ft1}, \eqref{ft2} and \eqref{ft3} together with \eqref{X-en} with small enough $\veps$
guarantee  that the hypothesis of Theorem~\ref{thm:no-null} is satisfied for $\tA$ all small enough $t$.
This shows that the connection $\tA$ admits a caloric representation in $[0,t]$,
which is also energy dispersed. Thus we  can apply the energy dispersed result in  Theorem~\ref{t:ED} 
to conclude that  the solution can be extended beyond the blow-up time $T = 0$, which is a contradiction. 

\medskip

{\em (ii) The non-scattering scenario.}
This is similar but simpler, as we no longer need to truncate the data for $A$ and instead we may work
directly with $A$. For this we choose $R$ large enough so that the outer energy of $A$ is small,
\[
\nE_{\{0\} \times \{|x| > R\}}  (A) \leq \veps^9 E,
\]
and then work with the translated connection $A(t-R,x)$.  This
satisfies the conditions (2),(3) in Lemma~\ref{l-final} for $t \in
[R,\infty)$. From this point on, the bound \eqref{X-en} must hold for
all large enough $t$.  Hence the hypothesis of Theorem~\ref{thm:no-null}
is satisfied for $\tA$ all large enough $t$. This shows that the
connection $A$ admits a caloric representation in $[t,\infty]$, which
is also energy dispersed (in the case of Theorem~\ref{t:threshold}, we have smallness of the energy dispersion in the original gauge, thanks to the uniqueness of the caloric gauge). Thus we can apply the energy dispersed
result in Theorem~\ref{t:ED} in $[t,\infty)$ and conclude that $A \in
S^1([t,\infty))$, i.e., the desired scattering result.

\appendix
\section{Tools for analysis of gauge transformations} \label{sec:gt}
In this appendix, we collect various technical results, mostly concerning gauge transformations, that are used in the main text.
\subsection{Results from \cite{OTYM2.5}}
We recall some useful results that were proved in \cite{OTYM2.5}. The first result is essentially an extension to the critical regularity of a well-known result (see, for instance, \cite[Lemma~2.6]{MR648356}). 
\begin{lemma} [{\cite[Lemma~3.5]{OTYM2.5}}]\label{lem:Ar=0} 
Let $B$ be a $\g$-valued function in
 $H^{\frac{1}{2}}(\bbS^{3})$. There exists $O \in
  L^{\infty} \cap H^{2}(B_{1})$, which depends continuously on $B$,
  such that
  \begin{equation*}
    (O, O_{;r}) \restriction_{\set{r = 1}} = (Id, B),
  \end{equation*}
where $O_{;r} = \frac{x^{j}}{\abs{x}} O_{;j}$. A similar construction can be done in the exterior region $\bbR^{4}
  \setminus B_{1}$.
\end{lemma}
The next result works in tandem with Lemma~\ref{lem:Ar=0}, and provides an simple way to extend a $H^{1}$ connection $1$-form through a sphere.
\begin{lemma} [{\cite[Lemma~3.18]{OTYM2.5}}]\label{lem:ext-simple} Let $A \in H^{1}(B_{R})$ with $A_{r} = 0$ on $\rd B_{R}$. Extend $A$
  outside $B_{R}$ by
  \begin{equation*}
    \bar{A}_{r} \left( \frac{R^{2}}{r}, \Tht \right)= - A_{r}(r, \Tht), \quad
    \bar{A}_{\Tht} \left( \frac{R^{2}}{r}, \Tht \right)= A_{\Tht} (r, \Tht).
  \end{equation*}
  Then the curvature $2$-form $\bar{F}$ of the extension obeys
  \begin{equation*}  \bar{F}
    \left(\frac{R^{2}}{r}, \Tht\right) = F(r, \Tht) \quad \hbox{
      for } r < R.
  \end{equation*}
  A similar construction can be done starting from the exterior region $\bbR^{4}
  \setminus B_{R}$.
\end{lemma}

The following results are useful tools for proving continuity of a gauge transformation in the critical regularity setting.
\begin{lemma} [{\cite[Lemma~3.16]{OTYM2.5}}]\label{lem:div-curl-O} Let $O_{;j} \in \dot{W}^{1, \frac{d}{2}}(B_{2R})$
  be a solution to the div-curl system
  \begin{equation} \label{eq:div-curl-O}
    \begin{aligned}
      \rd_{j} O_{;k} - \rd_{k} O_{;j} =& [O_{;j}, O_{;k}] \\
      \rd^{\ell} O_{; \ell} = & H.
    \end{aligned}
  \end{equation}
  If $H \in \ell^{1} L^{\frac{d}{2}}(B_{2R})$, then $O_{;x} \in \ell^{1}
  \dot{W}^{1, \frac{d}{2}}(B_{R})$ with the
  bound
  \begin{equation*}
    \nrm{O_{;x}}_{\ell^{1} \dot{W}^{1, \frac{d}{2}}(B_{R})} \aleq \nrm{H}_{\ell^{1} L^{\frac{d}{2}}(B_{2R})} + \nrm{O_{;x}}_{\dot{W}^{1, \frac{d}{2}}(B_{2R})}^{2}.
  \end{equation*}
\end{lemma}
We note that, even though the divergence $\rd^{\ell} O_{;\ell}$ is formulated in terms of the Euclidean metric, the lemma works with $\bfe^{k \ell} \covnb_{k} O_{;\ell}$ defined with respect to any smooth metric on $B_{R}$ with suitable adjustment of the constants.

\begin{lemma} [{\cite[Lemma~3.17]{OTYM2.5}}]\label{lem:ptwise-O} 
If $O_{;x} \in \ell^{1} W^{1,
    \frac{d}{2}}(B)$, then $O$ is continuous on $B$.
\end{lemma}

\subsection{Solvability of div-curl systems}
Our aim in this subsection is to provide solvability results for the nonlinear div-curl system
\begin{equation} \label{eq:div-curl-nonlin}
\left\{
\begin{aligned}
\covnb^{\ell} \Omg_{\ell} &= \covnb^{\ell} B_{\ell} \\
\covnb_{j} \Omg_{k} - \covnb_{k} \Omg_{j} &= -[\Omg_{j}, \Omg_{k}]
\end{aligned}
\right. \qquad \hbox{ in } \calO,
\end{equation}
either with $\calO = \bbR^{5}$ but with respect to a variable metric $\bfe$, or with $\calO = \calA_{(R', R)} \subset \bbR^{4}$ (with respect to the Euclidean metric) and with suitable boundary conditions on $\rd \calA_{(R', R)}$; these problems arise in Sections~\ref{sec:reg} and \ref{sec:no-null}, respectively. To begin with, we solve in each case the easier linear system (see Section~\ref{subsec:notation} for the notation)
\begin{equation} \label{eq:div-curl}
\left\{
\begin{aligned}
	\dlt \omg =& f \\
	\ud \omg =& g
\end{aligned}
\right. \qquad \hbox{ in } \calO,
\end{equation}
where we remind the reader that $\dlt \omg = - \covnb^{\ell} \omg_{\ell}$ for a $1$-form $\omg$. 
\begin{lemma} [Linear div-curl system in $\bbR^{5}$] \label{lem:div-curl-5d}
Consider the div-curl system \eqref{eq:div-curl} in $(\bbR^{5}, \bfe)$, where $\bfe_{\alp \bt}$ is a smooth metric such that $\nrm{\bfe_{\alp \bt} - \bar{\bfe}_{\alp \bt}}_{L^{\infty} \cap \dot{H}^{\frac{5}{2}}} < \epse$ for some constant positive definite matrix $\bar{\bfe}_{\alp \bt}$. Suppose that $f, g$ are in $L^{2} \dot{H}^{\frac{1}{2}}$ and obey the compatibility condition
\begin{equation*}
	\ud g = 0.
\end{equation*}
Then for $\epse$ sufficiently small, there exists a unique solution $\omg$ to this problem such that $\omg \in L^{5}$ and $\nb \omg \in L^{2} \dot{H}^{\frac{1}{2}}$, which obeys
\begin{equation*}
	\nrm{\nb \omg}_{L^{2} \dot{H}^{\frac{1}{2}}} \aleq \nrm{f}_{L^{2} \dot{H}^{\frac{1}{2}}} + \nrm{g}_{L^{2} \dot{H}^{\frac{1}{2}}}.
\end{equation*}
\end{lemma}
Note that if $\omg \in L^{5}$ and $\nb \omg \in L^{2} \dot{H}^{\frac{1}{2}}$, then
\begin{equation} \label{eq:inhom-trace}
	\nrm{\omg}_{L^{5}} \aleq \nrm{\omg}_{L^{2} \dot{H}^{\frac{3}{2}}} +\nrm{\omg}_{L^{\infty} \dot{H}^{1}} \aleq \nrm{\nb \omg}_{L^{2} \dot{H}^{\frac{1}{2}}}.
\end{equation}
Indeed, the first inequality is simply the $4$-dimensional Sobolev inequality and interpolation, and the second inequality follows from applying the trace theorem to each $P_{k} \omg$ and square summing in $k$.
\begin{proof}
First, we treat the case $\bfe_{\alp \bt} = \bar{\bfe}_{\alp \bt}$, in which case \eqref{eq:div-curl} has constant coefficients. Let
\begin{equation*}
	\tilde{f} = (-\lap)^{-1} f, \quad \tilde{g}_{\alp \bt} = (-\lap)^{-1} g_{\alp \bt},
\end{equation*}
and define
\begin{equation*}
	\omg = \ud \tilde{f} + \dlt \tilde{g}.
\end{equation*}
By the compatibility condition $\ud g = 0$, as well as the relations $\ud^{2} = 0$ and $\dlt^{2} = 0$, it is easy to check that $\omg$ solves \eqref{eq:div-curl}. Any other solution differs from $\omg$ by a harmonic $1$-form in $L^{5}$, which must be zero; thus the uniqueness assertion follows. The desired estimate is then clear by Fourier transform.

In the general case, we use a simple perturbation argument. Note that \eqref{eq:div-curl} may be written as
\begin{equation*}
\left\{
\begin{aligned}
	\dlt \omg & = f + \mathrm{Err}_{\bfe} \omg, \\
	\ud \omg & = g,
\end{aligned}
\right.
\end{equation*}
where $\dlt$ denotes the constant coefficient divergence with respect to $\bar{\bfe}$ and
\begin{equation*}
\mathrm{Err}_{\bfe} \omg = (\bar{\bfe}^{\alp \bt} - \bfe^{\alp \bt}) \rd_{\alp} \omg_{\bt} - \frac{1}{\sqrt{\det \bfe}} \rd_{\alp} (\bfe^{\alp \bt} \sqrt{\det \bfe}) \omg_{\bt}.
\end{equation*}
By the standard Moser estimates in $L^{\infty} \cap \dot{H}^{\frac{5}{2}}$, which is an algebra, it is straightforward to establish (for $\epse < 1$)
\begin{equation*}
	\nrm{\bar{\bfe}^{\alp \bt} - \bfe^{\alp \bt}}_{L^{\infty} \cap \dot{H}^{\frac{5}{2}}} \aleq \epse, \quad
	\nrm{\frac{1}{\sqrt{\det \bfe}} \rd_{\alp} (\bfe^{\alp \bt} \sqrt{\det \bfe})}_{\dot{H}^{\frac{3}{2}}} \aleq \epse.
\end{equation*}
Combined with the embeddings $\dot{H}^{\frac{3}{2}} \cdot L^{2} \dot{H}^{\frac{3}{2}} \hookrightarrow L^{2} \dot{H}^{\frac{1}{2}}$ and $(L^{\infty} \cap \dot{H}^{\frac{5}{2}}) \cdot L^{2} \dot{H}^{\frac{1}{2}} \hookrightarrow L^{2} \dot{H}^{\frac{1}{2}}$, both of which follow from the standard Littlewood--Paley trichotomy with respect to $x^{1}, \ldots, x^{4}$, we have
\begin{equation*}
	\nrm{\mathrm{Err}_{\bfe} \omg}_{L^{2} \dot{H}^{\frac{1}{2}}} \aleq \epse \nrm{\nb \omg}_{L^{2} \dot{H}^{\frac{1}{2}}}.
\end{equation*}
Thus, for $\epse$ sufficiently small, \eqref{eq:div-curl} is solvable with $\nb \omg \in L^{2} \dot{H}^{\frac{1}{2}}$, as desired. \qedhere
\end{proof}

Next we consider the case $\calA_{(R', R)}$. We denote by $\nu$ the outward unit normal of $\calA_{(R', R)}$ on $\rd \calA_{(R', R)}$.
\begin{lemma} [Linear div-curl system in a $4$-dimensional annulus] \label{lem:div-curl-ann}
Consider the div-curl system \eqref{eq:div-curl} in $\calA_{(R', R)} \subset \bbR^{4}$ with the boundary condition
\begin{equation*}
	\iota_{\nu} \omg = 0 \qquad \hbox{ on } \rd \calA_{(R', R)}. 
\end{equation*}
Suppose that $f, g$ are in $L^{2}(\calA_{(R', R)})$ and obey the compatibility conditions
\begin{equation*}
	\int_{\calA_{(R', R)}} f = 0, \quad \ud g = 0.
\end{equation*}
Then there exists a unique solution $\omg \in H^{1}(\calA_{(R', R)})$ to this boundary value problem, which obeys
\begin{equation*}
	\nrm{\omg}_{H^{1}(\calA_{(R', R)})} \aleq_{R', R} \nrm{f}_{L^{2}(\calA_{(R', R)})} + \nrm{g}_{L^{2}(\calA_{(R', R)})}.
\end{equation*}
\end{lemma}
\begin{proof}
For simplicity, we write $\calA = \calA_{(R', R)}$ and omit the dependence of constants on $R'$ and $R$.
As in Lemma~\ref{lem:div-curl-5d}, we start by solving the following boundary value problem for $\tilde{g}$:
\begin{equation*}
\left\{
\begin{aligned}
	-\lap \tilde{g} & = g \quad \hbox{ in } \calA, \\
	(\iota_{\nu} \ud \tilde{g}, \iota_{\nu} \tilde{g}) &= 0 \quad \hbox{ on } \rd \calA.
\end{aligned}
\right. 
\end{equation*}
Here $-\lap = \dlt \ud + \ud \dlt$ denotes the Hodge Laplacian. By the solvability of the absolute boundary value problem for $2$-forms, we may find a unique solution $\tilde{g} \in H^{2}(\calA)$ to these problems; see, for instance, \cite[{Proposition~9.8}]{Tay1}. We remark that for uniqueness, we use the Hodge theorem and the fact that the second de Rham cohomology group of $\calA$ is trivial. We also note that
\begin{equation*}
	\ud \dlt \ud \tilde{g} = \ud (-\lap) \tilde{g} = \ud g = 0.
\end{equation*}
Testing $\ud \tilde{g}$ against the above equation, integrating $\ud$ by parts and using $\iota_{\nu} \ud \tilde{g} = 0$ to make the boundary terms vanish, it follows that $\dlt \ud \tilde{g} = 0$. 

Next, we solve the following Neumann boundary value problem for $\tilde{f}$:
\begin{equation*}
\left\{
\begin{aligned}
	-\lap \tilde{f} & = f \quad \hbox{ in } \calA, \\
	\iota_{\nu} \ud \tilde{f} & = - \iota_{\nu} \dlt \tilde{g} \quad \hbox{ on } \rd \calA, \\
	\int_{\calA} \tilde{f} &= 0.
\end{aligned}
\right.
\end{equation*}
To solve this problem with $\tilde{f} \in H^{2}(\calA)$, we need to verify the following the compatibility condition (which arises from integrating $-\lap \tilde{f} = f$ over $\calA$, integrating the LHS by parts and using the conditions on $f$ and $\iota_{\nu} \ud \tilde{f} \vert_{\rd \calA}$):
\begin{equation*}
	0 = \int_{\rd \calA} \iota_{\nu} \dlt \tilde{g} \  \ud \hbox{Vol}_{\rd \calA}.
\end{equation*}
Since $\iota_{\nu} \star 1$ is precisely the induced volume form $\ud \hbox{Vol}_{\rd \calA}$, for any sufficiently regular $3$-form $\eta$ defined in a neighborhood of $\rd \calA$ we have $\iota_{\nu} \star \eta \  \ud \hbox{Vol}_{\rd \calA} = i_{\rd \calA}^{\ast} \eta$, where $i_{\rd \calA}$ is the embedding $\rd \calA \hookrightarrow \overline{\calA}$. It follows that
\begin{equation*}
\iota_{\nu} \dlt \tilde{g} \  \ud \hbox{Vol}_{\rd \calA} = \ud (i_{\rd \calA}^{\ast} \star \tilde{g}), 
\end{equation*}
so that the compatibility condition holds by the Stokes theorem.

In conclusion, $\omg = \ud \tilde{f} + \dlt \tilde{g}$ gives a desired $H^{1}(\calA)$ solution to the div-curl system \eqref{eq:div-curl} with $\iota_{\nu} \omg \vert_{\rd \calA}= 0$; uniqueness follows from the Hodge theorem and the fact that the first de Rham cohomology group of $\calA$ is trivial.
\end{proof}

We are now ready to state and prove the perturbative solvability results for the nonlinear div-curl system \eqref{eq:div-curl-nonlin}.
\begin{proposition} [Nonlinear div-curl system in $\bbR^{5}$]\label{prop:div-curl-nonlin-5d}
Consider the nonlinear div-curl system \eqref{eq:div-curl-nonlin} in $(\bbR^{5}, \bfe)$, where $\bfe_{\alp \bt}$ is a smooth metric such that $\nrm{\bfe_{\alp \bt} - \bar{\bfe}_{\alp \bt}}_{L^{\infty} \cap \dot{H}^{\frac{5}{2}}} < \epse$ for some constant positive definite matrix $\bar{\bfe}_{\alp \bt}$ and $A$ obeys $A \in L^{5}$, $\nb A\in L^{2} \dot{H}^{\frac{1}{2}}$ and $\nrm{\covnb^{\alp} A_{\alp}}_{L^{2} \dot{H}^{\frac{1}{2}}} < \epsA$. Then for $\epse, \epsA$ sufficiently small, there exists a unique solution $\Omg$ to this problem such that $\Omg \in L^{5}$ and $\nb \Omg \in L^{2} \dot{H}^{\frac{1}{2}}$, which obeys
\begin{equation*}
	\nrm{\nb \Omg}_{L^{2} \dot{H}^{\frac{1}{2}}} \aleq \nrm{\covnb^{\alp} A_{\alp}}_{L^{2} \dot{H}^{\frac{1}{2}}}.
\end{equation*}
\end{proposition}
\begin{proof}
We aim to solve \eqref{eq:div-curl-nonlin} by iteration; however, the RHS of the $\ud \Omg$ equation may not satisfy the compatibility condition during the iteration procedure. To rectify this issue, we use a Leray-type projection operator. For a $2$-form $g$ defined in $\bbR^{5}$, we introduce the operators
\begin{equation*}
	\bbP^{df} g = \dlt (-\lap)^{-1} \ud g, \quad \bbP^{cf} = g - \bbP^{df} g.
\end{equation*}
where $\dlt$ and $(-\lap)^{-1}$ are defined with respect to the constant metric $\bar{\bfe}$; such a simple choice is allowed since the condition we need to ensure ($\ud (\cdot) = 0$) is independent of the metric.

Now we set up an iteration scheme by starting with $\Omg^{(0)} = 0$, and defining $\Omg^{(n)}$ by solving the following system: 
\begin{equation*}
\left\{
\begin{aligned}
	\covnb^{\alp} \Omg_{\alp}^{(n)} & = \covnb^{\alp} B_{\alp}, \\
	\ud \Omg^{(n)} & = \frac{1}{2} \bbP^{cf} [\Omg^{(n-1)} \wedge \Omg^{(n-1)}].
\end{aligned}
\right.
\end{equation*}
Using Lemma~\ref{lem:div-curl-5d}, it is straightforward to show that $\Omg^{(n)}$ has a limit $\Omg$ such that $\nrm{\nb \Omg}_{L^{2} \dot{H}^{\frac{1}{2}}} \aleq \nrm{\covnb^{\alp} B_{\alp}}_{L^{2} \dot{H}^{\frac{1}{2}}}$ and solves
\begin{equation*}
\left\{
\begin{aligned}
	\covnb^{\alp} \Omg_{\alp} & = \covnb^{\alp} B_{\alp}, \\
	\ud \Omg & = \frac{1}{2} \bbP^{cf} [\Omg \wedge \Omg] = \frac{1}{2} [\Omg \wedge \Omg] - Z,
\end{aligned}
\right.
\end{equation*}
where
\begin{equation*}
	Z = \frac{1}{2} \bbP^{df} [\Omg \wedge \Omg].
\end{equation*}
It remains to show that $Z = 0$. As a preparation, note that for any $1$-form $\omg$,
\begin{equation*}
	[[\omg \wedge \omg] \wedge \omg ] = 0,
\end{equation*}
which follows from the Jacobi identity for the Lie bracket. 
Thus, $Z$ obeys the identity
\begin{align*}
	Z &= \frac{1}{2} \dlt (-\lap)^{-1} \ud [\Omg \wedge \Omg]\\
	& = \frac{1}{2} \dlt (-\lap)^{-1} [[\Omg \wedge \Omg] \wedge \Omg] - \frac{1}{2} \dlt (-\lap)^{-1} [\bbP^{df} [\Omg \wedge \Omg] \wedge \Omg] \\
	& = - \dlt (-\lap)^{-1} [Z \wedge \Omg].
\end{align*}
But then since $Z \in L^{2} \dot{H}^{\frac{1}{2}}$ and
\begin{equation*}
	\nrm{Z}_{L^{2} \dot{H}^{\frac{1}{2}}} \aleq \nrm{\Omg}_{L^{5}} \nrm{Z}_{L^{2} \dot{H}^{\frac{1}{2}}},
\end{equation*}
we have $Z = 0$ provided that $\epsA$ is small enough, as desired.
\end{proof}

\begin{proposition} [Nonlinear div-curl system in a $4$-dimensional annulus]\label{prop:div-curl-nonlin-ann}
Consider the nonlinear div-curl system \eqref{eq:div-curl-nonlin} in $\calA_{(R', R)} \subset \bbR^{4}$ with the boundary condition
\begin{equation*}
	\iota_{\nu} \omg = 0 \qquad \hbox{ on } \rd \calA_{(R', R)}. 
\end{equation*}
Suppose that $A$ is in $H^{1}(\calA_{(R', R)})$ and obeys $\nrm{\rd^{\ell} A_{\ell}}_{L^{2}(\calA_{(R', R)})} < \epsA$.
Then for $\epsA$ sufficiently small (depending on $R', R$), there exists a unique solution $\Omg \in H^{1}(\calA_{(R', R)})$ to this boundary value problem, which obeys
\begin{equation*}
	\nrm{\Omg}_{H^{1}(\calA_{(R', R)})} \aleq_{R', R} \nrm{\rd^{\ell} A_{\ell}}_{L^{2}(\calA_{(R', R)})}.
\end{equation*}
\end{proposition}
\begin{proof}
Again, for simplicity, we write $\calA = \calA_{(R', R)}$ and omit the dependence of constants on $R', R$. As in the proof of Proposition~\ref{prop:div-curl-nonlin-5d}, the crucial step is to construct a suitable projection that enforces the compatibility condition. For sufficiently smooth $g$ ($g \in H^{2}(\calA)$ is enough), solve the boundary value problem
\begin{equation*}
\left\{
\begin{aligned}
-\lap u &= \ud g \quad \hbox{ in } \calA, \\
(\iota_{\nu} \ud u, \iota_{\nu} u) & = 0 \quad \hbox{ on } \rd \calA, \\
u & \perp \calH^{3}_{A}(\calA),
\end{aligned}
\right.
\end{equation*}
where $\calH^{3}_{A}(\calA)$ is the space of harmonic $3$-forms $\eta$ satisfying $\iota_{\nu} \eta = 0$; see, for instance, \cite[{Proposition~9.8}]{Tay1}. Note that the solvability condition $\ud g \perp \calH^{3}_{A}$ is clearly satisfied. Furthermore, observe that $\ud \dlt \ud u = \ud^{2} g = 0$; thus testing by $\ud u$ and using the boundary condition $\iota_{\nu} \ud u = 0$, it follows that $\dlt \ud u = 0$. Thus, if we define
\begin{equation*}
	\bbP^{df} g = \dlt u, \quad \bbP^{cf} = g - \bbP^{df} g,
\end{equation*}
then $\ud \bbP^{df} g = \ud g$ and $\ud \bbP^{cf} g = 0$. Moreover, by the $H^{1}$ estimate for the Hodge Laplacian,
\begin{equation*}
	\nrm{\bbP^{df} g}_{L^{2}(\calA)} \aleq \nrm{u}_{H^{1}(\calA)} \aleq \nrm{g}_{L^{2}(\calA)},
\end{equation*}
by which we may extend $\bbP^{df}$ to any $g \in L^{2}(\calA)$.

As in Proposition~\ref{prop:div-curl-nonlin-5d}, we now solve the system
\begin{equation*}
\left\{
\begin{aligned}
	\dlt \Omg & = \dlt B, \\
	\ud \Omg & = \frac{1}{2} \bbP^{cf} [\Omg \wedge \Omg],
\end{aligned}
\right.
\end{equation*}
with the boundary condition $\iota_{\nu} \Omg = 0$ by iteration. Note that $\iota_{\nu} B = 0$ on $\rd \calA$ ensures the compatibility condition $\int_{\calA} \dlt B = 0$. Then to finish the proof, it suffices to show that
\begin{equation*}
Z = \frac{1}{2} \bbP^{df} [\Omg \wedge \Omg],
\end{equation*}
which is a-priori in $L^{2}$, must vanish. Since $\bbP^{df} g$ is defined from $\ud g$, we may perform a similar computation as in Proposition~\ref{prop:div-curl-nonlin-5d} and conclude that $Z$ obeys a self-improving relation if $\epsA$ is sufficiently small; thus $Z = 0$ as desired. \qedhere
\end{proof}

\subsection{Uhlenbeck's lemmas}
Here, we record various lemmas that allows us to pass to the Coulomb gauge under a suitable gauge-independent smallness condition. We begin with the classical results proved by Uhlenbeck. In what follows, $\nu$ denotes the outward unit normal to $B_{R}$ on $\rd B_{R}$.
\begin{theorem} \label{t:uhl}
Let $A$ be a connection in a ball $B_{R} \subset \bbR^{4}$ that satisfies $A \in H^{1}(B_{R})$ and $\nrm{F}_{L^{2}(B_{R})} < \epsU$.
If $\epsU$ is sufficiently small, then there exists an admissible gauge transformation $O$, unique up to multiplication by a constant element of $\G$, such that $\tA = \calG(O) A$ obeys
    \begin{equation*}
      \rd^{\ell} \tA_{\ell} = 0 \hbox{ in } B_{R}, \qquad
      \nu^{\ell} \tA_{\ell} = 0 \hbox{ on } \rd B_{R}
    \end{equation*}
    and
    \begin{equation*}
      \nrm{\tA}_{\dot{H}^{1}(B_{R})} \aleq \nrm{F}_{L^{2}(B_{R})}.
    \end{equation*}
\end{theorem}

\begin{theorem} \label{t:uhl-ext}
Let $A$ be a connection in $\bbR^{4} \setminus B_{R}$ that satisfies $A \in \dot{H}^{1} \cap L^{4}(\bbR^{4} \setminus B_{R})$ and $\nrm{F}_{L^{2}(\bbR^{4} \setminus B_{R})} < \epsU$.
If $\epsU$ is sufficiently small, then there exists an admissible gauge transformation $O$, unique up
    to multiplication by a constant element of $\G$, such that $\tA = \calG(O) A$ obeys
    \begin{equation*}
      \rd^{\ell} \tA_{\ell} = 0 \hbox{ in } \bbR^{4} \setminus B_{R}, \qquad
      \nu^{\ell} \tA_{\ell} = 0 \hbox{ on } \rd B_{R}
    \end{equation*}
    and
    \begin{equation*}
      \nrm{\tA}_{\dot{H}^{1}(\bbR^{4} \setminus B_{R})} \aleq \nrm{F}_{L^{2}(\bbR^{4} \setminus B_{R})}.
    \end{equation*}
\end{theorem}

The first result is essentially \cite[Theorem~1.3]{MR648356}; see \cite[Theorem~3.11]{OTYM2.5} for the uniqueness assertion at the critical regularity. The second result is the combination of \cite[Theorem~4.5]{MR815194}, which is formulated on a punctured disk, and a conformal inversion procedures, which is also in \cite{MR815194}. See, also, Theorem~3.12 and the proof of Theorem~1.5 in \cite{OTYM2.5}.

We end with a result that concerns a connection on the unit $3$-sphere $\bbS^{3}$ whose curvature is small in $L^{2}$; note that this is a subcritical assumption. 
\begin{proposition} [Subcritical Uhlenbeck on $\bbS^{3}$] \label{p:uhl-S3}
Let $A$ be a connection in $\bbS^{3}$ that satisfies $A \in H^{1}_{\Tht}(\bbS^{3})$ and $\nrm{F}_{L^{2}_{\Tht}(\bbS^{3})} < \epsU$. If $\epsU$ is sufficiently small, then there exists a $H^{2}_{\Tht}$ gauge transformation $O$, unique up to multiplication by a constant element of $\G$, such that $\tA = \calG(O) A$ obeys $\covnb^{\Tht} A_{\Tht} = 0$ and
\begin{equation*}
	\nrm{\tA}_{H^{1}_{\Tht}(\bbS^{3})} \aleq \nrm{F}_{L^{2}_{\Tht}(\bbS^{3})}
\end{equation*}
\end{proposition}
This proposition is a slight strengthening of \cite[Theorem~2.5]{U2}; we include a sketch of the proof for completeness.
\begin{proof}
We cover $\bbS^{3}$ by two caps $\calO_{N}$ and $\calO_{S}$ centered at the north and the south poles, respectively, and apply the usual Uhlenbeck's lemma (Theorem~\ref{t:uhl}) to each; we denote the resulting representations by $A^{(N)}$ and $A^{(S)}$, respectively. In the intersection, $A^{(N)} = \calG(O^{(NS)}) A^{(S)}$ for some gauge transformation $O^{(NS)} \in H^{2}_{\Tht}(\calO_{N} \cap \calO_{S})$. By the Sobolev embedding, taking $\epsU$ small enough, the image of $O^{(NS)}$ is contained in a small ball near a constant element. Hence we may patch together $A^{(N)}$ and $A^{(S)}$ to obtain a global representation $\bar{A}$ such that $\nrm{\bar{A}}_{H^{1}_{\Tht}} \aleq \epsU$. Finally, applying a subcritical perturbative argument (see, for instance, \cite[Proof of Theorem~2.5]{U2}), we find a gauge transformation from $\bar{A}$ into the Coulomb gauge. The uniqueness assertion follows also from the same perturbative argument. \qedhere
\end{proof}

\bibliographystyle{ym}
\bibliography{ym}

\end{document}